\numberwithin{equation}{section}
\newtheorem{lemma}{Lemma}[section]
\newtheorem{theorem}[lemma]{Theorem}
\newtheorem*{theorem*}{Theorem}
\newtheorem{corollary}[lemma]{Corollary}
\newtheorem*{question*}{Question}
\newtheorem{proposition}[lemma]{Proposition}
\newtheorem*{proposition*}{Proposition}
\newtheorem{problem}{Problem}
\newtheorem*{problem*}{Problem}
\theoremstyle{definition}
\newtheorem{definition}{Definition}[section]
\newtheorem*{claim*}{Claim}
\newtheorem*{remark}{Remark}
\newtheorem*{remarks}{Remarks}
\theoremstyle{plain}
\newtheorem*{namedthm}{\namedthmname}
\newcounter{namedthm}
	\newenvironment{named}[2]
	{\def\namedthmname{#1}
	\refstepcounter{namedthm}
	\namedthm[#2]\def\@currentlabel{#1}}
	{\endnamedthm}
\newcommand{\lE}{\mathbb{E}^{\log}}
\newcommand{\C}{{\mathbb C}}
\newcommand{\E}{{\mathbb E}}
\newcommand{\D}{{\mathbb D}}
\newcommand{\N}{{\mathbb N}}
\renewcommand{\P}{{\mathbb P}}
\newcommand{\Q}{{\mathbb Q}}
\newcommand{\R}{{\mathbb R}}
\renewcommand{\S}{\mathbb{S}}
\newcommand{\T}{{\mathbb T}}
\newcommand{\Z}{{\mathbb Z}}
\newcommand{\U}{{\mathbb U}}
\newcommand{\CA}{{\mathcal A}}
\newcommand{\CK}{{\mathcal K}}
\newcommand{\CM}{{\mathcal M}}
\begin{document}

\title{Partition regularity of Pythagorean pairs}

\author{Nikos Frantzikinakis}
\address[Nikos Frantzikinakis]{University of Crete, Department of Mathematics and Applied Mathematics, Heraklion, Greece}
\email{frantzikinakis@gmail.com}

\author{Oleksiy Klurman}
\address[Oleksiy Klurman]{University of Bristol, School of Mathematics, Bristol, UK}
\email{lklurman@gmail.com}

\author{Joel Moreira}
\address[Joel Moreira]{University of Warwick, Department of Mathematics, Coventry, UK}		 \email{ joel.moreira@warwick.ac.uk}

\begin{abstract}
	We  address a core  partition regularity problem in Ramsey theory by proving that every finite coloring of the positive integers contains monochromatic  Pythagorean pairs, i.e., $x,y\in \N$ such that  $x^2\pm y^2=z^2$   for some $z\in \N$.
	We also show  that partitions generated by  level sets of  multiplicative functions taking finitely many values
	always contain Pythagorean triples.
	Our proofs  combine known Gowers uniformity properties of aperiodic multiplicative functions with a  novel and rather flexible approach based on  concentration estimates of multiplicative functions.
\end{abstract}

\thanks{The first author was supported  by the Research Grant-ELIDEK HFRI-FM17-1684.}

\subjclass[2020]{Primary: 05D10; Secondary:11N37,11B30,  37A44.}

\keywords{Partition regularity, Pythagorean triples,  multiplicative functions, concentration inequalities,  Gowers uniformity.}

\date{\today}

\maketitle

\setcounter{tocdepth}{1}
\tableofcontents

\section{Introduction and main results}
\subsection{Introduction}
A fundamental problem in Ramsey theory is to determine which patterns must appear in a single cell for every partition of $\N=\{1,2,\dots\}$ into finitely many cells.
A famous example is provided by an early theorem of Schur \cite{Sc16}, which states that every finite partition of $\N$ has a solution to the equation $x+y=z$ where all  variables $x,y,z$ belong to the same cell.
Equations (and systems of equations) that satisfy this property are called \emph{partition regular}.

In 1933, Rado significantly extended Schur's theorem by characterizing all systems of linear equations that are partition regular \cite{R33}.
Polynomial equations however, have proven to be much more difficult to tackle.
In particular, the following notorious problem of Erd\H os and Graham \cite{Gr07,Gr08} remains unsolved.
\begin{problem*}\label{mainproblem}
	Determine whether the equation $x^2+y^2=z^2$ is partition regular.
\end{problem*}
Integer solutions to the equation $x^2+y^2=z^2$ are known as \emph{Pythagorean triples}, so the problem is colloquially referred to as the partition regularity problem for Pythagorean triples.
Graham in \cite{Gr07} places the origin of the problem in the  late 70's and offered \$250 for its  solution, noting that ``There is actually very little data (in either direction) to know which way to guess.''  While this was perhaps true a decade ago,
in the last few years there have been some positive developments.
The case where one allows only partitions of $\N$ into two sets was verified in 2016 with the help of a computer search \cite{HKM16}; this endeavor was hailed as the ``longest mathematical proof'' at the time, occupying 200 terabytes of data \cite{L16}.

Pioneering work in non-linear partition regularity goes back to the famous theorems of Furstenberg \cite{Fust} and S\'ark\"ozy \cite{Sa78-1}, culminating in the influential polynomial Szemer\'edi theorem of Bergelson and Leibman \cite{BL96}.
While these results apply only to shift-invariant configurations, there are now also several non-shift invariant configurations that are known to be or not to be partition regular.
Bergelson showed in \cite{Be87} that the equation $x^2+y=z$ is partition regular, and the equation $x^2+y=z^2$ was shown to be partition regular by the third author in \cite{Mo17}. On the other hand, the equation $x+y=z^2$ was shown   not to  be partition regular by  Csikv\'ari,  Gyarmati,  and S\'ark\"ozy in \cite{CGS12} (however, it is partition regular if we restrict to 2-colorings \cite{GL19,P18}).
Resolving an old conjecture, Khalfalah and Szemer\'edi \cite{KS06} showed that the equation
$x+y=z^2$ is partition regular if we only require $x,y$ to be of the same color and allow any $z\in\mathbb{N}.$
Other partition regularity results  of similar flavor can be found in \cite{Al22,BLM21, BJM17,DL18,DR18,DLMS23, L91}.
Lastly, we remark that in the case of more variables, a
result by Chow, Lindqvist, and Prendiville \cite{CLP21} establishes that the equation $x_1^2+x_2^2+x_3^2+x_4^2=x_5^2$ is partition regular (see also \cite{BP17,Ch22} for  related results).

Despite these developments, even the question of whether in any finite partition of $\N$ there is a Pythagorean triple with two terms in the same cell was still open.
We will say informally  that $(x,y)\in\N^2$ is a \emph{Pythagorean pair} if there exists $z\in\N$ such that either
$$
x^2+y^2=z^2 \quad \text{or} \quad  x^2+z^2=y^2.
$$
An attempt to address the question of whether Pythagorean pairs are partition regular was made  by the  first author and Host in \cite{FH17}, where an approach using Gowers uniformity properties and related  decomposition results of multiplicative functions was proposed.
This approach covered pairs $(x,y)$ satisfying, say, the equations $16x^2+9y^2=z^2$ or $x^2+y^2-xy=z^2$, but missed the case of Pythagorean pairs for reasons that we will explain later on.
Extending  these ideas, Sun in \cite{Su18,Su23}  established partition regularity in $(x,y)$  for the equation $x^2-y^2=z^2$, when $\N$  is replaced by the ring of integers of a larger number field, such as the Gaussian integers. However, the methods used there do not  apply to $\N$.

In the present paper, we develop a general approach to  partition regularity questions of pairs, by combining  the  method of \cite{FH17} together with a new input
related to  concentration estimates of multiplicative functions.
As a consequence, we show (among other things) that
Pythagorean pairs (and related pairs) are partition regular (see \cref{T:PairsPartition}) and density regular (see \cref{T:PairsDensity}).
We also show  that partitions generated by  level sets of  multiplicative functions taking finitely many values
always contain Pythagorean triples (see \cref{T:Triples}). The exact statements are given in the following subsections and our proof strategy and comparison with the previous approach in \cite{FH17} is described in Section~\ref{S:ProofStrategy}.

\subsection{Partition and density regularity of Pythagorean pairs}
Our first goal is to prove partition regularity and density regularity results for Pythagorean pairs, a case covered by taking $a=b=c=1$ in the next two results. Our results also answer the first part of Question~3 from \cite{FH17} and Problem~34 from \cite{Fr16}.
\begin{theorem}\label{T:PairsPartition}
	Let $a,b,c\in \N$ be  squares. Then 	for every finite coloring of $\N$ there exist
	\begin{enumerate}
		\item \label{I:PairsPartition1}	distinct  $x,y\in \N$ with the same color and $z\in \N$ such that $ax^2+by^2=cz^2$.
		
		\item \label{I:PairsPartition2} distinct $y,z\in \N$ with the same color and $x\in \N$ such that $ax^2+by^2=cz^2$.
	\end{enumerate}
\end{theorem}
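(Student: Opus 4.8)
The plan is to reduce the partition regularity statement to a statement about multiplicative functions via the standard correspondence. Since $a, b, c$ are squares, write $a = \alpha^2$, $b = \beta^2$, $c = \gamma^2$; then the equation $ax^2 + by^2 = cz^2$ becomes $(\alpha x)^2 + (\beta y)^2 = (\gamma z)^2$, so it suffices to find monochromatic (or, more precisely, appropriately-colored) Pythagorean pairs among the dilated values. The key observation is that Pythagorean triples admit the rational parametrization coming from Gaussian integers: if $m/n$ is a rational in lowest terms, then (up to scaling) $x = |m^2 - n^2|$, $y = 2mn$, $z = m^2 + n^2$ is a Pythagorean triple, and conversely every primitive one arises this way. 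Equivalently, writing $s = m + ni \in \Z[i]$, one has that $x + iy = \pm s^2$ or $\pm \bar s^2$ up to units, so the ratio $y/x$ is controlled by the argument of $s^2$. This means that, after passing to the multiplicative structure, the pair $(x,y)$ differs by a factor of the form $|s^2 \bar t^{\,2}|$-type expressions; the upshot is that one wants to find two integers in the same color class whose ratio lies in a prescribed multiplicatively-defined set.

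First I would invoke the standard reduction: a finite coloring of $\N$ corresponds to a finite-valued function, and by a compactness/averaging argument (of the type underlying the ``multiplicative correspondence principle'' flagged in the preamble) it suffices to show that for every $1$-bounded multiplicative function $f$, or for the indicator of a color class weighted suitably, a certain averaged expression of the form
\[
\E_{n \le N} \, \one_A(u(n)) \, \one_A(v(n))
\]
is bounded below, where $u(n), v(n)$ range over the two legs of a Pythagorean triple as $n$ runs over a suitable parametrizing set. Concretely, using the Gaussian-integer parametrization, I would set up an average over $s = m + ni$ in a box in $\Z[i]$ of a product $g(|\,\mathrm{leg}_1(s)\,|)\,\overline{g(|\,\mathrm{leg}_2(s)\,|)}$ for completely multiplicative $g$ of modulus $1$, and try to show this average does not vanish, i.e., that it is bounded away from $0$ along a subsequence.

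Second, I would split according to whether the relevant multiplicative function is \emph{pretentious} (close to $n \mapsto n^{it}\chi(n)$ for some Dirichlet character $\chi$ and small $t$) or \emph{aperiodic}. In the aperiodic case, the Gowers uniformity properties of aperiodic multiplicative functions (Tao--Teräväinen and the Frantzikinakis--Host machinery referenced in the excerpt) force the corresponding correlation average to be negligible, so the main term comes from a ``structured'' piece and one gets the desired lower bound from positivity of that piece. In the pretentious case, the multiplicative function essentially behaves like $n^{it}\chi(n)$, and here the new ingredient — concentration estimates for multiplicative functions — enters: one shows that $g(u(n))$ and $g(v(n))$ are \emph{concentrated} near a common value on a positive-density set of $n$, because $u(n)$ and $v(n)$ have nearly the same large prime factors (they differ multiplicatively by the ``small'' part $|s^2 \bar s^{-2}|$-type factors and by bounded quantities), and $\chi$, $n^{it}$ are insensitive to such perturbations. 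Concentration then gives $g(u(n))\overline{g(v(n))} \approx 1$ on that set, forcing the average to be close to a positive quantity. Combining the two cases via a decomposition $f = f_{\mathrm{str}} + f_{\mathrm{un}}$ yields the lower bound, hence the monochromatic configuration. Part (ii) is handled identically, parametrizing so that the two legs playing the role of $y$ and $z$ (i.e.\ one leg and the hypotenuse) are the pair required to be monochromatic; the Gaussian parametrization supplies both $(x,y)$-type and $(y,z)$-type pairs symmetrically.

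The main obstacle I anticipate is the pretentious case, and specifically making the concentration argument robust enough to apply simultaneously to both legs of the Pythagorean triple: one must show that on a set of $n$ of positive logarithmic density, \emph{both} $u(n)$ and $v(n)$ avoid having an anomalously large or small contribution from their ``non-shared'' prime factors, so that a single concentration estimate controls the product. This requires careful bookkeeping of the prime factorizations coming from the $\Z[i]$ parametrization — in particular keeping track of primes $\equiv 1 \pmod 4$ that split versus inert primes — and a quantitative concentration inequality (a second-moment or Turán--Kubilius type estimate for $\log$-averaged multiplicative functions) that is uniform over the relevant family. A secondary technical point is ensuring the $x \ne y$ (respectively $y \ne z$) distinctness: one must show the parametrizing set producing degenerate pairs is negligible, which is routine but needs to be stated.
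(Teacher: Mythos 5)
Your high-level outline matches the paper's: reduce via correspondence to a positivity statement for completely multiplicative functions, split into aperiodic and pretentious cases, kill the aperiodic part via Gowers-uniformity results from Frantzikinakis--Host, and handle the pretentious part with concentration estimates of Tur\'an--Kubilius type for primes $\equiv 1 \pmod 4$. The parametrization by $m^2-n^2$, $2mn$, $m^2+n^2$ (equivalently via Gaussian integers) is exactly what the paper uses.

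However, there are two genuine gaps in the pretentious case, and one is a conceptual error. First, your explanation of why concentration helps is wrong: you write that $u(n)$ and $v(n)$ ``have nearly the same large prime factors (they differ multiplicatively by the small part \dots),'' but in fact the three legs of a primitive Pythagorean triple are pairwise coprime, so the two quantities share essentially no prime factors. The actual mechanism in the paper is different: one restricts to a grid $\{(Qm+1,\,Qn)\}$ with $Q$ highly divisible, and Propositions~\ref{P:concentration1} and \ref{P:concentration2} show that $f(Qm+1)$ and $f((Qm+1)^2+(Qn)^2)$ each concentrate (in $L^1$ over $m,n$) around a fixed complex number --- roughly $\exp(F_N(f,K))$ and $\exp(G_N(f,K))$ respectively --- independently of what primes the two legs actually contain. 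Second, and this is the key missing idea, the concentration estimates alone only tell you that the correlation is close to a $Q$-dependent complex number of the form $c_{\ell,\ell'}(K)\cdot f(Q)\cdot Q^{-it}$, which can have negative real part. The paper's crucial maneuver (Lemma~\ref{L:mainvanishing1}, Proposition~\ref{P:vanishing2}) is to then \emph{average over $Q$ ranging through a multiplicative F\o lner set} $\Phi_K$: since $Q\mapsto f(Q)\cdot Q^{-it}$ is a nontrivial completely multiplicative function when $f\notin\CA$, this average tends to zero by Lemma~\ref{L:Fol0}. Only the Archimedean characters $n\mapsto n^{it}$ survive, and those are controlled separately by the weight $w_\delta$ (Lemma~\ref{L:Sdelta}), which you do not mention. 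Without the $Q$-averaging step, the pretentious case does not close; your proposed remedy of ``careful bookkeeping of the prime factorizations coming from the $\Z[i]$ parametrization'' points in the wrong direction.
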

\begin{remarks}
	$\bullet$	In  \cite[Corollary~2.8]{FH17},  part~\eqref{I:PairsDensity1} was covered under the additional restriction that $a+b$ is also a square, thus missing the case of Pythagorean pairs.
	
	$\bullet$ In fact, Theorem~\ref{T:PairsDensity} implies that all four elements  $x,y$ and $y',z'$ in part \eqref{I:PairsPartition1} and \eqref{I:PairsPartition2} respectively can be taken to be of the same color.
	
	$\bullet$ We can also extend \cite[Theorem~2.7]{FH17}, covering more general  homogeneous equations of the form $p(x,y,z)=ax^2+by^2+cz^2+dxy+exz+fyz=0$ where $a,b,c,d,e,f\in \Z$.
	Our method allows us to   show  that if $e^2-4ac$ and $f^2-4bc$ are non-zero squares, then for every finite coloring of the integers there exist distinct monochromatic $x,y$ and an integer $z$ such that
	$p(x,y,z)=0$.\footnote{Arguing as in Step~2 of \cite[Appendix~C]{FH17}
		we get parametrizations for $x,y$ of the form covered in Section~\ref{SS:generalforms}.} In contrast,   \cite[Theorem~2.7]{FH17}  assumes in addition that $(e+f)^2 -4c(a+b+d)$ is a non-zero square.
	
	
	$\bullet$ The assumption that $a,b,c\in \N$ are all squares is not  sufficient for partition regularity
	of 	the equation $ax^2+by^2=cz^2$. For example, the equation $x^2+y^2=4z^2$ is not partition regular, so in this case our result is  optimal, as only pairs  and  not triples can be  partition regular.  See Section~\ref{SS:problems} for  more details and conjectural necessary and sufficient conditions for partition regularity of such equations.
\end{remarks}
We establish a stronger density version of these partition regularity results.
It is clear that the set of odd numbers, which has additive density $1/2$, does not contain integers $x,y$ such that $x^2+y^2=z^2$ for some $z\in \N$, ruling out a potential density version using additive density.
On the other hand, since the equation $x^2+y^2=z^2$ is homogeneous, the set of solutions is invariant under dilations, and using a dilation-invariant notion of density turns out to be more fruitful.

To this end, we recall some standard notions. A \emph{multiplicative F\o lner sequence in $\N$} is a sequence $\Phi=(\Phi_K)_{K=1}^\infty$ of finite subsets of $\N$ asymptotically invariant under dilation, in the sense that
$$\forall x\in\N,\qquad\lim_{K\to\infty}\frac{\big|\Phi_K \cap (x\cdot \Phi_K)\big|}{|\Phi_K|}=1.$$
An example of a multiplicative F\o lner sequence is given by  \eqref{E:PhiK}.
The \emph{upper multiplicative density} of a set $\Lambda\subset\N$ with respect to a multiplicative F\o lner sequence $\Phi=(\Phi_K)_{K=1}^\infty$ is the quantity
$$\bar{d}_\Phi(\Lambda):=\limsup_{K\to\infty}\frac{\big|\Phi_K\cap \Lambda\big|}{|\Phi_K|},$$
and we write $d_\Phi(\Lambda)$ if the previous limit exists.
We say that $\Lambda\subset\N$ has {\em positive multiplicative density} (or, more precisely, positive upper Banach density with respect to multiplication)
if $\bar{d}_\Phi(\Lambda)>0$ for some multiplicative F\o lner sequence $\Phi$. A finite coloring of $\N$ always contains a monochromatic cell with positive multiplicative density, thus, the next result strengthens Theorem~\ref{T:PairsPartition}.
\begin{theorem}\label{T:PairsDensity}
	Let $a,b,c\in \N$ be  squares.  Then for every $\Lambda\subset \N$ with positive multiplicative density,  there exist
	\begin{enumerate}
		\item \label{I:PairsDensity1}	  distinct $x,y\in \Lambda$ and $z\in\N$ such that $ax^2+by^2=cz^2$.
		
		\item \label{I:PairsDensity2}   distinct $y,z\in \Lambda$ and $x\in\N$ such that $ax^2+by^2=cz^2$.
	\end{enumerate}
\end{theorem}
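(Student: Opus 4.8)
The plan is to reduce Theorem~\ref{T:PairsDensity} to the positivity of a bilinear ``Pythagorean average'' and then to evaluate that average by separating the aperiodic and pretentious behaviour of the multiplicative functions that govern $\one_\Lambda$. Write $a=a_0^2$, $b=b_0^2$, $c=c_0^2$. A pair of distinct $x,y\in\N$ with $ax^2+by^2=cz^2$ for some $z\in\N$ is exactly a non‑degenerate Pythagorean triple $(a_0x,b_0y,c_0z)$, and every such triple arises, for a suitable $k\in\N$, from the classical parametrisation $a_0x=k(m^2-n^2)$, $b_0y=2kmn$, $c_0z=k(m^2+n^2)$ (or the variant exchanging the two legs). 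Absorbing the bounded factors $a_0,b_0,c_0$ into $k$, part~\eqref{I:PairsDensity1} follows once we show that for some multiplicative F\o lner sequence $\Phi$ and bounded $\lambda_1,\lambda_2\in\N$ depending only on $a,b,c$,
\[
\liminf_{K\to\infty}\ \lE_{k\in\Phi_K}\,\lE_{m,n}\ \one_\Lambda\big(\lambda_1 k(m-n)(m+n)\big)\,\one_\Lambda\big(\lambda_2 kmn\big)\,w(k,m,n)\ >\ 0,
\]
where $\lE_{m,n}$ is a logarithmic average over a growing range and $w\ge 0$ is a fixed weight restricting $k,m,n$ to the admissible residue classes (coprimality of $m,n$, opposite parity, $m>n$, and the divisibilities forced by $\lambda_1,\lambda_2$). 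By the multiplicative correspondence principle it is enough to bound this below in terms of $\delta:=\bar d_\Phi(\Lambda)$, after passing to a subsequence along which $d_\Phi(\Lambda)$ exists.

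The next step is a structure‑versus‑uniformity decomposition $\one_\Lambda=g_{\st}+g_{\un}$, valid on $\Phi_K$ for large $K$, where $g_{\st}$ is a bounded combination of finitely‑valued completely multiplicative functions (and their dilates) with $\E_{\Phi_K}g_{\st}\ge\delta-o(1)$, and $g_{\un}$ has negligible Gowers uniformity norm of low degree along the F\o lner sequence. Expanding the two factors over the multiplicative constituents of $g_{\st}$ rewrites the main term as a bounded combination of averages
\[
\lE_{k\in\Phi_K}\,\lE_{m,n}\ f_j(k)\,f_j(m-n)\,f_j(m+n)\,\overline{f_l(k)\,f_l(m)\,f_l(n)}\,w(k,m,n)
\]
with completely multiplicative $f_j,f_l$ of modulus $\le 1$ (and the constants $f_j(\lambda_1),\overline{f_l(\lambda_2)}$ absorbed into the coefficients). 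The five linear forms $k,\,m-n,\,m+n,\,m,\,n$ have bounded Cauchy--Schwarz complexity, so a generalised von Neumann estimate shows that every contribution containing $g_{\un}$, and every term in the displayed sum in which some $f_j$ or $f_l$ is \emph{aperiodic}, is $o(1)$; here one invokes the known fact that aperiodic completely multiplicative functions have vanishing averaged uniformity norms. Only terms with both $f_j,f_l$ \emph{pretentious} survive, and the $k$‑average moreover forces $f_j$ and $f_l$ to pretend to the same Dirichlet character.

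This pretentious regime is the heart of the proof and its main difficulty. Since $f_j,f_l$ take finitely many values, there is no room for an Archimedean twist $n^{it}$ with $t\ne 0$, so each of them pretends to a genuine Dirichlet character: $f=\chi\cdot f'$ with $\chi$ of modulus $q$ and $f'$ completely multiplicative with $f'(p)=1$ outside a $1/p$‑summable set of primes. The new input is a concentration estimate showing that such an $f'$ does not oscillate along the arithmetic progressions cut out by our forms: after restricting $k,m,n$ to suitable residue classes, the values $f'(k),f'(m-n),f'(m+n),f'(m),f'(n)$ equal, with overwhelming logarithmic probability, a single constant close to a nonzero Euler product, up to an $L^1$‑small error. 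Substituting this replaces each $f_j,f_l$ by a character times a positive constant and turns the surviving main term into a finite combination of purely periodic averages $\lE_{k,m,n}\chi_j\big(k(m^2-n^2)\big)\overline{\chi_l(kmn)}\,w(k,m,n)$, which are evaluated by a residue computation. Using that $a,b,c$ are squares — which is precisely what makes $x,y$ expressible through products of linear forms in $k,m,n$ and makes the defining congruence system solvable — and that the $j=l=0$ term contributes the positive main term $\ge c(\delta)>0$, one checks that the full sum remains positive. Distinctness $x\ne y$ is automatic since the diagonal contributes $o(1)$; this proves part~\eqref{I:PairsDensity1}.

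Part~\eqref{I:PairsDensity2} is proved by the same scheme applied to a parametrisation expressing $y$ and $z$ through the same two parameters, for instance $b_0y=k(m^2-n^2)$, $c_0z=k(m^2+n^2)$, $a_0x=2kmn$; now $\one_\Lambda(y)\,\one_\Lambda(z)$ unfolds into averages involving $f(m-n)f(m+n)$ together with $f(m^2+n^2)$, the only new feature being a multiplicative function evaluated at the binary quadratic form $m^2+n^2$. This is again handled by the dichotomy: Gowers uniformity of $f$ along the values of the form when $f$ is aperiodic, and the concentration estimate — the form $m^2+n^2$ being generic enough for $f'$ to concentrate on its values — when $f$ is pretentious. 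The hard part throughout is the pretentious regime: one must rule out that a $1/p$‑summable multiplicative perturbation of Dirichlet characters flips the sign of a multilinear average over several correlated linear (and, for the second part, quadratic) forms, and show that the resulting periodic main term is strictly positive; this is exactly what the concentration estimate, together with the squares hypothesis on $a,b,c$, is designed to deliver.
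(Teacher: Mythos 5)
The central gap is the claimed decomposition $\one_\Lambda=g_{\st}+g_{\un}$ with $g_{\st}$ a bounded combination of \emph{finitely-valued} completely multiplicative functions: no such decomposition is available, and it is not what the paper uses. The correspondence principle (via Bochner--Herglotz) produces a spectral representation $\mu(T_r^{-1}A\cap T_s^{-1}A)=\int_\CM f(r)\overline{f(s)}\,d\sigma(f)$ over the full Pontryagin dual $\CM$ of $(\Q_+,\times)$, and the $f$ that arise are arbitrary $\S^1$-valued completely multiplicative functions, including the Archimedean characters $n^{it}$ and, more generally, functions pretending to $\chi\cdot n^{it}$ with $t\ne 0$. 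Your dismissal of the Archimedean twist ``since $f_j,f_l$ take finitely many values'' is therefore unjustified; finite-valuedness is a feature of Theorem~\ref{T:Triples}, not of the spectral side of Theorem~\ref{T:PairsDensity}. The Archimedean contribution is exactly what blocked the earlier \cite{FH17} method for Pythagorean pairs: $(n^{it})\big(\ell(m^2-n^2)\big)\cdot\overline{(n^{it})(\ell'mn)}=\big(\ell(m^2-n^2)/(\ell'mn)\big)^{it}$ has no useful positivity, and the squares hypothesis on $a,b,c$ does not repair it. The paper introduces a weight $w_\delta(m,n)$ restricting to $(m,n)$ with $\big(\ell(m^2-n^2)\big)^{i}(\ell'mn)^{-i}$ in a short arc around $1$, and this is what makes the Archimedean part positive (\cref{L:mainpositive1}); your proposal has no analogue of this device.

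You also leave unexplained how the pretentious but non-Archimedean $f$ are removed. The paper's key maneuver is to average the grid parameter $Q$ in $(Qm+1,Qn)$ over the multiplicative F\o lner sets $\Phi_K$, combine the concentration estimates (\cref{P:concentration1}, \cref{P:concentration2}) to show the inner average depends only weakly on $Q$, and conclude that what remains is a non-trivial completely multiplicative function of $Q$ whose F\o lner average vanishes (\cref{L:mainvanishing1}, \cref{P:vanishing2}). Your ``the $k$-average forces $f_j$ and $f_l$ to pretend to the same Dirichlet character'' conflates the dilation $k$ with the grid parameter $Q$ and by itself delivers no such cancellation; moreover, the paper deliberately treats each $f\in\CM$ separately under the integral rather than decomposing the correlation into structured plus uniform at the outset, and this separation is stated explicitly as the main departure from \cite{FH17} that makes the uniform-in-$Q$ concentration estimates usable.
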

\begin{remarks}
	$\bullet$ In fact, we prove the following stronger property:
	If $\bar{d}_\Phi(\Lambda)>0$, then there exist a sub-sequence $\Psi$ of $\Phi$ and distinct $x,y\in \N$ such that  $ax^2+by^2=cz^2$ for some $z\in \N$, and
	$$
	d_\Psi\big((x^{-1}\Lambda)\cap (y^{-1}\Lambda)\big)>0.
	$$
	A similar statement also holds with the roles of $x$ and $z$ reversed.	
	
	$\bullet$ If $a+b\neq c$, it is not true that  every $\Lambda\subset \N$ with positive multiplicative density contains  $x,y,z$ such that $ax^2+by^2=cz^2$.
	To see this when $a=b=c=1$ (the argument is similar whenever $a+b\neq c$),
	let $\Phi$ be any 
	multiplicative  F\o lner  sequence and  $\alpha$ be an irrational such that the sequence
	$(n^2\alpha)$ is equidistributed $\pmod{1}$ with respect to a subsequence $\Phi'$ of $\Phi$ (such an $\alpha$ and $\Phi'$ exist by the ergodicity of the multiplicative action $T_nx=n^2x$, $n\in \N$, defined on $\T$ with its Haar measure). Let
	$\Lambda:=\{n\in \N\colon \{n^2\alpha \}\in [1/5,2/5)\}$, which has positive upper  density with respect to $\Phi'$. If $x,y,z\in \Lambda$, then  $\{(x^2+y^2)\alpha \}\in [2/5,4/5)$ and  $\{z^2\alpha \}\in [1/5,2/5)$, hence we cannot have $x^2+y^2=z^2$. This example was shown to us  by V.~Bergelson.
\end{remarks}
We remark that the previous results also resolve the first part of Problem 3  in \cite{FH17} and also Problem 6 in \cite{FH17}. The latter implies   that the starting point in S\'ark\"ozy's theorem~\cite{Sa78} (or the  variant in \cite{KS06} dealing with the equation $x+y=n^2$) can be taken to be a square, as the following result shows.
\begin{corollary}
	For every finite coloring of $\N$  there exist
	\begin{enumerate}
		\item \label{I:PairsCorollary1}
		distinct $m,n\in \N$  such that the integers $m^2$ and $m^2+ n^2$ have the same color.
		
		\item \label{I:PairsCorollary2} distinct  $m,n\in \N$  such that the integers $m^2$ and $n^2- m^2$ have the same color.
	\end{enumerate}
\end{corollary}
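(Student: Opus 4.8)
The plan is to derive both parts from Theorem~\ref{T:PairsPartition} with $a=b=c=1$ — permissible since $1$ is a square — applied not to the given coloring but to an auxiliary one. Given a finite coloring $c\colon\N\to\{1,\dots,r\}$, define the \emph{squared coloring} $\tilde c\colon\N\to\{1,\dots,r\}$ by $\tilde c(k):=c(k^2)$; this is again a finite coloring of $\N$. A pair $\{u,v\}$ is monochromatic for $\tilde c$ precisely when $c(u^2)=c(v^2)$, so finding the configurations of the Corollary amounts to locating $\tilde c$-monochromatic pairs sitting in the correct position inside a Pythagorean triple $x^2+y^2=z^2$ — which is exactly what Theorem~\ref{T:PairsPartition} supplies.

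For part~\eqref{I:PairsCorollary1}, I would apply part~\eqref{I:PairsPartition2} of Theorem~\ref{T:PairsPartition} to $\tilde c$: this yields distinct $y,z\in\N$ with $\tilde c(y)=\tilde c(z)$ together with $x\in\N$ such that $x^2+y^2=z^2$. Setting $m:=y$ and $n:=x$ we get $m^2+n^2=z^2$, whence $c(m^2)=c(y^2)=c(z^2)=c(m^2+n^2)$, so $m^2$ and $m^2+n^2$ have the same $c$-color. Moreover $m\neq n$, since $x=y$ would give $z^2=2y^2$ and hence $\sqrt2\in\Q$, a contradiction.

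For part~\eqref{I:PairsCorollary2}, I would instead apply part~\eqref{I:PairsPartition1} of Theorem~\ref{T:PairsPartition} to $\tilde c$: this yields distinct $x,y\in\N$ with $\tilde c(x)=\tilde c(y)$ and $z\in\N$ such that $x^2+y^2=z^2$. Now set $m:=y$ and $n:=z$. Then $n^2-m^2=z^2-y^2=x^2\geq 1$, so $n^2-m^2$ is a positive integer, and $c(n^2-m^2)=c(x^2)=c(y^2)=c(m^2)$. Here $m\neq n$ because $y=z$ would force $x^2=0$, impossible for $x\in\N$.

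There is no substantial obstacle: the Corollary is a purely formal consequence of Theorem~\ref{T:PairsPartition}, whose entire difficulty lies upstream. The only points needing a moment's care are the passage to the squared coloring (which recasts a statement about $m^2$ and $m^2\pm n^2$ as one about $\tilde c$-monochromatic solutions of $x^2+y^2=z^2$), the verification that $n^2-m^2$ is genuinely a positive integer in part~\eqref{I:PairsCorollary2}, and the distinctness $m\neq n$ in both parts — each of which is immediate, the last from $x,y,z\in\N$ ruling out the degenerate cases $x=y$ and $y=z$.
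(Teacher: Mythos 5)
Your argument is the same as the paper's: both pass to the squared coloring $\tilde c(k):=c(k^2)$ and invoke Theorem~\ref{T:PairsPartition} (part~\eqref{I:PairsPartition2} for the first claim, part~\eqref{I:PairsPartition1} for the second). The only difference is that you explicitly verify the distinctness $m\neq n$ (ruling out $z^2=2y^2$ via irrationality of $\sqrt2$, and $x=0$), a small point the paper leaves implicit.
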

To prove part~\eqref{I:PairsCorollary1},  let $C_1,\ldots, C_k$  be a coloring of $\N$. Using part~\eqref{I:PairsPartition2} of  Theorem~\ref{T:PairsPartition} for the coloring $C_i':=\{n\in \N\colon n^2\in C_i\}$, $i=1,\ldots, k$, we deduce that there exist $i_0\in \{1,\ldots, k\}$ and $x,z\in C'_{i_0}$ such that
$x^2+y^2=z^2$. Then $x^2,z^2\in C_{i_0}$. Letting $m:=x$ and $n:=y$ we get that $m^2, m^2+n^2\in C_{i_0}$.  The proof of part~\eqref{I:PairsCorollary2} is similar and uses
part~\eqref{I:PairsPartition1} of  Theorem~\ref{T:PairsPartition}.

A coloring $C_1,\ldots, C_k$ of the squares induces a coloring $C'_1,\ldots, C'_k$ of $\N$ in the natural way:
$C_i':=\{n\in \N\colon n^2\in C_i\}$, $i=1,\ldots, k$. Applying Theorem~\ref{T:PairsPartition} for the induced coloring we deduce the following result.
\begin{corollary}
	For every finite coloring of the squares there exist
	\begin{enumerate}
		\item \label{I:PairsCorollary1'}  distinct squares $x,y$ with the same color such that $x+y$ is a square.
		
		\item \label{I:PairsCorollary2'} distinct squares $x,y$ with the same color such that $x-y$ is a square.
	\end{enumerate}
\end{corollary}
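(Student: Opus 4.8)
The plan is to deduce this corollary directly from Theorem~\ref{T:PairsPartition} with $a=b=c=1$, using the natural pull-back of a coloring of the squares to a coloring of $\N$ that was indicated just above the statement. Given a finite coloring $C_1,\dots,C_k$ of the set of squares, set $C_i':=\{n\in\N\colon n^2\in C_i\}$ for $i=1,\dots,k$. Since each $n\in\N$ has $n^2$ lying in exactly one cell $C_i$, the family $C_1',\dots,C_k'$ is a genuine finite coloring of $\N$, so Theorem~\ref{T:PairsPartition} applies to it.

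For part~\eqref{I:PairsCorollary1'} I would apply part~\eqref{I:PairsPartition1} of Theorem~\ref{T:PairsPartition} (with $a=b=c=1$) to this coloring: it yields an index $i_0\in\{1,\dots,k\}$, distinct $u,v\in C_{i_0}'$, and $w\in\N$ with $u^2+v^2=w^2$. Then $x:=u^2$ and $y:=v^2$ both lie in $C_{i_0}$, they are distinct squares because $u\neq v$ are positive integers, and $x+y=w^2$ is a square, as required.

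For part~\eqref{I:PairsCorollary2'} the only change is to invoke part~\eqref{I:PairsPartition2} of Theorem~\ref{T:PairsPartition} instead; with $a=b=c=1$ it produces $i_0$, distinct $v,w\in C_{i_0}'$, and $u\in\N$ with $u^2+v^2=w^2$, hence $w^2-v^2=u^2$. Taking $x:=w^2$ and $y:=v^2$, these are distinct squares in $C_{i_0}$ (distinct since $u\geq 1$ forces $w^2>v^2$) whose difference $x-y=u^2$ is a square. I do not expect any genuine obstacle here: all the difficulty is contained in Theorem~\ref{T:PairsPartition}, and the only points to verify in the reduction are that the pull-back coloring is well defined on all of $\N$ and that $n\mapsto n^2$ is injective on $\N$, both of which are immediate. (This is, incidentally, the same mechanism used in the proof of the preceding corollary.)
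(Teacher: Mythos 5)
Your argument is correct and is precisely the paper's proof: pull back the coloring of the squares to the coloring $C_i'=\{n\in\N\colon n^2\in C_i\}$ of $\N$ and invoke parts~\eqref{I:PairsPartition1} and~\eqref{I:PairsPartition2} of Theorem~\ref{T:PairsPartition} with $a=b=c=1$. The paper states this reduction in a single sentence; you have merely spelled out the (entirely routine) details.
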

\subsection{Pythagorean triples on level sets of multiplicative functions}
Our second objective is to lend support to the hypothesis that Pythagorean triples are partition regular by proving that the level sets of multiplicative functions that take finitely many values
always include Pythagorean triples.
Since the equation $x^2+y^2=z^2$ is homogeneous, one might expect that a presumed counterexample to partition regularity would have ``multiplicative structure'', so \cref{T:Triples} below addresses the most obvious possibilities.
We also remark that Rado's theorem implies that a given linear system of equations is partition regular as soon as it has monochromatic solutions in every coloring realized using a (finitely valued) completely multiplicative function;
but of course this result does not apply to the Pythagorean equation.

\begin{theorem}\label{T:Triples}
	Let   $f\colon \N\to \S^1$ be a completely multiplicative function that takes finitely many values. Then  there exist distinct $x,y,z\in \N$  such that
	$$
	x^2+y^2=z^2 \quad \text{and} \quad f(x)=f(y)=f(z)=1.
	$$
\end{theorem}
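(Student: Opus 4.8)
The plan is to exploit the standard rational parametrization of Pythagorean triples together with the hypothesis that $f$ is completely multiplicative and finitely valued. Every primitive Pythagorean triple has the form $x = s^2-t^2$, $y = 2st$, $z = s^2+t^2$ for coprime $s,t$ of opposite parity; scaling, a general Pythagorean triple is $x = m(s^2-t^2)$, $y = 2mst$, $z = m(s^2+t^2)$. Applying $f$ and using complete multiplicativity, the three conditions $f(x)=f(y)=f(z)=1$ become
\begin{equation*}
	f(m)f(s-t)f(s+t)=1,\qquad f(2)f(m)f(s)f(t)=1,\qquad f(m)f(s^2+t^2)=1.
\end{equation*}
Since $f$ takes values in a finite subgroup of $\S^1$, it factors through a homomorphism into $\Z/d\Z$ for some $d$; so the task reduces to finding $s,t$ (of opposite parity, and then a suitable $m$) making certain additive combinations of the "logarithms" $\log f(n)$ vanish. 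The first step is therefore to reformulate the problem purely in terms of the finite abelian group $G = f(\N)$ and the constraints above, and to observe that once $s,t$ are chosen so that $f(s-t)f(s+t)$, $f(2)f(s)f(t)$ and $f(s^2+t^2)$ all coincide (call this common value $g\in G$), we may simply take $m$ with $f(m)=g^{-1}$, which exists since $f$ is onto $G$ and $G$ is a group.

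Next I would bring in the Gowers-uniformity / concentration machinery that the paper has set up for aperiodic multiplicative functions, together with a separate, easier treatment of the pretentious case. Write $f = f_{\mathrm{pret}}\cdot f_{\mathrm{ap}}$ heuristically: if $f$ "pretends to be" a Dirichlet character $\chi$ times $n^{it}$, then on a positive-density (multiplicative) set $f$ behaves like $\chi$, and the finiteness of the range forces $t=0$ and $f$ to essentially agree with a genuine Dirichlet character $\chi$ of modulus $q$; for such $f$ one can find Pythagorean triples in the desired level set by an explicit congruence construction — choose $s,t$ in prescribed residue classes mod $q$ (and mod $8$ to control $f(2)$ and parity) so that $\chi(s-t)\chi(s+t) = \chi(s)\chi(t)\chi(2) = \chi(s^2+t^2) = 1$, which is a finite system of congruence conditions that is consistent for infinitely many coprime $s,t$ of opposite parity by CRT and Dirichlet's theorem on primes in progressions. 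In the genuinely aperiodic case, I would instead test the indicator of the event "$f(s-t)=f(s+t)=f(s)=f(t)=f(s^2+t^2)=1$" against a counting operator over $s,t$ in a multiplicative Følner window, expand using characters of $G$, and show that all non-trivial character contributions are negligible: the terms involving $f(s\pm t)$ and $f(s)f(t)$ are controlled by known $U^2$ or $U^3$ Gowers-uniformity estimates for aperiodic multiplicative functions (as used for the linear-in-$(x,y)$ forms appearing in $x=s^2-t^2=(s-t)(s+t)$ and $y=2st$), while the term $f(s^2+t^2)$ — which corresponds to the value of $f$ on the irreducible binary quadratic form $s^2+t^2$ — is handled by the concentration estimates for multiplicative functions that constitute the paper's new input, applied along the fibers $s^2+t^2 = n$.

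The main obstacle I expect is precisely the treatment of $f(s^2+t^2)$: unlike $s-t$, $s+t$, $s$, $t$ (which are linear forms in $s,t$), the quantity $s^2+t^2$ is a genuinely quadratic form, so one cannot directly invoke Gowers-uniformity for linear patterns, and the multiplicative function evaluated at values of a quadratic form is delicate — these are exactly the values represented as sums of two squares, whose multiplicative distribution ties to the behavior of $f$ on primes $\equiv 1 \pmod 4$ and on Gaussian primes. The way to get around this is to not require $f(s^2+t^2)=1$ as an independent condition but to absorb it into the scaling factor $m$: fix a value $n_0$ with $n_0 = s_0^2+t_0^2$ a sum of two squares, let $g_0 = f(n_0)$, and then we only need $s,t$ of opposite parity with $s^2+t^2 = n_0$ (there are such, by choosing $n_0$ with several representations) together with $f(s-t)f(s+t)=f(2)f(s)f(t)$; the remaining freedom is then genuinely linear and the uniformity/concentration estimates apply cleanly. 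Balancing the number of representations of $n_0$ as a sum of two squares against the multiplicative-density counting is the technical heart, and this is where the concentration inequalities for multiplicative functions — controlling how often $f$ on a short multiplicative interval deviates from its typical value — do the real work.
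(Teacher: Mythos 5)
Your high-level outline correctly identifies the parametrization $x=m(s^2-t^2)$, $y=2mst$, $z=m(s^2+t^2)$, the reduction to a finite-group condition, the freedom to choose $m$ to absorb a common value $g\in G$, and the split into pretentious and aperiodic cases. But the concrete steps you propose for both cases have genuine gaps, and your fix for the $f(s^2+t^2)$ term does not work.

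The most serious problem is your proposed escape from the quadratic term $f(s^2+t^2)$: fixing an $n_0$ that is a sum of two squares and varying only over representations $s^2+t^2=n_0$. For any fixed $n_0$ there are only $O(n_0^\varepsilon)$ such pairs $(s,t)$, which is far too few to support any Gowers-uniformity or concentration averaging; you would need the linear conditions $f(s-t)f(s+t)=f(2)f(s)f(t)$ to hold for one of a bounded collection of pairs, which cannot be guaranteed. The paper does the opposite: it keeps $m^2+n^2$ varying and absorbs the difficulty into an \emph{extra multiplicative average over the scaling factor} $k$ (your $m$), taken along a multiplicative F\o lner sequence. That averaging ``clears out'' every non-trivial character contribution $f^{j_1+j_2+j_3}(k)$ when expanding the indicator $\mathbf 1_{\{f=1\}}=\E_{0\le j<d}f^j$; this is the structural device that your sketch lacks.

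Two further gaps. First, in the pretentious case you want to choose $s,t$ in residue classes modulo $q$ and invoke CRT/Dirichlet; but $f$ pretentious (even finite-valued, so $f\sim\chi$ with $t=0$) does \emph{not} mean $f$ is periodic or agrees with $\chi$ on residue classes. One only has $\sum_p\frac1p|1-f(p)\overline{\chi(p)}|<\infty$, so $f$ can disagree with $\chi$ on a set of primes that still affects $f(n)$ on every residue class. The correct tool here is exactly the paper's concentration estimates (Propositions~\ref{P:concentration1} and \ref{P:concentration2}, distilled in Corollary~\ref{C:concentration2}): for highly divisible $Q$, $\E_{n\le N}|f(Qn+1)-1|$ and $\E_{m,n\le N}|f((Qm+1)^2+(Qn)^2)-1|$ are small. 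Second, in the aperiodic case you rely implicitly on a vanishing estimate for $\E_{m,n}f_1(m^2-n^2)f_2(mn)f_3(m^2+n^2)$; the available input (Proposition~\ref{P:aperiodic}, i.e.\ \cite[Theorem~9.7]{FH17}) yields vanishing only when $f_1$ or $f_2$ is aperiodic, \emph{not} when only $f_3$ (the factor at $m^2+n^2$) is. After expanding the indicator $\mathbf 1_{\{f=1\}}$ you inevitably produce cross-terms where only the $m^2+n^2$ slot carries the aperiodic factor; the paper explicitly flags this obstruction and circumvents it by first passing to the decomposition $f=g\cdot h$ with $g^d=\tilde\chi$, then using the $k$-average to kill exactly those cross-terms, and finally applying concentration to the remaining pretentious data. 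Without these two devices (the $k$-average and the $f=gh$ decomposition of Lemma~\ref{L:decomposition}), your case analysis does not close.
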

\begin{remarks}
	$\bullet$ There is nothing special about the value $1$ in \cref{T:Triples}.
	If $\zeta\in \S^1$ is any other number in  the range of $f$, then since the equation $x^2+y^2=z^2$ is invariant under dilations of the variables $x,y,z$,  we get that there exist distinct $x,y,z\in \N$,   such that
	$$
	x^2+y^2=z^2 \quad \text{and} \quad f(x)=f(y)=f(z)=\zeta.
	$$
	
	$\bullet$ With a bit more effort we can  extend Theorem~\ref{T:Triples} to cover more general  equations of the form
	\begin{equation}\label{E:abcxyz}
		ax^2+by^2=cz^2
	\end{equation}
	where $a,b,c\in \N$ are  squares and we  have either $a=c$, or $b=c$, or $a+b=c$. We outline the additional steps needed to be taken to prove such a result in Section~\ref{SS:abc}. Note that having  one of these three identities satisfied is a necessary condition for the partition regularity of \eqref{E:abcxyz}. For more details and related problems see the discussion in Section~\ref{SS:problems}.
\end{remarks}
Related linear equations $ax+by=cz$ on the level sets of completely multiplicative functions $f:\mathbb{N}\to\{-1,1\}$ have been studied in the works of Br\"udern \cite{Brud08} and more recently by de la Bret\`eche and Granville \cite{BG22}. One consequence of such results \cite[Corollary~2]{BG22}  is that the number of Pythagorean triples $(x,y,z)$ modulo any prime $p\ge 3,$ that is, solutions to $x+y=z$ where $x,y,z\le N<p$ are quadratic residues, is at least $\frac{1}{2}(k'+o_{N\to\infty}(1)) N^2$ where $k'=.005044...$ is a sharp constant.

\subsection{Parametric reformulation of the main results}\label{SS:parametric} To prove our main results it is convenient to  restate them  using  solutions  of \eqref{E:abcxyz}  in parametric form.

Our assumptions give that $a=a_0^2$, $b=b_0^2$, $c=c_0^2$ for some $a_0,b_0,c_0\in \N$. Then a simple computation shows that the following are solutions of $ax^2+by^2=cz^2$:
$$
x=k\, \ell_1\,  (m^2-n^2),\quad
y= k\, \ell_2 \, mn, \quad
z=k \, \ell_3\, (m^2+n^2), \quad  m,n\in\N,
$$
where $\ell_1:= a_0bc$, $\ell_2:= 2ab_0c$, $\ell_3:=abc_0$.

So in order to prove Theorem~\ref{T:PairsDensity}   it suffices to establish the following result.
\begin{theorem}\label{T:PairsDensityParametric}
	Suppose that   $\Lambda\subset \N$ satisfies  $\bar{d}_\Phi(\Lambda)>0$
	for some multiplicative F\o lner sequence $\Phi$.
	Then for every $\ell,\ell'\in \N$ there exist
	\begin{enumerate}
		\item \label{I:PairsDensity1parametric}	  $m,n\in\N$ with $m>n$ such that
		$\ell\, (m^2-n^2)$ and  $\ell'\, mn$ are distinct and
		$$
		\bar{d}_\Phi\big((\ell\, (m^2-n^2))^{-1}\Lambda\cap (\ell'\, mn)^{-1}\Lambda\big)>0.
		$$
		
		\item \label{I:PairsDensity2parametric} $m,n\in \N$ such that
		$\ell\, (m^2+n^2)$ and  $\ell'\, mn$ are distinct and
		$$
		\bar{d}_\Phi\big((\ell\, (m^2+n^2))^{-1}\Lambda\cap (\ell'\, mn)^{-1}\Lambda \big)>0.
		$$
	\end{enumerate}
\end{theorem}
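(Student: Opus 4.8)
The plan is to prove part~\eqref{I:PairsDensity1parametric}; part~\eqref{I:PairsDensity2parametric} goes through in the same way after replacing $m^2-n^2=(m-n)(m+n)$ throughout by the binary quadratic form $m^2+n^2$ (so that $\tfrac{m^2-n^2}{mn}=r-r^{-1}$, with $r=m/n$, is replaced by $\tfrac{m^2+n^2}{mn}=r+r^{-1}$) and dropping the constraint $m>n$; the one genuinely new ingredient needed there is a correlation estimate for multiplicative functions along the values of $m^2+n^2$, which does not factor over $\Z$. First I would pass to a subsequence $\Psi$ of $\Phi$ along which $\delta:=d_\Psi(\Lambda)$ exists and is positive, and then, by a diagonal argument over the countable family $\{\ell(m^2-n^2),\ \ell'mn:\ m,n\in\N\}$, refine $\Psi$ so that $d_\Psi\big((\ell(m^2-n^2))^{-1}\Lambda\cap(\ell'mn)^{-1}\Lambda\big)$ exists for all $m,n$. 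Since a weighted average of non-negative reals is positive as soon as one summand is, and since the $(m,n)$ with $\ell(m^2-n^2)=\ell'mn$ lie on a fixed conic and hence have zero logarithmic density, it then suffices to prove
\[
\liminf_{H\to\infty}\ \lE_{m,n\le H}\ d_\Psi\big((\ell(m^2-n^2))^{-1}\Lambda\cap(\ell'mn)^{-1}\Lambda\big)\ >\ 0 ,
\]
and, because $\bar{d}_\Phi(S)\ge d_\Psi(S)$ for every $S\subset\N$, this simultaneously delivers the subsequential strengthening recorded in the first of the Remarks following \cref{T:PairsDensity}.

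Next I would apply a multiplicative correspondence principle along $\Psi$ to produce a system $(X,\mu,(T_n)_{n\in\N})$ of commuting measure-preserving transformations with $T_{mn}=T_mT_n$, whose action extends to $\Q_{>0}$ on a natural extension, together with a set $A$, $\mu(A)=\delta$, such that $d_\Psi\big(\bigcap_j n_j^{-1}\Lambda\big)\ge\mu\big(\bigcap_jT_{n_j}^{-1}A\big)$ for every finite tuple; this reduces the task to a positive lower bound for $\lE_{m,n\le H}\int_XT_{\ell(m^2-n^2)}\one_A\cdot T_{\ell'mn}\one_A\,d\mu$. I would then run the machinery of \cite{FH17}, whose engine is the Gowers uniformity of aperiodic multiplicative functions, to show that the Kronecker factor $\CK$ is characteristic for these averages. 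Writing $\phi:=\E(\one_A\mid\CK)\ge0$, so $\int\phi\,d\mu=\delta$, expanding $\phi=\sum_\lambda c_\lambda e_\lambda$ into unit eigenfunctions whose eigenvalues are the completely multiplicative unimodular functions $\lambda$ of the system, and using complete multiplicativity of $\lambda$ to combine the two factors into $\lambda\!\big(\tfrac{\ell(m^2-n^2)}{\ell'mn}\big)$, one obtains, up to $o_{H\to\infty}(1)$,
\[
\lE_{m,n\le H}\int_XT_{\ell(m^2-n^2)}\one_A\cdot T_{\ell'mn}\one_A\,d\mu\ =\ \sum_\lambda|c_\lambda|^2\ \lE_{m,n\le H}\ \lambda\!\Big(\tfrac{\ell(m^2-n^2)}{\ell'mn}\Big),
\]
with $c_1=\delta$ and $\sum_\lambda|c_\lambda|^2=\|\phi\|_2^2\in[\delta^2,\delta]$, so that the trivial eigenvalue $\lambda=1$ already contributes the positive main term $\delta^2$.

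The heart of the argument is then to show that this main term is not cancelled by the remaining eigenvalues. Fixing $\varepsilon\in(0,\delta^2)$ and keeping only the finitely many eigenvalues carrying all but $\varepsilon$ of the mass, each such $\lambda$ is either aperiodic --- in which case the corresponding average tends to $0$ by the correlation theory of multiplicative functions --- or pretends to some $\chi(n)n^{it}$; the archimedean part contributes $0$ in the limit because $r-r^{-1}$ ranges over an interval of length tending to infinity, so only finitely many Dirichlet characters, all to a common modulus $Q$, genuinely survive. At this point the new concentration estimates for multiplicative functions are brought in: they describe the structured part of $\one_\Lambda$, uniformly as $Q$ is enlarged, as essentially the indicator of a union of residue classes modulo $Q$, i.e.\ of a union of cosets of a subgroup $H\le(\Z/Q\Z)^\times$ whose index stays bounded as $Q$ grows. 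Enlarging $Q$ to be much larger than that index, the Weil bound shows that $\{r-r^{-1}:r\in(\Z/Q\Z)^\times\}$ equidistributes among the cosets of $H$; hence one may choose $m,n$ so that $\tfrac{\ell(m^2-n^2)}{\ell'mn}=\tfrac{\ell}{\ell'}(r-r^{-1})$ lies in $H$ modulo $Q$, and then for any $x$ with $\ell(m^2-n^2)x$ in a coset of $H$ on which $\Lambda$ is dense, $\ell'mn\,x$ lies in the same coset, so $\{x:\ell(m^2-n^2)x\in\Lambda\text{ and }\ell'mn\,x\in\Lambda\}$ has positive multiplicative density. Carrying this back through the previous two steps yields the required strict positivity.

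The hard part will be this last step, and specifically the concentration estimate for multiplicative functions that is uniform as the modulus grows: it is exactly what allows one to neutralise \emph{all} of the finitely many Dirichlet-character and archimedean obstructions carried by $\one_\Lambda$ at once, through a single well-chosen family of pairs $(m,n)$, \emph{without} destroying the positive multiplicative density of $\Lambda$. This is the point at which the classical Gowers-uniformity method of \cite{FH17} is insufficient --- there one had to assume in addition that $a+b$ be a square, precisely so as to land in a configuration in which the structured obstructions cancel automatically. A secondary difficulty is to match up quantitatively the error term from the Kronecker reduction in the previous step, the error in the concentration estimate, and the Weil error; and, as noted above, part~\eqref{I:PairsDensity2parametric} additionally requires correlation estimates for multiplicative functions along the non-factoring form $m^2+n^2$.
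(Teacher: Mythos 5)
Your high-level outline --- correspondence principle, decomposition into a sum over (completely multiplicative unimodular) characters, then aperiodic versus pretentious, with a concentration estimate for the pretentious part and the nonlinear case $m^2+n^2$ flagged as the genuinely new input --- does track the paper's strategy. But several of the steps you sketch diverge materially from what the paper does, and at least two of them look like real gaps rather than alternative routes.

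First, the reduction. The paper does \emph{not} pass through the Kronecker factor of the system and an eigenfunction expansion of $\E(\one_A\mid\CK)$: that route would require you to prove that $\CK$ is characteristic for the double average $\E_{m,n}\int T_{\ell(m^2-n^2)}\one_A\cdot T_{\ell'mn}\one_A\,d\mu$, a statement that \cite{FH17} does not supply (what \cite{FH17} gives is the vanishing of correlations along the relevant forms when some factor is \emph{aperiodic}, which is a property of individual multiplicative functions, not a spectral characteristic-factor statement). The paper instead uses Bochner--Herglotz to produce a positive measure $\sigma$ on $\CM$ with $\sigma(\{1\})>0$ and writes the correlation as $\int_\CM f(\ell(m^2-n^2))\overline{f(\ell'mn)}\,d\sigma(f)$; the split is then a \emph{measurable} decomposition of $\sigma$ into the aperiodic and pretentious pieces, not a factor-theoretic one. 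Your formula $\sum_\lambda|c_\lambda|^2\lE_{m,n}\lambda(\cdots)$ only captures the atomic part of $\sigma$, and the continuous part would require a separate argument.

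Second, the Archimedean piece. You claim that the Archimedean contribution tends to $0$ because $r-r^{-1}$ spreads out. This is plausible for logarithmic averages (and you quietly switch to $\lE$ to get it), but it is false for Ces\`aro averages: $\E_{m,n\in[N]}((m^2-n^2)/(mn))^{it}$ has a nonzero limit for every fixed $t$. In the paper the Archimedean characters are \emph{not} discarded; together with $\lambda=1$ they carry the positive main term, and the entire point of the weight $w_\delta$ of \cref{L:Sdelta} is to force that Archimedean contribution to have positive real part uniformly in $t$ (see \cref{L:mainpositive1}). If you insist on logarithmic averages instead, you then lose the convergence input from \cite{FH16} that the paper relies on for part (i), and you must rerun the whole reduction in the logarithmic setting.

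Third, the non-Archimedean pretentious piece. You propose to handle the surviving Dirichlet-character obstructions with a Weil-type equidistribution of $r\mapsto r-r^{-1}$ in cosets of a subgroup of $(\Z/Q\Z)^\times$. The paper does something quite different and in a sense simpler and more robust: it restricts the average to the grid $\{(Qm+1,Qn)\}$, applies the concentration estimate (\cref{P:concentration1}, resp.\ \cref{P:concentration2}) which reduces the $m,n$-average to an expression of the form $C(K)\cdot\overline{f(Q)}\cdot Q^{it}$ (depending only on the prime support of $Q$), and then \emph{averages over $Q$ in the highly divisible F\o lner sequence} $\Phi_K$ of \eqref{E:PhiK}. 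Since $\E_{Q\in\Phi_K}g(Q)\to0$ for any nontrivial completely multiplicative $g$ (\cref{L:Fol0}), this kills every pretentious $f\notin\CA$ in one stroke (\cref{L:mainvanishing1}, \cref{C:mainvanishing1}, \cref{P:vanishing2}). This $Q$-averaging is the paper's second key new idea alongside the nonlinear concentration estimate, and your sketch does not contain it. Beyond the Weil-bound details you would still have to handle: the slowly oscillating phase $\exp(F_N(f,K))$ that the concentration estimate produces (this is precisely what the $Q$-average neutralizes); the fact that for part~\eqref{I:PairsDensity2parametric} the limits $\lim_N\E_{m,n\in[N]}$ need not exist (the paper devotes real effort to this in the proof of \cref{T:mainpairs2}); and the uniformity of the index bound on your hypothetical subgroup $H$ as $Q$ grows, which is not provided by the concentration estimates as stated.
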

\begin{remark}
	Since $2(m^2+n^2)=(m+n)^2+(m-n)^2$ and $4mn=(m+n)^2-(m-n)^2$, applying \eqref{I:PairsDensity2parametric} with $2\ell$ in place of $\ell$ and $4\ell'$ in place of $\ell'$, we can add
	\begin{itemize}
		\item[(iii)] $m,n\in \N$ such that
		$\ell\, (m^2+n^2)$ and  $\ell'\, (m^2-n^2)$ are distinct and
		$$
		\bar{d}_\Phi\big((\ell\, (m^2+n^2))^{-1}\Lambda\cap (\ell'\, (m^2-n^2))^{-1}\Lambda \big)>0.
		$$
	\end{itemize}
\end{remark}
In order to prove Theorem~\ref{T:Triples},  it suffices to establish the following result.
\begin{theorem}\label{T:Triplesparametric}
	Let  $f\colon \N\to \S^1$ be a completely multiplicative function that takes finitely many values.  Then there exist $k,m,n\in \N$, with $m> n$,  such that the integers $m^2-n^2$, $2mn$, $m^2+n^2$ are distinct and
	\begin{equation}\label{E:ftriples}
		f(k\, (m^2-n^2))=f(k\, 2mn)=f(k\, (m^2+n^2))=1.
	\end{equation}
\end{theorem}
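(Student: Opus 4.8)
The plan is to deduce Theorem~\ref{T:Triplesparametric} from Theorem~\ref{T:PairsDensityParametric} applied to a suitable level set of $f$. First I would note that since $f$ is completely multiplicative and finitely valued, its range is a finite subgroup of $\S^1$, so the level set $\Lambda:=\{n\in\N\colon f(n)=1\}$ is a multiplicative subsemigroup of $\N$ which, by the pigeonhole principle and the fact that $f$ takes finitely many values, has positive upper multiplicative density with respect to some multiplicative F\o lner sequence $\Phi$: indeed the cells $\{n\colon f(n)=\zeta\}$ for $\zeta$ in the (finite) range partition $\N$, so at least one of them has positive density along a subsequence, and since $f$ is completely multiplicative the nonzero-density cell can be translated back to $\Lambda$ by multiplying by a fixed element $n_0$ with $f(n_0)=\zeta^{-1}$, which preserves positive density by the F\o lner property.

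Next, I would apply part~\eqref{I:PairsDensity2parametric} of Theorem~\ref{T:PairsDensityParametric} with $\ell=1$ and $\ell'=2$ (using the parametrization $x=m^2-n^2$, $y=2mn$, $z=m^2+n^2$ of the classical Pythagorean triple), obtaining $m,n\in\N$ with $m^2+n^2$ and $2mn$ distinct and a subsequence $\Psi$ of $\Phi$ along which $(m^2+n^2)^{-1}\Lambda\cap(2mn)^{-1}\Lambda$ has positive density. In particular this intersection is nonempty, so there is $k$ with $k(m^2+n^2)\in\Lambda$ and $k\cdot 2mn\in\Lambda$, i.e.\ $f(k(m^2+n^2))=f(k\cdot 2mn)=1$. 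To also control $f(k(m^2-n^2))$, I would instead use the stronger form recorded in the Remark after Theorem~\ref{T:PairsDensity} (or iterate the argument): the set $(m^2-n^2)^{-1}\Lambda$ also has positive density along a further subsequence when intersected appropriately; more cleanly, since $(m^2-n^2)(m^2+n^2)=m^4-n^4$ and $\Lambda$ is a multiplicative semigroup, one would want $m^2-n^2$, $m^2+n^2$, and $2mn$ to all lie in a common dilate of $\Lambda$. The way to get all three simultaneously is to apply Theorem~\ref{T:PairsDensityParametric} twice — once for the pair $(m^2+n^2, 2mn)$ and once, on the resulting positive-density set, for the pair $(m^2+n^2, m^2-n^2)$ via item~(iii) of the Remark — or to observe that one can directly extract $k$ from the intersection of all three translates since they each have positive density along nested subsequences.

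More concretely, I would run the following bootstrapping: let $\Lambda_0:=\Lambda$; apply item (iii) of the Remark to find $m,n$ and a subsequence $\Psi_1$ with $d_{\Psi_1}\big((\ell_1)^{-1}\Lambda\cap(\ell_2)^{-1}\Lambda\big)>0$ where $\ell_1=m^2+n^2$, $\ell_2=m^2-n^2$; call this positive-density set $\Lambda_1$. Then $k\in\Lambda_1$ means $f(k(m^2+n^2))=f(k(m^2-n^2))=1$. Now I need, for the same $m,n$, also $f(k\cdot 2mn)=1$. Here I would use that $2mn\cdot(m^2-n^2)\cdot(m^2+n^2)\cdot\text{(stuff)}$ relations force membership: actually the cleanest route is to not fix $m,n$ prematurely but apply Theorem~\ref{T:PairsDensityParametric} to $\Lambda$ with parameters chosen so that the two pairs produced share the parametrization. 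Since $x=m^2-n^2$, $y=2mn$, $z=m^2+n^2$ satisfy $x^2+y^2=z^2$, and the theorem with $\ell=\ell'=1$ in part~\eqref{I:PairsDensity2parametric} already yields $m,n$ and $k$ with $f(k\cdot 2mn)=f(k(m^2+n^2))=1$; then I reapply part~\eqref{I:PairsDensity1parametric} type reasoning to the positive-density set $(2mn)^{-1}\Lambda\cap(m^2+n^2)^{-1}\Lambda$, but this changes $m,n$. So the honest approach is the one flagged in the first remark after Theorem~\ref{T:PairsDensity}: it gives, for a single pair $(x,y)$ arising from the parametrization, that $d_\Psi((x^{-1}\Lambda)\cap(y^{-1}\Lambda))>0$, and running the density-regularity statement for the \emph{third} ratio on this set yields a common $k$.

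The main obstacle I anticipate is precisely this simultaneity: Theorem~\ref{T:PairsDensityParametric} naturally produces pairs, not triples, and the parametrization $(m^2-n^2, 2mn, m^2+n^2)$ must be the \emph{same} one across all three pairwise applications. The resolution is to feed the positive-density set output of one application back as the input $\Lambda$ of the next, using that positive multiplicative density is inherited by $(\ell(m^2\pm n^2))^{-1}\Lambda$ along subsequences, and that the parametric theorem is proved in the strong ``positive density of the intersection'' form rather than merely ``nonempty intersection''. One must also check the distinctness of $m^2-n^2$, $2mn$, $m^2+n^2$, which holds for all but finitely many $(m,n)$ (the only collisions are at very small values), and these finitely many exceptions are automatically avoided since a positive-density set of admissible $k$ forces the parameters to range over an infinite set. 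Assembling these pieces gives \eqref{E:ftriples}, and the passage from Theorem~\ref{T:Triplesparametric} to Theorem~\ref{T:Triples} is immediate since $k(m^2-n^2), k\cdot 2mn, k(m^2+n^2)$ form a Pythagorean triple.
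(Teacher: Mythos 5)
Your proposed reduction of Theorem~\ref{T:Triplesparametric} to Theorem~\ref{T:PairsDensityParametric} has a genuine gap, and it is precisely the one you flag yourself and then do not actually resolve. Each application of Theorem~\ref{T:PairsDensityParametric} produces its \emph{own} pair $(m,n)$ (depending on the input set), and when you feed the positive-density output set $\Lambda_1:=(x^{-1}\Lambda)\cap(y^{-1}\Lambda)$ back in as input, the second application yields a fresh pair $(m',n')$ with no relation to the first. The fact that $\Lambda_1$ has positive multiplicative density and that $(2mn)^{-1}\Lambda$ also has positive multiplicative density (for the original $(m,n)$) does not force their intersection to be nonempty --- two sets of positive multiplicative density can be disjoint. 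There is no version of ``pass to a sub-F\o lner sequence'' that rescues this, because density positivity is a much weaker conclusion than an intersection statement involving a third, independently chosen dilate.

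A clean sanity check that this bootstrapping cannot work: your argument uses nothing about $\Lambda$ beyond its positive multiplicative density (the multiplicativity of $f$ only entered to show that the level set $\Lambda=\{n:f(n)=1\}$ has positive density, which is true but then never used again). So if the iteration succeeded, you would have proved that \emph{every} set of positive multiplicative density contains $k(m^2-n^2),\, k\cdot 2mn,\, k(m^2+n^2)$, hence a full Pythagorean triple in a single cell of every finite coloring --- i.e.\ the Erd\H{o}s--Graham problem itself, which the paper explicitly treats as open. The paper's actual proof of Theorem~\ref{T:Triplesparametric} does not go through the pairs machinery at all; it is a separate three-point correlation argument (Theorem~\ref{T:triplesrestated}, Propositions~\ref{P:reduction} and~\ref{P:pretentiousfinite}) that exploits the multiplicative structure of the level set in an essential way: $f$ is decomposed as $g\cdot h$ via Lemma~\ref{L:decomposition}, the aperiodic factor $g$ is killed by Proposition~\ref{P:aperiodic} inside the triple correlation, and the pretentious factor $h$ is handled with the two concentration estimates of Corollary~\ref{C:concentration2} applied simultaneously to the linear factors and to $m^2+n^2$, together with multiplicative averaging $\E^*_{k\in\N}$ over the scaling parameter to clear unwanted terms. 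The fact that $\{f=1\}$ is a level set of a completely multiplicative function (rather than an arbitrary positive-density set) is what makes the expansion $\mathbf{1}_{f=1}=\E_{0\le j<d}f^j$ available, and that expansion is load-bearing throughout; your reduction discards it.
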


\subsection{Other results}\label{SS:generalforms}
Our methodology is flexible enough to allow us to handle a variety of other
dilation-invariant pairs.  We record a few cases next.
\subsubsection{A question from \cite{DLMS23}}
The next result is related to      \cite[Question~7.1]{DLMS23}. It is only here that we use logarithmic averages
$$
\lE_{m,n\in[N]}:=\frac{1}{(\log{N})^2}\sum_{m,n\in [N]} \frac{1}{mn}
$$
in order to have  access to a result from \cite{Tao15}.
\begin{theorem}\label{T:DLMS}
	Suppose that   $\Lambda\subset \N$ satisfies  $\bar{d}_\Phi(\Lambda)>0$ for some multiplicative F\o lner sequence $\Phi$. Then
	$$
	\liminf_{N\to\infty} \lE_{m,n\in[N]} \, \bar{d}_{\Phi}( (n^2+n)^{-1}\Lambda\cap  (m^2)^{-1}\Lambda)>0.
	$$
\end{theorem}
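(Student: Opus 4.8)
\emph{Reduction.} Fix a multiplicative F\o lner sequence $\Phi$ with $\delta:=\bar d_\Phi(\Lambda)>0$, and by a diagonal argument pass to a subsequence $\Phi'$ along which $d_{\Phi'}(\Lambda)=\delta$ and along which $d_{\Phi'}\big((n^2+n)^{-1}\Lambda\cap(m^2)^{-1}\Lambda\big)$ exists for all $m,n$; write $\mathbb M$ for the associated multiplicative mean. Since $m$ and $n$ vary independently, $\lE_{m,n\in[N]}d_{\Phi'}\big((n^2+n)^{-1}\Lambda\cap(m^2)^{-1}\Lambda\big)=\mathbb M(A_N B_N)$ with $A_N(t):=\lE_{n\in[N]}\one_\Lambda((n^2+n)t)$ and $B_N(t):=\lE_{m\in[N]}\one_\Lambda(m^2t)$, and since $\bar d_\Phi\ge d_{\Phi'}$ it is enough to prove $\liminf_{N\to\infty}\mathbb M(A_N B_N)>0$. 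By the multiplicative correspondence principle, realise $\mathbb M$ by a measure preserving system $(X,\mu,(T_k)_{k\in\Q_{>0}})$ acting by dilations, with a set of measure $\delta$ corresponding to $\Lambda$; let $F=\one_\Lambda$ be its indicator, so that $\mathbb M(A_N B_N)=\int\big(\lE_{n\in[N]}T_{n^2+n}F\big)\big(\lE_{m\in[N]}T_{m^2}F\big)\,d\mu$.

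\emph{Removing the non-pretentious part.} Let $\CK$ be the Kronecker factor of the system; since $T_{kl}=T_kT_l$, the eigenvalue maps are completely multiplicative functions $g\colon\N\to\S^1$. Write $F=\E[F\mid\CK]+F^{\perp}$ and expand $\E[F\mid\CK]=\sum_j c_j\phi_j$ over eigenfunctions, with eigenvalue characters $g_j$, $g_0\equiv1$, and $c_0=\mathbb M(F)=\delta$. By complete multiplicativity, $\lE_{n\in[N]}T_{n^2+n}\phi_j=\phi_j\cdot\lE_{n\in[N]}g_j(n)g_j(n+1)$ and $\lE_{m\in[N]}T_{m^2}\phi_j=\phi_j\cdot\lE_{m\in[N]}g_j(m)^2$; when $g_j$ is not pretentious to a Dirichlet character the first average tends to $0$ by the logarithmically averaged two-point Elliott theorem of \cite{Tao15}, and the second by Hal\'asz's (logarithmically averaged) mean value theorem. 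The same two inputs — together with the continuity of the spectral measure of $F^{\perp}$, or, in the finitary model $\ZN$, the negligibility of the Gowers norm of the uniform part of $\one_\Lambda$ — kill the contribution of $F^{\perp}$. Hence $\lim_{N}\mathbb M(A_N B_N)$ equals the corresponding limit with $F$ replaced by $\E[F\mid\CK]$ and only the pretentious $g_j$ surviving.

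\emph{The pretentious main term.} Using orthonormality of the $\phi_j$ and the reality of $F$ (so the list $\{(c_j,g_j)\}$ is conjugation symmetric), this limit equals $\sum_j|c_j|^2\,v_j\,\overline{w_j}$, where $v_j=\lim_N\lE_{n\in[N]}g_j(n)g_j(n+1)$ and $w_j=\lim_N\lE_{m\in[N]}g_j(m)^2$. With logarithmic averaging the mean value of a multiplicative function vanishes unless it pretends to $1$, so $w_j\ne0$ forces the archimedean exponent of $g_j$ to vanish and $g_j$ to be pretentious to a \emph{quadratic} character; for these $g_j$ the limit $v_j$ exists (by \cite{Tao15} in the aperiodic sub-case, by pretentious methods otherwise) and is an explicit real number with $v_j>-1$ — equivalently, $\liminf_N\lE_{n\in[N]}\one_{\{g_j(n)=g_j(n+1)\}}>0$. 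Keeping the sum as the limit of the \emph{nonnegative} averages $\int\big(\lE_{n}T_{n^2+n}\E[F\mid\CK]\big)\big(\lE_{m}T_{m^2}\E[F\mid\CK]\big)d\mu$ (recall $\E[F\mid\CK]\ge0$) and controlling the tail by $\sum_{j\ne0}|c_j|^2\le\|\E[F\mid\CK]\|_{L^2(\mu)}^2-\delta^2\le\delta-\delta^2$, the term $j=0$, equal to $|c_0|^2=\delta^2$, dominates and yields $\liminf_N\mathbb M(A_N B_N)\ge c(\delta)>0$, which proves the theorem.

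\emph{Main difficulty.} The delicate step is the last one: showing that the nontrivial quadratic-character corrections cannot conspire to cancel the $\delta^2$ main term. The robust route is to avoid evaluating these corrections and instead combine the nonnegativity of $\E[F\mid\CK]$ (hence of the relevant averages), the strict non-degeneracy $v_j>-1$ of two-point correlations of completely multiplicative functions of modulus one, and the $\ell^2$-bound on the coefficients $c_j$; by the analogy with \cite{BG22} these corrections are genuinely present and may be negative, so no closed-form evaluation is to be expected. A secondary technical point, handled by the correspondence principle together with the structured/uniform decomposition, is the passage from $\one_\Lambda$, which is not multiplicative, to honest multiplicative functions, so that \cite{Tao15} and Hal\'asz's theorem apply; this is also what disposes of the genuinely weakly mixing component $F^{\perp}$.
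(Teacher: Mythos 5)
Your strategy — pass to a multiplicative measure preserving system, split $F={\bf 1}_\Lambda$ through the Kronecker factor, and compute the pretentious contribution as $\sum_j|c_j|^2\,v_j\overline{w_j}$ — is genuinely different from the paper's, which works directly with the Herglotz measure $\sigma$ on $\CM$ and, crucially, introduces an extra multiplicative average $\E_{Q\in\Phi_K}$ over scales $Q$. There are two serious gaps.

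First, the positivity step does not close. You correctly observe that the averages $\int(\lE_n T_{n^2+n}\E[F\mid\CK])(\lE_m T_{m^2}\E[F\mid\CK])\,d\mu$ are nonnegative, so their limit $\sum_j|c_j|^2 v_j\overline{w_j}\ge 0$; but you need $>0$, and the tools you invoke do not deliver it. The $\ell^2$ bound $\sum_{j\ne0}|c_j|^2\le\delta-\delta^2$ together with $|v_j\overline{w_j}|\le1$ only gives $\sum_j|c_j|^2 v_j\overline{w_j}\ge 2\delta^2-\delta$, which is negative as soon as $\delta<1/2$; and ``$v_j>-1$'' is not uniform over the (possibly infinitely many) characters $g_j$, so it cannot be used to separate the $j=0$ term from the tail. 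The paper sidesteps this entirely: restricting to the grid $(m,Qn)$ and averaging over $Q\in\Phi_K$, the quantity $\lE_{m,n}B(f,Q;m,n)$ factors as $f(Q)\cdot Q^{it_f}\cdot\tilde L_N(f,Q)$ where, by the concentration estimate of \cref{P:concentration1}, $\tilde L_N(f,Q)$ is essentially constant in $Q\in\Phi_K$; so the $\E_{Q\in\Phi_K}$-average kills every pretentious $f\ne 1$ by \cref{L:Fol0}, and only $\sigma(\{1\})>0$ survives. The positivity property $\int f(r)\overline{f(s)}\,d\sigma\ge 0$ is then used to go from the subgrid back to the full logarithmic average. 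Without the $Q$-averaging, there is no reason to expect the nontrivial pretentious terms to be nonnegative, and your route does not control them.

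Second, discarding $F^\perp$ is not justified. The Kronecker factor corresponds to the \emph{atomic} part of the spectral measure of $F$ on $\CM$, whereas the dichotomy that makes the Hal\'asz/Tao inputs applicable is \emph{aperiodic vs.\ pretentious}, which is a different Borel partition. A continuous spectral measure $\nu_{F^\perp}$ can perfectly well assign positive mass to the (uncountable) set $\{g\in\CM:\ g^2\sim 1\}$, in which case $\|\lE_{m\in[N]}T_{m^2}F^\perp\|_{L^2}^2=\int_{\CM}|\lE_{m\in[N]}g(m)^2|^2\,d\nu_{F^\perp}(g)$ does not tend to $0$, and similarly for the $n^2+n$ average; ``continuity of the spectral measure'' or a Gowers-norm heuristic from the finitary additive setting does not transfer here. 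The paper avoids this by never decomposing along the Kronecker factor: it partitions $\CM$ directly into aperiodic and pretentious characters (both Borel), handling the first by Hal\'asz and \cite{Tao15} pointwise and the second via concentration plus $Q$-averaging.
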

\begin{remark}
	Our method also implies the   following ergodic version of the previous result, as  posed in \cite{DLMS23}, using Ces\`aro  instead of logarithmic averages:
	If $(T_g)_{g\in\N}$ is a measure-preserving action of $(\N,\times)$ on a probability space $(X,\mu)$ and  $A\subset X$ is measurable with $\mu(A)>0$, then
	$$
	\liminf_{N\to\infty} \lE_{m,n\in[N]} \, \mu(T_{n^2+n}^{-1}A\cap T_{m^2}^{-1}A)>0.
	$$
	This follows from  property \eqref{E:positiveM'} that  we  prove below.
\end{remark}
Our argument also allows us to replace $n^2+n$ and $m^2$ by $n^2+an$ and $m^r$ respectively, where $r\in \N$ and $a$ is a non-zero integer.
The proof of Theorem~\ref{T:DLMS}  follows closely the  argument used to prove
part~\eqref{I:MainPythPairs2} of Theorem~\ref{T:MainPythPairs}. We will  outline this argument  in Section~\ref{SS:DLMS}.
\subsubsection{General linear forms}
We can also prove variants of \cref{T:PairsDensityParametric} that cover  more general patterns of the form
$$
(k\, L_1(m,n)\cdot   L_2(m,n), k \, L_3(m,n)\cdot L_4(m,n)),
$$
where $L_i(m,n)=a_im+b_in$ for some $a_i\in \N$, $b_i\in \Z$, $i=1,2,3,4$, and at least one of the forms, say $L_4(m,n)$, is not a rational multiple of the others.

Suppose we want to show, under the previous assumptions,   that if $\Lambda\subset \Z$ satisfies $\bar{d}_\Phi(\Lambda)>0$ for some multiplicative F\o lner sequence $\Phi$, then there exist  $m,n\in \Z$ such that $L_1(m,n)\cdot   L_2(m,n)$ and $L_3(m,n)\cdot   L_4(m,n)$ are distinct integers and satisfy
$$
\bar{d}_{\Phi}\big((L_1(m,n)\cdot  L_2(m,n))^{-1}\Lambda\cap (L_3(m,n)\cdot  L_4(m,n))^{-1}\Lambda\big)>0.
$$
Without loss of generality we can assume that $b_4\neq 0$. By making the substitution $m\mapsto b_4\, m$ and $n\mapsto n-a_4\, m$ (an operation that preserves our assumptions about the forms $L_i$) we can assume that $a_4=0$. Since the form $L_4$  is not a rational multiple of $L_i$  for $i=1,2,3$, we have $a_i\neq 0$ for $i=1,2,3$. We do another substitution $n\mapsto a_1\, a_2\, a_3 \, n$. We then factor out $a_i$ from the linear form $L_i$ for $i=1,2,3$. We see that it is sufficient  to consider the case where the $L_i$ are integer multiples of forms satisfying $a_1=a_2=a_3=1$ and  $a_4=0$, $b_4\neq 0$.
Making a last
substitution $m\mapsto m-b_3 \, n$, we get that it suffices to  prove that
$$
\bar{d}_{\Phi}\big( (\ell\,  (m+an)\cdot  (m+bn))^{-1}\Lambda\cap (\ell' \,  m\, n )^{-1}\Lambda\big)>0
$$
whenever $\ell,\ell'\in \N$ and $a,b\in \Z$.
This case can be covered by repeating the argument used to prove \cref{T:PairsDensityParametric} (which covers the case $a=1,b=-1$)  without any essential change.

\subsubsection{More general expressions and averages}
The methods used to establish part~\eqref{I:PairsDensity2parametric} of Theorem~\ref{T:PairsDensityParametric}, would also allow us to cover
patterns of the form
$$
\Big(k\, (m^2+n^2)^r\prod_{i=1}^lL_i(m,n),\  k\, \prod_{i=1}^{l'}L_i'(m,n)\Big),
$$
where $k\in\N$, $l,l',r\in \Z_+$ are such that $|l|+|l'|>0$,\footnote{The case $l=l'=0$ is covered in \cite[Theorem~1.5]{DLMS23}.} and at least one of the linear forms $L_i, L_i'$ is not a rational multiple of the others.
It should also be possible to cover
variants of \cref{T:MainPythPairs} below in which the averages over squares $\E_{m,n\in [N]}$ are replaced
by averages over discs, i.e., $\E_{m^2+n^2\leq N}$.
However, we do not pursue these directions here.


\subsection{Further directions}\label{SS:problems}
Our approach opens the way for  studying several other compelling  partition regularity problems
that  were previously considered intractable.
We note here some promising directions.

A result of Rado~\cite{R33} implies that if $a,b,c\in \N$, then the linear equation $ax+by=cz$ is partition regular  if and only if
either $a,b,$ or $a+b$ equals $c$, in which case we  say that the triple $(a,b,c)$
{\em   satisfies Rado's condition}.
It follows  that 	 a necessary condition  for the partition regularity of the equation \eqref{E:abcxyz} is that the triple $(a,b,c)$ satisfies Rado's condition.
Perhaps this condition is also sufficient, but very little is known in this direction; in fact, there is no triple $(a,b,c)$ for which the partition regularity  of \eqref{E:abcxyz} is  currently known.
We state a related problem of intermediate difficulty along the lines of  Theorem~\ref{T:Triples}.
\begin{problem}\label{Pr:1}
	Suppose that the triple  $(a,b,c)$ satisfies Rado's condition. Then for any
	completely multiplicative function  $f\colon \N\to \S^1$ taking finitely many values,  there exist distinct $x,y,z\in \N$,   such that
	$$
	ax^2+by^2=cz^2 \quad \text{and} \quad f(x)=f(y)=f(z)=1.
	$$
\end{problem}
Theorem~\ref{T:Triples} solves this problem when $a=b=c=1$ and as we mentioned in the second remark following the theorem, a similar argument  applies to   triples that satisfy Rado's condition and
consist of squares. It would be interesting to solve Problem~\ref{Pr:1}
for some other triples such as  $(1,1,2)$ and $(1,2,1)$. The first one corresponds to the  equation
$$x^2+y^2=2z^2,$$
which was conjectured to be partition regular by  Gyarmati and Ruzsa \cite{GR} and  has parametric solutions of the form
$$
x=k\, (m^2-n^2+2mn), \, y=k\, (m^2-n^2-2mn),\,  z=k\, (m^2+n^2).
$$
The second one corresponds to the equation
$$x^2+2y^2=z^2$$ with parametric solutions of the form
$$
x=k\, (m^2-2n^2), \, y=k\, (2mn), \, z=k\, (m^2+2n^2).
$$
Both parametrizations involve at least two quadratic forms that do not factor into  products of linear forms. This is a  problem for our method, since  a useful variant of
Proposition~\ref{P:aperiodic} is not known in this case, not even if  $f_1,f_2,f_3$ are all equal to  the Liouville function.

Another interesting problem is to relax the conditions on  the coefficients $a,b,c$ in Theorem~\ref{T:PairsPartition}. We mention two representative problems that seem quite challenging.
\begin{problem}\label{Pr:2}
	Show that  	for every finite coloring of $\N$ there exist
	\begin{enumerate}
		\item\label{I:xy2z} distinct  $x,y\in \N$ with the same color and $z\in \N$ such that $x^2+y^2=2z^2$.
		
		\item \label{I:x2yz}  distinct $x,y\in \N$ with the same color and $z\in \N$ such that $x^2+2y^2=z^2$.
	\end{enumerate}
	Show also similar properties with the roles of the variables $y$ and $z$ or $x$ and $z$ reversed.
\end{problem}
\begin{remark}
	More generally, we believe that if for $a,b,c\in \N$  at least one of the integers  $ac,bc, (a+b)c$ is a  square, then
	for every finite coloring of the integers there exist distinct $x,y\in \N$ with the same color and $z\in \N$ such that $ax^2+by^2=cz^2$.  Theorem~\ref{T:PairsPartition} verifies this if both $ac$ and $bc$ are squares.
	We also expect that    if at least one of the integers $bc$, $(c-a)b$ is a  square,  then
	for every finite coloring of the integers there exist distinct $x,z\in \N$ with the same color and $y\in \N$ such that $ax^2+by^2=cz^2$.
	It may also be  that  stronger density regularity results hold,  as in \cref{T:PairsDensity}  and \cref{T:PairsDensityParametric}.
\end{remark}
The broader issue is to find conditions for the polynomials $P,Q\in \Z[m,n]$ such that the following holds: If $\Lambda\subset \N$ satisfies  $\bar{d}_\Phi(\Lambda)>0$
for some multiplicative F\o lner sequence $\Phi$, then  there exist $m,n\in \N$ such that the integers  $P(m,n)$  and $Q(m,n)$ are positive,  distinct, and
$$
\bar{d}_\Phi\big((P(m,n))^{-1}\Lambda\cap (Q(m,n))^{-1}\Lambda\big)>0.
$$
Equivalently, using the terminology from \cite{DLMS23}, the problem is to determine for which polynomials $P,Q\in \Z[m,n]$ we have that  $\{P(m,n)/Q(m,n)\colon m,n\in \N\}$ is a set measurable multiplicative recurrence.

\subsection{Notation} \label{SS:notation}
We let  $\N:=\{1,2,\ldots\}$,  $\Z_+:=\{0,1,2,\ldots \}$, $\R_+:=[0,+\infty)$, $\S^1$ be the unit circle, and $\U$ be  the closed complex unit disk.
With $\P$ we
denote the set of primes and throughout we use the letter $p$ to denote primes.


For $t\in \R$, we let $e(t):=e^{2\pi i t}$.
For $z\in \C$, with $\Re(z)$, $\Im(z)$  we denote the real  and imaginary parts of $z$ respectively.

For  $N\in\N$, we let $[N]:=\{1,\dots,N\}$. We often denote sequences $a\colon \N\to \U$
by  $(a(n))$, instead of $(a(n))_{n\in\N}$.

If $A$ is a finite non-empty subset of the integers and $a\colon A\to \C$, we let
$$
\E_{n\in A}\, a(n):=\frac{1}{|A|}\sum_{n\in A}\, a(n).
$$

We write $a(n)\ll b(n)$ if for some $C>0$ we have $a(n)\leq C\, b(n)$ for every $n\in \N$.

Throughout this article, the letter $f$ is typically  used for multiplicative functions  and the letter $\chi$ for Dirichlet characters.

\subsection*{Acknowledgements}
Part of this research was conducted while the authors were at the
Institute for Advanced Studies at Princeton during parts of the  2022/23 academic year. We are grateful to  the Institute for its hospitality and support and acknowledge the NSF grant DMS-1926686.
For the purpose of open access, the authors have applied a Creative Commons Attribution (CC-BY) licence to any Author Accepted Manuscript version arising from this submission.
 The authors would like to thank James Leng for pointing out an error in a previous version of the manuscript, which led to a modification of the weight sequences used in the proofs.

\section{Roadmap to the proofs}\label{S:ProofStrategy}
This section outlines how we prove  our main results in their parametric reformulation, which is given in Theorems~\ref{T:PairsDensityParametric} and ~\ref{T:Triplesparametric}.

For various facts and notions concerning  multiplicative functions we refer the reader to Section~\ref{SS:multiplicative}.

{\em Throughout, $\ell, \ell'\in \N$ are fixed, and all implicit constants may depend on $\ell, \ell'$.}

\subsection{Reduction of Theorem~\ref{T:PairsDensityParametric} to a positivity property for multiplicative functions}\label{SS:ReductionMultiplicative}
We first use a version of the Furstenberg correspondence principle (see \cite{Be05}) to reformulate the results in an ergodic language.
\begin{theorem}\label{thmmaindynamical}
	Let $\ell, \ell'\in \N$, let $T=(T_n)_{n\in\N}$ be a measure preserving action of $(\N,\times)$ on a probability space $(X,\mu)$,\footnote{Meaning, $T_n\colon X\to X$, $n\in \N$, are invertible measure preserving transformations such that $T_1:=\text{id}$ and $T_{mn}=T_m\circ T_n$ for every $m,n\in\N$}  and let $A\subset X$ be measurable with $\mu(A)>0$.
	Then there exist
	\begin{enumerate}
		\item \label{I:recurrence1}
		$m,n\in\N$  with $m>n$ such that
		$\ell\, (m^2-n^2)$ and  $\ell'\, mn$ are distinct and
		\begin{equation}\label{E:ergodic1}
			\mu(T_{\ell(m^2-n^2)}^{-1}A\cap T_{\ell' mn}^{-1}A)>0.
		\end{equation}
		
		\item \label{I:recurrence2}
		$m,n\in\N$ such that
		$\ell\, (m^2+n^2)$ and  $\ell'\, mn$ are distinct and
		\begin{equation}\label{E:ergodic2}
			\mu(T_{ \ell(m^2+n^2)}^{-1}A\cap T_{\ell' mn}^{-1}A)>0.
		\end{equation}
	\end{enumerate}
	In fact, the set of $m,n\in\N$ for which \eqref{E:ergodic1} and \eqref{E:ergodic2} hold has positive lower density.
\end{theorem}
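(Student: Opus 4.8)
\emph{Overview and spectral reduction.} The plan is to deduce Theorem~\ref{thmmaindynamical} from a positivity property for self‑correlations of completely multiplicative functions, by means of spectral theory. Since the $T_n$ are commuting invertible measure‑preserving maps, the action of $(\N,\times)$ extends to a measure‑preserving action $(T_r)_{r\in\Q_{>0}}$ of its Grothendieck group by $T_{a/b}:=T_a T_b^{-1}$. Put $f:=\one_A$, so $f\ge0$ and $\int f\,d\mu=\mu(A)>0$; by measure‑preservation, for $a,b\in\N$,
\[
\mu(T_a^{-1}A\cap T_b^{-1}A)=\mu(A\cap T_{a/b}^{-1}A)=\langle f,\, f\circ T_{a/b}\rangle .
\]
The crucial point is that the dual group $\wh{\Q_{>0}}$ is, via restriction to $\N$, precisely the space of completely multiplicative functions $\chi\colon\N\to\S^1$, and Bochner's theorem yields a positive finite measure $\sigma=\sigma_f$ on $\wh{\Q_{>0}}$, of total mass $\|f\|_2^2=\mu(A)$, with $\langle f,\, f\circ T_r\rangle=\int\chi(r)\,d\sigma(\chi)$ for every $r\in\Q_{>0}$. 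This is where the problem becomes one about completely multiplicative functions, in the spirit of Theorems~\ref{T:Triples} and~\ref{T:Triplesparametric}.

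\emph{Unwinding the quadratic forms.} Fix $\ell,\ell'\in\N$. For part~\eqref{I:recurrence1}, take $a=\ell(m^2-n^2)$ with $m>n$ and $b=\ell' mn$; complete multiplicativity, $\chi(m^2-n^2)=\chi(m-n)\chi(m+n)$ and $\chi(mn)=\chi(m)\chi(n)$, gives
\[
\mu\big(T^{-1}_{\ell(m^2-n^2)}A\cap T^{-1}_{\ell' mn}A\big)=\int \chi(\ell)\,\overline{\chi(\ell')}\;\chi(m-n)\,\chi(m+n)\,\overline{\chi(m)}\,\overline{\chi(n)}\;d\sigma(\chi),
\]
while for part~\eqref{I:recurrence2} the factor $\chi(m-n)\chi(m+n)$ is replaced by the \emph{irreducible} $\chi(m^2+n^2)$. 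Averaging over the relevant range of $(m,n)\in[N]^2$, interchanging sum and integral ($\sigma$ finite, integrand bounded) and using $\mu(T^{-1}_aA\cap T^{-1}_bA)\le1$, it suffices to prove
\[
\liminf_{N\to\infty}\ \reel\int \chi(\ell)\,\overline{\chi(\ell')}\ \E_{\substack{m,n\in[N]\\ m>n}}\!\big(\chi(m-n)\,\chi(m+n)\,\overline{\chi(m)}\,\overline{\chi(n)}\big)\ d\sigma(\chi)\ >\ 0
\]
and its analogue with $\chi(m^2+n^2)$ in place of $\chi(m-n)\chi(m+n)$ (and no constraint $m>n$). Each is an instance of a positivity property for completely multiplicative functions: for every completely multiplicative $g\colon\N\to\S^1$ the inner averages are controlled as $N\to\infty$, and the resulting integral against $\sigma$ has real part bounded below in terms of $\mu(A)$ alone.

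\emph{The positivity property and the main obstacle.} Here I would split $\wh{\Q_{>0}}$ according to the pretentious dichotomy. If $\chi$ is \emph{aperiodic} --- not pretentious to any $n\mapsto\psi(n)n^{it}$ --- then the Gowers‑uniformity of aperiodic multiplicative functions (Proposition~\ref{P:aperiodic}), in a form adapted to the argument $m^2+n^2$, forces the inner average to $0$, so aperiodic (in particular Liouville‑type) characters contribute nothing in the limit. The \emph{trivial} character $\chi\equiv1$ has inner average $1$ and carries an atom $\sigma(\{1\})=\|\E(f\mid\mathcal{I})\|_2^2\ge(\int f\,d\mu)^2=\mu(A)^2>0$, where $\mathcal{I}$ is the $T$‑invariant factor; this is the main term. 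The remaining \emph{pretentious, nontrivial} characters $\chi\sim\psi(n)n^{it}$ are the crux: their inner averages converge to constants $c_\chi$ that need not vanish and can have negative real part, and since $\sigma$ has total mass $\mu(A)$ --- potentially far larger than the main term $\mu(A)^2$ --- no crude triangle inequality controls them. This is exactly where the new ingredient, concentration estimates for multiplicative functions, enters: exploiting that such $g$ is nearly constant along a positive‑density family of multiplicatively structured scales, one arranges that $\chi$ evaluated at $\ell(m^2\pm n^2)$ and at $\ell' mn$ essentially agree on a positive‑density set of pairs $(m,n)$, so that the pretentious part too contributes non‑negatively while $\mu(A)^2$ survives. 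Combining the three regimes (via dominated convergence, valid once the inner averages are known to converge $\sigma$‑a.e.) gives the positivity property. Its pretentious case is the real difficulty, and it is worst in part~\eqref{I:recurrence2}, where $m^2+n^2$ does not factor into linear forms --- precisely the obstruction that forced the hypothesis ``$a+b$ is a square'' in \cite{FH17} --- so taming it is the role of the concentration estimates advertised in the abstract.

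\emph{Conclusion.} Granting the positivity property, $\liminf_{N\to\infty}$ of the $(m,n)$‑average of $\mu(T^{-1}_{\ell(m^2\mp n^2)}A\cap T^{-1}_{\ell' mn}A)$ equals some $\delta>0$; Markov's inequality then gives, for all large $N$, at least $\tfrac{\delta}{4}N^2$ pairs in the relevant range with $\mu(\cdots)\ge\tfrac{\delta}{4}$, and discarding the $O(N)$ pairs lying on the curve $\ell(m^2\mp n^2)=\ell' mn$ leaves a set of positive lower density. This yields the ``in fact'' clause and hence both parts of the theorem.
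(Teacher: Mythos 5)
Your reduction is exactly the one the paper uses: identify $\widehat{\Q_{>0}}$ with the space $\CM$ of completely multiplicative functions on $\N$, apply the Bochner--Herglotz theorem to the positive-definite map $r/s\mapsto\mu(T_r^{-1}A\cap T_s^{-1}A)$ to produce a finite positive measure $\sigma$ on $\CM$ with $\sigma(\{1\})\geq\mu(A)^2>0$, and then reduce both parts of the theorem to the positivity statement that the paper isolates as Theorem~\ref{T:MainPythPairs}; your Markov-inequality step at the end correctly recovers the ``positive lower density'' clause. One minor imprecision in your sketch of the positivity property (which is not part of the statement you were asked to prove): in the $m^2+n^2$ case the inner averages $\E_{m,n\in[N]}(\cdots)$ need not converge for a fixed pretentious $\chi$ --- the paper remarks on this explicitly and it is one of the additional technical hurdles in part~\eqref{I:recurrence2} --- so one should not speak of the limits $c_\chi$ existing, but rather work with $\liminf$ and the positivity of $\sigma$ throughout.
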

\begin{remarks}
	$\bullet$	The reduction to the previous  multiple recurrence statement is merely a convenience. It facilitates the purpose of getting a further reduction  to a positivity property for completely multiplicative functions that we describe in \cref{T:MainPythPairs}. Alternatively, one could carry out this last reduction directly, as in \cite[Section~10.2]{FH17}.
	
	$\bullet$
	Using the terminology from \cite{DLMS23}, \cref{thmmaindynamical} can be rephrased as saying that for every $\ell,\ell'\in\N$ both subsets of $\Q^{>0}$
	$$
	\big\{\ell(m^2-n^2)/(\ell'mn)\colon m,n\in\N,\ m>n\big\} \text{ and }\
	\big\{\ell(m^2+n^2)/(\ell'mn)\colon m,n\in\N\big\}
	$$
	are sets of measurable multiplicative recurrence.
\end{remarks}

A function $f\colon \N\to \U$, where $\U$ is the complex unit disk,  is called {\em multiplicative} if
$$
f(mn)=f(m)\cdot f(n)  \quad \text {  whenever  }  (m,n)=1.
$$
It is called {\em completely multiplicative} if the previous equation holds for all $m,n\in\N$.
Let
$$
\CM:=\{f\colon \N\to \S^1 \text{ is a completely multiplicative function}\}.
$$
 Wherever necessary, we  extend multiplicative functions to the non-positive integers in an arbitrary way. 
Throughout, we assume that $\CM$ is equipped with the topology of pointwise convergence.
It easily follows that $\CM$ is a metrizable compact space with this topology.
We can identify $\CM$ with the Pontryagin dual of the (discrete) group of positive rational numbers under multiplication. Note that the map
$r/s\mapsto \mu( T_r^{-1}A \cap T_s^{-1}A)$, $r,s,\in \N$, from $(\Q_+,\times)$ to $[0,1]$ is well defined and positive definite. Using a theorem of Bochner-Herglotz, we get that  there exists a finite Borel measure $\sigma$ on $\CM$ such that $\sigma(\{1\})>0$ (in fact, $\sigma(\{1\})\geq \delta^2$, where $\delta=\mu(A)$) and for every $r,s\in\N$,
$$\int_\CM  f(r)\cdot \overline{f(s)}\, d\sigma(f)=\mu(T_r^{-1}A\cap T_s^{-1}A).$$
In particular, we have
$$\mu(T_{\ell (m^2-n^2)}^{-1}A\cap T_{\ell'mn}^{-1}A)
=
\int_\CM f(\ell (m^2-n^2))\cdot \overline{f(\ell'mn)}\, d\sigma(f)
$$
for every $m,n\in \N$ with $m>n$, and
$$\mu(T_{\ell(m^2+n^2)}^{-1}A\cap T_{\ell'mn}^{-1}A)
=
\int_\CM f(\ell(m^2+n^2))\cdot \overline{f(\ell'mn)}\, d\sigma(f)
$$
for every $m,n\in \N$.
Therefore, Theorem~\ref{thmmaindynamical} follows from the following result.

\begin{theorem}\label{T:MainPythPairs}
	Let $\sigma$ be a  positive bounded measure on $\CM$ such that $\sigma(\{1\})>0$ and
	\begin{equation}\label{E:positivers}
		\int_\CM f(r)\cdot \overline{f(s)}\, d\sigma(f)\geq 0\quad  \text{for every } r,s\in \N.
	\end{equation}
	Then  for every $\ell,\ell'\in \N$
	\begin{enumerate}
		\item \label{I:MainPythPairs1} we have
		\begin{equation}\label{E:sigmapositive1par}
			\lim_{N\to\infty} \E_{m,n\in[N], m>n}\int_{\CM} f(\ell(m^2-n^2))\cdot \overline{f(\ell'mn)}\, d\sigma(f)>0.
		\end{equation}

		\item \label{I:MainPythPairs2} we have
		\begin{equation}\label{E:sigmapositive2par}
			\liminf_{N\to\infty} \E_{m,n\in[N]}\int_{\CM} f(\ell (m^2+n^2))\cdot \overline{f(\ell'mn)}\, d\sigma(f)>0.
		\end{equation}
	\end{enumerate}
\end{theorem}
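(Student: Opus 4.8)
The plan is to decompose each $f$ in the support of $\sigma$ into a "structured" part (a Dirichlet character twisted by an Archimedean factor $n^{it}$, i.e. $f$ pretends to be $\chi(n)n^{it}$) and an "aperiodic" part, and to treat the two contributions separately. For part~\eqref{I:MainPythPairs1}, I would first recall that the linear forms $m^2-n^2=(m-n)(m+n)$ and $mn$ each factor into products of independent linear forms, so that after the change of variables $u=m-n,\ v=m+n$ (a bijection onto a sublattice of fixed index) the average over $m,n$ becomes an average over $u,v$ of a product of four multiplicative functions evaluated at $\ell_1 u$, $\ell_2 v$, $\ell_3 m$, $\ell_4 n$ with $m,n$ linear in $u,v$. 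The key analytic input is the Gowers-uniformity/decomposition result for aperiodic multiplicative functions (the proposition referred to later as \ref{P:aperiodic}): if at least one of the four multiplicative functions appearing is aperiodic, the corresponding average tends to $0$. Hence only the tuples $(f)$ for which all relevant twisted functions are pretentious survive, and for these one computes the limit of the average explicitly; because $\sigma(\{1\})>0$ contributes a strictly positive term (the constant function $1$ gives a genuinely positive density of solutions to $\ell_1 u\cdot\ell_2 v = $ etc.), and because \eqref{E:positivers} guarantees that all the other surviving terms are nonnegative, the total limit is strictly positive. The convergence of the average (not just $\liminf$) in part \eqref{I:MainPythPairs1} comes from the fact that for pretentious $f$ the relevant correlations converge, e.g. via the results on mean values of multiplicative functions along linear/polynomial sequences.

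For part~\eqref{I:MainPythPairs2}, the obstruction is that $m^2+n^2$ does not factor into linear forms over $\Q$, so the clean "split into linear forms and kill an aperiodic factor" argument is unavailable. This is precisely where the second, novel ingredient — concentration estimates for multiplicative functions — enters. The idea is: rather than averaging $f(\ell(m^2+n^2))\overline{f(\ell' mn)}$ over all $m,n\in[N]$, restrict to a sparse but positive-density family of pairs $(m,n)$ on which $f(m^2+n^2)$ is essentially constant (equal to $1$, say, after adjusting by a dilation), using that a finitely-valued completely multiplicative function $f$, or more precisely the measure $\sigma$, concentrates on level sets of controlled multiplicative density. Concretely, I would choose a large modulus $W$ (a product of small primes) and restrict $m,n$ to suitable residue classes so that $f(m^2+n^2)$ and $f(mn)$ are determined, then use \eqref{E:positivers} again plus the $\sigma(\{1\})>0$ mass to extract positivity; the $\liminf$ (rather than $\lim$) reflects that the restriction to good residue classes need only be done along a subsequence of $N$. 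The use of logarithmic averages in the related Theorem~\ref{T:DLMS} signals that some of the concentration/Tao-type input is cleanest with $\lE$, but here Ces\`aro averaging suffices because we only need a lower bound.

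The main obstacle I anticipate is part~\eqref{I:MainPythPairs2}: making the concentration argument quantitative enough that the positive mass $\sigma(\{1\})\ge\delta^2$ is not swamped by error terms. One must control, uniformly over $f\in\CM$, the "cost" of pinning down $f(m^2+n^2)$ — i.e. show that for a density-bounded-below set of $(m,n)$ the value $f(\ell(m^2+n^2))$ lies in a small arc, with the density lower bound independent of $f$ — and simultaneously ensure $\ell(m^2+n^2)\ne\ell' mn$. I would handle the first point by a pigeonhole/variance argument over dilates $t\cdot(m^2+n^2)$ combined with the multiplicative F\o lner structure, and the second is an easy genericity/counting exclusion. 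A secondary technical point is justifying the interchange of the integral over $\CM$ with the limit in $N$ and the passage to subsequences, which follows from dominated convergence since all integrands are bounded by $1$ in modulus and $\sigma$ is finite.
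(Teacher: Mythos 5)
Your plan for part~\eqref{I:MainPythPairs1} has a genuine gap at the crucial step. After removing the contribution of aperiodic $f$, you assert that ``\eqref{E:positivers} guarantees that all the other surviving terms are nonnegative.'' This is false, and is in fact precisely the obstruction that blocked \cite{FH17} from handling Pythagorean pairs. The positivity \eqref{E:positivers} is a statement, for each fixed $r,s$, about the \emph{full} integral over $\CM$; once you split the integral into pretentious and aperiodic parts and pass to the limit in $N$, the surviving contribution $\lim_N \E_{m,n}\int_{\CM_p} f(\ell(m^2-n^2))\overline{f(\ell' mn)}\,d\sigma$ has no reason to be nonnegative, and for general $\ell,\ell'$ it is not. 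The paper overcomes this by two devices you have not proposed: (a) replacing $(m,n)$ by $(Qm+1, Qn)$ and then \emph{averaging over $Q$} ranging over a multiplicative F\o lner set $\Phi_K$, so that the concentration estimate \cref{P:concentration1} reduces $\E_{m,n}A_\delta(f,Q;m,n)$ to a constant times $\overline{f(Q)}Q^{it}$, whose multiplicative $Q$-average vanishes whenever $f\in\CM_p\setminus\CA$ (\cref{L:mainvanishing1}); and (b) inserting the weight $w_\delta$ so that the genuinely surviving Archimedean part $\int_\CA$ has positive real part (\cref{L:mainpositive1}), which is again \emph{not} automatic from $\sigma(\{1\})>0$. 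Without both of these, the argument does not close.

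For part~\eqref{I:MainPythPairs2} you correctly sense that a concentration input is needed, but the sketch — restrict to residue classes mod $W$ and use a pigeonhole/variance argument over dilates — does not supply what is actually required. The obstacle is that for a general pretentious $f$, fixing $m,n$ in residue classes mod $W$ does not determine (or even approximately pin down) $f(m^2+n^2)$, because $m^2+n^2$ has large prime factors whose $f$-values must be controlled. The paper's resolution is a new Tur\'an--Kubilius-type concentration inequality for multiplicative functions along $(Qm+1)^2+(Qn)^2$ (\cref{L:TKadditive} lifted to \cref{P:concentration2}), exploiting the two-dimensional averaging to absorb the contribution of primes in $(\sqrt N, N]$ and $(N, 3Q^2N^2]$; this is one of the paper's main technical contributions and occupies Section~\ref{S:concentration2}. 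Your sketch does not propose any mechanism that plays this role, and the additional non-convergence of the $N$-limit in part \eqref{I:MainPythPairs2} — which forces the more delicate bookkeeping in the proof of \cref{T:mainpairs2} (one cannot pass the $N$-limit inside the $Q$-average) — is also unaddressed.
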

\begin{remark}
	The limit in \eqref{E:sigmapositive1par} exists by  \cite[Theorem~1.4]{FH16} and the bounded convergence theorem,\footnote{The statement of  \cite[Theorem~1.4]{FH16} does not have the restriction
		$m>n$ in the averaging, but the argument used there also covers this case without essential changes.} however the limit in \eqref{E:sigmapositive2par} may not always  exist.
\end{remark}
The reduction up to this point is similar to that in \cite{FH17}. The methods in \cite{FH17} were only able to address a variant of \eqref{I:MainPythPairs1}  in which the expressions under the integral were products of linear factors and were ``pairing up''  when $n=0$ and becoming non-negative.\footnote{For instance, to establish partition regularity of pairs $x,y$ that satisfy the equation $16x^2+9y^2=z^2$, it suffices to study  averages  of  $f(m(m+3n))\cdot \overline{f((m+n)(m-3n))}$  for $f\in \CM$. The key convenient property   these expressions have is that they are  non-negative when $n=0$.}
This positivity property is not shared by the expressions in \eqref{E:sigmapositive1par} (and \eqref{E:sigmapositive2par}), which is the main reason why it was not possible to deal with Pythagorean pairs in \cite{FH17}.
To overcome this obstacle, we do not use a decomposition result that covers all elements of $\CM$ simultaneously (as was the case in  \cite{FH17}), but rather work separately with aperiodic and pretentious multiplicative functions (these notions are defined in \cref{SS:multiplicative}). In particular, coupled with some measurability properties, this allows us to exploit the uniform concentration estimates of Propositions~\ref{P:concentration1} and \ref{P:concentration2}, which are not shared by all elements of $\CM$.
We  outline  our approach in the next subsections.

\subsection{Proof plan for part~\eqref{I:MainPythPairs1} 
of Theorem~\ref{T:MainPythPairs}}\label{SS:Plain1i}
We prove Theorem~\ref{T:MainPythPairs} by taking an average over the grid
$$
\{(Qm+1,Qn):m,n\in\N\},
$$
where $Q\in\N$ is chosen depending only on $\sigma$.
In view of \eqref{E:positivers} it suffices to prove positivity in \eqref{E:sigmapositive1par} when the average is taken along this subset of  pairs.
With $\ell,\ell'\in \N$ fixed, we introduce the following notation: for $\delta>0$, $f\in\CM$, and $Q,m,n\in\N$, let
\begin{equation}\label{E:AdfQmn}
	A_\delta(f,Q;m,n):=w_{\delta}(m,n)\cdot f\big(\ell\,((Qm+1)^2-(Qn)^2)\big)\cdot \overline{f\big(\ell'\, (Qm+1)Qn\big)},
\end{equation}
where $w_\delta:\N^2\to[0,1]$ is the weight defined in \eqref{E:weight1} of  Lemma~\ref{L:Sdelta} for reasons that will become clear in a moment (at a first reading the reader could just take $w_\delta=1$).  We also remark that the weight $w_\delta$ is supported on the set $\{m,n\in \N\colon m>n\}$, so to compute $A_\delta(f,Q;m,n)$ we only have to compute $f$ on positive integers.
Then  part~\eqref{I:MainPythPairs1} of Theorem~\ref{T:MainPythPairs} follows immediately from the next result, the fact that $0\leq w_\delta(m,n)\leq 1$, and the positivity property \eqref{E:positivers}.
\begin{theorem}\label{T:mainpairs1}
	Let $\sigma$ be a Borel probability measure on $\CM$ such that $\sigma(\{1\})>0$ and \eqref{E:positivers} holds.
	Then there exist $\delta_0>0$ and $Q_0\in\N$ such that
	\begin{equation}\label{E:sigmapositive1}
		\lim_{N\to\infty} \E_{m,n\in[N]}\int_{\CM} A_{\delta_0}(f,Q_0;m,n)\, d\sigma(f)>0.
	\end{equation}
\end{theorem}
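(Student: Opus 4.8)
The plan is to split the integration over $\CM$ according to whether $f$ is aperiodic or pretentious, exploiting that these two classes behave very differently under the averaging. First I would fix a small $\varepsilon>0$ (to be chosen at the end, depending only on $\sigma(\{1\})$) and decompose $\CM = \CM_{\mathrm{ap}} \sqcup \CM_{\mathrm{pr}}$ into the set of aperiodic completely multiplicative functions and the set of those that pretend to be some $n\mapsto \chi(n)n^{it}$. One has to check this is a Borel partition, so that $\sigma\restriction_{\CM_{\mathrm{ap}}}$ and $\sigma\restriction_{\CM_{\mathrm{pr}}}$ make sense. On the aperiodic part, the expression $f(\ell((Qm+1)^2-(Qn)^2))\cdot\overline{f(\ell' (Qm+1)Qn)}$ is an average of a product of multiplicative functions evaluated on linear forms in $m,n$ (after factoring $(Qm+1)^2-(Qn)^2 = (Qm+1-Qn)(Qm+1+Qn)$, which is the reason the grid $(Qm+1,Qn)$ is used — it keeps the forms genuinely distinct and coprime-friendly), and the relevant Gowers-uniformity input (the analogue of Proposition~\ref{P:aperiodic} referenced later in the paper, applied with the weight $w_\delta$) forces this average to tend to $0$ as $N\to\infty$, uniformly in $f\in\CM_{\mathrm{ap}}$. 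Hence $\int_{\CM_{\mathrm{ap}}} \big(\lim_N \E_{m,n\in[N]} A_{\delta_0}(f,Q_0;m,n)\big)\, d\sigma(f) = 0$ after applying bounded convergence.

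Next I would handle the pretentious part, which is where the weight $w_\delta$ and the concentration estimates (Propositions~\ref{P:concentration1} and~\ref{P:concentration2}) enter. The point of $w_\delta$, supplied by Lemma~\ref{L:Sdelta}, is that on its support the four quantities $\ell(Qm+1-Qn)$, $\ell(Qm+1+Qn)$, $\ell'(Qm+1)$, $\ell' Qn$ are forced into dyadic-type ranges on which a pretentious $f$ is nearly constant — more precisely, $f$ evaluated at each of them concentrates near a single unimodular value $\lambda_j(f)$, with the product $\lambda_1\lambda_2\overline{\lambda_3\lambda_4}$ tending to $1$ as $\delta\to 0$ and $Q\to\infty$ appropriately. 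Thus for $f\in\CM_{\mathrm{pr}}$ the integrand $A_{\delta_0}(f,Q_0;m,n)$ is, up to an error that the concentration estimates make small, equal to $w_{\delta_0}(m,n)$, whose average over $m,n\in[N]$ tends to a strictly positive constant $c(\delta_0)>0$ (again by Lemma~\ref{L:Sdelta}, the mass of the weight). Concretely, I would bound
\[
\Big|\lim_N \E_{m,n\in[N]} A_{\delta_0}(f,Q_0;m,n) - c(\delta_0)\,\lambda_1\lambda_2\overline{\lambda_3\lambda_4}\Big| \le \eta(\delta_0,Q_0),
\]
with $\eta\to 0$, and then $\big|\lambda_1\lambda_2\overline{\lambda_3\lambda_4}-1\big|\to 0$, so that the contribution of $\CM_{\mathrm{pr}}$ to the left side of \eqref{E:sigmapositive1} is at least $(c(\delta_0)-\eta')\,\sigma(\CM_{\mathrm{pr}})$. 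Since $1\in\CM_{\mathrm{pr}}$, we have $\sigma(\CM_{\mathrm{pr}})\ge\sigma(\{1\})>0$.

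Finally I would combine the two parts: the aperiodic integral contributes $0$, the pretentious integral contributes at least $(c(\delta_0)-\eta')\,\sigma(\{1\}) > 0$ once $\delta_0$ is small and $Q_0$ large enough (in terms of $\sigma(\{1\})$ and the implied rates), which yields \eqref{E:sigmapositive1}. One subtlety to dispatch: the limit $\lim_N$ inside the integral exists for each fixed $f$ by the cited results of~\cite{FH16}, and the uniformity of the convergence (so that we may exchange limit and integral via bounded convergence) must be invoked for both classes — this is where the \emph{uniform} nature of Propositions~\ref{P:concentration1}, \ref{P:concentration2} and the aperiodic estimate is essential. The main obstacle, and the genuinely new ingredient over~\cite{FH17}, is the pretentious case: one must show that a pretentious $f$ applied to the four linear forms really does concentrate simultaneously, and that the phases $\lambda_j(f)$ cancel in the product $\lambda_1\lambda_2\overline{\lambda_3\lambda_4}$ — this is exactly what forces the particular choice of weight $w_\delta$ and the design of the grid $(Qm+1,Qn)$, and it is the step I expect to require the most care.
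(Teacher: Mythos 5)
There is a genuine gap in your treatment of the pretentious part, and it is precisely the part where the paper's argument departs from the earlier approach of \cite{FH17}. Your claim that for every pretentious $f$ the average $\E_{m,n\in[N]}A_{\delta_0}(f,Q_0;m,n)$ concentrates near $c(\delta_0)\cdot\lambda_1\lambda_2\overline{\lambda_3\lambda_4}$ with $\lambda_1\lambda_2\overline{\lambda_3\lambda_4}\to 1$ is not correct, and the weight $w_\delta$ does not do what you say it does. First, one of the factors in $A_\delta$ is $\overline{f(\ell'(Qm+1)Qn)}$, which contains $\overline{f(n)}$ after factoring out $\overline{f(Q)}$; the variable $n$ ranges freely over $[N]$, so there is no concentration for this factor for general pretentious $f$. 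Second, Proposition~\ref{P:concentration1} shows that $f(Qn+1)$ concentrates near $(Qn)^{it}\exp(F_N(f,K))$, and the phase $\exp(F_N(f,K))$ depends on $N$ and can be an arbitrary unimodular number; the various such phases from the factors of $A_\delta$ do not cancel. Consequently, for a fixed $Q_0$ and a pretentious $f$ with $f\cdot n^{-it}\ne 1$, the quantity $\lim_N\E_{m,n\in[N]}A_{\delta_0}(f,Q_0;m,n)$ is a complex number with no sign control, not something close to $c(\delta_0)$; your estimate does not hold, and a direct bounded-convergence argument over $\CM_{\mathrm{pr}}$ does not produce a positive lower bound. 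The weight $w_\delta$ only neutralizes the \emph{Archimedean} characters $n\mapsto n^{it}$; it does not help general pretentious $f$ concentrate.

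The missing idea is the averaging over $Q$ in the multiplicative F\o lner sets $\Phi_K$ of \eqref{E:PhiK}, which the paper uses to split $\CM_p$ further into $\CA$ and $\CM_p\setminus\CA$. For $f\in\CM_p\setminus\CA$, Lemma~\ref{L:mainvanishing1} (proved via Proposition~\ref{P:concentration1}) shows that $\E_{m,n\in[N]}A_\delta(f,Q;m,n)$ is approximately $\overline{f(Q)}Q^{it}\cdot\tilde L_\delta(f,Q)$ with $\tilde L_\delta(f,Q)$ nearly independent of $Q\in\Phi_K$; since $Q\mapsto f(Q)Q^{-it}$ is a nontrivial multiplicative function, $\E_{Q\in\Phi_K}$ of this vanishes as $K\to\infty$ (Lemma~\ref{L:Fol0}). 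Only for the Archimedean part $\CA$ does one argue positivity, and there the role of $w_\delta$ is exactly as stated in Lemma~\ref{L:mainpositive1}: it is chosen so that the pairing $(\ell(m^2-n^2))^{it}\overline{(\ell'mn)^{it}}$ is close to $1$ on its support. After Corollary~\ref{C:mainvanishing1} and Lemma~\ref{L:mainpositive1} are combined, a good $Q_0\in\Phi_K$ exists for $K$ large, which yields \eqref{E:sigmapositive1}. Without the $Q$-averaging step, the contribution of $\CM_p\setminus\CA$ cannot be controlled, so your argument as written does not close. (Also note that only Proposition~\ref{P:concentration1} enters here; Proposition~\ref{P:concentration2} is used for the $m^2+n^2$ case, Theorem~\ref{T:mainpairs2}.)
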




To analyse the limit in \eqref{E:sigmapositive1} we use the theory of completely multiplicative functions.
When $f$ is aperiodic, the mean values of  $A_\delta(f,Q;m,n)$ vanish for every $Q$. This is a consequence of the following result, which in turn follows from results in \cite{FH17} (see also \cite{M18} for related work), we shall explain later on how.
\begin{proposition}\label{P:aperiodic1}
	Let $f\colon \N\to \U$ be an aperiodic completely  multiplicative function. Then  for every $\delta>0$ and $Q\in \N$  we have
	\begin{equation}\label{E:wdmnA}
		\lim_{N\to\infty} \E_{m,n\in [N]} \,
		A_\delta(f,Q;m,n)=0.
	\end{equation}
	Furthermore, for every completely multiplicative function $f\colon \N\to \U$ the previous limit exists.
\end{proposition}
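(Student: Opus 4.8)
The plan is to first remove the quadratic expression by factoring it into linear pieces. Since
\[
(Qm+1)^2-(Qn)^2=\big(Q(m-n)+1\big)\big(Q(m+n)+1\big)
\]
and the weight $w_\delta$ of \cref{L:Sdelta} is supported on $\{m>n\}$, on its support all of $Q(m-n)+1$, $Q(m+n)+1$, $Qm+1$ and $n$ are positive integers, and complete multiplicativity gives
\[
A_\delta(f,Q;m,n)=f(\ell)\,\overline{f(\ell')}\,\overline{f(Q)}\cdot w_\delta(m,n)\cdot f\big(Q(m-n)+1\big)\,f\big(Q(m+n)+1\big)\,\overline{f(Qm+1)}\,\overline{f(n)}.
\]
Thus, up to a factor of modulus at most $1$ depending only on $f,\ell,\ell',Q$, the average $\E_{m,n\in[N]}A_\delta(f,Q;m,n)$ is a $1$-bounded weighted two-dimensional average of a product of $f$ and $\overline f$ along the four affine-linear forms $L_1=Q(m-n)+1$, $L_2=Q(m+n)+1$, $L_3=Qm+1$, $L_4=n$, whose linear parts $(Q,-Q)$, $(Q,Q)$, $(Q,0)$, $(0,1)$ are pairwise linearly independent. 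The weight $w_\delta$ only complicates matters superficially: being $1$-bounded and (by its construction in \cref{L:Sdelta}) controlled by the magnitudes of $m,n$, it is removed by a routine approximation in which $[N]^2$ is cut into subrectangles on which $w_\delta$ is nearly constant, so it suffices to prove both assertions with $w_\delta\equiv1$ and the averaging variables restricted to arbitrary subrectangles $m\in[\alpha N,\beta N]$, $n\in[\gamma N,\delta N]$.

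For the existence of the limit in \eqref{E:wdmnA} for an arbitrary completely multiplicative $f\colon\N\to\U$, I would invoke \cite[Theorem~1.4]{FH16}, which gives convergence of two-dimensional averages of products of multiplicative functions along pairwise independent linear forms (and whose proof is insensitive to restricting the ranges of the averaging variables). For the vanishing when $f$ is aperiodic, observe that $\overline f$ is again completely multiplicative and aperiodic, so the four functions attached to $L_1,\dots,L_4$ are all aperiodic while the forms are pairwise independent; the linear-forms estimate for aperiodic multiplicative functions provided by \cite{FH17} --- which is \cref{P:aperiodic} below --- then forces this average to vanish. Undoing the approximation of $w_\delta$ (an entirely routine limiting argument) recovers \eqref{E:wdmnA}.

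The one genuinely substantive ingredient is the vanishing for aperiodic $f$: this is where aperiodicity is used, and it rests on the Gowers-uniformity properties of aperiodic multiplicative functions exploited in \cite{FH17}. The factorization in the first paragraph is precisely what makes those properties applicable; for equations such as $x^2+y^2=2z^2$, whose parametric solutions involve quadratic forms that do not split into linear factors, the required input is not available, which is the reason our method does not reach those cases. The treatment of the weight and the existence assertion are, by contrast, routine.
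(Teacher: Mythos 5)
Your factorization of $(Qm+1)^2-(Qn)^2$ into the two linear pieces $Q(m-n)+1$ and $Q(m+n)+1$, leading to a $1$-bounded constant times $w_\delta(m,n)$ times a product of $f$ and $\overline f$ along four affine-linear forms with pairwise-independent linear parts, is precisely the reduction that drives the paper's proof as well, and the reliance on \cite[Theorem~1.4]{FH16} for convergence is also what the paper does. Two points where you part ways from the paper deserve comment. First, the weight: you propose a Riemann-sum style cut of $[N]^2$ into subrectangles on which $w_\delta$ is nearly constant. This is workable — $w_\delta$ is a continuous function of $m/n$ away from the diagonal $m=n$, and near it the weight vanishes anyway — but it forces you to assume that the relevant FH16/FH17 limits persist on each subrectangle $[\alpha N,\beta N]\times[\gamma N,\delta N]$, a fact you assert rather than prove. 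The paper exploits the more specific structure of $w_\delta=F_\delta\big((\ell(m^2-n^2))^i(\ell'mn)^{-i}\big)\mathbf{1}_{m>n}$: since $F_\delta$ is a continuous function on $\mathbb{S}^1$, it is uniformly approximated by trigonometric polynomials, and the monomial $\theta\mapsto\theta^k$ simply multiplies the multiplicative functions attached to the linear forms by Archimedean characters $n^{ki}$. This keeps the argument entirely inside the hypotheses of the cited convergence/vanishing theorems, with no geometric decomposition and only the single indicator $\mathbf 1_{m>n}$ to absorb.

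Second, and more substantively, the citation for the vanishing step is wrong. Proposition~\ref{P:aperiodic} — which you invoke as "the linear-forms estimate" — is actually the quadratic-form result about $f_1(m^2-n^2)f_2(mn)f_3(m^2+n^2)$ involving the irreducible form $m^2+n^2$, deduced from the hard \cite[Theorem~9.7]{FH17}, and it requires $f_1$ or $f_2$ (not $f_3$) aperiodic. Your expression involves four affine-linear forms with (in general) four different multiplicative functions attached, and Proposition~\ref{P:aperiodic} does not cover that. What you need is the linear-forms vanishing for aperiodic multiplicative functions, which the paper obtains from \cite[Theorem~2.5]{FH17} together with \cite[Lemma~9.6]{FH17}; the paper in fact first converts the affine shifts $Qm+1$, $Qn$ into twists by Dirichlet characters mod $Q$ (the indicator of a residue class being a finite combination of Dirichlet characters), so that the forms become the homogeneous $m-n$, $m+n$, $m$, $n$ and the aperiodic $f_k$ is twisted by characters and Archimedean characters. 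Your direct affine-forms route is equivalent provided the version of the FH17 linear-forms estimate you invoke admits affine shifts, but you should cite that result rather than Proposition~\ref{P:aperiodic}. Your closing observation — that the method hinges on the quadratics splitting into linear factors, and hence fails for $x^2+y^2=2z^2$ — is correct and matches the paper's discussion.
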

Let
\begin{equation}\label{E:pretentious}
	\CM_p=\{f\colon \N\to \S^1\colon f \, \text{ is a pretentious completely multiplicative function}\};
\end{equation}
we show  in Lemma~\ref{L:Borel} that $\CM_p$ is a Borel subset of $\CM$. It follows from Proposition~\ref{P:aperiodic1} and the bounded convergence theorem, that in order to establish
\eqref{E:sigmapositive1} it suffices to show that there exist $\delta_0>0$ and $Q_0\in \N$ such that
\begin{equation}\label{E:sigmapositive1p}
	\lim_{N\to\infty} \E_{m,n\in[N]}\int_{\CM_p} A_{\delta_0}(f,Q_0;m,n)\, d\sigma(f)>0.
\end{equation}
If $f$ is pretentious, then it ``pretends'' to be a twisted Dirichlet character, and thus exhibits some periodicity. We exploit this periodicity by choosing a highly divisible $Q$ for which the averages  of $A_\delta(f,Q;m,n)$ take a much simpler form.
More precisely, we make use of the following concentration estimate, which is an immediate consequence of \cite[Lemma 2.5]{KMPT21}.
\begin{proposition}\label{P:concentration1}
	Let 
	$f\colon \mathbb{N}\to\mathbb{U}$ be a multiplicative function such that $f\sim \chi\cdot n^{it}$ for some $t\in \R$ and Dirichlet character $\chi$ with period $q$ (see \cref{SS:multiplicative} for definitions and notation).  Let also 
	$K\in\N$ be large enough so that $q$ divides all elements of the set
 \begin{equation}\label{E:PhiK}
	\Phi_K	:=\Big\{\prod_{p\leq K}p^{a_p}\colon K< a_p\leq 2K\Big\}.
\end{equation}
 Then
	$$
	\limsup_{N\to\infty} \max_{Q\in \Phi_K} \E_{n\in [N]}\big|f(Qn+1)- (Qn)^{it}\cdot \exp\big(F_N(f,K)\big)\big|\ll\D(f,\chi\cdot n^{it}; K,\infty)+K^{-1/2},
	$$
	where the implicit constant is absolute and
	\begin{equation}\label{E:FNfQdef}
		F_N(f,K):=\sum_{K< p\leq N} \frac{1}{p}\,\big(f(p)\cdot \overline{\chi(p)}\cdot p^{-it} -1\big).
	\end{equation}
\end{proposition}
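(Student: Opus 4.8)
\emph{Proof strategy.} The plan is to deduce the estimate directly from the concentration inequality \cite[Lemma~2.5]{KMPT21} after a short change of variables that removes the pretentious part of $f$ and isolates the contribution of the primes exceeding $K$. First I would set $\tilde f(n):=f(n)\cdot\overline{\chi(n)}\cdot n^{-it}$, which is again a multiplicative function with values in $\U$, satisfies $\tilde f\sim 1$, and has $\D(\tilde f,1;K,\infty)=\D(f,\chi\cdot n^{it};K,\infty)<\infty$ for every $K$. I would also introduce the multiplicative function $g\colon\N\to\U$ with $g(p^a)=\tilde f(p^a)$ for primes $p>K$ and $g(p^a)=1$ for primes $p\le K$, so that $\D(g,1;1,\infty)=\D(\tilde f,1;K,\infty)$ and $F_N(f,K)=\sum_{K<p\le N}\frac{g(p)-1}{p}=\sum_{p\le N}\frac{g(p)-1}{p}$.

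Next comes the reduction, carried out for a fixed $Q\in\Phi_K$; since $\Phi_K$ is finite, $\limsup_N\max_{Q\in\Phi_K}$ equals $\max_{Q\in\Phi_K}\limsup_N$, so it suffices to handle each $Q$ with an implied constant independent of $Q$. As $q$ divides every element of $\Phi_K$ by hypothesis, $Qn+1\equiv 1\pmod q$, hence $\chi(Qn+1)=\chi(1)=1$ and $f(Qn+1)=\tilde f(Qn+1)\cdot(Qn+1)^{it}$. Moreover $Q=\prod_{p\le K}p^{a_p}$ is divisible by every prime $\le K$, so $Qn+1$ is coprime to all such primes; this gives $\gcd(Q,Qn+1)=1$ and, since $g$ agrees with $\tilde f$ at prime powers $p^a$ with $p>K$, also $\tilde f(Qn+1)=g(Qn+1)$. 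Finally, replacing $(Qn+1)^{it}$ by $(Qn)^{it}$ costs $|(Qn+1)^{it}-(Qn)^{it}|\le|t|\log(1+1/(Qn))\le|t|/n$ per term, hence an average over $n\in[N]$ of order $|t|(\log N)/N$, which is negligible in the $\limsup$ uniformly in $Q$. Together with $|g(Qn+1)|\le1$, $|(Qn)^{it}|=1$, and the triangle inequality, these remarks give, uniformly over $Q\in\Phi_K$,
\begin{equation*}
\E_{n\in[N]}\big|f(Qn+1)-(Qn)^{it}\exp(F_N(f,K))\big|\ \le\ \E_{n\in[N]}\big|g(Qn+1)-\exp(F_N(f,K))\big|\ +\ O\!\big(|t|(\log N)/N\big).
\end{equation*}

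It then remains to invoke \cite[Lemma~2.5]{KMPT21} for the multiplicative function $g$ along the residue class $1\bmod Q$ --- legitimate since $g$ is trivial on $K$-smooth numbers and $\gcd(Q,Qn+1)=1$ --- which yields
\begin{equation*}
\limsup_{N\to\infty}\ \E_{n\in[N]}\Big|g(Qn+1)-\exp\Big(\sum_{p\le N}\frac{g(p)-1}{p}\Big)\Big|\ \ll\ \D(g,1;1,\infty)+K^{-1/2}=\D(f,\chi\cdot n^{it};K,\infty)+K^{-1/2},
\end{equation*}
with an absolute implied constant independent of $Q$. Since $\sum_{p\le N}\frac{g(p)-1}{p}=F_N(f,K)$, substituting this into the previous display, taking the maximum over $Q\in\Phi_K$, and letting $N\to\infty$ finishes the argument.

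The step I expect to require the most care is simply matching the exact ``model'' term in \cite[Lemma~2.5]{KMPT21} with $\exp(F_N(f,K))$: depending on the normalization there it might instead read $\prod_{p\le N}(1+(g(p)-1)/p)$ or carry higher-prime-power corrections. All such variants, however, have modulus at most $1$ and differ from $\exp(F_N(f,K))$ by $O\big(\sum_{p>K}p^{-2}\big)=O(1/K)$, so the discrepancy is absorbed into the $K^{-1/2}$ error. Beyond this essentially cosmetic point there is no real obstacle: once $f$ is twisted down to the $1$-pretentious, $K$-rough multiplicative function $g$ evaluated on a progression coprime to its modulus, the statement is exactly the cited concentration estimate.
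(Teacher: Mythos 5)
Your proposal is correct, and it follows exactly the route the paper has in mind: the paper's ``proof'' of Proposition~\ref{P:concentration1} is the single sentence that it is ``an immediate consequence of [Lemma~2.5, KMPT21],'' and your argument simply supplies the details of that deduction. Specifically, you twist $f$ to $\tilde f=f\cdot\overline\chi\cdot n^{-it}\sim 1$, pass to the $K$-rough truncation $g$ (legitimate because $Qn+1$ is coprime to every prime $\le K$ and $\chi(Qn+1)=1$ for $Q\in\Phi_K$), absorb the $(Qn+1)^{it}\mapsto(Qn)^{it}$ discrepancy into an $o_{N\to\infty}(1)$ term, and invoke the cited concentration lemma for $g$ along $1\bmod Q$ — precisely the reduction the authors intend. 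Your closing caveat about matching the normalization of the model term is reasonable and handled adequately, and the commutation of $\limsup_N$ with $\max_{Q\in\Phi_K}$ over the finite set $\Phi_K$ is correctly used; the one thing worth keeping explicit is that the absolute implied constant in the final statement does depend on the KMPT lemma being uniform in $Q$, not merely on $\Phi_K$ being finite.
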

\begin{remarks}
	$\bullet$ It is important for our argument that the implicit constant  is independent of $K$ and the quantity $F_N(f,K)$
	does not depend on $Q$ as long as $Q\in \Phi_K$ and $q\mid Q$.
	
  $\bullet$ We will also need the following variant from \cite[Lemma 2.5]{KMPT21}:
  For any fixed $Q\in \N$ such that $q\prod_{p\leq K} p\mid Q$ we have
$$  \limsup_{N\to\infty} \E_{n\in [N]}\big|f(Qn+1)- (Qn)^{it}\cdot \exp\big(F_N(f,K)\big)\big|\ll\D(f,\chi\cdot n^{it}; K,\infty)+K^{-1/2}.
$$

	
	$\bullet$ If $f\sim \chi\cdot n^{it}$, then  the sequence $A(N):=\sum_{1< p\leq N} \frac{1}{p}\,\big|1-f(p)\cdot \overline{\chi(p)}\cdot p^{-it}\big|$, $N\in \N$,  is slowly varying, in the sense that for a fixed pretentious $f$ we have for every $c\in (0,1)$ that $\lim_{N\to\infty}\sup_{n\in [N^c,N]}|A(n)-A(N)|=0$.\footnote{If $a_p:=1-f(p)\cdot \overline{\chi(p)}\cdot p^{-it}$, $p\in \P$, we note that
		$\sup_{n\in [N^c,N]}|A(n)-A(N)| \leq (B_N\cdot C_N)^{1/2},$ where $B_N:=\sum_{p\in[N^c,N]}\frac{|a_p|^2}{p}$, $C_N:= \sum_{p\in[N^c,N]}\frac{1}{p}$, $N\in \N$.
		The sequence $C_N$ is bounded and $\lim_{N\to\infty}B_N=0$ because
		$\sum_{p\in \P}\frac{|a_p|^2}{p}<+\infty$.}
	Keeping this in mind, if we use  partial summation on the interval $[N^c,N]$ and then  let $c\to 0^+$, we deduce that the main estimate  of \cref{P:concentration1} still holds if we replace $\E_{n\in[N]}$ with $\lE_{n\in[N]}$.
\end{remarks}
In order to establish \eqref{E:sigmapositive1p} we divide the integral into  two parts. The first is supported on multiplicative functions other than the Archimedean characters $(n^{it})_{n\in\N}$, $t\in \R$,  in which case we show using \cref{P:concentration1} that for a highly divisible $Q_0$ the contribution is essentially non-negative. The second is
supported on Archimedean characters.
We show that this part   is positive using our assumption $\sigma(\{1\})>0$ and by taking $\delta_0$ small enough so that the weight $w_{\delta_0}$  neutralizes the effect of the Archimedean characters  that are different from $1$.  To carry out the first part, the key idea is to
average over ``multiplicatively large'' values of $Q$.
More precisely, for each $K\in\N$ let $\Phi_K$ be the set described in \eqref{E:PhiK}.
The sequence $(\Phi_K)$ is a multiplicative F\o lner sequence with the property that, for every $q\in\N$, as soon as $K$ is large enough, every $Q\in\Phi_K$ is divisible by $q$.
It also has the property that for every $Q\in\Phi_K$ and a prime $p\in\P$, we have $p|Q$ if and only if $p\leq K$.
Let also
\begin{equation}\label{E:CA}
	\CA:=\{(n^{it})_{n\in\N}\colon t\in \R\}.
\end{equation}
Note that $\CA$ is  a Borel subset of $\CM$ since it is a countable union of compact sets (we caution the reader that $\CA$ is not closed with the topology of pointwise convergence, in fact, it is dense in $\CM$).
The most important step in establishing property \eqref{E:sigmapositive1p} is the following fact.
\begin{lemma}\label{L:mainvanishing1}
	Let $f\in\CM_p\setminus \CA$,  $\delta>0$, $\ell,\ell'\in \N$, and $\Phi_K$ be as in \eqref{E:PhiK}.
	Then
	$$	\lim_{K\to\infty}\E_{Q\in\Phi_K}\lim_{N\to\infty}\E_{m,n\in[N]}\, A_\delta(f,Q;m,n)=0.
	$$
	(Note that the inner limit exists by Proposition~\ref{P:aperiodic1}.)
\end{lemma}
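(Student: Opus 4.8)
The plan is to expand $A_\delta(f,Q;m,n)$ via complete multiplicativity, replace each factor $f(\text{linear}+1)$ by the explicit main term furnished by \cref{P:concentration1}, and then extract a cancellation that appears only after averaging over the multiplicatively large moduli $Q\in\Phi_K$. Write $L_\delta(f,Q):=\lim_{N\to\infty}\E_{m,n\in[N]}A_\delta(f,Q;m,n)$, which exists by \cref{P:aperiodic1}; the goal is $\lim_{K\to\infty}\E_{Q\in\Phi_K}L_\delta(f,Q)=0$. Since $f$ is pretentious, fix a Dirichlet character $\chi$ of period $q$ and $t\in\R$ with $f\sim\chi\cdot n^{it}$. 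Using $(Qm+1)^2-(Qn)^2=(Q(m-n)+1)(Q(m+n)+1)$, complete multiplicativity, and the fact that $w_\delta$ is supported on $\{m>n\}$, I would first record
$$
A_\delta(f,Q;m,n)=f(\ell)\,\overline{f(\ell')}\,\overline{f(Q)}\,\overline{f(n)}\;w_\delta(m,n)\;f(Q(m-n)+1)\,f(Q(m+n)+1)\,\overline{f(Qm+1)},
$$
in which every argument of $f$ is a positive integer.

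Next I would apply \cref{P:concentration1}, which is legitimate once $K$ is large enough that $q$ divides every element of $\Phi_K$, to replace, for each linear form $L(m,n)\in\{m-n,m+n,m\}$, the factor $f(QL(m,n)+1)$ by $(QL(m,n))^{it}\exp\!\big(F_N(f,K)\big)$. For $L=m$ this is the proposition verbatim; for $L=m\pm n$ I would bound $w_\delta\le 1$ and use that the number of pairs $(m,n)\in[N]^2$ with $m>n$ and a prescribed value of $m\mp n$ is at most $N$, so the double average is dominated by a single average $\E_{k\in[cN]}\big|f(Qk+1)-(Qk)^{it}\exp(F_N(f,K))\big|$ with $c\le2$, the rescaling being harmless since $(F_N(f,K))_N$ is slowly varying. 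Telescoping the product of the three replaced factors (each such factor and each main term has modulus at most $1$, because $|f|=1$ and $F_N(f,K)$ has non-positive real part), and using the identity $(Q(m-n))^{it}(Q(m+n))^{it}(Qm)^{-it}=Q^{it}\,((m^2-n^2)/m)^{it}$, I would obtain, uniformly over $Q\in\Phi_K$,
$$
\limsup_{N\to\infty}\Big|\E_{m,n\in[N]}A_\delta(f,Q;m,n)-f(\ell)\overline{f(\ell')}\,\big(\overline{f(Q)}\,Q^{it}\big)\,e^{2F_N(f,K)+\overline{F_N(f,K)}}\,\widetilde M_N\Big|\ \ll\ \D(f,\chi\cdot n^{it};K,\infty)+K^{-1/2},
$$
where $\widetilde M_N:=\E_{m,n\in[N]}w_\delta(m,n)\,\overline{f(n)}\,((m^2-n^2)/m)^{it}$ is independent of $Q$ and satisfies $|\widetilde M_N|\le1$.

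In the key step I would average this estimate over $Q\in\Phi_K$: only $\overline{f(Q)}Q^{it}$ depends on $Q$, so after letting $N\to\infty$ and using $|f(\ell)\overline{f(\ell')}|\le1$, $|e^{2F_N(f,K)+\overline{F_N(f,K)}}|\le1$, and $|\widetilde M_N|\le1$, I would get
$$
\big|\E_{Q\in\Phi_K}L_\delta(f,Q)\big|\ \ll\ \D(f,\chi\cdot n^{it};K,\infty)+K^{-1/2}+\big|\E_{Q\in\Phi_K}\big[\overline{f(Q)}\,Q^{it}\big]\big|.
$$
As $K\to\infty$ the first term vanishes because $f$ is pretentious, and the third is handled by observing that each $Q\in\Phi_K$ equals $\prod_{p\le K}p^{a_p}$ with the exponents $a_p\in\{K+1,\dots,2K\}$ chosen independently, whence $\E_{Q\in\Phi_K}[\overline{f(Q)}Q^{it}]=\prod_{p\le K}\big(\tfrac1K\sum_{a=K+1}^{2K}(\overline{f(p)}p^{it})^a\big)$. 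Since $f\notin\CA$, complete multiplicativity forces $f(p_0)\ne p_0^{it}$ for some prime $p_0$; then $\overline{f(p_0)}p_0^{it}\in\S^1\setminus\{1\}$, so the inner geometric sum for $p=p_0$ has modulus $O\big(1/(K|\overline{f(p_0)}p_0^{it}-1|)\big)=O(1/K)$, while all other factors have modulus at most $1$, so the $Q$-average is $O(1/K)\to0$, which finishes the proof. The slightly delicate point in this argument is keeping the error in \cref{P:concentration1} uniform over all $Q\in\Phi_K$ and intact after passing from single to rescaled double averages in $m\pm n$; the conceptual heart, though, is this last cancellation, where averaging over the multiplicatively large moduli of $\Phi_K$ reduces $\overline{f(Q)}Q^{it}$ to a product of short geometric sums in $\overline{f(p)}p^{it}$ that collapses precisely because $f$ is not an Archimedean character.
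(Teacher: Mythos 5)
Your proof is correct and follows essentially the same route as the paper's: you factor $A_\delta(f,Q;m,n)$ by complete multiplicativity, apply \cref{P:concentration1} (with the rescaling reduction that is the content of \cref{L:lN}, together with the slowly-varying property of $F_N(f,K)$) to show that $L_\delta(f,Q)$ is $\overline{f(Q)}Q^{it}$ times a quantity essentially independent of $Q\in\Phi_K$, and then let the multiplicative average over $Q$ annihilate the factor $\overline{f(Q)}Q^{it}$ because $f\cdot n^{-it}\neq 1$. The only cosmetic deviation is the final step, where you compute $\E_{Q\in\Phi_K}\overline{f(Q)}Q^{it}$ explicitly as a product of short geometric sums over primes $p\leq K$ rather than invoking \cref{L:Fol0}; this gives the same vanishing (with a quantitative $O(1/K)$ rate) by a computation specific to the sequence $(\Phi_K)$, whereas \cref{L:Fol0} reaches the conclusion by a softer dilation-invariance argument valid for any multiplicative F\o lner sequence.
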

Roughly, to prove \cref{L:mainvanishing1} we use the concentration estimate of \cref{P:concentration1} to deduce that  for $Q\in \Phi_K$ the average $\E_{m,n\in[N]}\, A_\delta(f,Q;m,n)$ is asymptotically equal to $C_{\ell,\ell'}(K)\cdot \overline{f(Q)}\cdot Q^{it}$ for some $C_{\ell,\ell'}(K)\in \U$   and $t\in \R$. Since $f\not \in \CA$, by \cref{L:Fol0}  the  average of the last expression, taken  over $Q\in \Phi_K$, converges to $0$ as $K\to \infty$.

Using  the previous result, the fact that the limit $\lim_{N\to\infty}\E_{m,n\in[N]}\, A_\delta(f,Q;m,n)$ exists
(by Proposition~\ref{P:aperiodic1}),
and applying the bounded convergence theorem twice, we deduce the following vanishing property.
\begin{corollary}\label{C:mainvanishing1}
	Let $(\Phi_K)$ and $\CA$  be defined by  \eqref{E:PhiK} and \eqref{E:CA} respectively.  Let also $\sigma$ be a Borel probability measure on $\CM_p$. Then	for every $\delta>0$ we have
	$$	\lim_{K\to\infty}\E_{Q\in\Phi_K}\lim_{N\to\infty}\E_{m,n\in[N]}\,\int_{\CM_p\setminus\CA} A_\delta(f,Q;m,n)\, d\sigma(f)=0.
	$$
\end{corollary}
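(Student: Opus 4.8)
The plan is to derive \cref{C:mainvanishing1} from \cref{L:mainvanishing1} by purely soft measure-theoretic manipulations: all the analytic content already sits inside \cref{L:mainvanishing1} and \cref{P:aperiodic1}, and what remains is to justify two interchanges of limit and integral through the bounded convergence theorem. So I do not expect a genuine obstacle here; the only points that deserve a word of care are the Borel measurability of the relevant integrands and domain, and the uniform boundedness that legitimizes the interchanges.

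First I would record those soft facts. Since $w_\delta$ takes values in $[0,1]$ and every $f\in\CM$ takes values in $\S^1$, we have $|A_\delta(f,Q;m,n)|\le w_\delta(m,n)\le 1$ for all $f,Q,m,n$, hence $|\E_{m,n\in[N]}A_\delta(f,Q;m,n)|\le 1$; and by \cref{P:aperiodic1} the limit $g_Q(f):=\lim_{N\to\infty}\E_{m,n\in[N]}A_\delta(f,Q;m,n)$ exists for every $f\in\CM$ and obeys $|g_Q(f)|\le 1$. For each fixed $N$ the map $f\mapsto\E_{m,n\in[N]}A_\delta(f,Q;m,n)$ is continuous on $\CM$ (a finite linear combination of products of the coordinate evaluations $f\mapsto f(r)$ and $f\mapsto\overline{f(s)}$, which are continuous in the topology of pointwise convergence), so $g_Q$ is Borel measurable as a pointwise limit of continuous functions; moreover $\CM_p\setminus\CA$ is a Borel subset of $\CM$ by \cref{L:Borel} and the observation following \eqref{E:CA}. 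Finally, since a finite sum commutes with integration, for every $N$ and every $Q$,
$$\E_{m,n\in[N]}\int_{\CM_p\setminus\CA}A_\delta(f,Q;m,n)\,d\sigma(f)=\int_{\CM_p\setminus\CA}\E_{m,n\in[N]}A_\delta(f,Q;m,n)\,d\sigma(f).$$

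Next I would push the limit in $N$ and then the average over $Q\in\Phi_K$ inside the integral. Fixing $K$ and $Q\in\Phi_K$, the functions $f\mapsto\E_{m,n\in[N]}A_\delta(f,Q;m,n)$ are bounded by $1$ and converge pointwise on $\CM_p\setminus\CA$ to $g_Q$, so (using that $\sigma$ is finite) the bounded convergence theorem gives $\lim_{N\to\infty}\int_{\CM_p\setminus\CA}\E_{m,n\in[N]}A_\delta(f,Q;m,n)\,d\sigma(f)=\int_{\CM_p\setminus\CA}g_Q(f)\,d\sigma(f)$; averaging over $Q\in\Phi_K$, again a finite sum that commutes with the integral, yields
$$\E_{Q\in\Phi_K}\lim_{N\to\infty}\E_{m,n\in[N]}\int_{\CM_p\setminus\CA}A_\delta(f,Q;m,n)\,d\sigma(f)=\int_{\CM_p\setminus\CA}\Big(\E_{Q\in\Phi_K}g_Q(f)\Big)\,d\sigma(f).$$
Finally I would let $K\to\infty$: by \cref{L:mainvanishing1}, for every $f\in\CM_p\setminus\CA$ we have $\lim_{K\to\infty}\E_{Q\in\Phi_K}g_Q(f)=0$, and $|\E_{Q\in\Phi_K}g_Q(f)|\le1$ uniformly in $K$ and $f$, so a second application of the bounded convergence theorem gives $\lim_{K\to\infty}\int_{\CM_p\setminus\CA}\big(\E_{Q\in\Phi_K}g_Q(f)\big)\,d\sigma(f)=0$, which is exactly the claimed identity. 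As indicated, the ``hard part'' was \cref{L:mainvanishing1} itself (ultimately the concentration estimate of \cref{P:concentration1}); here everything else is bookkeeping, and the worst that can go wrong is forgetting to verify the Borel measurability of $g_Q$ and of $\CM_p\setminus\CA$.
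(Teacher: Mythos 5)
Your argument is correct and is exactly the route the paper takes: the paper also deduces the corollary from \cref{L:mainvanishing1} and \cref{P:aperiodic1} by two applications of the bounded convergence theorem, after noting that the inner limit over $N$ exists pointwise. The additional care you take with the Borel measurability of $g_Q$ (as a pointwise limit of continuous functions) and of $\CM_p\setminus\CA$ is implicit but unspoken in the paper, so your write-up is, if anything, slightly more complete.
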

We are left to study the part of the integral supported on $\CA$.
For such functions the limits
$\lim_{N\to\infty}\E_{m,n\in[N]}A_\delta(f,Q;m,n)$ do not depend on $Q$, and so the previous argument will not help. It is the presence of the  weight  $w_\delta$ that will allow us to prove the following positivity property.
\begin{lemma}\label{L:mainpositive1}
	Let $\sigma$ be a Borel probability measure on $\CM$ such that $\sigma(\{1\})>0$ and $\CA$ be as in \eqref{E:CA}.
	Then there exist $\delta_0,\rho_0>0$, depending only on $\sigma$,  such that
	\begin{equation}\label{E:mainpositive1}
		\liminf_{N\to\infty}\inf_{Q\in \N}\Re\Big( \E_{m,n\in[N]}\int_{\CA} A_{\delta_0}(f,Q;m,n)\, d\sigma(f)\Big)\geq\rho_0.
	\end{equation}
	\begin{remark}
		The weight $w_\delta(m,n)$ is introduced to force positivity in this case, since for some choices of $\ell, \ell'$ and measures $\sigma$, the unweighted expressions  have negative real parts. However,  rather miraculously, if $\ell=1$ and $\ell'=2$ (which is the case to  consider for Pythagorean pairs), we get positivity even in the unweighted case, and
		a somewhat simpler argument applies. We do not pursue this approach here though because it lacks generality.
	\end{remark}
\end{lemma}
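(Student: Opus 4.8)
The plan is to evaluate the part of the integral over the Archimedean characters explicitly, check that this quantity does not depend on $Q$ in the limit, and then read off positivity from the atom $\sigma(\{1\})>0$ using the concentration built into the weight $w_\delta$ by \cref{L:Sdelta}. Identify $\CA$ with $\R$ through the Borel bijection $t\mapsto(n^{it})_{n\in\N}$ (injective because $\log2/\log3\notin\Q$), and let $\nu$ be the finite Borel measure on $\R$ that transports $\sigma|_\CA$, so that $\nu(\{0\})=\sigma(\{1\})>0$. For $f=(n^{it})$, \eqref{E:AdfQmn} reads $A_\delta(f,Q;m,n)=w_\delta(m,n)\,\rho_Q(m,n)^{it}$ with $\rho_Q(m,n):=\frac{\ell((Qm+1)^2-(Qn)^2)}{\ell'(Qm+1)Qn}$, which is positive on $\operatorname{supp}(w_\delta)\subseteq\{m>n\}$, so by Fubini
$$\E_{m,n\in[N]}\int_\CA A_\delta(f,Q;m,n)\,d\sigma(f)=\int_\R\Big(\E_{m,n\in[N]}w_\delta(m,n)\,\rho_Q(m,n)^{it}\Big)\,d\nu(t).$$

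Next I would remove the dependence on $Q$. Write $\rho_Q=\frac{\ell}{\ell'}\,r_Q$ with $r_Q(m,n)=\frac{Qm+1}{Qn}-\frac{Qn}{Qm+1}$, and set $r_\infty(m,n)=\frac mn-\frac nm=\frac{m^2-n^2}{mn}$. Factoring $(Qm+1)^2-(Qn)^2=(Q(m-n)+1)(Q(m+n)+1)$ gives $r_Q/r_\infty=\frac{(1+\frac1{Q(m-n)})(1+\frac1{Q(m+n)})}{1+\frac1{Qm}}$, so for $m-n>M$ and any $Q\ge1$ we have $|\log(r_Q/r_\infty)|\le 2/M$ and hence $|\rho_Q^{it}-(\frac\ell{\ell'}r_\infty)^{it}|\le 2|t|/M$; meanwhile $\{1\le m-n\le M\}$ has density $\le M/N$ in $[N]^2$. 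Since $0\le w_\delta\le1$, combining these (and, for $|t|>M$, using the crude bound $2$) and letting $N\to\infty$ and then $M\to\infty$ shows that, for each fixed $t$,
$$\lim_{N\to\infty}\E_{m,n\in[N]}w_\delta(m,n)\,\rho_Q(m,n)^{it}=\Big(\tfrac\ell{\ell'}\Big)^{it}\widehat W_\delta(t),\qquad \widehat W_\delta(t):=\lim_{N\to\infty}\E_{m,n\in[N]}w_\delta(m,n)\Big(\tfrac{m^2-n^2}{mn}\Big)^{it},$$
uniformly in $Q$, the limit $\widehat W_\delta(t)$ (which exists by \cref{L:Sdelta}) being independent of $Q,\ell,\ell'$. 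As the inner averages are bounded by $1$ and $\nu$ is finite, the bounded convergence theorem together with this uniformity in $Q$ yields
$$\liminf_{N\to\infty}\ \inf_{Q\in\N}\ \Re\Big(\E_{m,n\in[N]}\int_\CA A_\delta(f,Q;m,n)\,d\sigma(f)\Big)=\Re\int_\R\Big(\tfrac\ell{\ell'}\Big)^{it}\widehat W_\delta(t)\,d\nu(t).$$

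It remains to pick $\delta_0$. I would use the two features of $w_\delta$ supplied by \cref{L:Sdelta}: $\widehat W_\delta(0)=\lim_{N\to\infty}\E_{m,n\in[N]}w_\delta(m,n)=:c_\delta>0$, and $\widehat W_\delta$ concentrates at the origin as $\delta\to0$, in the sense that $\sup_{|t|\ge\epsilon}|\widehat W_\delta(t)|/c_\delta\to0$ as $\delta\to0$ for every $\epsilon>0$ (this is the purpose of \eqref{E:weight1}: $w_\delta$ flattens the distribution of $\log\frac{m^2-n^2}{mn}$ over a window whose length tends to $\infty$ as $\delta\to0$, so that its Fourier transform concentrates near $t=0$). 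Because $\nu$ is finite, $\nu(\{|t|\le\epsilon\})\downarrow\nu(\{0\})=\sigma(\{1\})$ as $\epsilon\downarrow0$; choose $\epsilon>0$ with $\nu(\{0<|t|\le\epsilon\})<\frac14\sigma(\{1\})$, and then $\delta_0>0$ with $\sup_{|t|\ge\epsilon}|\widehat W_{\delta_0}(t)|<\frac{\sigma(\{1\})}{4\nu(\R)}\,c_{\delta_0}$. Isolating the atom of $\nu$ at $0$ and using $|(\ell/\ell')^{it}|=1$,
$$\Re\int_\R\Big(\tfrac\ell{\ell'}\Big)^{it}\widehat W_{\delta_0}(t)\,d\nu(t)\ \ge\ \sigma(\{1\})\,c_{\delta_0}\ -\ c_{\delta_0}\,\nu(\{0<|t|\le\epsilon\})\ -\ \nu(\R)\sup_{|t|\ge\epsilon}|\widehat W_{\delta_0}(t)|\ \ge\ \tfrac12\,\sigma(\{1\})\,c_{\delta_0},$$
which, combined with the previous display, proves the lemma with $\rho_0:=\frac12\sigma(\{1\})c_{\delta_0}$. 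Note that this bound is uniform in $\ell,\ell'$ (and in $Q$): it never uses anything about $\ell/\ell'$ beyond $|(\ell/\ell')^{it}|=1$, which is why $\delta_0,\rho_0$ may be chosen depending only on $\sigma$.

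The main obstacle is the concentration statement for $w_\delta$ in the third paragraph — that the $w_\delta$-weighted distribution of $\log\frac{m^2-n^2}{mn}$ spreads out quickly enough as $\delta\to0$ for $\widehat W_\delta$ to decay uniformly away from the origin while retaining positive total mass $c_\delta$; this is precisely what \cref{L:Sdelta} is for. Internally, the only delicate point is the uniformity in $Q$ in the second paragraph, handled by the $m-n>M$ truncation together with the density-zero bound on $\{1\le m-n\le M\}$; everything else is routine dominated convergence.
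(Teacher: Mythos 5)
Your first two paragraphs are fine, and the device of factoring $(Qm+1)^2-(Qn)^2=(Q(m-n)+1)(Q(m+n)+1)$ together with the $\{1\le m-n\le M\}$ truncation gives a clean and correct proof that $\lim_{N}\E_{m,n\in[N]}w_\delta\,\rho_Q^{it}=(\ell/\ell')^{it}\widehat W_\delta(t)$ uniformly in $Q$, for each fixed $t$; passing through $\inf_Q$ and $\liminf_N$ via dominated convergence in $t$ is then legitimate, and the identity $\liminf_N\inf_Q(\cdots)=\Re\int_\R(\ell/\ell')^{it}\widehat W_\delta(t)\,d\nu(t)$ holds.

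The gap is the concentration claim $\sup_{|t|\ge\epsilon}|\widehat W_\delta(t)|/c_\delta\to0$ as $\delta\to0$, which is \emph{false}. Recall that $(\ell(m^2-n^2))^i(\ell'mn)^{-i}=e^{i\log(\ell(m^2-n^2)/(\ell'mn))}$, so the support of $w_\delta$ is the set where $\log\frac{\ell(m^2-n^2)}{\ell'mn}\in 2\pi\Z+(-2\pi\delta,2\pi\delta)$ — a $2\pi\Z$-\emph{periodic} union of intervals whose individual lengths shrink to $0$, not a single window whose length grows. On this set $\log\frac{m^2-n^2}{mn}\in 2\pi\Z+\log(\ell'/\ell)+(-2\pi\delta,2\pi\delta)$, so $\big(\frac{m^2-n^2}{mn}\big)^{it}=e^{2\pi ikt}(\ell'/\ell)^{it}e^{O(|t|\delta)}$ for some $k\in\Z$ depending on $(m,n)$. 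Consequently $\widehat W_\delta(t)/c_\delta$ converges, as $\delta\to0$, to $(\ell'/\ell)^{it}\sum_k\hat p_k e^{2\pi ikt}$ where $\hat p_k\ge0$, $\sum\hat p_k=1$ — a $1$-periodic profile of modulus $1$ at every integer $t$. In particular $|\widehat W_\delta(t)|/c_\delta\to1$ for every nonzero integer $t$; there is no decay at all for $|t|\ge\epsilon$. Your parenthetical "$w_\delta$ flattens the distribution of $\log\frac{m^2-n^2}{mn}$ over a window whose length tends to $\infty$" has the geometry backwards, and \cref{L:Sdelta} asserts only that $c_\delta>0$; it says nothing about Fourier decay of $\widehat W_\delta$ and cannot be read as supplying the property you need.

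The paper's proof takes a different route precisely to avoid asking for any decay of the weighted Fourier transform at large $t$. It first truncates the measure in $t$: since $\CA_T:=\{(n^{it}):|t|\le T\}$ increase to $\CA$, one may pick $T_0$ with $\sigma(\CA\setminus\CA_{T_0})$ small compared to $a:=\sigma(\{1\})$. The tail $\int_{\CA\setminus\CA_{T_0}}$ is then bounded trivially by $\sigma(\CA\setminus\CA_{T_0})\cdot\mu_{\delta_0}$, with no use of any decay, and the main term is controlled by showing that for $\delta_0$ small enough (depending on $T_0$, hence on $\sigma$) the weight forces $\E_{m,n}w_{\delta_0}f(\ell(m^2-n^2))\overline{f(\ell'mn)}$ close to $\E_{m,n}w_{\delta_0}$ uniformly over $f\in\CA_{T_0}$ and $Q\in\N$. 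So the control coming from $w_\delta$ is only invoked on a \emph{compact} $t$-range determined by $\sigma$; it is the measure truncation, not any decay of $\widehat W_\delta$, that handles the tail. Your reduction in the first two paragraphs is compatible with this strategy, so the fix is to replace the third paragraph by a truncation-in-$t$ argument along these lines rather than appeal to a nonexistent concentration of $\widehat W_\delta$.
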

Finally,  we will see how the previous results allow us to reach our goal, which is to
prove  Theorem~\ref{T:mainpairs1}, thus completing the proof of part~\eqref{I:MainPythPairs1} of Theorems~\ref{T:PairsDensity}
and \ref{T:MainPythPairs}.
\begin{proof}[Proof of Theorem~\ref{T:mainpairs1} assuming Proposition~\ref{P:aperiodic1}, Corollary~\ref{C:mainvanishing1},  and Lemma~\ref{L:mainpositive1}.]
	By combining Corollary~\ref{C:mainvanishing1}  and Lemma~\ref{L:mainpositive1}, we deduce
	that there exist $\delta_0, \rho_0>0$, depending only on $\sigma$, such that
	$$
	\liminf_{K\to\infty}\E_{Q\in\Phi_K}	\lim_{N\to\infty} \E_{m,n\in[N]}\int_{\CM_p} A_{\delta_0}(f,Q;m,n)\, d\sigma(f)\geq  \rho_0.
	$$
	(There is no need to take the real part on this expression since it is real.)
	From this we immediately deduce that \eqref{E:sigmapositive1p}  holds for some $Q_0\in \N$.   As we also explained before,  this fact, together with  Proposition~\ref{P:aperiodic1},  imply
	\eqref{E:sigmapositive1} via the bounded convergence theorem, completing the proof.
\end{proof}

To establish Theorem~\ref{T:mainpairs1}, it remains  to prove Proposition~\ref{P:aperiodic1},  Lemma~\ref{L:mainvanishing1} (Corollary~\ref{C:mainvanishing1} is an immediate consequence), and Lemma~\ref{L:mainpositive1}. We do this in Section~\ref{S:PythPairs1}.

\subsection{Proof plan for part~\eqref{I:MainPythPairs2} of Theorem~\ref{T:MainPythPairs}} \label{SS:Plan1ii}
The general strategy is similar to that used to prove part~\eqref{I:MainPythPairs1} of Theorem~\ref{T:MainPythPairs}, but there are two major differences.
The first is the required concentration estimate, which is given in Proposition~\ref{P:concentration2} below.
Unlike \cref{P:concentration1}, this result is new and of independent interest, and its proof occupies a considerable portion of the argument.
The second difference is that the limit in \eqref{E:sigmapositive2par} may not exist, which
causes additional technical problems.


Arguing as before, we get that  part~\eqref{I:MainPythPairs2} of Theorem~\ref{T:MainPythPairs}  follows from the following positivity property.
\begin{theorem}\label{T:mainpairs2}
	Let $\sigma$ be a Borel probability measure on $\CM$ such that $\sigma(\{1\})>0$ and \eqref{E:positivers} holds.
	Then there exists $\delta>0, c\in \R$ such that
 \begin{equation}\label{E:sigmapositive2}
		\liminf_{N\to\infty} \E_{m,n\in[N]}\, \tilde{w}_{\delta,c}(m,n)\cdot \int_{\CM}f\big(\ell \big(m^2+n^2\big)\big)\cdot \overline{f\big(\ell'\, m n\big)}\, d\sigma(f)>0,
	\end{equation}
 where $\tilde{w}_{\delta,c}(m,n)$ is the weight defined in \eqref{E:weight2} of Lemma~\ref{L:Sdelta}.
\end{theorem}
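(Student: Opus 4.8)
The plan is to run the argument of Section~\ref{SS:Plain1i} with the pattern $m^2-n^2$ replaced by $m^2+n^2$, substituting the new Proposition~\ref{P:concentration2} for Proposition~\ref{P:concentration1} and being careful that several of the limits over $N$ need no longer exist. As in the proof of Theorem~\ref{T:mainpairs1}, the positivity property \eqref{E:positivers} (together with $\tilde w_\delta\ge 0$) reduces matters to proving positivity of the liminf in \eqref{E:sigmapositive2} after restricting $\E_{m,n\in[N]}$ to a grid $\{(Qm+r_1,Qn+r_2)\colon m,n\in\N\}$, where $Q$ is multiplicatively large — eventually an element of a F\o lner set $\Phi_K$ with $K\to\infty$ — and the fixed residues $r_1,r_2$ are chosen so that $(Qm+r_1)^2+(Qn+r_2)^2$ lies in a fixed invertible residue class modulo $Q$ (for instance $(r_1,r_2)=(1,0)$ yields the residue $1$). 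A shift is necessary here, since without it the homogeneity of $m^2+n^2$ and $mn$ would make the $f(Q)$-factors cancel and merely reproduce the original average. Writing $A'_\delta(f,Q;m,n)$ for the resulting analogue of \eqref{E:AdfQmn} — the product of $\tilde w_\delta(m,n)$ with $f\big(\ell((Qm+r_1)^2+(Qn+r_2)^2)\big)\cdot\overline{f(\ell'(Qm+r_1)(Qn+r_2))}$ — one splits the $\CM$-integral of $A'_\delta$ into the contributions of the aperiodic completely multiplicative functions, of $\CM_p\setminus\CA$, and of the Archimedean characters $\CA$, and treats each separately.

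For the aperiodic part one needs the exact analogue of Proposition~\ref{P:aperiodic1}: for aperiodic $f$ the limit $\lim_{N\to\infty}\E_{m,n\in[N]}A'_\delta(f,Q;m,n)$ vanishes, and for every $f\in\CM$ it exists. As in Section~\ref{S:PythPairs1} this rests on the known Gowers-uniformity of aperiodic multiplicative functions; the only new feature is that $m^2+n^2$ does not split over $\Q$, so one works over the Gaussian integers, writing $m^2+n^2=(m+in)(m-in)$ and using that $m+in$, $m-in$, $m$, $n$ constitute a sufficiently non-degenerate system of linear forms over $\Z[i]$. For the part supported on $\CM_p\setminus\CA$ one invokes Proposition~\ref{P:concentration2}: for $f\sim\chi\cdot n^{it}$ and $Q\in\Phi_K$ with $K$ large it replaces $f\big(\ell((Qm+r_1)^2+(Qn+r_2)^2)\big)$ by a deterministic approximant, and a direct computation then shows that $\lim_{N\to\infty}\E_{m,n\in[N]}A'_\delta(f,Q;m,n)$ equals, up to an error that is small uniformly in $Q\in\Phi_K$, a quantity of the form $C_{\ell,\ell'}(K)\cdot\overline{f(Q)}\cdot Q^{i\tau}$ with $C_{\ell,\ell'}(K)\in\U$ and $\tau\in\R$. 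Averaging this over $Q\in\Phi_K$ and letting $K\to\infty$ gives $0$ whenever $f\notin\CA$, by the F\o lner/equidistribution argument of Lemma~\ref{L:Fol0} used in Lemma~\ref{L:mainvanishing1}; two applications of bounded convergence then yield the analogue of Corollary~\ref{C:mainvanishing1}. Since the inner $N$-limit in \eqref{E:sigmapositive2par} need not exist, one works with $\liminf_{N\to\infty}$ throughout and uses the $Q$-uniformity of the error terms (both in Proposition~\ref{P:concentration2} and in the aperiodic estimate) to pass to a common subsequence $N_j\to\infty$ along which the $N$- and $Q$-averages may be interchanged.

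It remains to treat the contribution of $\CA$, for which $Q$-averaging is useless since the relevant $N$-averages do not depend on $Q$; here the weight $\tilde w_\delta$ does the work. Arguing as in Lemma~\ref{L:mainpositive1}, and using the hypothesis $\sigma(\{1\})>0$ together with the defining properties of $\tilde w_\delta$ from Lemma~\ref{L:Sdelta} — which is tailored precisely to neutralize the effect of the non-trivial Archimedean characters — one obtains $\delta_0,\rho_0>0$, depending only on $\sigma$, with
$$\liminf_{N\to\infty}\inf_{Q\in\N}\Re\Big(\E_{m,n\in[N]}\int_\CA A'_{\delta_0}(f,Q;m,n)\,d\sigma(f)\Big)\ge\rho_0.$$
Combining the three pieces: the aperiodic contribution vanishes, the $\CM_p\setminus\CA$ contribution vanishes after averaging over $Q\in\Phi_K$ and letting $K\to\infty$, and the $\CA$ contribution is bounded below by $\rho_0$; hence for some $Q_0$ (in a suitable $\Phi_K$) the grid-restricted liminf in \eqref{E:sigmapositive2} is at least $\rho_0/2>0$, and by the grid reduction of the first paragraph this proves Theorem~\ref{T:mainpairs2}. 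The principal obstacle is Proposition~\ref{P:concentration2} itself: unlike Proposition~\ref{P:concentration1}, which is quoted from \cite{KMPT21}, the required concentration of a pretentious multiplicative function along the values $(Qm+r_1)^2+(Qn+r_2)^2$ is new, and establishing it — presumably by transporting Hal\'asz--Matom\"aki--Radziwi{\l}{\l}-type estimates to the Gaussian integers and exploiting the two-variable average — is expected to absorb the bulk of the work; a secondary, more technical obstacle is the possible non-existence of the limit over $N$, which is why the statement is phrased with $\liminf$ and why uniformity in the modulus $Q$ must be tracked at every step.
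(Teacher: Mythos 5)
Your overall strategy matches the paper's (restricting to the grid $\{(Qm+1,Qn)\}$, splitting into aperiodic / $\CM_p\setminus\CA$ / $\CA$ contributions, using Proposition~\ref{P:concentration2} and $Q$-averaging for the middle piece, and the weight $\tilde w_\delta$ for $\CA$, then undoing the grid restriction via \eqref{E:positivers}), but there is a genuine error and an internal inconsistency in the way you handle the non-convergence over $N$, which is exactly the technical crux that distinguishes this case from Theorem~\ref{T:mainpairs1}.

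You assert that one ``needs the exact analogue of Proposition~\ref{P:aperiodic1}: for aperiodic $f$ the limit \dots vanishes, and for every $f\in\CM$ it exists.'' The second half of this claim is false, or at least unavailable: the convergence for arbitrary $f\in\CM$ in the $m^2-n^2$ case comes from \cite[Theorem~1.4]{FH16}, which treats products of linear forms over $\Q$, and no such result is known for the irreducible quadratic form $m^2+n^2$. The paper's Proposition~\ref{P:aperiodic2} deliberately drops the ``furthermore'' clause, and a remark after Theorem~\ref{T:MainPythPairs} explicitly warns that the limit in \eqref{E:sigmapositive2par} may not exist. You later contradict your own existence claim when you write ``Since the inner $N$-limit \dots need not exist, one works with $\liminf$ \dots to pass to a common subsequence,'' which leaves your proposal without a coherent treatment of this point.

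The ``common subsequence'' idea is not carried out and, as stated, is not quite what is needed: the pretentious integral runs over uncountably many $f$, so one cannot diagonalize pointwise in $f$; one needs a measurability argument of the kind used in Lemma~\ref{L:averagesmall}. What the paper actually does is establish the vanishing statement with the order of limits $\lim_{K\to\infty}\limsup_{N\to\infty}\bigl|\E_{Q\in\Phi_K}\E_{m,n\in[N]}\int_{\CM_p\setminus\CA}B_\delta\,d\sigma\bigr|=0$ (Proposition~\ref{P:vanishing2}) — note $\limsup_N$ is \emph{outside} $\E_Q$ — which is precisely where the $Q$-uniformity of Proposition~\ref{P:concentration2} plus measurability are used. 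In the final assembly the paper then extracts not a single $Q_0$ but a sequence $Q_N\in\Phi_{K_0}$ depending on $N$, obtains \eqref{E:rho}, and only afterwards uses \eqref{E:positivers} (and a weight-substitution estimate $\tilde w_\delta(Q_Nm+1,Q_Nn)\approx\tilde w_\delta(m,n)$, which you omit) to remove the grid at the cost of a factor $1/Q_0^2$. Your concluding sentence ``hence for some $Q_0$ (in a suitable $\Phi_K$) the grid-restricted liminf \dots is at least $\rho_0/2$'' tacitly assumes a single $Q_0$ works, which does not follow without convergence of the $N$-averages. So the proposal correctly identifies the building blocks but misstates the analytic difficulty and does not actually resolve it.
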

Again, to analyse the limit in \eqref{E:sigmapositive2}, we use the theory of completely multiplicative functions. We introduce the following notation: for $\delta>0, c\in \R$, $f\in\CM$, and $Q,m,n\in\N$, let
\begin{equation}\label{E:BdfQmn}
	B_{\delta,c}(f,Q;m,n):=\tilde{w}_{\delta,c}(m,n)\cdot f\big(\ell \big((Qm+1)^2+(Qn)^2\big)\big)\cdot \overline{f\big(\ell'\, (Qm+1) (Qn)\big)}.
\end{equation}
If $f$ is aperiodic, we have the following result, which we will deduce from the results in \cite{FH17}.
\begin{proposition}\label{P:aperiodic2}
	Let $f\colon \N\to \U$ be an aperiodic multiplicative function. Then  for every $\delta>0, c\in \R$ and $Q\in \N$  we have
	\begin{equation}\label{E:wdmnB}
		\lim_{N\to\infty} \E_{m,n\in [N]} \,
		B_{\delta,c}(f,Q;m,n)=0.
	\end{equation}
\end{proposition}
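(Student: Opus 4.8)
The plan is to mirror the proof of Proposition~\ref{P:aperiodic1}, the one genuinely new feature being that the quadratic form $(Qm+1)^2+(Qn)^2$ is irreducible over $\Q$: thus $B_\delta(f,Q;m,n)$ is not a product of values of $f$ at independent linear forms, and the linear-forms estimates behind Proposition~\ref{P:aperiodic1} (which come from \cite{FH17}) do not apply to it directly; indeed this configuration also lies outside the scope of \cite[Theorem~2.7]{FH17} and of its extension, since here the relevant discriminant $e^2-4ac$ equals $-4<0$. The plan is to remove the irreducibility obstruction by passing to the Gaussian integers. First, a few harmless reductions: the weight $\tilde{w}_\delta$ is a fixed, $f$-independent $[0,1]$-valued function of low complexity (Lemma~\ref{L:Sdelta}) --- essentially a fixed Lipschitz function of $m/n$ --- so after a standard Fourier approximation it may be absorbed into the ``structured'' part of the analysis without affecting any cancellation, and it suffices to treat $\tilde{w}_\delta\equiv 1$ together with an auxiliary pair of archimedean twists in $m$ and $n$; moreover, since the measure $\sigma$ occurring in the application is carried by $\CM$, we may and do assume $f$ completely multiplicative, so that $f(\ell'(Qm+1)(Qn))=f(\ell')\,f(Q)\,f(Qm+1)\,f(n)$.

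The heart of the matter is the substitution $\alpha:=(Qm+1)+i\,(Qn)\in\Z[i]$. Then $(Qm+1)^2+(Qn)^2=N(\alpha)=\alpha\overline\alpha$, and $(m,n)\mapsto\alpha$ is a bijection of $[N]^2$ onto the Gaussian integers in a box that are $\equiv 1\pmod Q$. Up to the bounded scalar $f(\ell)\,\overline{f(\ell')f(Q)}$, the average in \eqref{E:wdmnB} becomes the average over such $\alpha$ of $g(\alpha)$ against $\tilde{w}_\delta$ and the twist $\overline{f(\Re\alpha)}\cdot\overline{f((\Im\alpha)/Q)}$, where $g:=f\circ N$ is a multiplicative function on $\Z[i]$. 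One checks that if $f$ is aperiodic then so is $g$ in the Gaussian sense: $N$, hence $g$, is invariant under $\alpha\mapsto\overline\alpha$ and under multiplication by units, so any Hecke character of $\Q(i)$ to which $g$ could pretend must share these symmetries, which forces it to factor through the norm as an ordinary Dirichlet character of $\N$; but that would make $f$ pretentious, a contradiction. Consequently $g$ has mean value zero and, more to the point, $g$ inherits the $\Z[i]$-analogue of the Gowers-uniformity / orthogonality-to-nilsequences statements for aperiodic multiplicative functions that underlie \cite{FH17}; this analogue is obtained either by transferring those arguments to the ring $\Z[i]$, or --- more efficiently --- by using that $g$ is constant on norm-fibres, which turns averages of $g$ over $\Z[i]$ into averages of $f$ over $\N$ weighted by the representation count, and quoting the $\N$-statements. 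Since the weight times twist is a ``rank-one'' object on $\Z[i]$ --- a product of a function of $\Re\alpha$, a function of $\Im\alpha$, and a slowly varying factor --- the asymptotic orthogonality of the aperiodic $g$ to such structured weights forces the average to $0$, which is the proposition. (Proposition~\ref{P:aperiodic1} is recovered the same way, with the split form $(Qm+1-Qn)(Qm+1+Qn)$ in place of the norm form, everything staying over $\Z$.)

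The crux --- and the only place requiring real work or a genuinely deep citation --- is this last orthogonality: that the truly two-dimensional, aperiodic $g=f\circ N$ cannot correlate with a product of two one-dimensional multiplicative weights, one in each Gaussian coordinate. Concretely, over $\Z^2$ one expands the representation count $\#\{(a,b)\in\Z^2\colon a^2+b^2=n\}=4\sum_{d\mid n}\chi_{-4}(d)$, turning $\sum_{m,n\in[N]}f((Qm+1)^2+(Qn)^2)\,\overline{f(Qm+1)}\,\overline{f(n)}$ into a bilinear sum $\sum_{d,e}\chi_{-4}(d)\,f(de)\,(\cdots)$ over hyperbola-type ranges, and one must show it has no main term for aperiodic $f$ in spite of the linear twists, which ``see'' $m$ and $n$ separately. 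This is exactly where the Gowers-uniformity input of \cite{FH17} --- control of the relevant $U^2$/$U^3$-type norms of aperiodic multiplicative functions and the attendant inverse theorem --- is indispensable; by comparison the affine shift $Qm+1$, the coprimality issues in $(Qm+1)(Qn)$, and the auxiliary weight $\tilde{w}_\delta$ are routine to accommodate once this is available.
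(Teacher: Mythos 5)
Your route differs genuinely from the paper's, and the step you flag as the crux contains an actual error. The paper's proof (given as Proposition~\ref{P:aperiodic2'}) is a short reduction to a single citation: approximate $\tilde{w}_\delta$ uniformly by trigonometric polynomials and absorb the resulting archimedean twist into $f$, forming $f_k:=f\cdot n^{ki}$; unfold the shifted coordinates via Dirichlet characters and complete multiplicativity so as to arrive at $\E_{m,n\in[N]}\,f_k(m^2+n^2)\cdot(\overline{f_k}\chi)(m)\cdot(\overline{f_k}\chi')(n)$ with $\overline{f_k}\chi$, $\overline{f_k}\chi'$ aperiodic; and then quote \cite[Theorem~9.7]{FH17}, which treats averages $\E_{m,n}\,f_1(Q(m,n))\,f_2(m)\,f_3(n)$ for binary quadratic forms $Q$ (including $m^2+n^2$) whenever $f_2$ or $f_3$ is aperiodic. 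The irreducibility of $m^2+n^2$ is thus not the obstacle you describe: aperiodicity is put on the linear coordinates $m$ and $n$, available because the \emph{other} argument $(Qm+1)(Qn)$ splits into linear forms, and the quadratic-form input is already a theorem of \cite{FH17}.

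The error in your Gaussian-integer detour is the assertion that $g:=f\circ N$ is aperiodic over $\Z[i]$ whenever $f$ is aperiodic over $\N$. This is false. The pretentious distance on $\Z[i]$ weights a Gaussian prime $\pi$ by $1/N(\pi)$, so inert primes $p\equiv 3\pmod 4$ (with $N(\pi)=p^2$) contribute only $O\big(\sum_p p^{-2}\big)=O(1)$; the distance is blind to $f$ at those primes. For a concrete counterexample, take a nontrivial real Dirichlet character $\psi$ of odd conductor congruent to $1\pmod 4$ and let $f$ be completely multiplicative with $f(2)=1$, $f(p)=1$ for $p\equiv 1\pmod 4$, and $f(p)=\psi(p)$ for $p\equiv 3\pmod 4$. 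Then $f$ is aperiodic on $\N$, yet $f(N(\alpha))=1$ for \emph{every} $\alpha\in\Z[i]$, since a norm is a sum of two squares, whose prime factors $\equiv 3\pmod 4$ occur to even powers, and $\psi^2=1$; that is, $g\equiv 1$, the opposite of aperiodic. Your symmetry argument --- ``any Hecke character $\xi$ with $g\sim\xi$ factors through the norm, but that would make $f$ pretentious'' --- breaks at the last implication for exactly this reason. Since your plan rests entirely on the aperiodic/pretentious dichotomy for $g$, it does not go through; and patching it to look only at the primes $\equiv 1\pmod 4$ returns you to the quadratic-form correlation estimate over $\N$, which is precisely what \cite[Theorem~9.7]{FH17} supplies.
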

\begin{remark}
	It follows that \eqref{E:wdmnB} also holds even  if  $Q$ depends on $N$, but its values are taken from a finite subset of $\N$.
\end{remark}

If $f$ is pretentious we will crucially use the following concentration estimate (which is a direct consequence of a more general result proved in Section~\ref{S:concentration2}) to analyse the average \eqref{E:sigmapositive2}.
It features a version of the pretentious distance that only considers primes\footnote{The reason we only need primes $\equiv1\bmod4$ is that these are the primes that split in the splitting field of $m^2+n^2$. In a subsequent work we extended these techniques to obtain concentration estimates to general binary quadratic forms.} congruent to $1$ mod $4$:
$$
	\D_1(f,\chi\cdot n^{it}; K,\infty)^2:= \sum_{\substack{K< p, \\ p\equiv   1 \! \! \! \pmod{4}}} \frac{1}{p}\, (1-\Re(f(p)\cdot \overline{\chi(p)} \cdot p^{-it})).
	$$
\begin{proposition}\label{P:concentration2}
	Let 
	$f\colon \mathbb{N}\to\mathbb{U}$ be a multiplicative function such that $f\sim \chi\cdot n^{it}$ for some $t\in \R$ and Dirichlet character $\chi$ with period $q$. Let  also  $\Phi_K$ be as in \eqref{E:PhiK} and suppose that
	$K$ is large enough  so that, say, $\D_1(f,\chi\cdot n^{it}; K,\infty)\leq 1$ and $q$ divides all elements of $\Phi_K$. Then
	\begin{multline*}
		\limsup_{N\to\infty}  \max_{Q\in \Phi_K}	\E_{m,n\in [N]}\, \big|f\big((Qm+1)^2+(Qn)^2\big)- Q^{2it} \cdot (m^2+n^2)^{it}\cdot   \exp\big(G_N(f,K)\big)\big|\ll  \\
		\D_1(f,\chi\cdot n^{it}; K,\infty)+K^{-1/2},
	\end{multline*}
	where the implicit constant is absolute and
	\begin{equation}\label{E:GNfQdef}
		G_N(f,K):=2\sum_{\substack{ K< p\leq N,\\ p\equiv   1 \! \! \! \pmod{4}}}\, \frac{1}{p}\,(f(p)\cdot \overline{\chi(p)}\cdot p^{-it} -1).
	\end{equation}
	
\end{proposition}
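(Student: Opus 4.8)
The plan is to deduce Proposition~\ref{P:concentration2} from a concentration estimate for the multiplicative function $\tilde f:=f\circ N$ on the Gaussian integers $\Z[i]$ --- the ``more general result'' alluded to above --- averaged over a Gaussian arithmetic progression; the factor $2$ and the restriction to primes $\equiv 1\pmod 4$ in $G_N(f,K)$ then appear naturally from the splitting of such primes in $\Z[i]$. Fix $f\sim\chi\cdot n^{it}$ with $\chi$ of period $q$, fix $K$ large, and for $Q\in\Phi_K$ and $m,n\in[N]$ put $\mu=\mu(m,n):=(Qm+1)+i\,Qn\in\Z[i]$ and $\nu=\nu(m,n):=N(\mu)=(Qm+1)^2+(Qn)^2$, so $f(\nu)=\tilde f(\mu)$. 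Since $q\mid Q$ and $\prod_{p\le K}p\mid Q$, we have $\mu\equiv 1\pmod Q$ in $\Z[i]$, hence $\nu\equiv 1\pmod q$ (so $\chi(\nu)=1$), $\nu$ is odd, and every Gaussian prime dividing $\mu$ has norm $>K$.

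First one reduces to a square-free situation: if $p\equiv 3\pmod 4$ divides $\nu$ then $p\mid Qm+1$ and $p\mid Qn$, and if $p\equiv 1\pmod 4$ has $p^2\mid\nu$ then $\mu$ is divisible by the square of a split Gaussian prime or by $p$ itself; each of these is a codimension-two congruence on $(m,n)$, so the set of $(m,n)\in[N]^2$ for which $\nu(m,n)$ is not square-free with all prime factors $\equiv 1\pmod 4$ has density $\ll K^{-1/2}+o_N(1)$, and there the trivial bound (the quantity in the proposition is $\le 2$) suffices. On the complement $\mu$ is a product of distinct split Gaussian primes of norm $>K$, and using $\chi(\nu)=1$ one may write, with $\tilde\chi:=\chi\circ N$,
$$
\tilde f(\mu)=N(\mu)^{it}\prod_{\pi\mid\mu}\tilde f(\pi)\,\overline{\tilde\chi(\pi)}\,N(\pi)^{-it}.
$$
Here $N(\mu)^{it}=\nu^{it}=Q^{2it}(m^2+n^2)^{it}\bigl(1+O(1/(Qm))\bigr)^{it}$ and the last factor is $1+o_N(1)$ in $L^1\bigl([N]^2\bigr)$. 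Hence it remains to show that $\prod_{\pi\mid\mu}\tilde f(\pi)\overline{\tilde\chi(\pi)}N(\pi)^{-it}$ concentrates at $\exp\bigl(G_N(f,K)\bigr)$ in $L^1$-average over $(m,n)\in[N]^2$, with error $\ll\D_1(f,\chi\cdot n^{it};K,\infty)+K^{-1/2}$.

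Writing $a_{N(\pi)}:=\tilde f(\pi)\overline{\tilde\chi(\pi)}N(\pi)^{-it}-1$ and expanding the product as $\exp\bigl(\sum_{\pi\mid\mu}\log(1+a_{N(\pi)})\bigr)$, linearizing the logarithm costs $\ll\sum_{\pi\mid\mu}|a_{N(\pi)}|^2\ll\sum_{\pi\mid\mu}\bigl(1-\Re(\tilde f(\pi)\overline{\tilde\chi(\pi)}N(\pi)^{-it})\bigr)$, which (summing over the two Gaussian primes of norm $p$ for each split $p\equiv1\pmod 4$) has $L^1$-average $\ll\D_1(f,\chi\cdot n^{it};K,\infty)^2+K^{-1/2}\le\D_1(f,\chi\cdot n^{it};K,\infty)+K^{-1/2}$. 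One is thus reduced to a Tur\'an--Kubilius-type concentration for the additive function $\mu\mapsto\sum_{\pi\mid\mu}a_{N(\pi)}$ along $\mu(m,n)$, which is proved by the second moment method, the two arithmetic inputs being: (i) for a Gaussian prime $\pi$ with $K<N(\pi)\le N^2$, the density of $(m,n)\in[N]^2$ with $\pi\mid\mu(m,n)$ equals $N(\pi)^{-1}$ up to a small error, a lattice point count for the affine sublattice $\{\alpha+\beta i:\alpha\equiv 1,\ \beta\equiv 0\pmod Q\}$ in a box of side $\asymp N$ --- summing over the two primes above each split $p\equiv1\pmod 4$ produces exactly the factor $2$ and the congruence condition in $G_N$, and truncating the resulting sum at $p\le N$ costs only $o_N(1)$ by Cauchy--Schwarz and $\sum_{p>N}|a_p|^2/p\to 0$; (ii) quasi-independence: for distinct Gaussian primes the density of $\pi\pi'\mid\mu$ is $N(\pi)^{-1}N(\pi')^{-1}$ up to error (CRT), bounding the variance by $\ll\sum_{K<N(\pi)}|a_{N(\pi)}|^2/N(\pi)\ll\D_1(f,\chi\cdot n^{it};K,\infty)^2+K^{-1/2}$. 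Reassembling the factorisation (the $\max$ over the finite set $\Phi_K$ being harmless, and the exceptional pairs added back at cost $\ll K^{-1/2}+o_N(1)$) then yields the claim.

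The main obstacle will be to make inputs (i) and (ii) quantitative \emph{uniformly} in the Gaussian prime(s), with error terms strong enough to survive summation over all $\pi$ (resp.\ $\pi,\pi'$) of norm up to $\asymp N^2$. For $N(\pi)$ comfortably below the box size the boundary term in the lattice point count is under control, but the Gaussian primes of ``large'' norm (between $\asymp N$ and $\asymp N^2$) divide $\mu(m,n)$ for a positive proportion of $(m,n)$ and a crude bound fails; one must instead use a divisor-switching / sieve argument, exploiting that $\sum_{\pi}|a_{N(\pi)}|^2/N(\pi)<\infty$ forces the Gaussian primes with $|a_{N(\pi)}|$ bounded away from $0$ to be so sparse that their total contribution along $\mu(m,n)$ is $o_N(1)$. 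Carrying this out robustly --- ideally for a general number field, which would also account for the footnote about binary quadratic forms --- is where the bulk of the work lies.
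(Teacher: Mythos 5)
Your blueprint is structurally the same as the paper's: a Tur\'an--Kubilius/second-moment argument for the additive function $\nu\mapsto\sum_{p\mid\mid\nu}a_p$ along the quadratic form, followed by exponentiation and linearization of the logarithm. The paper carries this out directly over $\Z$ (Lemmas \ref{L:wNPQ}, \ref{L:TKadditive}, \ref{L:TKmultiplicative}, \ref{L:TKmultiplicative2}, and then Proposition \ref{P:concentration2quanti}), whereas you reformulate everything in $\Z[i]$ via $\mu=(Qm+1)+iQn$ and $\tilde f=f\circ N$. The two formulations are equivalent --- the factor $2$ and the restriction to $p\equiv1\pmod4$ in $G_N(f,K)$ are, in the paper, encoded in the count of \eqref{E:wNPQ1}, which comes from $-1$ being a quadratic residue $\bmod\ p$, i.e.\ from splitting; your version makes the Gaussian structure explicit, which is arguably cleaner and (as the footnote indicates) is what the authors do in later work on general binary forms. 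Your reduction to the squarefree, $1\bmod4$-only case is correct and matches the paper's reductions in the proof of Lemma \ref{L:TKmultiplicative2}.

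There are, however, two concrete gaps.

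First, and most seriously, you do not resolve the large-prime problem that you yourself flag at the end; and the mechanism you gesture at (``divisor-switching / sieve argument'') is not what is actually needed. The paper handles this with a very short argument (Lemma \ref{L:wNQl}): if $l$ is a sum of two squares and $l\mid (Qm+1)^2+(Qn)^2$, then the quotient is again a sum of two squares, so the count of $(m,n)\in[N]^2$ with $l\mid\nu(m,n)$ is at most $\sum_{k\le 3Q^2N^2/l}r_2(k)\ll Q^2N^2/l$, giving a uniform upper bound $w_{N,Q}(l)\ll Q^2/l$ that holds even for $l$ as large as $3Q^2N^2$. This is all that is needed: one does not require the exact density $1/N(\pi)+$error for large $\pi$ (which is indeed false), only the upper bound $\ll Q^2/N(\pi)$. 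The large primes are then packaged into an auxiliary multiplicative function $f_{N,1}$ supported on primes in $(N,3Q^2N^2]$, and their total contribution is bounded by $Q^2\cdot\D_1(f,\chi n^{it};N,3Q^2N^2)\to0$ as $N\to\infty$, using Cauchy--Schwarz exactly as you describe for the smaller range. No sieve or divisor switching is involved.

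Second, even for primes of norm $\le N$ the quasi-independence estimate (your input (ii)) carries an $O(1/N)$ error per pair $(\pi,\pi')$, so summing over all pairs with $N(\pi),N(\pi')\le N$ would give an unacceptable error of order $N/\log^2 N$. The paper splits the additive function into the part supported on primes $\le\sqrt N$ (where quasi-independence is used and the cumulative error is $O(1/\log^2N)$) and the part on primes in $(\sqrt N,N]$ (where only a crude $L^2$ bound via $w_{N,Q}(l)\ll Q^2/l$ and Mertens $\sum_{\sqrt N<p\le N}1/p\ll1$ is used) --- see the definitions of $h_1,h_2$ in the proof of Lemma \ref{L:TKadditive}. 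Your write-up does not make this cut-off explicit; without it, the variance estimate does not close. Both gaps are fixable and fit naturally into your Gaussian-integer framing, but as written the hardest steps of the proof remain unproved.
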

\begin{remarks}
	$\bullet$ 	It is important for our argument that the  implicit constant does not depend on $K$ and  that  $\exp\big(G_N(f,K)\big)$ is the same for all $Q\in \Phi_K$ that are divisible by $q$. It is also important for our applications that we get some uniformity over the  $Q\in \Phi_K$.
	
	
	$\bullet$ For the future applications in mind, we prove a somewhat more general and quantitatively more explicit variant, see Proposition~\ref{P:concentration2quanti} below.
\end{remarks}
As in the proof of  Theorem~\ref{T:mainpairs1},
in order to prove Theorem~\ref{T:mainpairs2}  we split the integral into two parts, one that  is supported on Archimedean characters and the other on its complement. To handle the second part, we use the following  result, which is proved using \cref{P:concentration2} and can be compared to Corollary~\ref{C:mainvanishing1}. Again, taking multiplicative averages over the variable $Q$ is a key maneuver, but the non-convergence of  the averages $\E_{m,n\in [N]} \, B_{\delta,c}(f,Q;m,n)$ causes considerable technical difficulties in our proofs.
\begin{proposition}\label{P:vanishing2}
	Let $(\Phi_K)$, $\CA$, $B_{\delta,c}(f,Q; m,n)$  be defined by  \eqref{E:PhiK},  \eqref{E:CA}, \eqref{E:BdfQmn}, respectively, and $\delta>0, c\in \R$.  Let also $\sigma$ be a Borel probability measure on $\CM_p$. Then	
	$$
	\lim_{K\to\infty} 	\limsup_{N\to \infty}\Big|\E_{Q\in \Phi_{K}}\,  \E_{m,n\in[N]}	\int_{\CM_p \setminus \CA} \, B_{\delta,c}(f,Q; m,n)\, d\sigma(f)\Big|=0.
	$$	
\end{proposition}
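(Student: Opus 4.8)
The plan is to reduce everything to the concentration estimate of Proposition~\ref{P:concentration2} combined with multiplicative averaging over $Q\in\Phi_K$, exactly in the spirit of Lemma~\ref{L:mainvanishing1} and Corollary~\ref{C:mainvanishing1}, but handling the non-existence of $\lim_N\E_{m,n\in[N]}B_\delta(f,Q;m,n)$ by keeping the $\limsup$ outside and moving it inward only at the very end. First I would fix $f\in\CM_p\setminus\CA$, so that $f\sim\chi\cdot n^{it}$ for some Dirichlet character $\chi$ of period $q$ and some $t\in\R$, with $(n^{it})\notin\CA$ meaning precisely that $f$ is not an Archimedean character. For $K$ large (so that $\D_1(f,\chi\cdot n^{it};K,\infty)\le1$ and $q$ divides every element of $\Phi_K$), Proposition~\ref{P:concentration2} gives, uniformly over $Q\in\Phi_K$,
$$
\E_{m,n\in[N]}\big|f((Qm+1)^2+(Qn)^2)-Q^{2it}(m^2+n^2)^{it}\exp(G_N(f,K))\big|\ll \D_1(f,\chi\cdot n^{it};K,\infty)+K^{-1/2}+o_{N\to\infty}(1).
$$
Similarly, by Proposition~\ref{P:concentration1} (its $\Phi_K$-version), $f(\ell'(Qm+1)Qn)$ is, after factoring out $f(\ell')f(Qn)$, uniformly close in $L^1$ average to $f(\ell')f(Qn)(Qn)^{it}\exp(F_N(f,K))$, and likewise $f(\ell((Qm+1)^2+(Qn)^2))$ factors as $f(\ell)f((Qm+1)^2+(Qn)^2)$. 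Substituting both approximations into $B_\delta(f,Q;m,n)=\tilde w_\delta(m,n)\cdot f(\ell((Qm+1)^2+(Qn)^2))\cdot\overline{f(\ell'(Qm+1)Qn)}$, the dependence on $Q$ collapses to a single factor $\overline{f(Q)}\,Q^{2it}$ (the $\exp(G_N)$ and $\exp(\overline{F_N})$ and the $f(\ell),\overline{f(\ell')}$ pieces do not depend on $Q$), so that
$$
\E_{m,n\in[N]}B_\delta(f,Q;m,n)=\overline{f(Q)}\,Q^{2it}\cdot D_{\ell,\ell',\delta}(f,K,N)+\text{error},
$$
with $|D_{\ell,\ell',\delta}(f,K,N)|\le1$ and error $\ll\D_1+K^{-1/2}+o_{N\to\infty}(1)$, uniformly in $Q\in\Phi_K$.

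Next I would apply Lemma~\ref{L:Fol0} (the statement asserting that for $g\in\CM\setminus\CA$ one has $\lim_{K\to\infty}\E_{Q\in\Phi_K}\overline{g(Q)}\,Q^{i\tau}=0$ for every $\tau\in\R$): here $g=f\notin\CA$, so $\E_{Q\in\Phi_K}\overline{f(Q)}\,Q^{2it}\to0$ as $K\to\infty$. Therefore
$$
\limsup_{N\to\infty}\Big|\E_{Q\in\Phi_K}\E_{m,n\in[N]}B_\delta(f,Q;m,n)\Big|\ll \big|\E_{Q\in\Phi_K}\overline{f(Q)}Q^{2it}\big|+\D_1(f,\chi\cdot n^{it};K,\infty)+K^{-1/2},
$$
and letting $K\to\infty$ the right side tends to $0$, since $\D_1(f,\chi\cdot n^{it};K,\infty)\to0$ as $K\to\infty$ for any fixed pretentious $f$ (the defining sum converges). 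This establishes the pointwise (in $f$) vanishing statement that plays the role of Lemma~\ref{L:mainvanishing1} in the present setting.

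To pass from the pointwise statement to the integrated statement of Proposition~\ref{P:vanishing2}, I would invoke the bounded convergence theorem, but with care about the order of limits. For each fixed $K$ the function $f\mapsto\limsup_{N}|\E_{Q\in\Phi_K}\E_{m,n\in[N]}B_\delta(f,Q;m,n)|$ is bounded by $1$ and is Borel measurable on $\CM_p$ (this uses the measurability of $\CM_p$ from Lemma~\ref{L:Borel} and the fact that each $\E_{Q\in\Phi_K}\E_{m,n\in[N]}B_\delta(f,Q;m,n)$ is continuous in $f$, so its $\limsup$ over the countable set $N\in\N$ is Borel). Taking $\limsup_{N}$ inside the integral via Fatou, then letting $K\to\infty$ and using dominated convergence against the just-proved pointwise limit, we get
$$
\lim_{K\to\infty}\limsup_{N\to\infty}\Big|\E_{Q\in\Phi_K}\E_{m,n\in[N]}\int_{\CM_p\setminus\CA}B_\delta(f,Q;m,n)\,d\sigma(f)\Big|\le\lim_{K\to\infty}\int_{\CM_p\setminus\CA}\limsup_{N\to\infty}\Big|\E_{Q\in\Phi_K}\E_{m,n\in[N]}B_\delta(f,Q;m,n)\Big|d\sigma(f)=0.
$$
I expect the main obstacle to be the bookkeeping in the first paragraph: because $\lim_N\E_{m,n\in[N]}B_\delta(f,Q;m,n)$ need not exist, one cannot simply quote Proposition~\ref{P:aperiodic2}-style convergence and must instead carry the $\limsup_N$ and the error terms $\ll\D_1+K^{-1/2}+o_{N\to\infty}(1)$ through every manipulation, verifying that all approximations are uniform in $Q\in\Phi_K$ so that the single clean factor $\overline{f(Q)}Q^{2it}$ really does emerge after averaging over $Q$; the uniformity over $Q\in\Phi_K$ built into Propositions~\ref{P:concentration1} and~\ref{P:concentration2} is exactly what makes this go through. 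A secondary, purely technical point is justifying the measurability and the interchange of $\limsup_N$ with $\int d\sigma$, which is routine once one notes that everything in sight is a countable combination of pointwise-continuous functions on the compact metrizable space $\CM$.
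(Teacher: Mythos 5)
Your argument is correct and reaches the same conclusion, but the final passage from pointwise (in $f$) vanishing to the integrated statement is done differently from the paper. The paper's proof routes through Lemma~\ref{L:averagesmall}, an Egorov/continuity-of-measure step that produces, for each $\varepsilon>0$, a Borel set $\CM_\varepsilon\subset\CM_p\setminus\CA$ of nearly full $\sigma$-measure and a threshold $K_0$ so that $\sup_{f\in\CM_\varepsilon}|\E_{Q\in\Phi_K}f(Q)Q^{-it_f}|\le\varepsilon$ uniformly for $K\ge K_0$; the uniformity is then used via the approximants $I(Q,N)\approx I_1(Q,N)$ coming from Lemma~\ref{L:keyQQ'}. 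You instead prove the pointwise statement $\lim_K\limsup_N|\E_{Q\in\Phi_K}\E_{m,n\in[N]}B_\delta(f,Q;m,n)|=0$ for each $f\in\CM_p\setminus\CA$ and then apply reverse Fatou followed by dominated convergence to integrate it. This works, and it is somewhat leaner: since $h_K(f):=\limsup_N|\E_Q\E_{m,n}B_\delta(f,Q;m,n)|$ is a countable $\limsup$ of functions continuous on $\CM$, its Borel measurability is immediate, whereas the paper's construction of $\CM_{\varepsilon,m}$ needs the measurability of $f\mapsto t_f$ from Lemma~\ref{L:Borel}\eqref{I:Borel2}. Both routes share the same technical core (concentration estimates from Propositions~\ref{P:concentration1} and~\ref{P:concentration2} applied uniformly over $Q\in\Phi_K$, plus Lemma~\ref{L:Fol0}); yours is essentially the content of Lemma~\ref{L:keyQQ'} carried out inline. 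Two small slips to fix: after factoring out $\overline{f(Q)}$ from $\overline{f(Qn)}$, the remaining $Q$-dependence is $Q^{2it}\cdot Q^{-it}=Q^{it}$, so the relevant factor is $\overline{f(Q)}\,Q^{it}=\overline{f(Q)Q^{-it}}$, not $\overline{f(Q)}\,Q^{2it}$ (the conclusion is unaffected since $f(Q)Q^{-it}$ is still a nontrivial completely multiplicative function when $f\notin\CA$); and in the Proposition~\ref{P:concentration1} approximation it should read $(Qm)^{it}$, not $(Qn)^{it}$. You should also carry both $\D$ (from \cref{P:concentration1}) and $\D_1$ (from \cref{P:concentration2}) in the error term, though both vanish as $K\to\infty$.
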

\begin{remark}
	Unlike the case of \cref{C:mainvanishing1}, we cannot pass the limit over $N$ inside the average over $Q$. This will cause some minor problems in our later analysis, which   we will overcome by using the positivity property \eqref{E:positivers} of the measure $\sigma$ (this is why this positivity property is used in the statement of  Theorem~\ref{T:mainpairs2} but not in  Theorem~\ref{T:mainpairs1}).
\end{remark}

We are left to study the   contribution of the set  $\CA$ of Archimedean characters in which case the presence of the weight $\tilde{w}_{\delta,c}$ allows us  to establish positivity by taking $\delta$ small enough and choosing $c$ appropriately.
\begin{lemma}\label{L:mainpositive2}
	Let $\sigma$ be a bounded Borel  measure on $\CM_p$ such that $\sigma(\{1\})>0$ and $\CA$ be as in \eqref{E:CA}.
	Then there exist $\delta_0,\rho_0>0, c_0\geq 0$, depending only on $\sigma$,  such that
	$$
	\liminf_{N\to\infty}\inf_{Q\in \N}	\Re\Big( \E_{m,n\in[N]}\int_{\CA} B_{\delta_0,c_0}(f,Q;m,n)\, d\sigma(f)\Big)\geq\rho_0.
	$$
\end{lemma}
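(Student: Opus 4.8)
The plan is to restrict the integral to the set $\CA$ of Archimedean characters, transport the problem to a finite measure on $\R$, split off the trivial character as a positive main term, and dispose of the remainder once $\delta$ is small using the properties of the weight $\tilde{w}_\delta$ provided by Lemma~\ref{L:Sdelta}.

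I would first parametrize $\CA$. Writing $f=(n^{it})_{n\in\N}$, the map $f\mapsto t$ is a Borel isomorphism of $\CA$ onto $\R$ (injective since $\log 2/\log 3\notin\Q$), so $\sigma|_{\CA}$ pushes forward to a finite positive measure $\nu$ on $\R$ with $\nu(\{0\})=\sigma(\{1\})>0$, and for such $f$ the quantity \eqref{E:BdfQmn} becomes $\tilde{w}_\delta(m,n)\big(\tfrac{\ell((Qm+1)^2+(Qn)^2)}{\ell'(Qm+1)(Qn)}\big)^{it}$. From Lemma~\ref{L:Sdelta} I would use that $\tilde{w}_\delta$ is $[0,1]$-valued, depends essentially only on the ratio $m/n$, has $\lim_{N}\E_{m,n\in[N]}\tilde{w}_\delta(m,n)=:c_\delta>0$, and distributes $\log\frac{m^2+n^2}{mn}$ (weighted by $\tilde{w}_\delta$, in the limit over $[N]^2$) essentially uniformly over an interval of length of order $1/\delta$ at the left edge $\log 2$ of the range of this quantity. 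Because $(x,y)\mapsto\frac{x^2+y^2}{xy}$ is scale-invariant, replacing $Qm+1$ by $Qm$ changes the base of the above power by a factor $1+O(1/m)$ uniformly in $Q$; so for every $t\in\R$ the average $\E_{m,n\in[N]}\tilde{w}_\delta(m,n)\big(\tfrac{\ell((Qm+1)^2+(Qn)^2)}{\ell'(Qm+1)(Qn)}\big)^{it}$ converges, as $N\to\infty$ and uniformly in $Q$, to
\[
h_\delta(t):=\lim_{N\to\infty}\E_{m,n\in[N]}\tilde{w}_\delta(m,n)\Big(\tfrac{\ell(m^2+n^2)}{\ell'mn}\Big)^{it},
\]
a limit which exists, is independent of $Q$, and satisfies $h_\delta(0)=c_\delta$ and $|h_\delta(t)|\ll c_\delta\min\{1,\delta/|t|\}$ (the sinc-type decay coming from the uniform distribution above). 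The uniformity in $Q$ follows from the elementary bound $\big|(\text{ratio})^{it}-\big(\tfrac{\ell(m^2+n^2)}{\ell'mn}\big)^{it}\big|\ll\min\{|t|/m,1\}$, whose contribution is $\int_\R\E_{m\in[N]}\min\{|t|/m,1\}\,d\nu(t)\to0$ (a Ces\`aro estimate together with dominated convergence, $\nu$ being finite). Two further applications of bounded convergence then give $\lim_{N\to\infty}\Re\big(\E_{m,n\in[N]}\int_{\CA}B_\delta(f,Q;m,n)\,d\sigma(f)\big)=\Re\int_\R h_\delta(t)\,d\nu(t)$, uniformly in $Q$.

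It then suffices to pick $\delta_0=\delta_0(\sigma)$ with $\Re\int_\R h_{\delta_0}\,d\nu\ge\rho_0>0$. Splitting the integral into its part over $\{0\}$ and its part over $\R\setminus\{0\}$, the first equals $h_\delta(0)\,\nu(\{0\})=c_\delta\,\sigma(\{1\})>0$, while the second is at most $\int_{\R\setminus\{0\}}|h_\delta(t)|\,d\nu(t)\ll c_\delta\int_{\R\setminus\{0\}}\min\{1,\delta/|t|\}\,d\nu(t)$ in absolute value. For fixed $t\ne0$ the integrand tends to $0$ as $\delta\to0$ and is bounded by $1$, so by dominated convergence this integral tends to $0$; choosing $\delta_0$ small enough that it is at most $\tfrac12\sigma(\{1\})$ (absorbing the implied constant) yields $\Re\int_\R h_{\delta_0}\,d\nu\ge\tfrac12 c_{\delta_0}\sigma(\{1\})=:\rho_0$, and with the uniform-in-$Q$ identity above this is exactly the claimed inequality; here $\delta_0,\rho_0$ depend only on $\sigma$, the dependence on $\ell,\ell'$ being absorbed into the construction of $\tilde{w}_\delta$.

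The main obstacle is the input from Lemma~\ref{L:Sdelta}: one must exhibit a $[0,1]$-valued weight, a function of $m/n$ in essence, whose limiting average $c_\delta$ stays positive while the $\tilde{w}_\delta$-weighted law of $\log\frac{m^2+n^2}{mn}$ becomes uniform over a long interval abutting the minimum of its range, so that $h_\delta$ acquires sinc-type Fourier decay without its mass collapsing too fast. Granting this, the rest is the elementary comparison of $Qm+1$ with $Qm$ and two routine dominated-convergence arguments. (In the Pythagorean case $\ell=1$, $\ell'=2$ one can, as the remark after Lemma~\ref{L:mainpositive1} indicates, dispense with $\tilde{w}_\delta$ altogether and with the shrinking of $\delta$, the unweighted expression already having nonnegative real part.)
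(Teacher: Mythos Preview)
Your argument rests on a misreading of the weight $\tilde w_\delta$ in \eqref{E:weight2}. By definition $\tilde w_\delta(m,n)=F_\delta(r^i)$ with $r:=\ell(m^2+n^2)/(\ell'mn)$, where $F_\delta$ is a bump on the arc $I_\delta\subset\S^1$ around $1$. Thus $\tilde w_\delta$ is a $2\pi$-\emph{periodic} function of $\log r$, supported on intervals of width $O(\delta)$ about the points of $2\pi\Z$; it does \emph{not} spread $\log r$ over a single interval of length $\sim 1/\delta$ as you assert. This immediately falsifies your sinc decay: for any fixed nonzero integer $k$, on the support of $\tilde w_\delta$ one has $r^{ik}=(r^i)^k=1+O(|k|\delta)$, hence
\[
h_\delta(k)=\E\big[\tilde w_\delta\, r^{ik}\big]=c_\delta\big(1+O(|k|\delta)\big),
\]
contradicting the claimed bound $|h_\delta(k)|\ll c_\delta\,\delta/|k|$. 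Without that decay your splitting at $t=0$ collapses: the trivial bound $\big|\int_{t\neq 0}h_\delta\,d\nu\big|\le c_\delta\,\nu(\R\setminus\{0\})$ is all that survives, and there is no reason this should be smaller than $c_\delta\,\nu(\{0\})=c_\delta\,\sigma(\{1\})$. Your final paragraph concedes that the whole argument hinges on $\tilde w_\delta$ having a property that Lemma~\ref{L:Sdelta} does not provide; the lemma as stated is about the specific weight \eqref{E:weight2}, not about a weight you may redesign.

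The paper's proof (declared identical to that of Lemma~\ref{L:mainpositive1}) proceeds in the opposite direction. One first truncates $\CA$ to $\CA_{T_0}=\{n^{it}:|t|\le T_0\}$, choosing $T_0=T_0(\sigma)$ so that $\sigma(\CA\setminus\CA_{T_0})$ is small compared with $\sigma(\{1\})$. On $\CA_{T_0}$ one uses that the weight forces $r^i$ into $I_\delta$, so that for $\delta$ small depending on $T_0$ the weighted average of $r^{it}$ over $[N]^2$ is close to $\mu_\delta=\lim_N\E_{m,n}\tilde w_\delta$, uniformly in $|t|\le T_0$ and in $Q$ (the latter via the same $\log(Qm+1)-\log(Qm)\to0$ observation you make). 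The tail over $\CA\setminus\CA_{T_0}$ is then controlled trivially by $\sigma(\CA\setminus\CA_{T_0})\cdot\mu_\delta$. The smallness that drives the argument is the $\sigma$-measure of the tail in $t$, not a Fourier decay of $h_\delta$.
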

We conclude this section by noting how the previous results allow us to reach our goal, which is to
prove  Theorem~\ref{T:mainpairs2},  thus completing the proof of part~\eqref{I:MainPythPairs2} of Theorems~\ref{T:PairsDensity}
and \ref{T:MainPythPairs}.
\begin{proof}[Proof of Theorem~\ref{T:mainpairs2} assuming  Proposition~\ref{P:aperiodic2}, Proposition~\ref{P:vanishing2},  and Lemma~\ref{L:mainpositive2}]
	We start by combining Proposition~\ref{P:vanishing2}  and Lemma~\ref{L:mainpositive2}.
	We deduce
	that there exist $\delta_0, \rho_0>0, c_0\geq 0$, depending only on $\sigma$, such that
	$$
	\liminf_{K\to\infty}	\liminf_{N\to\infty} \E_{Q\in\Phi_K} \Re\Big(\E_{m,n\in[N]}\int_{\CM_p} B_{\delta_0,c_0}(f,Q;m,n)\, d\sigma(f)\Big)\geq  \rho_0.
	$$
	In this case,
	it is a little bit tricky   to deduce   that  \eqref{E:sigmapositive2} holds. We do it as follows.  The last estimate implies that there exist $K_0\in \N$ and $Q_N\in \Phi_{K_0}$, $N\in \N$,  such that
	$$
	\liminf_{N\to\infty}  \Re\Big(\E_{m,n\in[N]}\int_{\CM_p} B_{\delta_0,c_0}(f,Q_N;m,n)\, d\sigma(f)\Big)\geq  \rho_0/2.
	$$
	Note that since $Q_N$ belongs to a finite set, Proposition~\ref{P:aperiodic2} implies that
	in the last expression we can replace $\CM_p$ with $\CM$. Hence,
	\begin{equation}\label{E:rho}
	\liminf_{N\to\infty}  \E_{m,n\in[N]}\int_{\CM} B_{\delta_0,c_0}(f,Q_N;m,n)\, d\sigma(f)\geq  \rho_0/2.
	\end{equation}
	(The real part is no longer needed since the last expression is known to be real by \eqref{E:positivers}.)

Recall the definition of 	$\tilde{w}_{\delta,c}$ in \eqref{E:weight2}.
Using  the uniform continuity of $F_\delta$ and that  $L(m,n):=\log\frac{(m^2+n^2)}{mn}$ satisfies  $|L(Qm+1,Qn)-L(m,n)|\leq C/m$ for some $C>0$ and all $m,n\in\N$,  it is easy to verify  that  for every $\delta>0, c\in \R$
 $$
 \lim_{N\to\infty} \E_{m,n\in [N]}\sup_{Q\in \N}|\tilde{w}_{\delta,c}(Qm+1,Qn)-\tilde{w}_{\delta,c}(m,n)|=0.
 $$
We deduce that if  in the definition of $B_{\delta_0,c_0}(f,Q_N;m,n)$ given in \eqref{E:BdfQmn} we  replace $\tilde{w}_{\delta,c}(m,n)$ with $\tilde{w}_{\delta,c}(Q_Nm+1,Q_Nn)$,  the limit on  left side of \eqref{E:rho} remains unchanged. Keeping this in mind,
	and since  $Q_N$ takes values in a finite set with upper bound, say $Q_0$, and by the positivity property \eqref{E:positivers}, we have
$$
\tilde{w}_{\delta,c}(m,n)\cdot \int_{\CM}f\big(\ell \big(m^2+n^2\big)\big)\cdot \overline{f\big(\ell'\, m n\big)}\, d\sigma(f)\geq 0
$$
for every $m,n\in\N$,   we deduce that
	$$
	\liminf_{N\to\infty} \E_{m,n\in[N]}\, \tilde{w}_{\delta,c}(m,n)\cdot \int_{\CM}f\big(\ell \big(m^2+n^2\big)\big)\cdot \overline{f\big(\ell'\, m n\big)}\, d\sigma(f)\geq
	\rho_0/(2Q_0^2).
	$$
	This establishes \eqref{E:sigmapositive2} and ends the proof.
\end{proof}

In order to establish Theorem~\ref{T:mainpairs2}, it remains  to prove Proposition~\ref{P:aperiodic2},  Proposition~\ref{P:vanishing2}, and Lemma~\ref{L:mainpositive2}. We do this in Section~\ref{S:PythPairs2}, after having  established Proposition~\ref{P:concentration2} in Section~\ref{S:concentration2}, which is crucially used in  the proof of Proposition~\ref{P:vanishing2}.

\subsection{Proof plan of Theorem~\ref{T:Triplesparametric}}\label{SS:Plan2}
For notational convenience, when we write $\E^*_{k\in \N}$  in the following statements, we mean the limit $\lim_{K\to\infty} \E_{k\in \Phi_K}$, where $(\Phi_K)$ is an arbitrary multiplicative F\o lner sequence, chosen so that all the limits in the following statements  exist. Since our setting will always involve a countable collection of limits, such a F\o lner sequence always exists and can be taken as a subsequence of any given
multiplicative F\o lner sequence.

Our argument is divided into two parts. In the first part we   reduce the problem to a positivity property of pretentious multiplicative functions and in the second part we verify this positivity property.
To carry out the first part, we note that to prove \cref{T:Triplesparametric}, it is only necessary to establish the subsequent averaged version.
\begin{theorem}\label{T:triplesrestated}
	Suppose that the completely multiplicative function  $f\colon \N\to \S^1$ takes finitely many values and $F:={\bf 1}_{\{1\}}$.
	Then
	\begin{equation}\label{E:Ffkmn}
		\liminf_{N\to \infty} \E_{m,n\in [N],m>n}\, \E^*_{k\in \N} \, F(f(k\, (m^2-n^2)))\cdot F(f(k\, 2 mn))
		\cdot  F(f(k\,  (m^2+n^2)))>0.
	\end{equation}
\end{theorem}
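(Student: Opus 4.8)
The goal is to show positivity of the triple-average in \eqref{E:Ffkmn}, where $F={\bf 1}_{\{1\}}$ and $f$ is finitely-valued completely multiplicative. First I would linearize the indicator function. Since $f$ takes values in a finite subgroup of $\S^1$, say the group $\mu_d$ of $d$-th roots of unity, we can write $F(f(n))={\bf 1}_{\{1\}}(f(n))=\frac1d\sum_{j=0}^{d-1}f(n)^j=\frac1d\sum_{j=0}^{d-1}f^j(n)$, where each $f^j$ is again completely multiplicative and finitely-valued. Plugging this into \eqref{E:Ffkmn} expands the left-hand side into a sum over triples $(j_1,j_2,j_3)\in\{0,\dots,d-1\}^3$ of averages of the form
\begin{equation*}
	\E_{m,n\in[N],m>n}\,\E^*_{k\in\N}\, f^{j_1}(k(m^2-n^2))\cdot f^{j_2}(k\,2mn)\cdot f^{j_3}(k(m^2+n^2)).
\end{equation*}
The diagonal term $j_1=j_2=j_3=0$ contributes exactly $\frac1{d^3}$ times the density of valid triples (here one checks that $m^2-n^2$, $2mn$, $m^2+n^2$ are distinct for a positive-density set of $m>n$), which is a fixed positive constant. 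So the whole problem reduces to showing that all the \emph{other} terms, collectively, cannot overwhelm this positive diagonal contribution — ideally by showing each non-diagonal term is non-negative, or at least that the off-diagonal sum is controlled.

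The key structural observation is that the inner average over $k$ decouples nicely: since $f^{j}$ is completely multiplicative, $f^{j_1}(k(m^2-n^2))f^{j_2}(k\,2mn)f^{j_3}(k(m^2+n^2)) = f^{j_1+j_2+j_3}(k)\cdot f^{j_1}(m^2-n^2)f^{j_2}(2mn)f^{j_3}(m^2+n^2)$, so $\E^*_{k\in\N}$ of this is $\big(\E^*_{k\in\N}f^{j_1+j_2+j_3}(k)\big)$ times a factor independent of $k$. Now $\E^*_{k\in\N}g(k)$ for a completely multiplicative $g:\N\to\S^1$ equals $\prod_p$ of local factors and is $0$ unless $g$ is pretentious to the constant function $1$ — more precisely, by \cref{L:Fol0}-type reasoning it vanishes unless $g\equiv1$ when restricted appropriately. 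Since $f$ is finitely-valued it is either aperiodic or pretentious; in the pretentious case $f\sim\chi\cdot n^{it}$ with $t=0$ (finite-valued forces $t=0$) and $\chi$ a Dirichlet character, and $\E^*_{k\in\N}f^{r}(k)$ is a nonzero constant (a product of nonzero local factors at each prime) precisely, whereas if $f$ is aperiodic then by Proposition~\ref{P:aperiodic1}-style vanishing the whole expression dies. So I split into the aperiodic case (where one must separately argue, using the full strength of Gowers-uniformity / \cite{FH17}, that the average is positive — but here the diagonal term is the issue, since aperiodic $f$ has $\E^*_k f^0(k)=1$ and the diagonal survives while off-diagonal terms with $j_1+j_2+j_3\not\equiv0$ may not vanish unless we invoke that $f^j$ aperiodic makes $\E_{m,n}f^{j_1}(m^2-n^2)f^{j_2}(2mn)f^{j_3}(m^2+n^2)=0$ by the correlation results) and the pretentious case.

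In the pretentious case, the substitution $m\mapsto Qm+1$, $n\mapsto Qn$ over a highly divisible $Q$ (as in the proof-plan of part~\eqref{I:MainPythPairs2}) together with the concentration estimate of Proposition~\ref{P:concentration2} lets us replace $f((Qm+1)^2+(Qn)^2)$ by $(m^2+n^2)^{it}\exp(G_N(f,K))$, similarly for the other two factors using Proposition~\ref{P:concentration1} on the linear factor $2mn$ (which factors through linear forms after the shift) and on $m^2-n^2=(m-n)(m+n)$; and with $t=0$ these become genuine constants depending only on $K$ and $f$, not on $m,n$. After averaging over $Q\in\Phi_K$ and letting $K\to\infty$, the pretentious contribution converges to an explicit nonzero quantity, which one then shows is positive — this is where the finitely-valued hypothesis is essential, as it guarantees $f^j(p)=1$ for all but finitely many primes $p$ in each residue class forced by $\chi$, making the Euler products $\prod_p(1+\text{stuff})$ manifestly positive rather than merely nonzero. \textbf{The main obstacle} I expect is precisely this last positivity verification: ensuring that after all the reductions the surviving pretentious main term is not just nonzero but has the right sign, uniformly — which requires carefully tracking that the off-diagonal characters $f^j$ contribute local factors whose product is real and positive, and handling the interaction between the three quadratic forms $m^2-n^2$, $2mn$, $m^2+n^2$ (two of which factor into linear forms, one of which does not but is a norm form from $\Q(i)$, so Proposition~\ref{P:concentration2} applies). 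A secondary technical nuisance, as in part~\eqref{I:MainPythPairs2}, is that the $N$-limits may not exist, so one works with $\liminf$ and uses the $\E^*_k$ device plus a finite-set argument for $Q$ exactly as in the proof of Theorem~\ref{T:mainpairs2}.
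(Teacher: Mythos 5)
You correctly identify the opening moves: expand $F(f(n))=\tfrac1d\sum_{0\le j<d}f^j(n)$ where $f$ takes values in $d$-th roots of unity, peel off $f^{j_1+j_2+j_3}(k)$ by complete multiplicativity, and kill the $k$-average by Lemma~\ref{L:Fol0} whenever $j_1+j_2+j_3\not\equiv 0\pmod d$; the diagonal term $(0,0,0)$ contributes $1/d^3>0$. However, the dichotomy ``$f$ aperiodic'' versus ``$f$ pretentious'' is too coarse to close the aperiodic branch. The surviving off-diagonal terms are $\lim_N\E_{m,n}\,f^{j_1}(m^2-n^2)\cdot f^{j_2}(2mn)\cdot f^{j_3}(m^2+n^2)$, and Proposition~\ref{P:aperiodic} only kills those where $f^{j_1}$ \emph{or} $f^{j_2}$ is aperiodic. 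When $f$ is aperiodic but some power $f^{d_0}$, $1<d_0<d$, is already pretentious (which is perfectly possible; $f^d=1$ forces $d_0\le d$ but not $d_0=d$, and $d_0\mid d$), every surviving term with $j_1,j_2\equiv 0\pmod{d_0}$ — which by $j_1+j_2+j_3\equiv 0\pmod d$ forces $j_3\equiv 0\pmod{d_0}$ too — has all three factors pretentious and does \emph{not} vanish via Proposition~\ref{P:aperiodic}. Your sketch offers no mechanism for controlling these terms, and ``invoke the correlation results'' does not apply to them.

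The paper's fix is Lemma~\ref{L:decomposition}: factor $f=g\cdot h$ with $g,\dots,g^{d_0-1}$ aperiodic, $g^{d_0}=\tilde\chi$ a modified Dirichlet character, and $h\sim 1$ finite-valued. One then expands ${\bf 1}_{g=1}$ (not ${\bf 1}_{f=1}$), which cleanly separates the genuinely aperiodic characters $g^j$, $j\not\equiv 0\pmod{d_0}$ (dispatched in claims (ii)--(iii) of the proof of Proposition~\ref{P:reduction'}, and note the asymmetry: claim (ii) must treat $H_3$ aperiodic by a different argument than claim (iii), precisely because Proposition~\ref{P:aperiodic} has no conclusion for aperiodicity only in the $m^2+n^2$ slot) from the pretentious block $r\cdot{\bf 1}_{\tilde\chi=1}$, while the $h$-piece becomes a nonnegative weight $c_{k,m,n}$; the remaining positivity is Proposition~\ref{P:pretentiousfinite'}. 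Your proposal lacks any analogue of this decomposition, and that is the substance of Proposition~\ref{P:reduction}. A smaller misstatement: $\E^*_{k\in\N}g(k)$ is not ``an Euler product of local factors'' — it is a multiplicative-F\o lner average, equal to $1$ if $g\equiv 1$ and $0$ otherwise by Lemma~\ref{L:Fol0}; your parenthetical self-correction is the right criterion, but there is no product formula behind it. Your outline of the pretentious case — the grid $\{(Qm+1,Qn)\}$, Corollary~\ref{C:concentration2}, and $Q$-averaging over $\Phi_K$ — does track the paper's Proposition~\ref{P:pretentiousfinite'}, though the positivity you flag as the ``main obstacle'' is resolved there through Claims~1 and~2 (concentration forcing the diagonal terms $\ge 1-\varepsilon$ and $Q$-averaging making the others $\ge -\varepsilon$), not through an Euler-product sign argument.
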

\begin{remark}
	The ``multiplicative average'' $\E^*_{k\in \N}$ is needed in our analysis  to ``clear out'' some unwanted terms.
\end{remark}
We write  $f=gh$, where $g$ has aperiodicity properties and $h$ is pretentious (see Lemma~\ref{L:decomposition} for the exact statement). Since $f$ is finite-valued, it follows that
$g$ takes values in $d$-roots of unity for some $d\in \N$, hence we have
$$
F\circ g={\bf 1}_{g=1}=\E_{0\leq j <d}\, g^j.
$$
We use the previous facts to analyse the average in \eqref{E:Ffkmn}.
The aperiodic part is covered by the next result, which is a direct consequence of  \cite[Theorem~9.7]{FH17}.
\begin{proposition}\label{P:aperiodic}
	Let $f_1,f_2,f_3\colon \N\to \U$ be  completely multiplicative functions and suppose that either $f_1$ or $f_2$ is aperiodic. Then
	$$
	\lim_{N\to\infty} \E_{m,n\in [N],m>n} \, f_1(m^2-n^2)\cdot f_2(mn)\cdot f_3(m^2+n^2)=0.
	$$
\end{proposition}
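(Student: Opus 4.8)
The plan is to derive the statement directly from \cite[Theorem~9.7]{FH17}, after two elementary reductions that put the average into the exact shape treated there.

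\textbf{First}, I would use complete multiplicativity of $f_1$ and $f_2$ to factor the reducible quadratic forms: for $m>n$, all of $m-n,\,m+n,\,m,\,n$ are positive integers, so $f_1(m^2-n^2)=f_1(m-n)\,f_1(m+n)$ and $f_2(mn)=f_2(m)\,f_2(n)$. The average in question then equals
$$
\E_{m,n\in[N],\,m>n}\, f_1(m-n)\,f_1(m+n)\,f_2(m)\,f_2(n)\,f_3(m^2+n^2),
$$
a product of values of completely multiplicative functions at the four pairwise non-proportional linear forms $m-n,\ m+n,\ m,\ n$, weighted by the value of a multiplicative function at the irreducible binary quadratic form $m^2+n^2$. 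This is precisely the type of average handled by \cite[Theorem~9.7]{FH17}.

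\textbf{Second}, I would account for the fact that the average runs over the triangle $\{1\le n<m\le N\}$ rather than over the full square $[N]^2$. The diagonal $m=n$ contributes $O(1/N)$ and is negligible; the restriction to a triangle is absorbed either by noting that the concentration and Gowers-uniformity inputs in the proof of \cite[Theorem~9.7]{FH17} are uniform over convex averaging regions, or by writing the indicator of the triangle as a sum of $O(1)$ indicators of boxes and applying the theorem to each --- the same device alluded to in the footnote to \cite[Theorem~1.4]{FH16} quoted above.

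\textbf{Finally}, I would check that the hypothesis ``$f_1$ or $f_2$ aperiodic'' supplies the non-degeneracy input required by \cite[Theorem~9.7]{FH17}. If $f_2$ is aperiodic, the aperiodic function weights each of the coordinate forms $m$ and $n$, the most transparent case. If $f_1$ is aperiodic, it weights the pair $(m-n,m+n)$; here one needs only that $m-n$ and $m+n$ are linearly independent and not proportional to $m$ or to $n$, together with the fact that $m^2+n^2$ is irreducible over $\Q$ (its splitting field being $\Q(i)$, so that the relevant primes are those $\equiv 1\pmod 4$). All of this holds, so the system of forms meets the theorem's requirements in either case. The hard part, such as it is, will be this last hypothesis-matching step; but the substantive analytic content --- that an aperiodic completely multiplicative function is Gowers-uniform (after a $W$-trick) along such a system of forms, and that this forces the multilinear average to vanish --- is entirely internal to \cite[Theorem~9.7]{FH17} and is simply invoked.
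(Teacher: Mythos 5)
Your proof is correct and takes essentially the same route as the paper: the paper simply declares Proposition~\ref{P:aperiodic} to be a direct consequence of \cite[Theorem~9.7]{FH17}, and your factorization of $f_1(m^2-n^2)=f_1(m-n)f_1(m+n)$ and $f_2(mn)=f_2(m)f_2(n)$ (valid since $m>n$ makes all factors positive and $f_1,f_2$ are completely multiplicative), together with the standard treatment of the triangular averaging region, is exactly the elementary reduction that makes this ``directness'' honest. Your hypothesis-matching discussion is sound and consistent with how the paper treats the analogous Propositions~\ref{P:aperiodic1'} and \ref{P:aperiodic2'}.
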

Combining the above and some technical maneuvering,  we  get the following reduction, which completes the first part needed to  prove Theorem~\ref{T:triplesrestated}.
\begin{proposition}\label{P:reduction}
	Suppose that for every  finite-valued  completely multiplicative function  $h\colon \N\to \S^1$, with $h\sim 1$, and modified Dirichlet character $\tilde{\chi}\colon \N\to \S^1$ (see \cref{SS:multiplicative} for the definitions), we have
	$$
	\liminf_{N\to \infty} \E_{m,n\in [N],m>n}\, \E^*_{k\in \N}\ A(k\, (m^2-n^2))\cdot A(k\, 2 mn)
	\cdot  A(k\,(m^2+n^2))>0,
	$$
	where
	$$
	A(n):=F(h(n))\cdot F(\tilde{\chi}(n)), \quad n\in \N, \quad F:={\bf 1}_{\{1\}}.
	$$
	Then  for every finite-valued completely multiplicative function $f\colon \N\to \S^1$  we have
	$$
	\liminf_{N\to \infty} \E_{m,n\in [N],m>n}\, \E^*_{k\in \N} \, F(f(k\, (m^2-n^2)))\cdot F(f(k\, 2 mn))
	\cdot  F(f(k\,  (m^2+n^2)))>0.
	$$
\end{proposition}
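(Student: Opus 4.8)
The plan is to expand each factor $F(f(\,\cdot\,))=\one_{\{f(\cdot)=1\}}$ as a uniform average of powers of $f$, feed in the multiplicative decomposition of Lemma~\ref{L:decomposition}, and show that after the multiplicative average $\E^*_{k\in\N}$ and the average over $m,n$ every ``aperiodic'' contribution vanishes, leaving a positive multiple of the quantity controlled by the hypothesis. Concretely, write $f=g\cdot h\cdot\tilde\chi$, where $g$ is the aperiodic factor of Lemma~\ref{L:decomposition} (finite-valued of exact order $d$, with $g^a$ aperiodic for every $0<a<d$), $h$ is finite-valued with $h\sim1$, and $\tilde\chi$ is a modified Dirichlet character; thus the hypothesis of the Proposition applies to this particular pair $(h,\tilde\chi)$. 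Since $f=g\,h\,\tilde\chi$, the inclusion $\{g(x)=1\}\cap\{h(x)=1\}\cap\{\tilde\chi(x)=1\}\subseteq\{f(x)=1\}$ gives the pointwise bound $F(f(x))\ge F(g(x))\,F(h(x))\,F(\tilde\chi(x))\ge0$. Using this with $x$ equal to $k(m^2-n^2)$, $k\cdot2mn$, $k(m^2+n^2)$, together with monotonicity of $\E^*_{k\in\N}$, $\E_{m,n\in[N],m>n}$ and $\liminf_N$, it suffices to prove that
$$
\liminf_{N\to\infty}\,\E_{m,n\in[N],m>n}\,\E^*_{k\in\N}\prod_{v}F(g(kv))\,F(h(kv))\,F(\tilde\chi(kv))\;=\;\frac1{d^3}\,\liminf_{N\to\infty}\,\E_{m,n\in[N],m>n}\,\E^*_{k\in\N}\prod_{v}A(kv),
$$
where $v$ runs over $m^2-n^2,\,2mn,\,m^2+n^2$ and $A=F(h(\cdot))\,F(\tilde\chi(\cdot))$; the right-hand side is positive by hypothesis.

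To prove this identity, expand $F(g(x))=\E_{0\le a<d}g(x)^a$ and, similarly, $F(h(x))$ and $F(\tilde\chi(x))$ as uniform averages of powers, multiply out the product over $v$, and use complete multiplicativity to pull the variable $k$ out of each factor. The expression under $\liminf_N\E_{m,n}$ then becomes an average over the finitely many exponent triples of
$$
\Big(\E^*_{k\in\N}\big(g^{a_1+a_2+a_3}h^{b_1+b_2+b_3}\tilde\chi^{c_1+c_2+c_3}\big)(k)\Big)\cdot\prod_{v}\big(g^{a_v}h^{b_v}\tilde\chi^{c_v}\big)(v).
$$
The first factor equals $1$ if $g^{\sum a}h^{\sum b}\tilde\chi^{\sum c}\equiv1$ and $0$ otherwise: for any multiplicative F\o lner sequence along which $\lim_K\E_{k\in\Phi_K}\phi(k)$ exists and any completely multiplicative $\phi$, the F\o lner property forces $(\phi(x)-1)\lim_K\E_{k\in\Phi_K}\phi(k)=0$ for all $x$, so the limit is $0$ unless $\phi\equiv1$ (see also \cref{L:Fol0}). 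For a surviving triple, consider the two factors at the slots $m^2-n^2$ and $2mn$; since every power of $h$ and of $\tilde\chi$ is pretentious (as $h\sim1$ and $\tilde\chi$ is pretentious) while $g^a$ is aperiodic for $0<a<d$, and the product of an aperiodic and a pretentious completely multiplicative function is aperiodic, each of these two factors is aperiodic unless its $g$-exponent vanishes. Hence if $a_1\ne0$ or $a_2\ne0$, \cref{P:aperiodic} gives $\lim_{N\to\infty}\E_{m,n\in[N],m>n}\prod_v(g^{a_v}h^{b_v}\tilde\chi^{c_v})(v)=0$ (the scalar $(g^{a_2}h^{b_2}\tilde\chi^{c_2})(2)$ factored out of the $2mn$-slot has modulus $1$). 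As there are only finitely many triples, these terms may be discarded when forming $\liminf_N$, so only triples with $a_1=a_2=0$ remain; for these the constraint $g^{\sum a}h^{\sum b}\tilde\chi^{\sum c}\equiv1$ forces $g^{a_3}=\overline{h^{\sum b}\tilde\chi^{\sum c}}$, whose right side is pretentious, so $g^{a_3}$ is pretentious and hence $g^{a_3}\equiv1$, i.e. $a_3=0$ as well. Thus $g$ disappears, the constraint becomes $h^{\sum b}\tilde\chi^{\sum c}\equiv1$, and the surviving terms are exactly those produced by expanding $\E^*_{k\in\N}\prod_vA(kv)$; the only residual trace of $g$ is the factor $d^{-3}$ from the now unconstrained average over $(a_1,a_2,a_3)$. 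This yields the displayed identity, and hence the Proposition.

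The step needing the most care is not any single estimate but the interchange of limits: $\E^*_{k\in\N}$ is itself a limit along a multiplicative F\o lner scale, it does not commute with $\liminf_N$, and the inner averages $\E_{m,n\in[N],m>n}\prod_v(\,\cdot\,)(v)$ need not converge once all three functions are pretentious. This is handled by keeping $\liminf_N$ throughout, by using that only finitely many exponent triples occur, by invoking \cref{P:aperiodic} (which does provide genuine convergence) only for the aperiodic terms, and by using the convention attached to $\E^*_{k\in\N}$: a single multiplicative F\o lner sequence is fixed in advance along which all of the countably many limits in play exist. Everything else is the routine bookkeeping alluded to by the phrase ``some technical maneuvering''.
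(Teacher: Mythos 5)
Your strategy matches the paper's — expand indicators via roots of unity, clear aperiodic terms with Proposition~\ref{P:aperiodic}, and use $\E^*_{k\in\N}$ to force the product of multiplicative functions to be trivial — but the decomposition you start from is not what Lemma~\ref{L:decomposition} gives and, in general, does not exist. The lemma yields $f=g\cdot h$ (not $f=g\cdot h\cdot\tilde\chi$) with $g^d=\tilde\chi$; in particular $g$ does \emph{not} have order $d$ but takes values in $(rd)$-th roots of unity, where $r$ is the order of $\tilde\chi$, and $g^j$ is pretentious precisely when $d\mid j$ (with $g^j=\tilde\chi^{j/d}$) and aperiodic otherwise. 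Consequently your expansion $F(g(x))=\E_{0\le a<d}\,g(x)^a$ is false for this $g$; the correct identity is $\one_{g=1}=\E_{0\le j<rd}\,g^j$. There is also no way to repair the decomposition: a factor $g_0$ of exact order $d$ with $f=g_0 h\tilde\chi$ would force $\tilde\chi^d$ to agree (up to a factor $\sim 1$) with the primitive character that $f^d$ pretends to, and the group of primitive Dirichlet characters is not $d$-divisible — for instance the quadratic character mod $3$ is not the square of any Dirichlet character — so such a $g_0$ need not exist.

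The fix is exactly the paper's route: keep $f=gh$, expand $\one_{g=1}=\E_{0\le j<rd}\,g^j$, and separate the exponents $j$ into the multiples of $d$ (these give $\tilde\chi^{j/d}$ and, together with the expansion of $F(h)$, reproduce the hypothesis quantity for the pair $(h,\tilde\chi=g^d)$) and the rest (aperiodic). Your two mechanisms for discarding the aperiodic terms — Proposition~\ref{P:aperiodic} when the aperiodic exponent sits in the $m^2-n^2$ or $2mn$ slot, and the constraint from $\E^*_{k\in\N}$ when it sits only in the $m^2+n^2$ slot — then go through verbatim, and the normalization does come out to $d^{-3}$ as you computed. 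You should also say a word about the case where $f$ is already pretentious, which Lemma~\ref{L:decomposition} does not cover since it assumes $f$ aperiodic: there, since $f$ is finite-valued, Lemma~\ref{L:convergesabsolutely} gives $f\sim\chi$, so $f=h\tilde\chi$ with $h:=f\,\overline{\tilde\chi}\sim 1$ finite-valued and the hypothesis applies directly.
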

Therefore, it remains to verify the assumption of this result.  For this purpose,
we will make crucial use of the following concentration estimates, which easily follow  from Propositions~\ref{P:concentration1} and \ref{P:concentration2}, as we will see later.
\begin{corollary}\label{C:concentration2}
	Let $f\colon \mathbb{N}\to\mathbb{U}$ be a finite-valued multiplicative function such that $f\sim \chi $ for some Dirichlet character $\chi$  with period $q$. Then for every $\varepsilon>0$ there exists $Q_0=Q_0(f,\varepsilon)\in \N$ such that the following holds: \begin{enumerate}
		\item \label{I:1}  For all $Q\in \N$  such that  $Q_0\mid Q$ 	 we have
		$$
		\limsup_{N\to\infty}	\E_{n\in [N]}\big|f(Qn+1)-  1  \big|\ll \varepsilon,
		$$
		where the implicit constant is absolute.

		\item \label{I:2} For all $Q\in \N$  such that  $Q_0\mid Q$  we have
			$$
			\limsup_{N\to\infty}	\E_{m,n\in [N]}|f\big((Qm+1)^2+(Qn)^2\big)-  1  \big|\ll \varepsilon,
			$$
			where the implicit constant is absolute.
		\end{enumerate}
	\end{corollary}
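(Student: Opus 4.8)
The plan is to read off both estimates directly from the concentration results of Propositions~\ref{P:concentration1} and~\ref{P:concentration2}, using finite-valuedness of $f$ in two ways. First, the hypothesis $f\sim\chi$ is precisely $f\sim\chi\cdot n^{it}$ with $t=0$, so we apply both propositions with $t=0$; all factors $(Qn)^{it}$, $Q^{2it}$, $(m^2+n^2)^{it}$ are then equal to $1$. Second, since $f$ is finite-valued the numbers $f(p)\overline{\chi(p)}$ range over a finite set on which $1-\Re(f(p)\overline{\chi(p)})$ vanishes only at the value $1$, so $|1-f(p)\overline{\chi(p)}|\asymp 1-\Re(f(p)\overline{\chi(p)})$ (implied constants depending on $f$), whence $\sum_p\frac{1}{p}\,|1-f(p)\overline{\chi(p)}|<\infty$ because $f\sim\chi$. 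Consequently $F_\infty(f,K):=\sum_{p>K}\frac{1}{p}(f(p)\overline{\chi(p)}-1)$ and $G_\infty(f,K):=2\sum_{\substack{p>K\\ p\equiv 1\!\!\!\pmod{4}}}\frac{1}{p}(f(p)\overline{\chi(p)}-1)$ are absolutely convergent, equal to the $N\to\infty$ limits of $F_N(f,K)$ and $G_N(f,K)$ respectively, and tend to $0$ as $K\to\infty$; the same tail bound also gives $\D(f,\chi;K,\infty),\,\D_1(f,\chi;K,\infty)\to0$ as $K\to\infty$.

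Fix $\varepsilon>0$. The plan is then to pick $K=K(f,\varepsilon)$ large enough that $\D(f,\chi;K,\infty)+K^{-1/2}\le\varepsilon$, that $\D_1(f,\chi;K,\infty)\le1$, that $|\exp(F_\infty(f,K))-1|\le\varepsilon$ and $|\exp(G_\infty(f,K))-1|\le\varepsilon$, and that $q$ divides every element of $\Phi_K$; then set $Q_0:=q\prod_{p\le K}p$. For part~\eqref{I:1}, for every $Q$ with $Q_0\mid Q$ the ``fixed $Q$'' variant of Proposition~\ref{P:concentration1} recorded in the remarks following it gives, with $t=0$, that $\limsup_{N\to\infty}\E_{n\in[N]}|f(Qn+1)-\exp(F_N(f,K))|\ll\D(f,\chi;K,\infty)+K^{-1/2}\le\varepsilon$; since $\exp(F_N(f,K))\to\exp(F_\infty(f,K))$ as $N\to\infty$ and $|\exp(F_\infty(f,K))-1|\le\varepsilon$, the triangle inequality yields $\limsup_{N\to\infty}\E_{n\in[N]}|f(Qn+1)-1|\ll\varepsilon$. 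For part~\eqref{I:2} I would run the identical argument with Proposition~\ref{P:concentration2} in place of Proposition~\ref{P:concentration1}: with $t=0$ its conclusion reads $\limsup_{N\to\infty}\E_{m,n\in[N]}|f((Qm+1)^2+(Qn)^2)-\exp(G_N(f,K))|\ll\D_1(f,\chi;K,\infty)+K^{-1/2}\le\varepsilon$, and then $\exp(G_N(f,K))\to\exp(G_\infty(f,K))$ together with $|\exp(G_\infty(f,K))-1|\le\varepsilon$ finishes the bound.

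The one point needing care, which I would flag as the main obstacle, is that Proposition~\ref{P:concentration2} is stated only for $Q\in\Phi_K$, whereas part~\eqref{I:2} asks for \emph{all} $Q$ divisible by $Q_0$ --- and such $Q$ may have prime factors exceeding $K$. This is handled by the more general quantitative version Proposition~\ref{P:concentration2quanti}, whose only hypothesis on $Q$ is $q\prod_{p\le K}p\mid Q$: the proof of Proposition~\ref{P:concentration2} uses only that $Q$ is divisible by $q$ and by every prime up to $K$ (so that $Qn+1$ avoids those primes and $\chi(Qn+1)=\chi(1)$), hence applies verbatim to such $Q$. Part~\eqref{I:1} has no analogous issue since the variant of Proposition~\ref{P:concentration1} for arbitrary admissible $Q$ is already recorded. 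Everything else is soft --- absolute convergence of the tails, continuity of $z\mapsto\exp(z)$, and the triangle inequality --- so no further input is required.
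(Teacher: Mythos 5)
Your overall plan coincides with the paper's: fix $t=0$, use Lemma~\ref{L:convergesabsolutely} to get absolute convergence of $\sum_p\frac1p|1-f(p)\overline{\chi(p)}|$, pick $K_0$ to make the tails $\le\varepsilon$, set $Q_0=q\prod_{p\le K_0}p$, and feed this into the concentration estimates together with the bound $|\exp(F_N(f,K_0))-1|\ll\varepsilon$ (resp.\ $|\exp(G_N(f,K_0))-1|\ll\varepsilon$). For part~\eqref{I:1} your argument is exactly the paper's and is correct.

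The issue is in your treatment of the obstacle you flagged for part~\eqref{I:2}. You assert that the only hypothesis Proposition~\ref{P:concentration2quanti} places on $Q$ is $q\prod_{p\le K_0}p\mid Q$. That is a misreading: the proposition requires $Q=\prod_{p\le K_0}p^{a_p}$ with $a_p\in\N$, which forces \emph{every} prime factor of $Q$ to lie at or below $K_0$. This stronger condition is genuinely used: the Tur\'an--Kubilius step (Lemmas~\ref{L:TKadditive} and~\ref{L:wNPQ}) evaluates $w_{N,Q}(p)$ for primes $K_0<p\le N$ via Lemma~\ref{L:wNPQ}, whose hypothesis is $(p,Q)=1$. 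If $Q$ has a prime factor $p\in(K_0,N]$ then $w_{N,Q}(p)=0$ (since $(1,Q)=1$ forces $p\nmid (Qm+1)^2+(Qn)^2$), not $\tfrac{2}{p}(1-\tfrac1p)^2+O(1/N)$, so the comparison $|H_{1,N}-H'_{1,N}|\ll 1/K_0+N^{-1/2}$ and the variance estimate both acquire uncontrolled extra terms of the form $\sum_{p\mid Q,\,p>K_0}|h(p)|/p$. The proof therefore does \emph{not} ``apply verbatim'' to an arbitrary multiple of $Q_0$. To be fair, the paper itself disposes of part~\eqref{I:2} with the phrase ``in a similar fashion'' without addressing this; the correct fix is to modify the additive function to vanish also on primes dividing $Q$ (those primes never divide $(Qm+1)^2+(Qn)^2$, so they can be dropped from both $h$ and the main term $G_N$ harmlessly, and the final target is $1$ in any case), but your stated justification is not the fix.
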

	Finally, using the previous concentration estimates  and the key maneuver of taking multiplicative averages over $Q\in \N$,  which was also a crucial element in the proof of Theorem~\ref{T:MainPythPairs},  we verify the assumptions of Proposition~\ref{P:reduction}.
	\begin{proposition}\label{P:pretentiousfinite}
		Let   $f\colon \N\to \S^1$ be a finite-valued pretentious multiplicative function and $\tilde{\chi}\colon \N\to \S^1$ be a modified Dirichlet character.  Then
		$$
		\liminf_{N\to \infty} \E_{m,n\in [N],m>n}\, \E^*_{k\in \N}\ A(k\, (m^2-n^2))\cdot A(k\, 2mn)
		\cdot  A(k\, (m^2+n^2))>0,
		$$
		where
		$$
		A(n):=F(f(n))\cdot F(\tilde{\chi}(n)), \quad n\in \N, \quad  F:={\bf 1}_{\{1\}}.
		$$
	\end{proposition}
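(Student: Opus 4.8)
The plan is to evaluate the inner multiplicative average $\E^*_{k\in\N}$ in closed form, which turns the statement into a combinatorial one about $f$ and $\tilde\chi$ taking a single value along the three parametric forms, and then to exhibit many such pairs $(m,n)$ by passing to a highly divisible arithmetic grid and invoking Corollary~\ref{C:concentration2}.

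\emph{Step 1: evaluating $\E^*_k$.} Since $f$ and $\tilde\chi$ are finite-valued completely multiplicative, their ranges are cyclic subgroups of $\S^1$ of orders $d_1,d_2$, and $F(\zeta)=\one_{\{1\}}(\zeta)=d_1^{-1}\sum_{0\le j<d_1}\zeta^{j}$ for $\zeta$ in the range of $f$ (likewise for $\tilde\chi$). Write $a_1=m^2-n^2=(m-n)(m+n)$, $a_2=2mn$, $a_3=m^2+n^2$. Complete multiplicativity gives $\prod_{i=1}^3A(k a_i)=\one[f(a_1)=f(a_2)=f(a_3)]\cdot\one[\tilde\chi(a_1)=\tilde\chi(a_2)=\tilde\chi(a_3)]\cdot\one[f(k)=\overline{f(a_1)}]\cdot\one[\tilde\chi(k)=\overline{\tilde\chi(a_1)}]$. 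Expanding the last two indicators by orthogonality and using the elementary fact that $\E^*_{k\in\N}g(k)=\one[g\equiv1]$ for every finite-valued completely multiplicative $g$ (a consequence of F\o lner invariance: $\E_{k\in\Phi_K}g(k)=g(p)\,\E_{k\in\Phi_K}g(k)+o(1)$ forces the limit to vanish unless $g\equiv1$), one obtains
\[
\E^*_{k\in\N}\prod_{i=1}^3A(k a_i)=c_\star\cdot\one[f(a_1)=f(a_2)=f(a_3)]\cdot\one[\tilde\chi(a_1)=\tilde\chi(a_2)=\tilde\chi(a_3)],
\]
where $c_\star=|\Gamma|/(d_1d_2)>0$ with $\Gamma=\{(j,j')\in\Z/d_1\times\Z/d_2:f^j\tilde\chi^{j'}\equiv1\}$; here one uses that $f(a_1)^{j}\tilde\chi(a_1)^{j'}=(f^j\tilde\chi^{j'})(a_1)=1$ for $(j,j')\in\Gamma$, so the $\Gamma$-sum collapses to $|\Gamma|$. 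Thus it suffices to prove that the set of $(m,n)$ with $m>n$ on which each of $f$ and $\tilde\chi$ is constant along $(m^2-n^2,\,2mn,\,m^2+n^2)$ has positive lower density.

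\emph{Step 2: a good grid, and the obstruction.} Fix $\varepsilon>0$ (pinned down at the very end), let $Q_0=Q_0(f,\varepsilon)$ be as in Corollary~\ref{C:concentration2}, let $q'$ be the modulus of the Dirichlet character underlying $\tilde\chi$, and restrict $(m,n)$ to the grid $(Qm+1,Qn)$ with $Q:=\operatorname{lcm}(Q_0,q')$. Then $m^2-n^2\mapsto(Q(m-n)+1)(Q(m+n)+1)$ and $m^2+n^2\mapsto(Qm+1)^2+(Qn)^2$, so parts (i) and (ii) of Corollary~\ref{C:concentration2} give that $f$ equals $1$ on all of these factors except on a set of pairs of density $\ll\varepsilon/\delta_0$, where $\delta_0:=\min\{|z-1|:z\text{ in the range of }f,\ z\ne1\}>0$ (finite-valuedness upgrades the $L^1$-bounds to exact equality off a small set; passing to the difference and sum variables $m\pm n$ costs only a bounded factor). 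Similarly, since $Q(m\pm n)+1\equiv1$ and $(Qm+1)^2+(Qn)^2\equiv1\pmod{q'}$, the character $\tilde\chi$ equals $1$ there. The remaining form is the obstruction: $2mn\mapsto 2Q(Qm+1)n$ has $f$-value $f(2Q)\,f(Qm+1)\,f(n)$ and $\tilde\chi$-value $\tilde\chi(2Q)\,\tilde\chi(n)$ — the factor $f(Qm+1)$ is $1$ for most $m$, but $f(n)$ and $\tilde\chi(n)$ survive. I would absorb these by further restricting $n$ to the level set $\Lambda:=\{n:f(n)=\overline{f(2Q)},\ \tilde\chi(n)=\overline{\tilde\chi(2Q)}\}$, after which, on the resulting sub-grid, all three $f$-values and all three $\tilde\chi$-values equal $1$, so the pair is good.

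\emph{Step 3: density of the level set, and conclusion.} It remains to show $\Lambda$ has positive lower density, bounded below independently of $Q$. The pair $(f(n),\tilde\chi(n))$ lives in the finite group $G:=\langle(f(p),\tilde\chi(p)):p\text{ prime}\rangle\le\S^1\times\S^1$ and has a limiting distribution $\nu$ on $G$ (each power $f^j\tilde\chi^{j'}$ has a mean value by Hal\'asz/Wirsing, finite-valuedness excluding $n^{it}$-pretentiousness for $t\ne0$). Choosing primes $p_1,\dots,p_s$ whose images generate $G$ and splitting $(f(n),\tilde\chi(n))$ into the independent contributions of $p_1,\dots,p_s$ and of the other primes, the first factor already has full support on $G$ (each valuation $v_{p_i}(n)$ is a.s.\ finite with a geometric-type law, hence attains every residue class), so $\nu(g)\ge\kappa>0$ for all $g\in G$ with $\kappa$ depending only on $f,\tilde\chi$; since $(\overline{f(2Q)},\overline{\tilde\chi(2Q)})\in G$, the set $\Lambda$ has lower density $\ge\kappa$. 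Choosing $\varepsilon$ small enough, in terms of $\kappa$ and $\delta_0$ only, that the density of the bad pairs of Step~2 is below $\kappa/100$, and accounting for the constraint $m>n$ (a bounded factor on the grid) and the normalisation $Q^{-2}$, the lower density of good pairs is $\ge c\kappa/Q^2>0$, whence $\liminf_N\E_{m,n\in[N],m>n}\E^*_{k\in\N}\prod_iA(k a_i)\ge c_\star c\kappa/Q^2>0$. The main obstacle is precisely the asymmetry created by the form $2mn$: no shift that tames both $m^2-n^2$ and $m^2+n^2$ makes $2mn\equiv1$ modulo $Q$, so the concentration estimates do not directly control $f$ there, and the level-set maneuver — resting on the uniform positivity of densities of level sets of finite-valued pretentious multiplicative functions — is what closes the gap.
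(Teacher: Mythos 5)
Your proof is correct, but it takes a genuinely different route from the paper's, and the comparison is instructive.

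Both proofs begin by expanding $F\circ f$ and $F\circ\tilde\chi$ as averages of characters and using $\E^*_k$ to eliminate the $k$-dependence, and both shift to a grid $(Qm+1,Qn)$ with $Q$ highly divisible and invoke \cref{C:concentration2} to control $f$ along $Q(m\pm n)+1$, $Qm+1$, and $(Qm+1)^2+(Qn)^2$. The divergence is in how the obstruction from the middle form $2(Qm+1)(Qn)$ is handled. After the concentration estimates clean up the other factors, what survives is $f(2Q)f(n)$ (and $\tilde\chi(2Q)\tilde\chi(n)$). The paper keeps the Fourier sum over $\CK$ open and splits it according to whether $f^{k_2}\tilde\chi^{k_2'}=1$ (Claim~1, contributing $\geq 1$) or $\neq 1$ (Claim~2); in the latter case the surviving factor is $(f^{k_2}\tilde\chi^{k_2'})(2Q)$ times a quantity essentially independent of $Q$, and a \emph{second} multiplicative F\o lner average over $Q\in\Phi_K$ kills it via \cref{L:Fol0}. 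You instead collapse the $\E^*_k$ sum at the outset to $c_\star\cdot\one[f(a_1)=f(a_2)=f(a_3)]\cdot\one[\tilde\chi(a_1)=\tilde\chi(a_2)=\tilde\chi(a_3)]$, and then neutralize the dangling $f(n),\tilde\chi(n)$ by restricting $n$ to the level set $\Lambda=\{n:(f(n),\tilde\chi(n))=(\overline{f(2Q)},\overline{\tilde\chi(2Q)})\}$, so that no second averaging over $Q$ is needed. This is cleaner and avoids the casework over $\CK$, but it trades the F\o lner trick for an ancillary lemma: that level sets of the finite-valued pretentious map $n\mapsto(f(n),\tilde\chi(n))\in G$ have natural density bounded below uniformly over $G$. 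Your sketch of this lemma (splitting off valuations at a generating set of primes, each geometrically distributed, convolved with the distribution of the cofactor via Wirsing) is correct, but it is a genuine extra ingredient the paper's argument does not need. Conversely, the paper's route via a second $\E_{Q\in\Phi_K}$ average is more in the spirit of the rest of the paper (e.g.\ \cref{L:mainvanishing1} and \cref{P:vanishing2}), where this maneuver is already established; your route is arguably more self-contained at the cost of the level-set density input.
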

	Thus, to  prove Theorem~\ref{T:triplesrestated} it remains to verify Propositions~\ref{P:reduction} and \ref{P:pretentiousfinite}. We do this in Sections~\ref{S:triplesreduction} and \ref{S:Triples} (the other results mentioned in this subsection are needed in the proofs of these two results and will also be verified).

	\section{Background and preparation}
	\subsection{Some elementary facts} We will use the following elementary property.
	\begin{lemma}\label{L:lN}
		Let $a\colon \Z\to \U$ be   an even sequence and $l_1,l_2\in \Z$, not both of them $0$. Suppose that for some $\varepsilon>0$ and for some sequence $L_N\colon \N\to \U$ we have
		$$
		\limsup_{N\to\infty} \E_{n\in [N]} |a(n) -L_N|\leq \varepsilon.
		$$
		Then
		$$
		\limsup_{N\to\infty} \E_{m,n\in [N]} |a(l_1m+l_2n)-L_{lN}|\leq 2 l\varepsilon
		$$
		where $l:=|l_1|+|l_2|$.
	\end{lemma}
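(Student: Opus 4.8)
The plan is to reduce the double average over $[N]^2$ to a single average over a symmetric interval at scale $lN$, where the hypothesis can be applied directly. Since $l_1$ and $l_2$ are not both zero, relabelling the summation variables (which replaces the pair $(l_1,l_2)$ by $(l_2,l_1)$ and leaves $l=|l_1|+|l_2|$ unchanged) lets us assume $l_2\neq0$. The basic observation is that for $(m,n)\in[N]^2$ we have $|l_1m+l_2n|\le|l_1|m+|l_2|n\le lN$, so $l_1m+l_2n$ always lies in the interval $I:=\{-lN,\dots,lN\}$.

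First I would fix $m\in[N]$ and estimate the inner sum $\sum_{n\in[N]}|a(l_1m+l_2n)-L_{lN}|$. Because $l_2\neq0$, the map $n\mapsto l_1m+l_2n$ is injective on $[N]$, so this sum consists of $N$ terms $|a(k)-L_{lN}|$ with distinct $k\in I$; as every summand is nonnegative it is at most $\sum_{k\in I}|a(k)-L_{lN}|$. Splitting off $k=0$, pairing $k$ with $-k$, and using $a(-k)=a(k)$ together with $|a(0)-L_{lN}|\le 2$ gives
$$
\sum_{k\in I}|a(k)-L_{lN}|\le 2+2\sum_{k=1}^{lN}|a(k)-L_{lN}|=2+2lN\cdot\E_{k\in[lN]}|a(k)-L_{lN}|.
$$
Summing this over $m\in[N]$ and dividing by $N^2$ yields
$$
\E_{m,n\in[N]}|a(l_1m+l_2n)-L_{lN}|\le\frac{2}{N}+2l\cdot\E_{k\in[lN]}|a(k)-L_{lN}|.
$$

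To conclude I would take $\limsup_{N\to\infty}$: the term $2/N$ vanishes, and since $(lN)_{N\in\N}$ runs through a subsequence of $\N$, the hypothesis gives $\limsup_{N\to\infty}\E_{k\in[lN]}|a(k)-L_{lN}|\le\limsup_{M\to\infty}\E_{k\in[M]}|a(k)-L_M|\le\varepsilon$, producing the claimed bound $2l\varepsilon$. There is no genuine obstacle here; the only points demanding a little care are the reduction to $l_2\neq0$, the use of evenness to fold the negative indices onto $[lN]$ (this is the source of the factor $2$, while the factor $l$ comes from the length blow-up $N\mapsto lN$), and invoking the hypothesis at the scale $lN$ rather than $N$.
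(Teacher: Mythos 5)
Your proof is correct and follows essentially the same counting argument as the paper: both proofs rest on the observation that for each $m$ the map $n\mapsto l_1m+l_2n$ hits each value at most once, so that the double average is dominated by (a constant times) the single average of $|a(k)-L_{lN}|$ over $|k|\le lN$, after which evenness folds the negative indices and the hypothesis is applied along the scale $lN$. The paper phrases this via the weight $w_N(k):=|\{(m,n)\in[N]^2:l_1m+l_2n=k\}|$ with the uniform bound $w_N(k)\le N$, whereas you fix $m$ and bound the inner sum over $n$ directly; these are the same estimate, and your version is in fact slightly more careful about the harmless $k=0$ term and about reducing to $l_2\neq 0$.
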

	\begin{proof}
	We can assume that $\ell_2\neq 0$. 	We have
		\begin{equation}\label{E:l1l2}
			\E_{m,n\in [N]} |a(l_1m+l_2n)-L_{lN}|\leq
			\frac{1}{N^2}\sum_{|k|\leq l N} w_N(k)\, |a(k) -L_{lN}|,
		\end{equation}
		where for $k\in \Z$ we let
		$$
		w_N(k):=|\{(m,n)\in [N]^2\colon l_1m+l_2n=k\}|.
		$$
		For every $k\in\Z$ and $m\in [N]$  there exists at most one $n\in [N]$ for which
		$l_1m+l_2n=k$, hence  $|w_N(k)|\leq N$ for every  $k\in \Z$. Since $a$ is even, we deduce that
		the  right hand side in \eqref{E:l1l2} is bounded by
		$$
		2\,  l \cdot \E_{k\in [lN]}
		|a(k) -L_{lN}|.
		$$
		The asserted estimate now follows from this and our assumption. 	
	\end{proof}
	The next well-known property of multiplicative functions will also be used several times.
	\begin{lemma}\label{L:Fol0}
		Let  $(\Phi_K)$ be a multiplicative  F\o lner  sequence.
		If $f\colon \N\to \U$ is a completely multiplicative function
		and $f\neq 1$, then
		$$
		\lim_{K\to\infty}\E_{n\in \Phi_K}\, f(n)=0.
		$$
	\end{lemma}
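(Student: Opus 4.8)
The plan is to use the multiplicative F\o lner property of $(\Phi_K)$ together with complete multiplicativity, via the standard averaging argument showing that an invariant mean annihilates a non-trivial character. Since $f\neq 1$, I would first fix some $a\in\N$ with $f(a)\neq 1$. The key identity, immediate from complete multiplicativity and the injectivity of the map $n\mapsto an$, is
$$
f(a)\sum_{n\in\Phi_K}f(n)=\sum_{n\in\Phi_K}f(an)=\sum_{m\in a\cdot\Phi_K}f(m),
$$
where $a\cdot\Phi_K:=\{an\colon n\in\Phi_K\}$ has the same cardinality as $\Phi_K$.

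The next step is to compare $\sum_{m\in a\cdot\Phi_K}f(m)$ with $\sum_{m\in\Phi_K}f(m)$. Since $|f|\leq 1$, the absolute value of the difference is at most $\big|(a\cdot\Phi_K)\triangle\Phi_K\big|$, and because $|a\cdot\Phi_K|=|\Phi_K|$ this symmetric difference equals $2\big(|\Phi_K|-|(a\cdot\Phi_K)\cap\Phi_K|\big)$, which is $o(|\Phi_K|)$ as $K\to\infty$ by the defining F\o lner condition applied with $x=a$. Dividing through by $|\Phi_K|$ then gives
$$
\big(f(a)-1\big)\,\E_{n\in\Phi_K}f(n)\longrightarrow 0\qquad (K\to\infty).
$$
Since $f(a)-1\neq 0$, dividing by this fixed nonzero constant yields $\E_{n\in\Phi_K}f(n)\to 0$, which is the assertion.

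I do not expect any genuine obstacle here. The only two points that require a moment's care are: (i) multiplication by $a$ is a bijection of $\N$, so it preserves cardinalities of finite sets — this is exactly what lets the one-parameter F\o lner condition control the full symmetric difference and not merely a one-sided inclusion; and (ii) the hypothesis permits $f$ to take values in the closed unit disk $\U$ rather than only on $\S^1$, but this is harmless, as the error estimate uses only $|f|\leq 1$ and the final division uses only $f(a)\neq 1$.
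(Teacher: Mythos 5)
Your argument is correct and essentially the same as the paper's: the paper picks a prime $p$ with $f(p)\neq 1$ (using that a completely multiplicative function is determined on primes) and derives $\E_{n\in\Phi_K}f(n)=f(p)\E_{n\in\Phi_K}f(n)+o_{K\to\infty}(1)$, exactly your identity. Choosing an arbitrary $a$ with $f(a)\neq 1$ instead of a prime is an immaterial variation.
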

	\begin{proof}
		Since $f\neq 1$ there exists $p\in \mathbb{P}$ such that $f(p)\neq 1$. 	By the definition of $\Phi_K$ we have
		$$
		\lim_{K\to\infty}\frac{|\Phi_K\cap (p\cdot \Phi_K)|}{|\Phi_K|}=1.
		$$
		From this and the fact that $f(pn)=f(p)\cdot f(n)$ we get
		$$
		\E_{n\in \Phi_K}\, f(n)= \E_{n\in p\cdot\Phi_K}\, f(n) +o_{K\to\infty}(1)
		= f(p)\cdot \E_{n\in \Phi_K}\, f(n) +o_{K\to\infty}(1).
		$$
		Since $f(p)\neq 1$, we deduce that $\E_{n\in \Phi_K}\, f(n)=o_{K\to\infty}(1)$.
	\end{proof}

	
	\subsection{Some useful weights}
	In the proof of Theorems~\ref{T:PairsPartition} and \ref{T:PairsDensity} we will utilize weighted averages.  The weights are employed to ensure that
	the averages  $\E_{m,n\in [N]}\, A_\delta(f,Q,m,n)$ and
	$\E_{m,n\in [N]}\, B_{\delta,c}(f,Q,m,n)$, where $A_\delta, B_{\delta,c}$ are as in \eqref{E:AdfQmn}, \eqref{E:BdfQmn} respectively,   have a positive real part if $f$ is an Archimedean character and   $\delta$ is sufficiently small.
	
	\begin{lemma}\label{L:Sdelta}
		For every $\delta\in (0,1/2)$,    let $F_\delta \colon \R\to [0,1]$ be the continuous  trapezoid function that is equal to $1$ on $[-\delta/2, \delta/2]$, equal to  $0$ outside $[-\delta,\delta]$, and linear on the remaining two intervals. Let also $\ell,\ell'\in \N$,
		\begin{equation}\label{E:weight1}
			w_\delta(m,n):=F_\delta \Big(\log\frac{\ell (m^2-n^2)}{\ell'mn}\Big) \cdot {\bf 1}_{m> n}, \quad m,n\in\N,
		\end{equation}
		and for $c\in \R$ let
		\begin{equation}\label{E:weight2}
			\tilde{w}_{\delta,c}(m,n):=F_\delta \Big(\log\frac{\ell (m^2+n^2)}{\ell'mn}-c\Big), \quad m,n\in\N.
		\end{equation}
		Then for every $c\geq  \log\frac{2\ell}{\ell'}$ we have
				$$
		\lim_{N\to\infty} \E_{m,n\in [N]}\,  w_\delta(m,n)>0 \ \text{ and }\ 	\lim_{N\to\infty} \E_{m,n\in [N]}\,  \tilde{w}_{\delta,c}(m,n)>0.
		$$
		In particular, if $2\ell\leq \ell'$ we can take $c=0$.
 	\end{lemma}
	\begin{remark}
		We opted for a continuous function for $F_\delta$ instead of an indicator function, to make it easier to prove Propositions~\ref{P:aperiodic1'} and \ref{P:aperiodic2'} later on.
	\end{remark}
	\begin{proof}
		We first  cover the weight in \eqref{E:weight1}.
		Note that the limit we want to evaluate is equal to
		$$
		\lim_{N\to\infty} \E_{m,n\in [N]}\,
		F_\delta \Big(\log\frac{\ell((m/N)^2-(n/N)^2)}{\ell'(m/N)\cdot (n/N)}\Big)
		\cdot {\bf 1}_{m/N> n/N}.
		$$
		Let
		$\tilde{F}_\delta\colon [0,1]\times [0,1]\to [0,1]$ be given by
		$$
		\tilde{F}_\delta(x,y):=F_\delta\Big(\log \frac{\ell (x^2-y^2)}{\ell'xy}\Big) \cdot {\bf 1}_{x> y}, \quad x,y\in (0,1].
		$$
		Then $\tilde{F}_\delta$ is Riemann integrable on $[0,1]\times [0,1]$ as it is bounded and continuous except for a set of Lebesgue measure $0$.  Hence, the limit we aim to compute exists and is equal to the  Riemann integral
		$$
		\int_0^1\int_0^1 \tilde{F}_\delta(x,y)\, dx\, dy.
		$$
		It remains to show that this integral is positive, and since  $\tilde{F}_\delta$ is non-negative, it suffices to show that $\tilde{F}_\delta$
 does not vanish almost everywhere.

  To verify the non-vanishing property, note that
		if $x= ay$ where
		$a:=\frac{\ell'+\sqrt{(\ell')^2+4\ell^2}}{2\ell}>1$,
		then $x>y$ and  $\ell(x^2-y^2)= \ell'xy$. Hence, 	$\tilde{F}_\delta(x,y)=F_\delta(0)=1$ on the line $x= ay$. Since $\tilde F_\delta$ is continuous in the region $x>y$, this proves that it stays bounded away from zero in a neighborhood of the line $x=ay$ in that region, and hence it does not vanish almost everywhere. This completes the proof for the weight \eqref{E:weight1}.

		The argument   for the second weight \eqref{E:weight2} is very similar, so we only  summarize it.
		Let
		$\tilde{F}_{\delta,c}\colon [0,1]\times [0,1]\to [0,1]$ be given by
		$$
		\tilde{F}_{\delta,c}(x,y):=F_\delta\Big(\log \frac{\ell(x^2+y^2)}{ \ell'xy}-c \Big)\cdot {\bf 1}_{(0,1]\times (0,1]}(x,y).
		$$
		Then the limit we want to evaluate exists and is equal to the  Riemann integral
		$$
		\int_0^1\int_0^1 \tilde{F}_{\delta,c}(x,y)\, dx\, dy.
		$$
		The integral is positive because  $\tilde{F}_{\delta,c}$ is non-negative and does not vanish almost everywhere.  It remains to verify the non-vanishing property for $c\geq  \log\frac{2\ell}{\ell'}$.
Let	  $b:=\ell'/\ell\cdot e^c\geq 2$  and  $a:=\frac{b+\sqrt{b^2-4}}{2}$.
  If $x,y\in [0,1]$ are such that $x= ay$,
		then 
		  $\ell(x^2+y^2)=e^c\, \ell'xy $, which implies
	 	$\tilde{F}_{\delta,c}(x,y)=F_\delta(0)=1$.
	 	By continuity  $\tilde{F}_{\delta,c}$
	  is bounded away from zero on a neighborhood of the line $x=ay$, hence it does not vanish almost everywhere.
	\end{proof}

	\subsection{Multiplicative functions}\label{SS:multiplicative}
	We record here some basic notions and facts about multiplicative functions that will be used throughout the article.
	\subsubsection{Dirichlet characters} A {\em Dirichlet character} $\chi$  is a periodic completely multiplicative function, and is often thought of as a multiplicative function on $\Z_m$ for some $m\in \N$. In this case,
	$\chi$  takes the value $0$ on integers that are not coprime to $m$, and
	takes values on $\phi(m)$-roots of unity on all other integers, where $\phi$ is the Euler totient function.
	If $\chi$ is a Dirichlet character, we define the {\em modified Dirichlet character}  $\tilde{\chi}\colon \N\to \S^1$    to be the completely multiplicative function satisfying
	$$
	\tilde{\chi}(p):=
	\begin{cases}
		\chi(p), &\quad \text{if } \chi(p)\neq 0\\
		1, &\quad \text{if } \chi(p)=0.
	\end{cases}
	$$
	We note in passing that  the level sets of modified Dirichlet characters $\tilde{\chi}$, which can be seen as finite colorings of $\N$, are precisely the colorings that appear in Rado's theorem when showing that certain systems of linear equations are not partition regular.
 In particular, a system of linear equations is partition regular if and only if it has a monochromatic solution in any coloring realized by a modified Dirichlet character.
	
	\subsubsection{Distance between  multiplicative functions}\label{SS:distance}
	Following Granville and Soundararajan~\cite{GS07,GS23},  in this and the next subsection, we define a distance and a related notion of pretentiousness  between multiplicative functions.
	If $f,g\colon \N\to \U$ are multiplicative functions and $x,y\in \R_+$ with $x<y$ we let
	\begin{equation}\label{E:Dfgxy}
		\D(f,g; x,y)^2:= \sum_{x< p \leq y} \frac{1}{p}\, (1-\Re(f(p)\cdot \overline{g(p)})).
	\end{equation}
	We also let
	\begin{equation}\label{E:Dfg}
		\D(f,g)^2:=\sum_{p\in \mathbb{P}} \frac{1}{p}\, (1-\Re (f(p)\cdot \overline{g(p)})).
	\end{equation}
	Note that if $|f|=|g|=1$, then
	$$
	\D(f,g)^2=\frac{1}{2}\cdot \sum_{p\in \mathbb{P}} \frac{1}{p}\, |f(p)-g(p)|^2.
	$$
	It can be shown (see \cite{GS08} or \cite[Section~2.1.1]{GS23}) that $\D$ satisfies the triangle inequality
	$$
	\D(f, g) \leq \D(f, h) + \D(h, g)
	$$
	for all  $f,g,h\colon \P\to \U$.
	Also, for all  $f_1, f_2, g_1, g_2\colon \P\to \U$,  we have (see
	\cite[Lemma~3.1]{GS07})
	\begin{equation}\label{E:Df1f2}
		\D(f_1f_2, g_1g_2) \leq \D(f_1, g_1) + \D(f_2, g_2).
	\end{equation}

	\subsubsection{Pretentious   multiplicative functions}\label{SS:pretentious}
	If $f,g\colon \N\to \U$ are multiplicative functions, 	we say that $f$ {\em pretends to be} $g$,
	and write $f\sim g$, if $\D(f,g)<+\infty$.
	It follows from \eqref{E:Df1f2} that if $f_1\sim g_1$ and $f_2\sim g_2$, then $f_1f_2\sim g_1g_2$.
	We say that $f$ is {\em pretentious,} if $f\sim  \chi \cdot n^{it}$ for some $t\in \R$ and Dirichlet character $\chi$, in which case
	$$
	\sum_{p\in \mathbb{P}} \frac{1}{p}\, (1-\Re (f(p)\cdot \overline{\chi(p)}\cdot p^{-it}))<+\infty.
	$$
	The value of  $t$ is uniquely determined; this follows
	from \eqref{E:Df1f2} and the fact that $n^{it}\not\sim \chi$
	for every non-zero $t\in \R$  and  Dirichlet character $\chi$ (see for example \cite[Corollary~11.4]{GS23}  or \cite[Proposition~7]{GS08}).
	
	Although real valued or finite-valued multiplicative functions always have a mean value,  we caution the reader that this is not the case  for general multiplicative functions with values on the unit circle. For example, we have
	$$
	\E_{n\in [N]} \, n^{it}=N^{it}/(1+it)+o_N(1),
	$$
	so  we have non-convergent means when $t\neq 0$. But even multiplicative functions satisfying  $f\sim 1$ can have non-convergent means. In particular, if
	$f\sim 1$ is a completely multiplicative function,  then it is known
	(see for example \cite[Theorems~6.2]{E79})  that there exists $c\neq 0$ such that
	$$
	\E_{n\in[N]}\, f(n)= c\cdot e(A(N)) +o_N(1),
	$$
	where $A(N):=	\sum_{p\leq N} \frac{1}{p}\, \Im (f(p))$, $N\in \N$.
	Hence, we have non-convergent means when, for example,
	$$	
	\sum_{p\in \mathbb{P}} \frac{1}{p}\, \Im (f(p))=+\infty,
	$$
	which is the case  if 	
	$f(p):=e(1/ \log\log{p})$, $p\in \P$.  This oscillatory behavior of the mean values of some complex-valued multiplicative functions has to be taken into account and will cause problems in the proofs of some of our main results.
	
	Finally, we record an
	observation that will only be used in the proof of Theorem~\ref{T:Triples}.
	\begin{lemma}\label{L:convergesabsolutely}
		Let $f\colon \N\to \U$ be a pretentious finite-valued multiplicative function. Then $f\sim\chi$ for some Dirichlet character $\chi$ and
		\begin{equation}\label{E:convergesabsolutely}
			\sum_{p\in \P} \frac{1}{p}|1-f(p)\cdot \overline{\chi(p)}|<+\infty.
		\end{equation}
	\end{lemma}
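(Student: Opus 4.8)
The plan is to treat the two assertions of the lemma separately: first that the Archimedean parameter vanishes (so $f\sim\chi$), and then the $\ell^1$-type convergence \eqref{E:convergesabsolutely}. Throughout I will repeatedly use that pretentiousness forces $f$ to be ``essentially unimodular'': if $f(p)=0$ for infinitely many primes $p$, then since $\chi(p)=0$ only for the finitely many $p\mid q$ we would have $1-\Re(f(p)\overline{\chi(p)})=1$ on an infinite set of primes, contradicting $\D(f,\chi\cdot n^{it})<\infty$; so $f(p)\ne0$ for all but finitely many $p$, and the same reasoning shows $|f(p)|=1$ for all but finitely many $p$.

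The first step is to locate an exponent $d$ with $f(p)^d=1$ off a finite set of primes. Since $f$ is finite-valued, among the finitely many unimodular values of $f$ on primes there is at least one value $\zeta$ attained by infinitely many primes $q_1,q_2,\dots$; for every $k$, multiplicativity and pairwise coprimality give $f(q_1\cdots q_k)=\zeta^k$, which lies in the finite range of $f$, so $\{\zeta^k:k\ge1\}$ is finite and hence $\zeta$ is a root of unity of order at most the size of that range. Taking $d$ to be the lcm of the orders of all such values, we get $f(p)^d=1$ for every prime outside the finite set consisting of the primes with $f(p)=0$ together with those carrying one of the finitely many values attained only finitely often. Then $f^d$ is multiplicative and equal to $1$ off a finite set of primes, so $f^d\sim1$; on the other hand \eqref{E:Df1f2} gives $f^d\sim(\chi\cdot n^{it})^d=\chi^d\cdot n^{idt}$. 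The triangle inequality for $\D$ then yields $n^{idt}\sim\overline{\chi^d}$, and since $\overline{\chi^d}$ is a Dirichlet character while $n^{is}\not\sim\chi'$ for every real $s\ne0$ and every Dirichlet character $\chi'$ (as recorded in \cref{SS:pretentious}), we conclude $dt=0$, i.e.\ $t=0$. Thus $f\sim\chi$.

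It remains to upgrade $\D(f,\chi)^2=\sum_p\frac1p\bigl(1-\Re(f(p)\overline{\chi(p)})\bigr)<\infty$, equivalently $\sum_p\frac1p\,|1-f(p)\overline{\chi(p)}|^2<\infty$, to \eqref{E:convergesabsolutely}. Here finiteness of the range is again decisive: for all but finitely many primes $p$ the quantity $f(p)\overline{\chi(p)}$ lies in a fixed finite subset $W\subset\S^1$, so there is $c>0$ with $|1-w|\ge c$ for every $w\in W\setminus\{1\}$. Hence $|1-f(p)\overline{\chi(p)}|\le c^{-1}\,|1-f(p)\overline{\chi(p)}|^2$ for all but finitely many $p$, and summing $1/p$ against this inequality, absorbing the finitely many exceptional primes into a bounded error, gives \eqref{E:convergesabsolutely}.

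The step I expect to be the main obstacle is the production of the exponent $d$: one must be careful that a finite-valued multiplicative function need \emph{not} take only root-of-unity values — only the values attained by infinitely many primes are forced to be roots of unity of bounded order — and it is precisely these that matter, since $f^d$ only needs to equal $1$ off a finite set of primes to conclude $f^d\sim1$. Everything else reduces to the triangle inequality for $\D$, the cited non-pretentiousness of $n^{is}$, and the elementary bound $|1-w|\le c^{-1}|1-w|^2$ valid on any finite subset of the circle bounded away from $1$.
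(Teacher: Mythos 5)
Your approach is essentially the same as the paper's: you produce an exponent $d$ with $(f\overline{\chi})^d$ essentially equal to $1$, use the triangle inequality for $\D$ together with the non-pretentiousness of the Archimedean characters to force $t=0$, and then exploit finiteness of the range to upgrade the $\ell^2$ bound $\D(f,\chi)<\infty$ to the $\ell^1$ bound \eqref{E:convergesabsolutely}. You are in fact somewhat more careful than the paper in producing $d$: the paper simply asserts that some power of the finite-valued multiplicative function $g=f\overline{\chi}$ ``is the constant $1$,'' which is not literally true (e.g.\ $g(p)=0$ for $p\mid q$), and your distinction between values of $f$ attained infinitely often (necessarily roots of unity of bounded order) and those attained finitely often is the right way to make this precise.

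The one place where your argument as written has a gap is the claim you flag as something you will ``repeatedly use'': ``if $f(p)=0$ for infinitely many primes $p$, \dots\ contradicting $\D(f,\chi\cdot n^{it})<\infty$.'' This inference is false. The set $P_0:=\{p: f(p)=0\}$ contributes $\sum_{p\in P_0}1/p$ to $\D(f,\chi\cdot n^{it})^2$, and this sum can converge even when $P_0$ is infinite (take $P_0$ to be a sufficiently sparse sequence of primes). So ``$f(p)\ne0$ for all but finitely many $p$'' does not follow, and neither do the subsequent ``finite set'' claims built on it. The repair is trivial and leaves the structure of the proof intact: wherever you invoke finiteness of the exceptional set, what is actually available from pretentiousness --- and all you need --- is that the exceptional set $P_0$ satisfies $\sum_{p\in P_0}1/p<\infty$. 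With that replacement one still gets $\D(f^d,1)^2\le 2\sum_{p\in P_0}1/p<\infty$, hence $f^d\sim 1$; and in the $\ell^2\Rightarrow\ell^1$ step the exceptional primes contribute at most $2\sum_{p\in P_0}1/p<\infty$ to the left side of \eqref{E:convergesabsolutely}, so the conclusion is unchanged. (Minor point: the word ``equivalently'' before $\sum_p\frac{1}{p}|1-f(p)\overline{\chi(p)}|^2<\infty$ should be dropped; only the implication from $\D(f,\chi)^2<\infty$, via $|1-z|^2\le 2(1-\Re z)$ for $|z|\le 1$, holds in general --- but that is the direction you use.)
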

	\begin{remark}
		It can be shown using \eqref{E:convergesabsolutely}
		that  finite-valued  pretentious multiplicative functions always have  convergent means.
	\end{remark}
	\begin{proof}
		Since $f$ is pretentious we have $f\sim \chi \cdot n^{it}$ for some $t\in \R$ and Dirichlet character $\chi$. Then
		$\D(n^{it}, g)<+\infty$ where $g:=f\cdot \overline{\chi}$ is a finite-valued multiplicative function.
  In particular, there exists $d\in\N$ for which $g^d$ is the constant $1$, so from \eqref{E:Df1f2} it follows that $\D(n^{idt}, 1)<+\infty$ , which is turn implies that
		$t=0$ (hence, $f\sim \chi$) and
		$$
		\sum_{p\in \P\colon f(p)\cdot \overline{\chi(p)}\neq 1}\frac{1}{p}<+\infty.
		$$
		Hence,
		$$
		\sum_{p\in \P}\frac{1}{p} |\Im (f(p)\cdot \overline{\chi(p)})|<+\infty.
		$$
		If we combine this with $\D(f,\chi)<+\infty$, we deduce that
		\eqref{E:convergesabsolutely} holds.
	\end{proof}

	\subsubsection{Aperiodic  multiplicative functions} \label{SS:aperiodic}
	We say that a multiplicative function $f\colon \N\to \U$ is {\em aperiodic} if  for every $a,b\in\N$,
	$$\lim_{N\to\infty}\frac1N\sum_{n=1}^N\, f(an+b)=0.$$
	The following well known result  of Daboussi-Delange \cite[Corollary~1]{DD82}  states that a multiplicative function is aperiodic if and only if it is non-pretentious.
	\begin{lemma}\label{lemmamultiplicativestructure}
		Let $f\in\CM$.
		Then either $f\sim \chi\cdot n^{it}$ for some Dirichlet character $\chi$ and $t\in \R$,  or $f$ is aperiodic.
	\end{lemma}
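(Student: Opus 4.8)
The plan is to prove the only substantive implication, namely that if $f$ is \emph{not} pretentious then $f$ is aperiodic; combined with the definitions in \cref{SS:pretentious} and \cref{SS:aperiodic} this yields the stated dichotomy. In exactly this form the statement is \cite[Corollary~1]{DD82}, and fundamentally it is a consequence of Halász's theorem on mean values of multiplicative functions, so I would present it as a short deduction from that theorem rather than reproving Halász.

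First I would reduce to residues coprime to the modulus. Fix $a,b\in\N$, set $d:=\gcd(a,b)$ and write $a=d\alpha$, $b=d\beta$ with $\gcd(\alpha,\beta)=1$. Since $f$ is completely multiplicative and $|f(d)|=1$, we have $f(an+b)=f(d)\,f(\alpha n+\beta)$, so it is enough to control $\frac1N\sum_{n\le N}f(\alpha n+\beta)$; that is, we may assume $\gcd(a,b)=1$. Then the integers $an+b$, $n\in[N]$, are precisely the integers $m\le aN$ with $m\equiv b\pmod a$ up to $O(1)$ boundary terms, and each such $m$ is automatically coprime to $a$; hence by orthogonality of Dirichlet characters modulo $a$,
$$
\frac1N\sum_{n\le N}f(an+b)=\frac{1}{\phi(a)}\sum_{\chi\bmod a}\overline{\chi(b)}\cdot\frac1N\sum_{m\le aN}f(m)\chi(m)+O(1/N).
$$
It therefore suffices to show $\frac1X\sum_{m\le X}(f\chi)(m)\to0$ for each Dirichlet character $\chi$ modulo $a$. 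Now $f\chi\colon\N\to\U$ is multiplicative, and it is still non-pretentious: if $\D(f\chi,\psi\cdot n^{it})<\infty$ for some Dirichlet character $\psi$ and some $t\in\R$, then multiplying by $\overline{\chi}$ and applying the triangle inequality \eqref{E:Df1f2}, together with the fact that $f\chi\overline{\chi}$ agrees with $f$ outside the finitely many primes dividing $a$, would give $\D(f,\psi\overline{\chi}\cdot n^{it})<\infty$, contradicting the non-pretentiousness of $f$. By Halász's theorem, a multiplicative function $g\colon\N\to\U$ with $\D(g,n^{it})=\infty$ for every $t\in\R$ has mean value tending to $0$, which yields the claim; summing over the $O_a(1)$ characters $\chi$ completes the proof.

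The single genuine ingredient is Halász's theorem, which is where all the analytic content sits; the remaining steps are the routine passage from a mean on an arithmetic progression to mean values of multiplicative twists, together with the observation — furnished by \eqref{E:Df1f2} — that twisting a non-pretentious function by a Dirichlet character preserves non-pretentiousness, so I do not anticipate a real obstacle. (That the two alternatives are mutually exclusive, i.e.\ that pretentious functions fail to be aperiodic, follows from the quantitative part of Halász's theorem applied along a progression on which the approximating Dirichlet character is constant, but this is not needed for the dichotomy as stated.)
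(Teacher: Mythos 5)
Your argument is correct, and it reconstructs the standard derivation of the Daboussi--Delange dichotomy from Hal\'asz's mean value theorem. Note that the paper does not give a proof of this lemma; it is quoted verbatim from \cite[Corollary~1]{DD82}, so there is no internal argument to compare against. Your outline is faithful to what Daboussi and Delange actually do: reduce to $\gcd(a,b)=1$, expand the indicator of the progression in Dirichlet characters, observe via \eqref{E:Df1f2} that twisting a non-pretentious $f$ by a character produces a function with $\D(f\chi,n^{it})=\infty$ for every $t$, and invoke Hal\'asz to conclude the twisted mean values vanish. Two cosmetic points if you were to write it in full: in the triangle-inequality step $\D(\overline\chi,\overline\chi)=\sum_{p\mid a}1/p$ is finite rather than zero, which your remark about $f\chi\overline\chi$ agreeing with $f$ off the primes dividing $a$ already absorbs; and you should cite Hal\'asz in the qualitative form (non-pretentious of $n^{it}$ for all $t$ implies zero mean), since passing from the quantitative bound with its $|t|\le\log X$ truncation to this statement is standard but not completely immediate. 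Neither of these affects correctness.
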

	In our arguments we typically distinguish two cases. One where a multiplicative function is aperiodic, then we show that the expressions we are interested in vanish. The complementary one where the multiplicative function is pretentious is  treated using concentration estimates.

	\subsection{Some Borel measurability results}
	Recall  that $\CM$ is equipped with the topology of pointwise convergence. In the proof of Theorem~\ref{T:MainPythPairs} we require certain Borel measurability properties of subsets of $\CM$ and related maps.
	The second property proved below will only be used in the proof of part~\eqref{I:MainPythPairs2}  of   Theorem~\ref{T:MainPythPairs}.
	
	Recall that 	if $f$ is pretentious, then  there exist a  unique $t=t_f\in \R$ and a  Dirichlet character $\chi$ such that $f\sim \chi \cdot n^{it}$.
	\begin{lemma}\label{L:Borel}
		\begin{enumerate}
			\item\label{I:Borel1}	The set $\CM_p$ of pretentious completely multiplicative functions is Borel.
			
			\item \label{I:Borel2} The map $f\mapsto t_f$ from $\CM_p$ to $\R$ is Borel measurable.
			
			
		\end{enumerate}
	\end{lemma}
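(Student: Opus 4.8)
The plan is to circumvent the \emph{a priori} uncountable union over $t\in\R$ in the definition of pretentiousness by using the uniqueness of the exponent $t_f$ to present $\CM_p$ as the injective image of a Borel set, which is then Borel by the Lusin--Souslin theorem.

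\emph{Reduction to trivial character.} Since $\CM$ is compact metrizable it is Polish, and for any completely multiplicative $u\colon\N\to\S^1$ the map $f\mapsto f\overline{u}$ is a self-homeomorphism of $\CM$ (each evaluation $f\mapsto f(n)$ is continuous). Given a Dirichlet character $\chi$ of modulus $q$ with modified character $\tilde\chi$, I would first note that $\D(f,\chi\,n^{is})$ and $\D(f,\tilde\chi\,n^{is})$ differ only through the finitely many terms with $p\mid q$, while $\D(f,\tilde\chi\,n^{is})=\D(f\overline{\tilde\chi},n^{is})$ term by term; hence $f\sim\chi\,n^{is}\iff f\overline{\tilde\chi}\sim n^{is}$. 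Consequently $f$ is pretentious if and only if $f\overline{\tilde\chi}$ lies in $\CM_p':=\{g\in\CM\colon g\sim n^{it}\text{ for some }t\in\R\}$ for one of the countably many Dirichlet characters $\chi$, and in that case $t_f=t_{f\overline{\tilde\chi}}$. So it suffices to show that $\CM_p'$ is Borel and that $g\mapsto t_g$ is Borel on $\CM_p'$: then $\CM_p=\bigcup_\chi\{f\colon f\overline{\tilde\chi}\in\CM_p'\}$ is a countable union of Borel sets, and $f\mapsto t_f$ is Borel because on each such (Borel) piece it coincides with the Borel map $f\mapsto t_{f\overline{\tilde\chi}}$.

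\emph{A Borel set and an injective projection.} For $k\in\N$ set $P_k(g,t):=\sum_{p\le k}\frac1p\,(1-\Re(g(p)\,p^{-it}))$, which is jointly continuous in $(g,t)\in\CM\times\R$ and increases, as $k\to\infty$, to $\D(g,n^{it})^2$. Therefore $g\sim n^{it}$ iff $\sup_k P_k(g,t)<\infty$, so $G:=\{(g,t)\in\CM\times\R\colon g\sim n^{it}\}=\bigcup_{M\in\N}\bigcap_{k\in\N}\{(g,t)\colon P_k(g,t)\le M\}$ is an $F_\sigma$ subset of the Polish space $\CM\times\R$. The decisive observation is that the coordinate projection $\pi\colon\CM\times\R\to\CM$ is injective on $G$: if $(g,t),(g,t')\in G$ then $g\sim n^{it}$ and $g\sim n^{it'}$, whence $n^{i(t-t')}\sim1$ and so $t=t'$ (recall $n^{is}\not\sim1$ for $s\neq0$). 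By the Lusin--Souslin theorem, the injective Borel image $\pi(G)=\CM_p'$ is Borel in $\CM$, and the inverse bijection $(\pi|_G)^{-1}\colon\CM_p'\to G$ is Borel measurable; composing it with the continuous projection $(g,t)\mapsto t$ shows that $g\mapsto t_g$ is Borel measurable on $\CM_p'$. Together with the reduction above this yields both assertions.

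The main obstacle is conceptual rather than computational: described naively, $\CM_p$ is only an analytic set, and one genuinely needs the uniqueness of $t_f$ (which makes $\pi|_G$ injective) together with the descriptive-set-theoretic fact that continuous injective images of Borel sets in Polish spaces are Borel with Borel inverse. The remaining ingredients — joint continuity of the partial pretentious distances, the finitely-many-bad-primes bookkeeping for $\tilde\chi$, and gluing the countably many character contributions — are routine.
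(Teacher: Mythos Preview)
Your proof is correct, but it differs from the paper's in an instructive way, especially for part~\eqref{I:Borel1}. The paper avoids descriptive set theory entirely for~\eqref{I:Borel1}: it invokes the Daboussi--Delange characterization (\cref{lemmamultiplicativestructure}) that $f$ is pretentious iff $f$ is not aperiodic, and then observes that $\CM_p=\bigcup_{a,b\in\N}M_{a,b}$ where $M_{a,b}:=\{f\colon \limsup_N|\E_{n\in[N]}f(an+b)|>0\}$ is manifestly Borel. This is more elementary than your Lusin--Souslin argument, though yours has the virtue of handling both parts in one stroke. For part~\eqref{I:Borel2} the two proofs are essentially the same idea: the paper shows directly that the graph $\Gamma=\{(f,t_f)\}=\bigcup_k\{(f,t)\colon \D(f,\chi_k\cdot n^{it})<\infty\}$ is Borel in $\CM_p\times\R$ and then cites \cite[Theorem~14.12]{Ke12}, which is exactly the ``Borel graph implies Borel function'' consequence of Lusin--Souslin that you invoke. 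Your reduction to the trivial character via $f\mapsto f\overline{\tilde\chi}$ is correct but unnecessary, since one can just take the countable union over all $\chi_k$ directly as the paper does.
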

	\begin{proof}
		We prove \eqref{I:Borel1}.
		For $a,b\in \N$ we  let $M_{a,b}$ be the set of $f\in \CM$ such that
		$$
		\limsup_{N\to\infty} |\E_{n\in [N]} \, f(an+b)|>0.
		$$
		Clearly $M_{a,b}$ is a Borel subset of $\CM$. By Lemma~\ref{lemmamultiplicativestructure}  we have $\CM_p=\bigcup_{a,b\in \N}M_{a,b}$ and the result follows.
		
		We prove \eqref{I:Borel2}.  By  \cite[Theorem~14.12]{Ke12}, it suffices to show that the graph
		$$
		\Gamma:=\{(f,t_f)\in \CM_p\times \R \}
		$$
		is a Borel subset of $\CM_p\times \R$.
		If $\chi_k$, $k\in\N$, is an enumeration of all Dirichlet characters, and
		$$
		\Gamma_k:=\{(f,t_f)\in \CM_p\times \R\colon f\sim \chi_k\cdot n^{it_f}\},
		$$
		then
		$$
		\Gamma=\bigcup_{k\in\N} \Gamma_k.
		$$
		Hence, it suffices to show that for every $k\in\N$ the set $\Gamma_k$ is Borel.
		Note that
		$$
		\Gamma_k:=\{(f,t)\in\CM_p\times \R\colon \D(f,\chi_k\cdot n^{it})<\infty\}.
		$$
		Since for $k\in \N$   the map $(f,t)\mapsto \D(f,\chi_k\cdot n^{it})$ is clearly Borel, the set $\Gamma_k$ is Borel.
		This completes the proof.
	\end{proof}

	\section{Type I Pythagorean pairs}\label{S:PythPairs1}
	As explained in Section~\ref{SS:Plain1i},  in order to  complete the proof of  Theorem~\ref{T:mainpairs1} (and thus of  part~\eqref{I:MainPythPairs1} of Theorem~\ref{T:MainPythPairs}) it remains  to prove  Proposition~\ref{P:aperiodic1}, Lemma~\ref{L:mainvanishing1}, and Lemma~\ref{L:mainpositive1}.  We  do this in this section.
	
	We start with Proposition~\ref{P:aperiodic1}, which we state here in an equivalent form.
	\begin{proposition}\label{P:aperiodic1'}
		Let $f\colon \N\to \U$ be an aperiodic completely multiplicative function, let $\ell,\ell',Q\in\N$ and $\delta>0$. Then, with $w_\delta:\N^2\to[0,1]$ described by \eqref{E:weight1},  we have
		\begin{equation}\label{E:wdmnA'}
			\lim_{N\to\infty} \E_{m,n\in [N]} \, w_\delta(m,n)\cdot  f(\ell((Qm+1)^2-(Qn)^2))\cdot \overline{f(\ell'(Qm+1)(Qn))}=0.
		\end{equation}
		Furthermore, the limit in \eqref{E:wdmnA'} exists for all multiplicative functions $f\colon \N\to \U$.
	\end{proposition}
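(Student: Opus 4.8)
The plan is to derive \eqref{E:wdmnA'} from two external inputs: the convergence of multilinear averages of multiplicative functions in \cite[Theorem~1.4]{FH16}, and the Gowers uniformity of aperiodic multiplicative functions in \cite{FH17} (the fact underlying \cite[Theorem~9.7]{FH17}, hence behind \cref{P:aperiodic}). I would first record a formal identity: since $|f|=1$ and $f$ is completely multiplicative, the factorisation $(Qm+1)^2-(Qn)^2=(Q(m-n)+1)(Q(m+n)+1)$ gives, for all $m>n$,
$$
A_\delta(f,Q;m,n)=f(\ell)\,\overline{f(\ell')}\,\overline{f(Q)}\cdot w_\delta(m,n)\cdot f\big(Q(m-n)+1\big)\,f\big(Q(m+n)+1\big)\,\overline{f(Qm+1)}\,\overline{f(n)},
$$
with unimodular prefactor, while $w_\delta(m,n)=0$ for $m\le n$. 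Recall from the proof of \cref{L:Sdelta} that $w_\delta(m,n)=\tilde F_\delta(m/N,n/N)$ for a bounded Riemann-integrable $\tilde F_\delta$ on $[0,1]^2$ supported on $\{x>y\}$. The convergence assertion holds for every multiplicative $f$ and follows from \cite[Theorem~1.4]{FH16} together with the methods of its proof, which accommodate the shifts $Qm+1,Qn$, the Riemann-integrable weight $w_\delta$, and the restriction $m>n$ (exactly the cosmetic modification flagged in the footnote to \cref{T:MainPythPairs}); concretely one expands $\tilde F_\delta$ into a trigonometric polynomial in $(m/N,n/N)$ up to an $L^1$-error $\varepsilon$, applies \cite[Theorem~1.4]{FH16} to each of the finitely many resulting averages, and lets $\varepsilon\to0$.

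The heart of the matter is the vanishing in \eqref{E:wdmnA'} for aperiodic $f$. By the factorisation it suffices to show
$$
\lim_{N\to\infty}\E_{m,n\in[N],\,m>n}w_\delta(m,n)\, f\big(Q(m-n)+1\big)\,f\big(Q(m+n)+1\big)\,\overline{f(Qm+1)}\,\overline{f(n)}=0.
$$
I would note that the four linear forms $m-n$, $m+n$, $m$, $n$ are pairwise non-proportional, expand the weight $w_\delta(m,n)=\tilde F_\delta(m/N,n/N)$ into a trigonometric polynomial in $(m/N,n/N)$ up to small $L^1$-error (the jump of $\tilde F_\delta$ along the null set $\{x=y\}$ being harmless, as $\mathbf{1}_{x>y}$ is squeezed between continuous functions), and then for each Fourier mode apply the generalised von Neumann inequality for these four forms, attaching the clean factor $\overline{f(n)}$ --- twisted by the harmless linear phase $e(bn/N)$, which does not change Gowers $U^s$-norms for $s\ge 2$ --- to the form $n$. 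This bounds each mode in modulus by $\norm{f}_{U^s[N]}$ for a fixed finite $s$, and $\norm{f}_{U^s[N]}\to 0$ because $f$ is aperiodic, by the Gowers uniformity of aperiodic multiplicative functions established in \cite{FH17}; equivalently one quotes \cite[Theorem~9.7]{FH17}, whose proof covers this configuration of forms and the shifts by $1$. Summing the finitely many Fourier modes and letting the approximation parameter tend to $0$ gives \eqref{E:wdmnA'}.

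The only genuinely hard ingredient is the Gowers uniformity of aperiodic multiplicative functions, which is not proved here but imported from \cite{FH17} (resting ultimately on advances of Matom\"aki, Radziwi\l\l, and Tao on correlations of multiplicative functions). The remaining steps are bookkeeping: factoring through complete multiplicativity, expanding the continuous-times-indicator weight $w_\delta$ (whose continuity, remarked after \cref{L:Sdelta}, is exactly what makes the expansion painless), checking the pairwise non-proportionality of the forms $m-n,m+n,m,n$, and using that linear-phase twists cost nothing on Gowers norms. I expect the main place to be careful is the claim that the shifts $Qm+1,Qn$ and the constraint $m>n$ leave the external inputs of \cite{FH16} and \cite{FH17} intact --- which, as their authors note, they do.
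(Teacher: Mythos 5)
Your proof is correct, but takes a genuinely different route from the paper's. Where the paper uniformly approximates $F_\delta$ on $\S^1$ by trigonometric polynomials $\sum_k c_k z^k$ and \emph{absorbs} the resulting weight $(\ell(m^2-n^2))^{ki}(\ell'mn)^{-ki}$ into the twisted multiplicative function $f_k:=f\cdot n^{ki}$ (keeping everything a multilinear average of multiplicative functions, which is what the cited results are calibrated for), you instead $L^1$-approximate the full two-variable weight $\tilde F_\delta(x,y)=F_\delta(\cdots)\mathbf{1}_{x>y}$ on $[0,1]^2$, producing $N$-dependent phases $e(am/N+bn/N)$ that you then attach to the forms $m$ and $n$. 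And where the paper reduces to unshifted arguments by Dirichlet-character expansion of the progressions $Qm+1$, $Qn$ and then quotes \cite[Theorem~2.5]{FH17} together with \cite[Lemma~9.6]{FH17} (which do the factorization $m^2-n^2=(m-n)(m+n)$ and the von Neumann reduction internally), you factor $(Qm+1)^2-(Qn)^2=(Q(m-n)+1)(Q(m+n)+1)$ upfront and run the generalized von Neumann inequality yourself, using that a linear phase does not change $U^s$-norms for $s\ge2$. Both roads lead to the same underlying input: Gowers $U^s$-uniformity of aperiodic multiplicative functions. The one place where your route demands a bit more than the paper's is the convergence assertion for general $f$: the paper's absorption into $f_k$ leaves only the scale-invariant weight $\mathbf{1}_{m>n}$ to be handled beyond \cite[Theorem~1.4]{FH16}, while your trigonometric expansion in $(m/N,n/N)$ reduces convergence to averages with $N$-dependent oscillatory weights $e(am/N+bn/N)$, which \cite[Theorem~1.4]{FH16} does not cover as stated; one needs a partial-summation (or Riemann-sum) extension of the FH16 convergence result to Riemann-integrable weights. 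This is in the same spirit as the ``cosmetic modification'' the paper itself invokes, but it is a larger modification, and you should flag it accordingly rather than presenting it as a direct application of the cited theorem.
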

	\begin{proof}
		Recall that
		$$
			w_\delta(m,n):=F_\delta \Big(\log\frac{\ell (m^2-n^2)}{\ell'mn}\Big) \cdot {\bf 1}_{m> n}, \quad m,n\in\N,
		$$
		where  $F_\delta \colon \R\to [0,1]$ is the continuous function defined  in Lemma~\ref{L:Sdelta}.
	Using that  $\lim_{n\to\infty}(\log(Qn+1)-\log(Qn))=0$, we get that the limit in \eqref{E:wdmnA'}  remains unchanged if we replace  $w_\delta(m,n)$ with  $w_\delta(Qm+1,Qn)$.
	Moreover,  if $L(m,n):=\log\frac{\ell (m^2-n^2)}{\ell'mn}$ for $m,n\in \N$ with $m>n$,
	one easily verifies that for every $Q\in \N$
	$$
	\lim_{T\to\infty}\limsup_{N\to\infty} \E_{m,n\in[N]}  \, {\bf 1}_{|L(Qm+1,Qn)|>T}=0.
	$$
	It follows from the above that in order to verify  \eqref{E:wdmnA'} it suffices to show that for all large enough  $T>0$ we have
	\begin{equation}\label{E:wdmnA''}
		\lim_{N\to\infty} \E_{m,n\in [N]} \, w_{\delta,T}(Qm+1,Qn)\cdot  f( (Qm+1)^2-(Qn)^2)\cdot \overline{f((Qm+1)(Qn))}=0.
\end{equation}
	where  $w_{\delta,T}(m,n):=	 F_{\delta,T}(L(m,n))\cdot  {\bf 1}_{m> n}$ and
$F_{\delta,T}$ is the $(2T)$-periodic extension of $F_\delta\cdot {\bf 1}_{[-T,T]}$.
		Since $F_{\delta,T}$ can be approximated uniformly by trigonometric
  polynomials, using linearity we deduce that it suffices to verify \eqref{E:wdmnA''} with
  $w_{\delta,T}(m,n)$ replaced by $(m^2-n^2)^{it}\cdot (mn)^{-it}\cdot {\bf 1}_{m>n}$ for arbitrary $t\in \R$.
		Hence, in order to establish \eqref{E:wdmnA'} it suffices to show that for every $t\in \R$ we have
		\begin{equation}\label{E:fkQ0}
			\lim_{N\to\infty} \E_{m,n\in [N]} \,   {\bf 1}_{Qm+1> Qn} \cdot f_t((Qm+1)^2-(Qn)^2)\cdot \overline{f_t((Qm+1)(Qn))}=0,
		\end{equation}
		where $f_t(n):=f(n)\cdot n^{it}$, $n\in \N$.
  Note that since the indicator function of an arithmetic progression is a linear combination of Dirichlet characters, in order to establish \eqref{E:fkQ0}, it suffices to show that  for all Dirichlet characters $\chi,\chi'$ we have
  $$
			\lim_{N\to\infty} \E_{m,n\in [N]} \,   {\bf 1}_{m> n} \cdot \chi(m)\cdot \chi'(n)\cdot f_t(m^2-n^2)\cdot \overline{f_t(mn)}=0,
		$$
  or, equivalently, that
  \begin{equation}\label{E:fkQ0'}
			\lim_{N\to\infty} \E_{m,n\in [N]} \,   {\bf 1}_{m> n} \cdot f_t(m^2-n^2)\cdot (\overline{f_t}\cdot\chi) (m)\cdot  (\overline{f_t}\cdot\chi') (n)=0.
		\end{equation}
  Since $f$ is aperiodic, so is $f_t$. Since $f$ is aperiodic, so is $\overline{f_t}\cdot \chi$ (and $\overline{f_t}\cdot \chi'$).
		Combining \cite[Theorem~2.5]{FH17}
		and \cite[Lemma~9.6]{FH17},  we deduce
		that 		\eqref{E:fkQ0'} holds, completing the proof.
		
		Finally, to prove  convergence for all multiplicative functions, we argue as before, using the fact that convergence in the case
		$w_\delta=1$  follows from \cite[Theorem~1.4]{FH16}. We note that although \cite[Theorem~1.4]{FH16} only covers the  case without the weight  ${\bf 1}_{m> n}$,  exactly the same argument can be used to cover this weighted variant.
	\end{proof}
	Next we restate and prove Lemma~\ref{L:mainvanishing1}. Recall that $A_\delta$, $\CM_p$, and $\CA$ were defined in \eqref{E:AdfQmn}, \eqref{E:pretentious}, and \eqref{E:CA} respectively.

 \begin{named}{\cref{L:mainvanishing1}}{}
	Let $f\in\CM_p\setminus \CA$,  $\delta>0$, $\ell,\ell'\in \N$, and let $\big(\Phi_K\big)_{K\in\N}$ be the F\o lner sequence described in \eqref{E:PhiK}.
		Then
		\begin{equation}\label{E:lemmaaveragezero'}
			\lim_{K\to\infty}\E_{Q\in\Phi_K}\lim_{N\to\infty}\E_{m,n\in[N]}\, A_\delta(f,Q;m,n)=0.
		\end{equation}
	\end{named}
	\begin{proof}
		Let $\delta>0$ and  $f\in\CM_p\setminus \CA$. Then for some $t\in \R$ and Dirichlet character $\chi$ we have
		\begin{equation}\label{E:fng}
			f(n)=n^{it} \cdot g(n), \quad \text{where } g\sim \chi, \, g\neq 1.
		\end{equation}
		For reasons that will become clear later, for $\delta>0$ and $Q\in \N$, let
		\begin{equation}\label{E:LtildeL}
			\tilde L_\delta(f,Q):=f(Q)\cdot Q^{-it}\cdot\lim_{N\to\infty} \E_{m,n\in[N]}\, A_{\delta}(f,Q;m,n).
		\end{equation}
		Note that the limit in the definition of $\tilde L_\delta(f,Q)$ exists by the second part of \cref{P:aperiodic1'}.
  The idea to prove \eqref{E:lemmaaveragezero'} is to show that $\tilde L_\delta(f,Q)$  does not depend strongly on $Q$ (it  depends only on the prime factors of $Q$), so that, as a function of $Q$ it is orthogonal to any non-trivial completely multiplicative function with respect to multiplicative averages.
		Since the left hand side  of \eqref{E:lemmaaveragezero'} is the correlation between $\tilde L(f,Q)$ and the completely multiplicative function $Q\mapsto f(Q)\cdot Q^{-it}$, which is non-trivial by \eqref{E:fng}, the conclusion will  follow.
		
		Fix $\varepsilon>0$ and take $K_0=K_0(\varepsilon,f)$ so that
		$$\sum_{p\geq K_0}\frac1p (1-\Re(f(p)\cdot  \overline{\chi(p)}\cdot p^{-it}))+K_0^{-1/2}\leq\varepsilon.$$
		Using Proposition~\ref{P:concentration1} (and noting that the function $K\mapsto \D(f,\chi\cdot n^{it}; K,N)$ is decreasing for any fixed $f$ and $N$), it follows that for every $N>K>K_0$ and $Q\in\Phi_K$,
		\begin{equation}\label{eqconcentrationinequality1}
			\E_{n\in [N]}|f(Qn+1)-(Qn)^{it}\cdot \exp\big(F_N(f,K)\big)|\ll\varepsilon.
		\end{equation}
		Using  this identity and  Lemma~\ref{L:lN} with $a(n):=f(Qn+1)\cdot (Qn)^{-it}$ and $l_1=1$, $l_2=-1$, it follows that
		\begin{equation}\label{eqconcentrationinequality2}
			\limsup_{N\to\infty}\E_{m,n\in[N],m>n}\Big|f(Q(m-n)+1)-\big(Q(m-n)\big)^{it}\exp\big(F_{2N}(f,K)\big)\Big|\ll\varepsilon.
		\end{equation}
		Using  \eqref{eqconcentrationinequality1} and Lemma~\ref{L:lN} with $a(n):=f(Qn+1)\cdot (Qn)^{-it}$ and $l_1=l_2=1$, it follows that
		\begin{equation}\label{eqconcentrationinequality3}
			\limsup_{N\to\infty}\E_{m,n\in[N]}\Big|f(Q(m+n)+1)-\big(Q(m+n)\big)^{it}\exp\big(F_{2N}(f,K)\big)\Big|\ll\varepsilon.
		\end{equation}
		Combining  \eqref{eqconcentrationinequality1}, \eqref{eqconcentrationinequality2}, \eqref{eqconcentrationinequality3}, and since all terms involved are $1$-bounded,  we deduce that for every $K>K_0$ and $Q\in\Phi_K$,
		\begin{multline*}
			\limsup_{N\to\infty}\E_{m,n\in[N], m>n}\Big|f\big((Qm+1)^2-(Qn)^2\big)\cdot  \overline{f(Qm+1)}
			\\
			-Q^{it}\cdot (m^2-n^2)^{it} \cdot m^{-it} \cdot \exp(2F_{2N}(f,K)) \cdot \overline{\exp(F_N(f,K))}\big)\Big|\ll
   \varepsilon.
		\end{multline*}
		Multiplying by $c_{\ell,\ell'}\cdot w_\delta(m,n)\cdot  \overline{f(Qn)}\cdot Q^{-it}\cdot f(Q)=c_{\ell,\ell'}\cdot w_\delta(m,n)\cdot \overline{f(n)}\cdot Q^{-it}$,
		where $c_{\ell,\ell'}:=f(\ell)\cdot \overline{f(\ell')}$,  we deduce that
		\begin{multline*}
			\limsup_{N\to\infty}\E_{m,n\in[N],m>n}\Big|A_\delta(f,Q;m,n)\cdot Q^{-it}\cdot f(Q)\\
			-c_{\ell,\ell'}\cdot w_\delta(m,n)\cdot (m^2-n^2)^{it} \cdot m^{-it} \cdot  \overline{f(n)}\cdot \exp(2F_{2N}(f,K)) \cdot \overline{\exp(F_N(f,K))}\Big|\ll\varepsilon.
		\end{multline*}
		This implies that, for every $K>K_0$
		\begin{multline*}
			\limsup_{N\to\infty}\sup_{Q\in \Phi_K}\Big|\tilde L_\delta(f,Q)
			-\\ c_{\ell,\ell'}\cdot \E_{m,n\in[N]}\, w_\delta(m,n)\cdot (m^2-n^2)^{it}\cdot m^{-it}\cdot \overline{f(n)}\cdot \exp(2F_{2N}(f,K)) \cdot \overline{\exp(F_N(f,K))}\Big|\ll\varepsilon.
		\end{multline*}
		Since the second term does not depend on $Q$, we conclude that for every $K>K_0$ and $Q,Q'\in\Phi_K$,
		$\big|\tilde L_\delta(f,Q)-\tilde L_\delta(f,Q')\big|\ll\varepsilon$.
		We can choose $\varepsilon$ arbitrarily small by sending $K\to\infty$, so it follows that
		$$\lim_{K\to\infty}\max_{Q,Q'\in\Phi_K}\big|\tilde L_\delta(f,Q)-\tilde L_\delta(f,Q')\big|=0.$$
		For $K\in \N$, let $Q_K$ be any element of $\Phi_K$.  From the last identity and    \eqref{E:LtildeL} it follows that
		$$
		\lim_{K\to\infty}\E_{Q\in\Phi_K} \lim_{N\to\infty}\E_{m,n\in[N]}\, A_{\delta}(f,Q;m,n)=
		\lim_{K\to\infty} \tilde L_\delta(f,Q_K) \cdot \E_{Q\in\Phi_K}\overline{f(Q)}\cdot Q^{it}.
		$$
		By  \eqref{E:fng} we have that
		$Q\mapsto f(Q)\cdot Q^{-it}$ is a non-trivial multiplicative function,  hence the last limit is zero by Lemma~\ref{L:Fol0}. This establishes
		\eqref{E:lemmaaveragezero'} and completes the proof.
	\end{proof}
	
	Lastly, we restate and prove Lemma~\ref{L:mainpositive1}.
	
 \begin{named}{\cref{L:mainpositive1}}{}
	Let $\sigma$ be a Borel probability measure on $\CM_p$ such that $\sigma(\{1\})>0$ and let $\CA$ be as in \eqref{E:CA}.
		Then there exist $\delta_0,\rho_0>0$, depending only on $\sigma$,  such that
		\begin{equation}\label{E:mainpositive1'}
			\liminf_{N\to\infty}\inf_{Q\in \N}\Re\Big( \E_{m,n\in[N]}\int_{\CA} A_{\delta_0}(f,Q;m,n)\, d\sigma(f)\Big)\geq\rho_0.
		\end{equation}
	\end{named}

		
	\begin{proof}
		Let $a:=\sigma(\{1\})>0$ and  for $\delta>0$ let
		$$
		\mu_{\delta}:=\lim_{N\to\infty}\E_{m,n\in [N]} \, w_{\delta}(m,n).
		$$
		Note that by 	Lemma~\ref{L:Sdelta} we have $\mu_{\delta}>0$.
		For $T\in \R_+$ we  consider the sets
		$$
		\CA_{T}:=\{(n^{it})_{n\in\N} \colon  t\in [-T,T] \}.
		$$
		These sets are 	closed and as a consequence Borel.
		Since
		$\CA_{T}$ increases to $\CA$ as $T\to \infty$,
		and the Borel measure $\sigma$ is finite,   there exists $T_0=T_0(\sigma)>0$ such that
		\begin{equation}\label{E:CAT0}
			\sigma(\CA\setminus \CA_{T_0})\leq \frac{a}{4}.
		\end{equation}
		Note also that since $\lim_{n\to \infty} \sup_{Q\in \N}|\log(Qn+1)-\log(Qn)|=0$,
		we have
		\begin{multline*}
			\lim_{N\to\infty}\sup_{f\in \CA_{T_0}, Q\in \N}\E_{m,n\in [N],m>n}\big| f(\ell ((Qm+1)^2-(Qn)^2))\cdot \overline{f(\ell'(Qm+1)(Qn))}-\\
			f(\ell (m^2-n^2))\cdot \overline{f(\ell' mn)}\big|=0,
		\end{multline*}
		and   by the definition of  $w_\delta$ given in Lemma~\ref{L:Sdelta}, we have
		$$
		\lim_{\delta\to 0^+} 	\limsup_{N\to\infty}\sup_{f\in \CA_{T_0}} \big|\E_{m,n\in [N]} \, w_\delta(m,n)\cdot  f(\ell (m^2-n^2))\cdot \overline{f(\ell'mn)} -
		\E_{m,n\in [N]} \, w_\delta(m,n) \big|=0.
		$$
		We deduce  from  the last two identities that if $\delta_0$ is small enough (depending only on  $T_0$ and hence only on $\sigma$), then for every $Q\in \N$ we have
		$$
		\liminf_{N\to\infty}\inf_{Q\in \N} \Re\Big(\E_{m,n\in[N]}\int_{\CA_{T_0}} A_{\delta_0}(f,Q;m,n)\, d\sigma(f)\Big)\geq  \frac{\sigma(\CA_{T_0})\cdot \mu_{\delta_0}}{2} 	\geq \frac{ a\cdot \mu_{\delta_0}}{2},
		$$
		where  we
		used that $1\in \CA_{T_0}$, hence  $\sigma(\CA_{T_0})\geq \sigma(\{1\})=a$.
		On the other hand, using \eqref{E:CAT0} and the triangle inequality, we get
		$$
		\limsup_{N\to\infty} \sup_{Q\in \N}\Big|\E_{m,n\in[N]}\int_{\CA\setminus \CA_{T_0}} A_{\delta_0}(f,Q;m,n)\, d\sigma(f)\Big|\leq \frac{a\cdot \mu_{\delta_0}}{4}.
		$$
		Combining the last two estimates we deduce that \eqref{E:mainpositive1'} holds with $\rho_0:=\frac{a\cdot \mu_{\delta_0}}{4}$.
	\end{proof}

	\section{Nonlinear concentration estimates}\label{S:concentration2}
	Our goal is to prove the concentration estimate of  Proposition~\ref{P:concentration2}, which is a crucial ingredient in the proof of part~\eqref{I:MainPythPairs2} of Theorem~\ref{T:PairsDensity} and in the proof of Theorem~\ref{T:Triples}.
	In fact, we will prove a more general and quantitatively more explicit statement with further applications in mind.
	
	Let $f,g\colon \N\to \U$ be multiplicative functions and let $\chi$ be a Dirichlet character and $t\in \R.$ For every $K_0\in \N$  we let
	\begin{equation}\label{E:defGNfK0}
		G_N(f,K_0):=2\sum_{\substack{ K_0< p\leq N,\\ p\equiv   1 \! \! \! \pmod{4}}}\, \frac{1}{p}\,(f(p)\cdot \overline{\chi(p)}\cdot p^{-it} -1)
	\end{equation}
	and
	\begin{equation}\label{E:defD1}
		\D_1(f,g; x,y)^2:= \sum_{\substack{x< p\leq y, \\ p\equiv   1 \! \! \! \pmod{4}}} \frac{1}{p}\, (1-\Re(f(p)\cdot \overline{g(p)})).
	\end{equation}
	\begin{proposition}\label{P:concentration2quanti}
		Let $K_0,N\in \N$ and  $f\colon \mathbb{N}\to\mathbb{U}$ be a multiplicative function. Let also $t\in \R$,  $\chi$ be a Dirichlet character with period $q,$   $Q= \prod_{p\leq K_0}p^{a_p}$ for some  $a_p\in \N$,  and suppose that $q\mid Q$.
		If   $N$ is  large enough, depending only on  $Q$,  then for all  $a,b\in \Z$ with  $-Q\leq a,b\leq Q$ and $(a^2+b^2,Q)=1$ we have
		\begin{multline}\label{E:generalf}
			\E_{m,n\in [N]}\, \big|f\big((Qm+a)^2+(Qn+b)^2\big)- \chi(a^2+b^2)\cdot ((Qm+a)^2+(Qn+b)^2)^{it} \cdot \exp\big(G_N(f,K_0)\big)\big|\ll \\ 	(\D_1+\D_1^2)(f,\chi\cdot n^{it}; K_0,\sqrt{N})+
			Q^2\cdot\mathbb{D}_1(f,\chi\cdot n^{it};N,3Q^2N^2)+ Q\cdot \D_1(f,\chi\cdot n^{it}; \sqrt{N},N)+K_0^{-1/2},
		\end{multline}
		where $G_N,\D_1$ are as in   \eqref{E:defGNfK0}, \eqref{E:defD1},  and
		the implicit constant is absolute.
	\end{proposition}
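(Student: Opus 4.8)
The plan is to peel the periodic and Archimedean factors off $f$ and then run a Tur\'an--Kubilius-type concentration argument for the resulting multiplicative function along the binary form $x^2+y^2$, organising the prime factors of $r:=(Qm+a)^2+(Qn+b)^2$ into the three ranges $(K_0,\sqrt N]$, $(\sqrt N,N]$, $(N,3Q^2N^2]$. First I would record the arithmetic reductions. Since $Q=\prod_{p\le K_0}p^{a_p}$ and $q\mid Q$, every value of $r$ is coprime to $Q$, hence has no prime factor $\le K_0$ and satisfies $r\equiv a^2+b^2\pmod q$, so $\chi(r)=\chi(a^2+b^2)$. Discarding the $O_Q(N)$ pairs with $\min(Qm+a,Qn+b)\le\sqrt N$ (a contribution $O_Q(N^{-1})$ to the average, everything being $1$-bounded) we may assume $N\le r\le 3Q^2N^2$, the upper bound being valid once $N\gg_Q1$. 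Writing $g$ for the multiplicative function with $g(p)=f(p)\overline{\chi(p)}p^{-it}$ on primes $p\nmid q$ (all we need, since $r$ is coprime to $q$), we have $f(r)=\chi(a^2+b^2)\cdot r^{it}\cdot g(r)$, $g\sim1$, and $\D_1(g,1;x,y)=\D_1(f,\chi\cdot n^{it};x,y)$ for $x\ge K_0$. Cancelling the factor $r^{it}=\big((Qm+a)^2+(Qn+b)^2\big)^{it}$, the assertion \eqref{E:generalf} is equivalent to the same estimate with its left-hand side replaced by $\E_{m,n\in[N]}\big|g(r)-\exp(G_N(f,K_0))\big|$, where $G_N(f,K_0)=2\sum_{K_0<p\le N,\,p\equiv1\,(4)}\tfrac1p(g(p)-1)$. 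Finally, factor $r=r_1r_2r_3$ so that $r_i$ collects the prime-power factors of $r$ whose underlying prime lies in $(K_0,\sqrt N]$, $(\sqrt N,N]$, $(N,3Q^2N^2]$ respectively; then $g(r)=g(r_1)g(r_2)g(r_3)$ with $|g|\le1$, and $G_N(f,K_0)=G_N^{\le\sqrt N}+G_N^{(\sqrt N,N]}$ where $|G_N^{(\sqrt N,N]}|\ll\D_1(f,\chi\cdot n^{it};\sqrt N,N)$. It therefore suffices to bound $\E|g(r_1)-\exp(G_N^{\le\sqrt N})|$, $\E|g(r_2)-1|$ and $\E|g(r_3)-1|$ separately.

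The main (smooth) contribution is $r_1$, where I would run the Tur\'an--Kubilius argument for $x^2+y^2$. Since $g\sim1$, passing from the multiplicative to the additive description costs $\ll\D_1(f,\chi\cdot n^{it};K_0,\sqrt N)^2$ in $L^1$, and the contribution of the prime powers $p^k\| r$ with $k\ge2$ is $\ll\sum_{p>K_0}p^{-2}\ll K_0^{-1}$ (because $\E_{m,n\in[N]}\mathbf 1_{p^2\mid r}\ll p^{-2}$ for every $p\nmid Q$), so $g(r_1)=\exp\!\big(\sum_{p\mid r_1}(g(p)-1)\big)$ up to such errors. The exponent is the additive function $\sum_{K_0<p\le\sqrt N}(g(p)-1)\mathbf 1_{p\mid r}$, whose first two moments over the box $\{(Qm+a,Qn+b):m,n\in[N]\}$ I would compute from the classical count: for $p\nmid Q$ the congruence $u^2+v^2\equiv0\pmod p$ has exactly $2p-1$ solutions mod $p$ when $p\equiv1\pmod4$ (two lines through the origin, since $-1$ is a quadratic residue there) and exactly $1$ when $p\equiv3\pmod4$. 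Hence, for $p\le\sqrt N$, $\E_{m,n\in[N]}\mathbf 1_{p\mid r}=\tfrac2p+O(p/N^2)$ (resp.\ $\tfrac1{p^2}+O(p/N^2)$) and $\E_{m,n\in[N]}\mathbf 1_{pp'\mid r}=(\E\mathbf 1_{p\mid r})(\E\mathbf 1_{p'\mid r})+O(pp'/N^2)$ for distinct $p,p'\le\sqrt N$ (near-independence via CRT). Consequently the mean of the exponent equals $2\sum_{K_0<p\le\sqrt N,\,p\equiv1(4)}\tfrac1p(g(p)-1)=G_N^{\le\sqrt N}$ up to $O(K_0^{-1})+o_{N\to\infty}(1)$ — the $p\equiv3\pmod4$ primes contributing only $\ll\sum_{p>K_0}p^{-2}$, as $\E_{m,n\in[N]}\mathbf 1_{p\mid r}\asymp p^{-2}$ for them — while its variance is $\ll\D_1(f,\chi\cdot n^{it};K_0,\sqrt N)^2+K_0^{-1}+o(1)$. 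Chebyshev's inequality then yields $\E|g(r_1)-\exp(G_N^{\le\sqrt N})|\ll(\D_1+\D_1^2)(f,\chi\cdot n^{it};K_0,\sqrt N)+K_0^{-1/2}+o(1)$ (using that $\exp$ is $1$-bounded here, as $\Re(G_N^{\le\sqrt N})\le0$), which is the first and last terms of the target.

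For $r_2$ and $r_3$ I would use cruder estimates. Once $N\gg_Q1$ we have $r<N^3$, so $r_2$ and $r_3$ have at most two prime factors each; bounding $|g(r_i)-1|\le\sum_{p\| r,\,p\mid r_i}|g(p)-1|$ up to prime-power terms ($\ll K_0^{-1}$ in total) and negligible cross terms, the $r_2$-estimate reduces to $\sum_{\sqrt N<p\le N}(\E_{m,n\in[N]}\mathbf 1_{p\mid r})\,|g(p)-1|\ll\sum_{\sqrt N<p\le N}(\tfrac1p+\tfrac p{N^2})|g(p)-1|$, which by Cauchy--Schwarz is $\ll Q\cdot\D_1(f,\chi\cdot n^{it};\sqrt N,N)+o(1)$; for $r_3$, where the lattice-point count over the box is no longer sharp (the modulus $p$ being large relative to $N$), I would fall back on $\E_{m,n\in[N]}\mathbf 1_{p\mid r}\ll p/N^2$ together with a Rankin/Cauchy--Schwarz argument, which costs a factor $Q^2$ and yields $\ll Q^2\cdot\D_1(f,\chi\cdot n^{it};N,3Q^2N^2)+o(1)$. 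Assembling the three contributions — and noting that the $o_{N\to\infty}(1)$ errors and the $Q,Q^2$ losses in the upper ranges are harmless in all our applications, where $N\to\infty$ is sent first — gives the stated bound. I expect the main obstacle to be precisely this lattice-point bookkeeping: obtaining, for the form $x^2+y^2$ restricted to the residue-class box $\{(Qm+a,Qn+b):m,n\in[N]\}$, first- and second-moment estimates uniform enough in the large modulus $Q$ and in the prime $p$ (and its powers) to deliver the clean $Q$- and $Q^2$-weighted error terms; the remaining steps are routine, if lengthy, second-moment computation.
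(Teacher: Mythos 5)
Your overall architecture matches the paper's: reduce to $\chi=1$, $t=0$ by peeling off the periodic and Archimedean factors; split the prime factors of $r=(Qm+a)^2+(Qn+b)^2$ into the ranges $(K_0,\sqrt N]$, $(\sqrt N,N]$, $(N,3Q^2N^2]$; run a Tur\'an--Kubilius second-moment argument for the first range; and use cruder first-moment bounds for the two larger ranges. Your $r_1$ analysis is sound, and your first-moment treatment of $r_2$ is a legitimate minor simplification of the paper's second-moment estimate for the same range (it even avoids the extra factor of $Q$). The reductions at the start (coprimality to $Q$, $\chi(r)=\chi(a^2+b^2)$, discarding $O_Q(N)$ small pairs) are all correct.

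However, there is a genuine gap in your $r_3$ estimate, and it is precisely at the step the paper's proof is really \emph{about}. You propose the per-prime bound $\E_{m,n\in[N]}\mathbf 1_{p\mid r}\ll p/N^2$ for $p\in(N,3Q^2N^2]$ plus an unspecified ``Rankin/Cauchy--Schwarz argument.'' This cannot work. First, $p/N^2$ is not even the correct estimate for the congruence count (the direct modular count gives $\ll 1/p+1/N$, not $p/N^2$). Second, and more importantly, \emph{no} bound of the form ``a small constant per $(m,n)$'' can close the argument here, because there are $\sim Q^2N^2/\log N$ primes in $(N,3Q^2N^2]$; if you try Cauchy--Schwarz on $\sum_{N<p\le 3Q^2N^2}\E\mathbf 1_{p\mid r}\,|g(p)-1|$ with either $p/N^2$ or $1/N$ as the weight, the unweighted prime count swamps the $1/p$-weighted quantity $\D_1(f,\chi\cdot n^{it};N,3Q^2N^2)^2$ that you need, and the resulting bound diverges with $N$. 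What is actually needed, and what the paper supplies in its Lemma~\ref{L:wNQl}, is the arithmetic input that if $\ell$ is a sum of two squares and $\ell\mid r$, then $r/\ell$ is again a sum of two squares, so
$$
\E_{m,n\in[N]}\mathbf 1_{\ell\mid r}\le\frac1{N^2}\sum_{k\le 3Q^2N^2/\ell}r_2(k)\ll\frac{Q^2}{\ell},
$$
using the classical bound $\sum_{k\le X}r_2(k)\ll X$. Applied to $\ell=p$ with $p\equiv1\pmod4$ (and to $\ell=pq$), this yields a per-prime weight $\ll Q^2/p$, which is exactly what lets Cauchy--Schwarz together with Mertens' theorem on $(N,3Q^2N^2]$ produce the term $Q^2\cdot\D_1(f,\chi\cdot n^{it};N,3Q^2N^2)$. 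This divisor-function bound --- not the lattice-point count modulo $p$, which is useless once $p\gg N$ --- is the one non-routine ingredient of the proof, and it is the step your proposal is missing.
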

	\begin{remarks}
		$\bullet$	Note that  if $f\sim \chi\cdot n^{it}$,  we have $\lim_{N\to \infty} \D_1(f,\chi\cdot n^{it};N,3Q^2N^2)=0$ and $\lim_{N\to\infty} \D_1(f,\chi\cdot n^{it}; \sqrt{N},N)=0$.
		Hence, renaming $K_0$ as $K$, taking  the max over all $Q\in \Phi_K$,  and then letting $N\to \infty$ in \eqref{E:generalf} gives the estimate in Proposition~\ref{P:concentration2}.
		
		$\bullet$	The averaging over both variables $m,n\in\mathbb{N}$ is crucial for our argument and allows  us to overcome issues with large primes. In fact, by slightly modifying the  example of \cite[Lemma~2.1]{KlurComp}, one can construct completely multiplicative functions (both pretentious and aperiodic) $f:\mathbb{N}\to \{-1,1\},$ such that for every $a\in \Z_+$ the averages
		$$
  \E_{n\in [N]}\, f((Qn+a)^2+1)
  $$
		behave rather ``erratically'' and a similar concentration estimate fails. On the other hand, in \cite{Te24} Ter\"{a}v\"{a}inen proved a version of the concentration estimates for values of $f(P(Qn+a))$, where  $P\in\mathbb{Z}[x]$ is  arbitrary and the  multiplicative functions $f$ satisfies  $f(p)=p^{it}\chi(p)$ for $p>N$ (with a somewhat more particular choice of $Q$).  In our setting, however, we cannot afford to make such assumptions on $f$.
	\end{remarks}
	The proof is carried out in  several steps, covering progressively more general settings.
	Throughout  the argument we write $p\, \mid\mid\, n$ if $p\mid n$ but $p^2 \nmid n$.
	\subsection{Preparatory counting arguments}
	The following lemma will be used multiple times subsequently.
	\begin{lemma}\label{L:wNPQ}
		For $Q,N\in \N$, $a,b\in \Z$,  and  primes $p, q$ such that   $p,q\equiv 1
		\pmod{4}$  and  $(pq, Q)=1$, let
		\begin{equation}\label{E:wNQpq}
			w_{N,Q}(p,q):=\frac{1}{N^2}\, \sum_{\substack{m,n\in [N],\\   p,q\, \mid\mid\,  (Qm+a)^2+(Qn+b)^2} } 1.
		\end{equation}
		Then
		\begin{equation}\label{E:wNPQ1}
			w_{N,Q}(p,p)=\frac{2}{p}\Big(1-\frac{1}{p}\Big)^2 + O\Big(\frac{1}{N}\Big),
		\end{equation}
		and  if $p\neq q$ we have  
		\begin{equation}\label{E:wNPQ2}
			w_{N,Q}(p,q)=\frac{4}{pq}\Big(1-\frac{1}{p}\Big)^2 \Big(1-\frac{1}{q}\Big)^2 +O\Big(\frac{1}{N}\Big),
		\end{equation}
  where the implicit constants are absolute.
	\end{lemma}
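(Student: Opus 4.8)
The plan is to reduce $w_{N,Q}(p,q)$ to a purely local count at the primes under consideration, where we write $u=Qm+a$ and $v=Qn+b$, exploiting two elementary facts about a prime $p\equiv 1\pmod 4$. First, $-1$ is a quadratic residue modulo every power of $p$, so for $j\in\{1,2\}$ and $u$ with $p\nmid u$ the congruence $u^2+v^2\equiv 0\pmod{p^j}$ is equivalent to $v$ lying in one of exactly two residue classes modulo $p^j$ (the two square roots of the unit $-u^2$). Second, if $p\mid u$ then $p^2\mid u^2$, so $p\mid u^2+v^2$ forces $p\mid v$ and hence $p^2\mid u^2+v^2$; in particular $p\,\mid\mid\,u^2+v^2$ can occur only when $p\nmid u$, and then automatically $p\nmid v$. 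The same applies to $q$. A further guiding principle is that every error term must be kept absolute, which dictates how the count is organized.

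Next I would record the one-variable counts. Since $(Q,p)=1$, the values $Qn+b$ with $n\in[N]$ equidistribute among residue classes modulo $p$ and modulo $p^2$, each class being hit $N/p+O(1)$ times (and $N/p^2+O(1)$ times modulo $p^2$) with absolute implied constants. Hence, for any fixed $u$ with $p\nmid u$,
\begin{align*}
\#\{n\in[N]\colon p\,\mid\mid\,u^2+v^2\}
&=\#\{n\in[N]\colon p\mid u^2+v^2\}-\#\{n\in[N]\colon p^2\mid u^2+v^2\}\\
&=\frac{2N}{p}-\frac{2N}{p^2}+O(1)=\frac{2(p-1)}{p^2}\,N+O(1),
\end{align*}
because each divisibility condition cuts out exactly two residue classes for $v$ (modulo $p$ and modulo $p^2$, respectively). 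For $p\neq q$, $(Q,pq)=1$, and $u$ with $p\nmid u$ and $q\nmid u$, the Chinese Remainder Theorem turns the joint condition $p^i\mid u^2+v^2$ and $q^j\mid u^2+v^2$ into exactly $4$ residue classes for $v$ modulo $p^iq^j$; inclusion--exclusion over $i,j\in\{1,2\}$ then gives
\[
\#\{n\in[N]\colon p\,\mid\mid\,u^2+v^2,\ q\,\mid\mid\,u^2+v^2\}=\frac{4N}{pq}\Big(1-\tfrac1p\Big)\Big(1-\tfrac1q\Big)+O(1),
\]
with an absolute implied constant that is moreover uniform over the admissible $u$.

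It remains to sum over $m\in[N]$. By the second structural fact, in the sum defining $w_{N,Q}(p,p)$ only the $m$ with $p\nmid Qm+a$ contribute, and there are $N(1-1/p)+O(1)$ of those since $(Q,p)=1$; for each such $m$ the inner sum over $n$ equals $\tfrac{2(p-1)}{p^2}N+O(1)$ by the display above. Multiplying these, summing the $O(1)$ terms over the $O(N)$ relevant values of $m$ (which contributes $O(N)$), and dividing by $N^2$ yields \eqref{E:wNPQ1}. Likewise, for $p\neq q$ only the $m$ with $p\nmid Qm+a$ and $q\nmid Qm+a$ contribute, of which there are $N(1-1/p)(1-1/q)+O(1)$, and each contributes $\tfrac{4(p-1)(q-1)}{p^2q^2}N+O(1)$ admissible $n$; multiplying out and normalizing gives \eqref{E:wNPQ2}. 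The one delicate point throughout --- and the only genuine obstacle --- is the absoluteness of the implied constants: this is why one must slice over $m$ first and detect $p\,\mid\mid\,u^2+v^2$ through the \emph{two} residue classes coming from $v^2\equiv -u^2$ rather than expanding it into the roughly $2p$ residue classes it occupies modulo $p^2$, an expansion that would only produce an error of order $O(p/N)$.
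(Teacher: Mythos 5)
Your proof is correct and is essentially the same argument as the paper's: both use that $-1$ is a quadratic residue modulo $p^j$ for $p\equiv 1\pmod 4$, split the ``exactly divides'' condition via inclusion--exclusion over $p$ versus $p^2$ (and $q$ versus $q^2$) to keep the error at $O(1)$ per slice, and combine slice counts to get a total of the form (main term)$\cdot N^2+O(N)$. The only cosmetic differences are that you fix $m$ and count over $n$ while the paper fixes $n$ and counts over $m$, and you package the two-prime case directly via CRT and inclusion--exclusion rather than introducing the intermediate quantities $A_{r,s}$.
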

	\begin{remark}
		We deduce the approximate identity
  $$	w_{N,Q}(p,q)=	w_{N,Q}(p,p)\cdot 	w_{N,Q}(q,q)+O\Big(\frac{1}{N}\Big),
  $$
  which is crucial for the proof of the  concentration estimates.
		On the other hand,
		 because of the $O\big(\frac{1}{N}\big)$ errors, these approximate identities  will only be useful to us for sums
		that contain $o(N)$ terms.
	\end{remark}
	\begin{proof}
		Throughout the discussion, we use $\epsilon$  to designate a number in $\{0,1,2,3,4\}$ .
		
		We first establish  \eqref{E:wNPQ1}.
		Let $p$ satisfy the assumptions. Note first that if  $p\mid Qn+b$ and $p\mid (Qm+a)^2+(Qn+b)^2$, then also $p^2\mid (Qm+a)^2+(Qn+b)^2$, hence  we get no contribution to the sum \eqref{E:wNQpq} in this case. So we can assume that   $p\nmid Qn+b$. Since  $p\equiv 1\pmod 4$, the number   $-1$ is a quadratic residue $\! \! \! \mod{p}$,  and we have exactly two solutions  $m\pmod {p}$ to the congruence
		\begin{equation}\label{E:congruence}
			(Qm+a)^2+(Qn+b)^2\equiv 0\pmod {p}.
		\end{equation}
		Hence, for those $n\in [N]$ we have   $2[N/p]+\epsilon$ solutions in the variable $m\in [N]$ to \eqref{E:congruence}.
		Since there are $N-[N/p]+\epsilon$ integers $n\in [N]$ with $p\nmid Qn+b$ (we used that $(p,Q)=1$ here), we get  a  total of
		$$
		2[N/p]\, (N-[N/p]) +O(N)=2N^2/p-2N^2/p^2+O(N)
		$$ solutions of $m,n\in [N]$ to the congruence \eqref{E:congruence}. Similarly, we get that if
		$p\nmid Qn+b$, then the number of solutions $m,n\in [N]$ to the  congruence  $(Qm+a)^2+(Qn+b)^2\equiv 0\pmod {p^2}$ is
		$$
		2[N/p^2]\, (N-[N/p])+O(N)=2N^2/p^2-2N^2/p^3+O(N).
		$$
		(We used that $-1$ is also a quadratic residue $\! \! \! \mod{p^2}$.)
		These solutions should be subtracted from the previous solutions of \eqref{E:congruence} in order to count the number of solutions of $m,n\in [N]$ for which $p\, \mid\mid \, (Qm+a)^2+(Qn+b)^2$. We deduce that
		\begin{equation}\label{E:pndivn}
			\frac{1}{N^2}\, \sum_{\substack{m,n\in [N],\\   p\, \mid\mid\,  (Qm+a)^2+(Qn+b)^2} } 1=\frac{2}{p}-\frac{4}{p^2}+\frac{2}{p^3}+O\Big(\frac{1}{N}\Big)=\frac{2}{p}\Big(1-\frac{1}{p}\Big)^2 + O\Big(\frac{1}{N}\Big),
		\end{equation}
		which proves  \eqref{E:wNPQ1}.
		
		Next,  we establish \eqref{E:wNPQ2}. 	Let $p,q $ satisfy the assumptions.
		As explained in the previous case, those $n\in [N]$ for which $p\mid Qn+b$ or $q\mid Qn+b$ do not contribute  to the sum \eqref{E:wNQpq} defining $w_{N,Q}(p,q)$, hence we can assume that
		$(pq,Qn+b)=1$.
		Let
		$$
		A_{r,s}:=\frac{1}{N^2}\, \sum_{\substack{m,n\in [N],\\   r,s\mid  (Qm+a)^2+(Qn+b)^2,\, (rs,Qn+b)=1 }} 1
		$$
		and note that
		\begin{equation}\label{E:wNQA}
			w_{N,Q}(p,q)=A_{p,q}-A_{p^2,q}-A_{p,q^2}+A_{p^2,q^2}.
		\end{equation}
		
		We first compute $A_{p,q}$.
		Since  $p\equiv q\equiv 1\pmod 4$, the number  $-1$ is a quadratic residue $\! \! \! \mod{p}$ and $\! \! \! \mod{q}$,  and we get  by the Chinese remainder theorem, that for each $n\in [N]$ with $p,q\nmid Qn+b$ we have $4$ solutions $m\pmod {pq}$ to the congruence
		\begin{equation}\label{E:congruencepq}
			(Qm+a)^2+(Qn+b)^2\equiv 0\pmod {pq}.\footnote{If $pq>N$ these may translate to no solutions in $m\in [N]$, but this is also going to be reflected in our computation below since  in this case $4[N/(pq)]+\epsilon=\epsilon$ could very well be $0$.  }
		\end{equation}
		We deduce that  for  each $n\in [N]$ with $(pq,Qn+b)=1$  we have $4[N/(pq)]+\epsilon$ solutions in the variable $m  \in [N]$ to the congruence \eqref{E:congruencepq}.
		Since  the number of $n\in [N]$ for which  $(pq,Qn+b)=1$ is $N-[N/p]-[N/q]+[N/pq]$,
		we get that the total number of solutions to the congruence \eqref{E:congruencepq} with $m,n\in[N]$ and $(pq,n)=1$ is
		\begin{multline}\label{E:pqndivn}
			4 [N/(pq)] \, (N-[N/p]-[N/q]+[N/(pq)])+ O(N)=\\ N^2 \cdot (4/(pq))\cdot (1-1/p-1/q+1/(pq)) +O(N).
		\end{multline}
		Hence,
		$$
		A_{p,q}:=
		\frac{4}{pq}\Big(1-\frac{1}{p}\Big) \Big(1-\frac{1}{q}\Big) +O\Big(\frac{1}{N}\Big).
		$$
		Similarly, using that  $-1$ is also a quadratic residue $\! \! \! \mod{p^k}$  and  $\! \! \! \mod{q^k}$ for $k=1,2$,  we find that
		$$
		A_{p^2,q}=
		\frac{4}{p^2q}\Big(1-\frac{1}{p}\Big) \Big(1-\frac{1}{q}\Big) +O\Big(\frac{1}{N}\Big),
		$$
		and
		$$
		A_{p,q^2}=
		\frac{4}{pq^2}\Big(1-\frac{1}{p}\Big) \Big(1-\frac{1}{q}\Big) +O\Big(\frac{1}{N}\Big).
		$$
		Also,
		$$
		A_{p^2,q^2}=
		\frac{4}{p^2q^2}\Big(1-\frac{1}{p}\Big) \Big(1-\frac{1}{q}\Big) +O\Big(\frac{1}{N}\Big).
		$$
		Using the last four identities and  \eqref{E:wNQA}, we deduce that \eqref{E:wNPQ2} holds. This completes the proof.
	\end{proof}
	We will also need to give upper bounds for
	$w_{N,Q}(p,q)$ when $p,q$ are not necessarily primes, and also give upper bounds that do not involve the error terms $O(1/N)$ that cause  us problems in some cases (this is only relevant  when $pq\geq N$). The next lemma is crucial for us and gives an upper bound that is good enough for our purposes.
	\begin{lemma}\label{L:wNQl}
		For $l, Q,N \in \N$ and  $a,b\in \Z$ with  $-Q\leq a,b\leq Q$,   let
		$$
		w_{N,Q}(l):=\frac{1}{N^2} \, \sum_{\substack{m,n\in [N],\\   l\mid  (Qm+a)^2+(Qn+b)^2} } 1.
		$$
		If  $l$ is a sum of two squares, then
		\begin{equation}\label{E:wNQl}
			w_{N,Q}(l)\ll  \frac{Q^2}{l},
		\end{equation}
		where the implicit constant is  absolute.
  In particular, if $w_{N,Q}(p,q)$ is as in \eqref{E:wNQpq}, taking $l=p$ and $l=pq$ where $p, q$ are distinct primes of the form
		$1\pmod{4}$,   we get
		\begin{equation}\label{E:wNQppq}
			w_{N,Q}(p,p)\ll  \frac{Q^2}{p}, \quad 	 	w_{N,Q}(p,q)\ll  \frac{Q^2}{pq}.
		\end{equation}
	\end{lemma}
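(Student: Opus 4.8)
The plan is to avoid counting residues modulo $l$ altogether: that approach, used in \cref{L:wNPQ}, is governed by equidistribution of $(m,n)$ in $[N]^2$ modulo $l$ and is useless precisely when $l\gg N$, which is the range we care about here. Instead I would bound $w_{N,Q}(l)$ by counting \emph{representations of multiples of $l$ as sums of two squares}. The starting observation is that for $(m,n)\in[N]^2$ the value $g(m,n):=(Qm+a)^2+(Qn+b)^2$ satisfies $g(m,n)\le 2Q^2(N+1)^2$, so the condition $l\mid g(m,n)$ forces $g(m,n)=lk$ with $0\le k\le 2Q^2(N+1)^2/l$ (an empty range, so nothing to prove, once $l>2Q^2(N+1)^2$).

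Concretely: exactly one pair $(m,n)\in[N]^2$ can have $g(m,n)=0$ (it forces $Qm+a=Qn+b=0$), and this occurs only when $Q\mid a$ and $Q\mid b$, a degenerate situation that does not arise in our applications (there $Q$ is large and $\gcd(a^2+b^2,Q)=1$); we discard it at a cost $1/N^2$. For the remaining pairs the map $(m,n)\mapsto(Qm+a,Qn+b)$ is injective, so for each $k\ge1$ we have $\#\{(m,n)\in[N]^2: g(m,n)=lk\}\le r(lk)$, where $r(n):=\#\{(u,v)\in\Z^2:u^2+v^2=n\}$; summing over $k$ gives $N^2 w_{N,Q}(l)\le 1+\sum_{1\le k\le 2Q^2(N+1)^2/l} r(lk)$. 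The key estimate is that, since $l$ is a sum of two squares, $r(lk)\le d(l_1)\,r(k)$, where $l_1:=\prod_{p\mid l,\,p\equiv1(4)}p^{v_p(l)}$: writing $r(n)/4=\sum_{d\mid n}\chi_{-4}(d)=\prod_{p\equiv1(4)}(1+v_p(n))$ when $n$ is a sum of two squares (and $r(n)=0$ otherwise), one notes that $r(lk)\ne0$ forces $k$ to be a sum of two squares too, whence $\prod_{p\equiv1(4)}(1+v_p(l)+v_p(k))\le\prod_{p\equiv1(4)}(1+v_p(l))(1+v_p(k))$. Therefore $\sum_{k\le K}r(lk)\le d(l_1)\sum_{k\le K}r(k)$, and $\sum_{k\le K}r(k)$ is the number of nonzero lattice points in a disc of radius $\sqrt K$, hence $\ll K$ by a trivial area bound. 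Combining, $w_{N,Q}(l)\ll N^{-2}\bigl(1+d(l_1)Q^2(N+1)^2/l\bigr)\ll d(l_1)Q^2/l$; for the cases actually used — $l=p$ or $l=pq$ with $p,q$ distinct primes $\equiv1\bmod4$, so $d(l_1)\le4$ — this is the asserted $w_{N,Q}(l)\ll Q^2/l$ with an absolute constant, and $w_{N,Q}(p,p)\le w_{N,Q}(p)$, $w_{N,Q}(p,q)\le w_{N,Q}(pq)$ give the ``in particular'' statements.

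The one point requiring care is the input that beats the naive divisor bound: estimating $r(lk)\le 4d(lk)\ll_l d(k)$ and using $\sum_{k\le K}d(k)\ll K\log K$ would cost a factor $\log(QN)$, which is fatal when $l\asymp Q^2N^2$; the crux is that $\sum_{k\le K}r(k)$, being a count of lattice points in a disc, is genuinely $\ll K$ with no logarithm. Everything else is bookkeeping — checking $d(l_1)=O(1)$ for the relevant $l$ and disposing of the single degenerate pair with $g(m,n)=0$. I would also emphasize that this single argument is uniform in $l$, simultaneously covering the easy range $l\le N$ (where the bound is far from sharp) and the delicate range $l>N$ (where it is sharp up to the constant $d(l_1)$), so that, unlike in \cref{L:wNPQ}, no case distinction is needed.
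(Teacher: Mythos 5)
Your argument is correct and follows the same basic strategy as the paper: after discarding the (at most one) degenerate pair with $(Qm+a)^2+(Qn+b)^2=0$, inject the solution set into lattice points on circles via $(m,n)\mapsto(Qm+a,Qn+b)$, and then bound the resulting count using the disc estimate $\sum_{k\le K}r_2(k)\ll K$ rather than a divisor-sum bound. Where you are more careful than the paper is exactly the step you flag: the injection yields $N^2w_{N,Q}(l)\le O(1)+\sum_{1\le k\le 3Q^2N^2/l}r_2(lk)$, and passing from $r_2(lk)$ to $r_2(k)$ costs the divisor factor $d(l_1)$ (with $l_1$ the part of $l$ supported on primes $\equiv1\pmod4$), as you derive from $r_2(n)/4=\prod_{p\equiv1\!\!\pmod4}(1+v_p(n))$. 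The paper instead asserts $N^2w_{N,Q}(l)\le\sum_{k\le 3Q^2N^2/l}r_2(k)$ outright, which silently drops this factor; no canonical cancellation of a norm-$l$ Gaussian divisor of $(Qm+a)+i(Qn+b)$ exists in general, and the factor is genuinely present: with $l=5^j$, $Q=1$, $a=b=0$, $N\asymp 5^{j/2}$ one has $w_{N,Q}(l)\gg(j+1)5^{-j}$ while $Q^2/l=5^{-j}$, so an implicit constant that is absolute over \emph{all} sums of two squares $l$ cannot hold. Your bound $w_{N,Q}(l)\ll d(l_1)Q^2/l$ is therefore the correct general statement, and since the lemma is only ever invoked with $l=p$ or $l=pq$ for distinct primes $p,q\equiv1\pmod4$ — in \eqref{E:wNQppq} and in the proof of Proposition~\ref{P:concentration2quanti} — where $d(l_1)\le4$, it delivers the needed absolute constant in every case the paper actually uses. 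Your emphasis that one must use the lattice-point count $\sum_{k\le K}r_2(k)\ll K$ rather than $\sum_{k\le K}d(k)\ll K\log K$ is precisely the right point; that is what makes the estimate uniform across the full range of $l$ with no case distinction, in contrast to Lemma~\ref{L:wNPQ}.
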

\begin{remark}
	These estimates will allow us to show later  that  the contribution of the $m,n\in[N]$ for which
	 $(Qm+a)^2+(Qn+b)^2$ have large prime divisors (say $\geq \sqrt{N}$)
	 is negligible for our purposes. In contrast, we could not have done the same for the $n\in[N]$ for which $n^2+1$ have  large prime divisors.
\end{remark}
	\begin{proof}
		Recall that an integer is a sum of two squares if and only if in its  factorization as a product of primes, all prime factors congruent to $3 \pmod{4}$ occur with even multiplicity.
		It follows that
		if  $l$ is a sum of two squares and $l\mid (Qm+a)^2+(Qn+b)^2$, then the ratio
		$\big((Qm+a)^2+(Qn+b)^2\big)/l$ is also a sum of two squares. We deduce from this and our assumption $|a|,|b|\leq Q$  that if $l$ is a sum of two squares, then
		$$
		w_{N,Q}(l)\leq \frac{1}{N^2}\, \sum_{k\leq 3Q^2N^2/l}r_2(k)\ll \frac{Q^2}{l},
		$$
		where $r_2(k)$ denotes the number of representations of $k$ as a sum of two squares,  and to get the second estimate we used the well-known fact
		$\sum_{k\le n}r_2(k)\ll n$.  This completes the proof.
	\end{proof}
	\subsection{Concentration estimate for additive functions}
	We start with a  concentration estimate for additive functions that will eventually get  lifted to a concentration estimate for multiplicative functions.
	\begin{definition}
		We say that  $h\colon \mathbb{N}\to\mathbb{C}$ is {\em  additive}, if   it satisfies
		$h(mn)=h(m) +h(n)$ whenever $(m,n)=1$.
	\end{definition}
	\begin{lemma}[Tur\'an-Kubilius inequality for sums of squares]\label{L:TKadditive}	
		Let $K_0,N\in \N$, $a,b\in\Z$ with $-Q\leq a,b\leq Q$,   and  $h\colon \mathbb{N}\to\mathbb{C}$ be an additive  function that is bounded by $1$  on primes and
		such that
		\begin{enumerate}
			\item \label{I:hp1}  $h(p)=0$ for all primes $p\leq K_0$ and $p>N$;
			
			\item \label{I:hp2} $h(p)=0$  for all  primes $p\equiv 3\pmod 4$;
			
			\item \label{I:hp3} $h(p^k)=0$  for all   primes $p$ and  $k\geq 2$.
		\end{enumerate}
		Let also  $Q= \prod_{p\leq K_0}p^{a_p}$ for some $a_p\in \N$.
		Then  for all  large enough $N$, depending only on $K_0$, we have
		\begin{equation}\label{E:varianceh}
			\E_{m,n\in [N]}\, \big|h\big((Qm+a)^2+(Qn+b)^2\big)-   H_N(h,K_0) \big|^2\ll   \D^2(h; K_0,\sqrt{N})+Q^2\cdot \D^2(h; \sqrt{N},N)+K_0^{-1},
		\end{equation}
		where the implicit constant is absolute,
		\begin{equation}\label{E:HNhP0}
			H_N(h,K_0):=2\, \sum_{K_0< p\leq N}\, \frac{h(p)}{p}
		\end{equation}
		and
		$$
		\D^2(h; K_0,N):=\sum_{K_0< p\leq N} \frac{|h(p)|^2}{p}.
		$$
	\end{lemma}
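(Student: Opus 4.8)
The plan is to compute the variance on the left of \eqref{E:varianceh} by expanding the square and counting, reducing everything to the counting estimates of Lemmas~\ref{L:wNPQ} and \ref{L:wNQl}. Write $N(m,n):=(Qm+a)^2+(Qn+b)^2$. Since $h$ is additive, vanishes on prime powers $p^k$ with $k\geq 2$, on primes $p\leq K_0$, on primes $p>N$, and on primes $p\equiv 3\bmod 4$, we have the exact decomposition
$$
h(N(m,n))=\sum_{\substack{K_0<p\leq N,\ p\equiv 1(4)\\ p\,\mid\mid\, N(m,n)}}h(p).
$$
(Primes dividing $N(m,n)$ to a power $\geq 2$ contribute nothing; primes $\equiv 3\bmod 4$ that divide $N(m,n)$ do so to an even power, so they too contribute nothing after we restrict to $p\mid\mid$; and in any case $h(p)=0$ for such $p$.) Thus, writing $S:=\{p:\ K_0<p\leq N,\ p\equiv 1\bmod 4\}$ and $X_p(m,n):=\mathbf{1}_{p\,\mid\mid\,N(m,n)}$, the quantity to estimate is $\E_{m,n\in[N]}\big|\sum_{p\in S}h(p)(X_p-\E X_p)+\big(\sum_{p\in S}h(p)\E X_p - H_N(h,K_0)\big)\big|^2$, where $\E$ denotes $\E_{m,n\in[N]}$.

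First I would handle the ``mean'' term. By \eqref{E:wNPQ1} we have $\E X_p = w_{N,Q}(p,p)=\frac2p(1-\frac1p)^2+O(1/N)$ for $p$ small, and $\E X_p\ll Q^2/p$ for all $p\in S$ by \eqref{E:wNQppq}. Comparing $\sum_{p\in S}h(p)\E X_p$ with $H_N(h,K_0)=2\sum_{K_0<p\leq N}h(p)/p$ (note that in $H_N$ the primes $\equiv3\bmod 4$ drop out since $h$ vanishes on them, so both sums run over $S$): the difference $\big|\sum_{p\in S}h(p)(\E X_p-\tfrac2p)\big|$ splits over $p\leq\sqrt N$ and $\sqrt N<p\leq N$. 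For $p\leq\sqrt N$, $|\E X_p-\tfrac2p|\ll \tfrac1{p^2}+\tfrac1N$, and $\sum_{p}\tfrac{|h(p)|}{p^2}\ll\sum_p\tfrac1{p^2}=O(1)$ while the $O(1/N)$ errors, summed over $O(\sqrt N/\log N)$ primes, contribute $o(1)$. For $\sqrt N<p\leq N$ we bound crudely: $|h(p)|\leq 1$ and $\sum_{\sqrt N<p\leq N}(\tfrac{Q^2}{p}+\tfrac2p)\ll Q^2\cdot\D^2(h;\sqrt N,N)$ after inserting $|h(p)|\leq|h(p)|^2\cdot(\text{nothing})$... more carefully, since $|h(p)|\le 1$ on primes we have $|h(p)|\le 1$, and by Cauchy--Schwarz $\big|\sum_{\sqrt N<p\le N}h(p)\E X_p\big|\le \big(\sum_{\sqrt N<p\le N}|h(p)|^2\tfrac{1}{p}\big)^{1/2}\big(\sum_{\sqrt N<p\le N}p\,(\E X_p)^2\big)^{1/2}\ll \D(h;\sqrt N,N)\cdot Q^2$, using $\E X_p\ll Q^2/p$; similarly for the $\tfrac2p$ term. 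This gives a bound $\ll Q^2\D^2(h;\sqrt N,N)+K_0^{-1}+o(1)$ for the square of the mean term (the $K_0^{-1}$ can be absorbed from the tail of $\sum_{p>K_0}1/p^2$, or simply dropped).

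The main work is the variance of $\sum_{p\in S}h(p)X_p$, i.e. $\sum_{p,q\in S}h(p)\overline{h(q)}\,\Cov(X_p,X_q)$ where $\Cov(X_p,X_q)=\E(X_pX_q)-\E X_p\,\E X_q$. The diagonal $p=q$ contributes $\ll\sum_{p\in S}|h(p)|^2\E X_p\ll \sum_{K_0<p\le\sqrt N}\tfrac{|h(p)|^2}{p}+Q^2\sum_{\sqrt N<p\le N}\tfrac{|h(p)|^2}{p}\ll \D^2(h;K_0,\sqrt N)+Q^2\D^2(h;\sqrt N,N)$, which is exactly the target. For the off-diagonal, $\E(X_pX_q)=w_{N,Q}(p,q)$, and the ``independence'' remark after Lemma~\ref{L:wNPQ} gives $\E(X_pX_q)=\E X_p\,\E X_q+O(1/N)$ provided both $p,q\leq\sqrt N$ (so that the error terms in \eqref{E:wNPQ1}--\eqref{E:wNPQ2} are genuinely $O(1/N)$ and the product of main terms matches). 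Hence on $\{p\ne q,\ p,q\le\sqrt N\}$ the covariance is $O(1/N)$, and since there are $\ll N/(\log N)^2$ such pairs and $|h(p)\overline{h(q)}|\le 1$, the total contribution is $O(1/\log^2 N)=o(1)$. The remaining off-diagonal pairs have $\max(p,q)>\sqrt N$, say $q>\sqrt N$; here I bound $|\Cov(X_p,X_q)|\le \E(X_pX_q)+\E X_p\E X_q\le w_{N,Q}(pq)+\E X_p\E X_q\ll \tfrac{Q^2}{pq}+\tfrac{Q^2}{pq}\ll\tfrac{Q^2}{pq}$ using \eqref{E:wNQl} with $l=pq$ (valid since a product of two primes $\equiv1\bmod4$ is a sum of two squares), and then $\sum_{p\in S}\sum_{q\in S,\ q>\sqrt N}|h(p)||h(q)|\tfrac{Q^2}{pq}\le Q^2\big(\sum_{p\in S}\tfrac{|h(p)|}{p}\big)\big(\sum_{\sqrt N<q\le N}\tfrac{|h(q)|}{q}\big)$, and by Cauchy--Schwarz each factor is $\ll \D(h;K_0,N)$ resp.\ $\ll\D(h;\sqrt N,N)$ times a bounded quantity, giving $\ll Q^2\D(h;K_0,N)\D(h;\sqrt N,N)$; absorbing via $xy\le x^2+y^2$ and $\D^2(h;K_0,N)=\D^2(h;K_0,\sqrt N)+\D^2(h;\sqrt N,N)$ yields a bound $\ll \D^2(h;K_0,\sqrt N)+Q^2\D^2(h;\sqrt N,N)$ as desired. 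Collecting the mean term and the variance bounds gives \eqref{E:varianceh}.

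\emph{Main obstacle.} The delicate point is the bookkeeping for primes in the range $\sqrt N<p\le N$: there the ``approximate independence'' $w_{N,Q}(p,q)\approx \E X_p\E X_q$ is useless because the $O(1/N)$ errors are as large as the main terms and $pq$ can exceed $N$, so one genuinely must fall back on the crude divisor-sum bound $w_{N,Q}(pq)\ll Q^2/(pq)$ from Lemma~\ref{L:wNQl} and accept the factor $Q^2$. One must check this crude bound still closes against $\D^2(h;\sqrt N,N)$ (it does, by Cauchy--Schwarz), and that the single-variable analogue would fail here --- which is exactly the remark that for $n^2+1$ one cannot control large prime divisors, and is why averaging in both $m$ and $n$ is essential. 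The rest (the small-prime regime and the mean term) is routine Tur\'an--Kubilius-style estimation.
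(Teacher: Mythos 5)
Your overall structure (expand $h(N(m,n))=\sum_{p\in S}h(p)X_p$ with $X_p:=\mathbf{1}_{p\,\mid\mid\,N(m,n)}$, separate the mean term from the centered fluctuation, and estimate $\sum_{p,q}h(p)\overline{h(q)}\operatorname{Cov}(X_p,X_q)$ using Lemmas~\ref{L:wNPQ} and \ref{L:wNQl}) is in the right spirit, and your diagonal and small--small off-diagonal estimates are correct. However, there is a genuine gap in the treatment of the off-diagonal pairs with $p\le\sqrt N<q$ (equivalently, the cross-covariance between the small-prime and large-prime parts of $h$). You bound those terms by
$$
Q^2\Big(\sum_{K_0<p\le N}\frac{|h(p)|}{p}\Big)\Big(\sum_{\sqrt N<q\le N}\frac{|h(q)|}{q}\Big),
$$
and then assert that the first factor is $\ll\D(h;K_0,N)$ ``times a bounded quantity.'' That is false: Cauchy--Schwarz only gives
$$
\sum_{K_0<p\le N}\frac{|h(p)|}{p}\le \D(h;K_0,N)\cdot\Big(\sum_{K_0<p\le N}\frac{1}{p}\Big)^{1/2}\asymp \D(h;K_0,N)\cdot\sqrt{\log\log N},
$$
and the $\sqrt{\log\log N}$ is unbounded, so the resulting bound on the cross-term does not close against $\D^2(h;K_0,\sqrt N)+Q^2\D^2(h;\sqrt N,N)+K_0^{-1}$. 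The approximate-independence route $\operatorname{Cov}(X_p,X_q)=O(1/N)$ is also unavailable here, because the number of pairs with $p\le\sqrt N<q\le N$ is $\asymp N^{3/2}/\log^2 N$ and the total error is then $\asymp \sqrt N/\log^2 N\to\infty$; this is exactly why the paper's proof works with the restrictions $h_1$ (primes in $(K_0,\sqrt N\,]$) and $h_2$ (primes in $(\sqrt N, N]$) separately, bounding $\E|h_1(N)-H'_{1,N}|^2$, $\E|h_2(N)|^2$ and $|H_{2,N}|^2$ individually and combining by the triangle inequality, so that the problematic small--large cross-covariances never have to be estimated at the prime level. (If you want to stay with your decomposition, you would need to bound the cross-term by Cauchy--Schwarz at the level of the random variables, $|\E(Y_1\overline{Y_2})|\le(\E|Y_1|^2)^{1/2}(\E|Y_2|^2)^{1/2}$ with $Y_i$ the centered small/large pieces, which amounts to the same $h_1,h_2$ split.)

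A second, more minor issue: in your estimate of the mean term $\big|\sum_{p\in S}h(p)\E X_p-H_N\big|$ for $\sqrt N<p\le N$ you use $\E X_p\ll Q^2/p$, which after Cauchy--Schwarz gives an unsquared bound of $Q^2\D(h;\sqrt N,N)$ and a squared bound of $Q^4\D^2(h;\sqrt N,N)$, not $Q^2\D^2$ as you wrote. This one is fixable: Lemma~\ref{L:wNPQ} already gives $\E X_p=w_{N,Q}(p,p)=\tfrac{2}{p}(1-\tfrac1p)^2+O(1/N)\ll 1/p$ uniformly for $K_0<p\le N$ with $(p,Q)=1$, so the mean term is in fact $\ll\D(h;\sqrt N,N)+K_0^{-1}$ with no $Q$ at all. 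But the cross-term gap above is not fixable within your current bookkeeping.
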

	\begin{proof}
		We consider the additive functions $h_1,h_2$, which are the restrictions  of $h$ to the primes $K_0<p\le \sqrt{N}$ and $\sqrt{N}<p\le N$.\footnote{If we worked with $h$ only,  we would run into trouble establishing \eqref{E:variance1} below, since a  non-acceptable term of the form $O(\sum_{p,q\leq N} N^{-1})$ would appear in our estimates. For $h_1$ this term becomes $O(\sum_{p,q\leq \sqrt{N}} N^{-1})=O((\log{N})^{-2})$, which is acceptable. We could have also worked with the restriction to  the intevral $[K_0,N^a]$ for any $a\leq 1/2$.  In the case of linear concentration estimates this splitting is not needed since the error that appears in this case  is  $O(\sum_{pq \leq N} N^{-1})=O(\log\log{N/\log{N}})$. } More precisely,
		$$
		h_1(p^k):= \begin{cases}h(p), \quad  &\text{if }\  k=1 \text{ and }   K_0<p\leq \sqrt{N}\\
			0, \quad &\text{otherwise}  \end{cases}
		$$
		and
		$$
		h_2(p^k):=\begin{cases}h(p), \quad  &\text{if }\  k=1 \text{ and } \sqrt{N}<p\leq  N\\
			0, \quad &\text{otherwise}  \end{cases}.
		$$
		We also  define
		\begin{equation}\label{E:HiN}
			H_{i,N}(h_i,K_0):=2\sum_{K_0< p\leq N}\, \frac{h_i(p)}{p}, \quad i=1,2,
		\end{equation}
		and the  technical variant
		\begin{equation}\label{E:H'iN}
			H'_{1,N}(h_i,Q,K_0):=\sum_{K_0< p\leq N }\, w_{N,Q}(p)\cdot h_1(p),
		\end{equation}
		where
		\begin{equation}\label{E:wNQp}
			w_{N,Q}(p):=	\frac{1}{N^2}\, \sum_{\substack{m,n\in [N],\\   p\, \mid\mid\,  (Qm+a)^2+(Qn+b)^2} } 1.
		\end{equation}
		(Note that $w_{N,Q}(p)=w_{N,Q}(p,p)$ where $w_{N,Q}(p,q)$ is as in \eqref{E:wNQpq}.)
		The reason for introducing this variant is because it gives the mean value of $h_1$ along sums of squares. Indeed,
		using properties \eqref{I:hp1}-\eqref{I:hp3}, we have
		\begin{multline}\label{E:meanhi}
			\E_{m,n\in [N]}\, h_1((Qm+a)^2+(Qn+b)^2)=\E_{m,n\in [N]}\, \sum_{p\, \mid\mid\,  (Qm+a)^2+(Qn+b)^2}h_1(p)\\=
			\frac{1}{N^2} \sum_{K_0<  p \leq N} h_1(p) \,  \sum_{\substack{m,n\in [N],\\   p\, \mid\mid\,  (Qm+a)^2+(Qn+b)^2} } 1
			=
			H'_{1,N}(h_1,Q,K_0).
		\end{multline}

		Using \eqref{E:wNPQ1} of  Lemma~\ref{L:wNPQ} and that $h_1(p)=0$ for $p> \sqrt{N}$ and $h_1(p)$ is bounded by $1$,  we get
		\begin{equation}\label{E:H1H1'}
			|H_{1,N}(h_1,K_0)-H'_{1,N}(h_1,Q,K_0)|\ll
			\sum_{K_0< p\leq \sqrt{N}} \frac{1}{p^2}+\frac{1}{\sqrt{N}}
			\leq 	\frac{1}{K_0} +\frac{1}{\sqrt{N}}.
		\end{equation}
		Hence, in order to prove \eqref{E:varianceh},
		it suffices to estimate
		\begin{equation}\label{E:h1H1}
			\E_{m,n\in [N]}\big|h_1((Qm+a)^2+(Qn+b)^2)-	H'_{1,N}(h_1,Q,K_0)\big|^2
		\end{equation}
		and
		\begin{equation}\label{E:h2H2}
			\E_{m,n\in [N]}\big|h_2((Qm+a)^2+(Qn+b)^2)\big|^2+	|H_{2,N}(h_1,K_0)|^2.
		\end{equation}

		We first deal with the expression \eqref{E:h1H1}.
		Using  \eqref{E:meanhi} and expanding the square below we get
		\begin{multline}\label{E:variance}
			\E_{m,n\in [N]} \, \big|h_1((Qm+a)^2+(Qn+b)^2)- H'_{1,N}(h_1,Q,K_0)\big|^2=\\
			\E_{m,n\in [N]}\, \big|h_1((Qm+a)^2+(Qn+b)^2)\big|^2 - |H'_{1,N}(h_1,Q,K_0)|^2.
		\end{multline}
		To estimate this expression, first note that since $h_1$ is additive and $h_1(p^k)=0$ for $k\geq 2$, we have
		\begin{equation}\label{E:h1Qsquare}
			\E_{m,n\in [N]}\, \big|h_1((Qm+a)^2+(Qn+b)^2)\big|^2=
			\E_{m,n\in [N]}\,  \Big|\sum_{p\,  \mid \mid \,(Qm+a)^2+(Qn+b)^2} h_1(p)\Big|^2.
		\end{equation}
		Expanding the square, using the  fact that $h_1(p)=0$ unless $K_0<p\leq \sqrt{N}$, and the definition of $w_{N,Q}(p,q)$ given in \eqref{E:wNQpq}, we get that the right hand side is equal to
		$$
		\sum_{K_0<p\leq \sqrt{N}} |h_1(p)|^2\cdot w_{N,Q}(p,p)+
		\sum_{K_0<p,q\leq \sqrt{N}, \,  p\neq q} h_1(p)\cdot  \overline{h_1(q)}\cdot w_{N,Q}(p,q).
		$$
		Using  equation \eqref{E:wNPQ1} of Lemma~\ref{L:wNPQ} we get that the first term is
		at most
		$$
		2\cdot \sum_{K_0<p\leq \sqrt{N}}\frac{|h_1(p)|^2}{p}+ O(N^{-1/2}).
		$$
		Using equations \eqref{E:wNPQ1} and \eqref{E:wNPQ2} of Lemma~\ref{L:wNPQ} we get that the second term is equal to (we crucially use the bound $p,q\leq \sqrt{N}$ here and the prime number theorem)
		\begin{multline*}
			\sum_{K_0<p,q\leq \sqrt{N}, \,  p\neq q} h_1(p)\cdot \overline{h_1(q)}\cdot w_{N,Q}(p,p)\cdot w_{N,Q}(q,q) +O((\log{N})^{-2})\leq \\(H'_{1,N}(h_1,Q,K_0))^2+O((\log{N})^{-2}),
		\end{multline*}
		where to get the last estimate we  added to the sum the contribution of the diagonal terms $p=q$  (which is non-negative), used \eqref{E:H'iN}, and the fact that $h_1(p)=0$ for $p>\sqrt{N}$.
		Combining \eqref{E:variance} with the previous estimates, we are led to the bound
		\begin{multline}\label{E:variance1}
			\E_{m,n\in [N]} \, \big|h_1((Qm+a)^2+(Qn+b)^2)- H'_{1,N}(h_1,Q,K_0)\big|^2\ll \\
			\D^2(h_1; K_0, \sqrt{N})+O((\log{N})^{-2}).
		\end{multline}
		
		Next  we estimate  the expression \eqref{E:h2H2}. Since $h_2$ is additive and satisfies properties \eqref{I:hp1}-\eqref{I:hp3}, we get using \eqref{E:h1Qsquare} (with $h_2$ in place of $h_1$)   and expanding the square
		$$
		\E_{m,n\in [N]} \, \big|h_2((Qm+a)^2+(Qn+b)^2)\big|^2= 	 \sum_{\sqrt{N}<   p, q \leq N} h_2(p)\, \overline{h_2(q)} \, w_{N,Q}(p,q).
		$$
		Since $h_2(p)\neq 0$ only when $p\equiv 1 \! \!  \pmod{4}$,
		using  \eqref{E:wNQppq} of  Lemma~\ref{L:wNQl}, we get that the right hand side is bounded  by
		\begin{multline*}
			\ll 	Q^2\cdot \Big(\sum_{\sqrt{N}< p,q\leq N} \frac{|h_2(p)| \, |h_2(q)|}{pq}+ \sum_{\sqrt{N}< p\leq N} \frac{|h_2(p)|^2}{p}\Big)= \\
			Q^2\cdot \Big(\Big(\sum_{\sqrt{N}< p\leq N} \frac{|h_2(p)|}{p}\Big)^2+\D^2(h_2; \sqrt{N},N) \Big)\leq \\ Q^2 \cdot \Big(\sum_{\sqrt{N}< p\leq N} \frac{|h_2(p)|^2}{p}\cdot
			\sum_{\sqrt{N}< p\leq N} \frac{1}{p}+\D^2(h_2; \sqrt{N},N) \Big)\ll Q^2\cdot \D^2(h_2; \sqrt{N},N),
		\end{multline*}
		where we crucially used the estimate
		$$
		\sum_{\sqrt{N}< p\leq N} \frac{1}{p}\ll 1.
		$$
		Similarly we find  $$
		(H_{2,N}(h_2,K_0))^2=4\, \Big(\sum_{\sqrt{N}< p\leq N} \frac{|h_2(p)|}{p}\Big)^2\ll  \D^2(h_2; \sqrt{N},N).
		$$
		Combining the previous estimates we get the following bound for the expression in  \eqref{E:h2H2}
		\begin{equation}\label{E:h2H2bound}
			\E_{m,n\in [N]}\big(h_2((Qm+a)^2+(Qn+b)^2)\big)^2+	(H_{2,N}(h_1,K_0))^2\ll  Q^2\cdot \D^2(h_2; \sqrt{N},N).
		\end{equation}
		
		Combining the bounds  \eqref{E:H1H1'}, \eqref{E:variance1}, \eqref{E:h2H2bound}, we get the asserted bound \eqref{E:varianceh}, completing the proof.
	\end{proof}

	\subsection{Concentration estimates for multiplicative functions}
	Next we use Lemma~\ref{L:TKadditive} to get a variant that deals with multiplicative functions.
	\begin{lemma}\label{L:TKmultiplicative}	
		Let $K_0,N\in \N$,  $a,b\in \Z$ with  $-Q\leq a,b\leq Q$, and  $f\colon \mathbb{N}\to\mathbb{U}$ be a multiplicative function that satisfies
		\begin{enumerate}
			\item \label{I:fp1}  $f(p)=1$ for all primes $p\leq K_0$ and $p> N$;
			
			\item \label{I:fp2} $f(p)=1$  for all  primes $p\equiv 3\pmod 4$;
			
			\item \label{I:fp3} $f(p^k)=1$  for all   primes $p$ and  $k\geq 2$.
		\end{enumerate}
		Let also  $Q= \prod_{p\leq K_0}p^{a_p}$ with  $a_p\in \N$.
		If $N$ is  large enough, depending only on $K_0$, then
		\begin{multline}\label{E:mainestimate'}
			\E_{m,n\in [N]}\, \big|f\big((Qm+a)^2+(Qn+b)^2\big)-   \exp\big(G_N(f,K_0)\big)\big|\ll \\
			(\D+\D^2)(f,1; K_0,\sqrt{N})+ Q\cdot \D(f,1; \sqrt{N},N)+K_0^{-\frac{1}{2}},
		\end{multline}
		where the implicit constant is absolute  and
		\begin{equation}\label{E:GNfP01}
			G_N(f,K_0):=2\sum_{ K_0< p\leq  N}\, \frac{1}{p}\,(f(p) -1).
		\end{equation}
	\end{lemma}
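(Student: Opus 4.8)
The plan is to lift the additive Tur\'an--Kubilius inequality for sums of squares (Lemma~\ref{L:TKadditive}) to the multiplicative setting by means of the linearisation $f(p)=1+(f(p)-1)\approx\exp(f(p)-1)$. First I would introduce the additive function $h\colon\N\to\C$ determined by $h(p):=f(p)-1$ for $K_0<p\leq N$, $h(p):=0$ for $p\leq K_0$ and for $p>N$, and $h(p^k):=0$ for $k\geq 2$. The primes with $h(p)\neq 0$ are then automatically $\equiv 1\pmod 4$, because $f(p)=1$ on primes $\equiv 3\pmod 4$; hence $h$ satisfies the hypotheses of Lemma~\ref{L:TKadditive} up to replacing the bound $1$ on primes by $2$, which only affects implicit constants. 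By definition $H_N(h,K_0)=G_N(f,K_0)$, and $|h(p)|^2=|f(p)-1|^2\leq 2(1-\Re(f(p)))$, so $\D(h;x,y)\leq\sqrt2\,\D(f,1;x,y)$. Writing $\nu:=(Qm+a)^2+(Qn+b)^2$, the triangle inequality reduces the task to bounding $\E_{m,n\in[N]}|\exp(h(\nu))-\exp(G_N(f,K_0))|$ and $\E_{m,n\in[N]}|f(\nu)-\exp(h(\nu))|$.

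For the first of these I would use that $\Re(h(\nu))=\sum_{p\,\mid\mid\,\nu}(\Re(f(p))-1)\leq 0$ and $\Re(G_N(f,K_0))\leq 0$, so that $|\exp(h(\nu))-\exp(G_N(f,K_0))|\leq|h(\nu)-H_N(h,K_0)|$. Averaging, applying Cauchy--Schwarz, and then Lemma~\ref{L:TKadditive} for $h$ (together with $\D(h;x,y)\leq\sqrt2\,\D(f,1;x,y)$ and $\sqrt{a+b+c}\leq\sqrt a+\sqrt b+\sqrt c$) produces a bound of size $\ll\D(f,1;K_0,\sqrt N)+Q\cdot\D(f,1;\sqrt N,N)+K_0^{-1/2}$, already within the target.

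For the second average I would factor $f=F_1\cdot F_2$ and $\exp(h)=\exp(h_1)\cdot\exp(h_2)$, where $F_i$ and $h_i$ collect the primes $p\,\mid\mid\,\nu$ lying in $(K_0,\sqrt N]$ (for $i=1$) and in $(\sqrt N,N]$ (for $i=2$); by the hypotheses on $f$ these are the only primes that contribute, and all factors lie in $\U$, so $|f(\nu)-\exp(h(\nu))|\leq|F_1(\nu)-\exp(h_1(\nu))|+|F_2(\nu)-\exp(h_2(\nu))|$. For the first summand, the elementary inequality $|\prod_j z_j-\prod_j w_j|\leq\sum_j|z_j-w_j|$ for $z_j,w_j\in\U$, together with $|f(p)-\exp(f(p)-1)|\ll|f(p)-1|^2\ll 1-\Re(f(p))$, shows $|F_1(\nu)-\exp(h_1(\nu))|\ll g_1(\nu)$ for the additive function $g_1$ with $g_1(p):=1-\Re(f(p))$ on $(K_0,\sqrt N]$ (zero elsewhere, $g_1(p^k):=0$); the mean-value identity \eqref{E:meanhi} and \eqref{E:wNPQ1} then give, for $N$ large enough, $\E_{m,n\in[N]}g_1(\nu)=\sum_{K_0<p\leq\sqrt N}w_{N,Q}(p,p)\,(1-\Re(f(p)))\ll\D^2(f,1;K_0,\sqrt N)+K_0^{-1/2}$. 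The second summand is where I expect the only genuine obstacle, since for $p>\sqrt N$ Lemma~\ref{L:wNQl} only gives $w_{N,Q}(p,p)\ll Q^2/p$, losing a factor $Q^2$; a naive pointwise estimate would only produce $Q^2\D^2(f,1;\sqrt N,N)$. The way around it is to square first: for $N$ large in terms of $Q$ one has $\nu\leq 3Q^2N^2<N^{5/2}$, so $\nu$ has at most four prime divisors exceeding $\sqrt N$, and the telescoping inequality combined with $\big(\sum_{j\leq 4}a_j\big)^2\leq 4\sum_{j\leq 4}a_j^2$ yields $|F_2(\nu)-\exp(h_2(\nu))|^2\ll g_2(\nu)$, where $g_2$ is the additive function with $g_2(p):=1-\Re(f(p))$ on $(\sqrt N,N]$ (those primes being $\equiv 1\pmod 4$ where $g_2(p)\neq0$). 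By \eqref{E:meanhi} and \eqref{E:wNQppq}, $\E_{m,n\in[N]}g_2(\nu)\ll Q^2\,\D^2(f,1;\sqrt N,N)$, so Cauchy--Schwarz gives $\E_{m,n\in[N]}|F_2(\nu)-\exp(h_2(\nu))|\ll Q\cdot\D(f,1;\sqrt N,N)$. Adding the three contributions yields \eqref{E:mainestimate'}.
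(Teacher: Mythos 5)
Your plan is correct and matches the paper's strategy in its essential steps: introduce the additive function $h$ with $h(p)=f(p)-1$ (vanishing elsewhere), split off the correction $|f(\nu)-\exp(h(\nu))|$ by the triangle inequality, bound $\E|\exp(h(\nu))-\exp(G_N(f,K_0))|$ by $\E|h(\nu)-H_N(h,K_0)|$ using $|e^{z_1}-e^{z_2}|\leq|z_1-z_2|$ for $\Re z_1,\Re z_2\leq 0$, apply Cauchy--Schwarz and Lemma~\ref{L:TKadditive}, and control the correction via $|f(p)-\exp(f(p)-1)|\ll|f(p)-1|^2$ and the counting lemmas. All of that is right.

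Where you diverge from the paper is in the treatment of the correction term $\E_{m,n}|f(\nu)-\exp(h(\nu))|\ll\E_{m,n}\sum_{p\,\mid\mid\,\nu}|h(p)|^2=\sum_{K_0<p\leq N}|h(p)|^2\,w_{N,Q}(p)$. You regard the range $p>\sqrt N$ as an obstacle because the uniform bound $w_{N,Q}(p,p)\ll Q^2/p$ from Lemma~\ref{L:wNQl} would lose a factor $Q^2$, and you circumvent this with a nice squaring trick exploiting that $\nu$ has at most four prime factors exceeding $\sqrt N$. The obstacle, however, is not actually present: the paper simply applies the asymptotic \eqref{E:wNPQ1} from Lemma~\ref{L:wNPQ}, namely $w_{N,Q}(p,p)=\tfrac{2}{p}(1-\tfrac1p)^2+O(1/N)$, to \emph{all} $p\leq N$. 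This has no $Q$-dependence, and the accumulated error $\sum_{p\leq N}O(1/N)=O(\pi(N)/N)=O(1/\log N)$ is negligible by the prime number theorem. The main term then gives $\ll\D^2(f,1;K_0,N)$, which is absorbed into the target since $\D^2(f,1;\sqrt N,N)\ll\D(f,1;\sqrt N,N)\leq Q\,\D(f,1;\sqrt N,N)$ (Mertens makes $\D(f,1;\sqrt N,N)\ll 1$). Your split-and-square route avoids the $O(1/N)$ error entirely and produces the bound $Q\,\D(f,1;\sqrt N,N)$ directly without the final absorption step, so it is a valid and self-contained alternative; the paper's route is simply shorter because the uniform counting bound from Lemma~\ref{L:wNQl} is never invoked at this point.
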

	\begin{proof}
		Let $h:\mathbb{N}\to \mathbb{C}$ be the additive function given on prime powers by
		$$
		h(p^k):=f(p^k)-1.
		$$
		We note that  due to our assumptions on $f,$ properties \eqref{I:hp1}-\eqref{I:hp3} of Lemma~\ref{L:TKadditive} are satisfied for $h/2$, which is bounded by $1$ on primes.
		
		Using that $z=e^{z-1}+O(|z-1|^2)$ for $|z|\leq 1$ and property \eqref{I:hp3}, we have
		\begin{align*}
			f(m^2+n^2)=\prod_{p^k\, \mid \mid \, m^2+n^2}f(p^k)=  \prod_{p\, \mid \mid \, m^2+n^2}\big(\exp(h(p))+O(|h(p)|^2)\big).
		\end{align*}
		Applying the estimate $|\prod_{i\leq k}z_i-\prod_{i\leq k}w_i|\leq \sum_{i\leq k}|z_i-w_i|$, we deduce that for all $m,n\in\N$ we have
		\begin{align*}
			f(m^2+n^2)=\exp(h(m^2+n^2))+O\Big(\sum_{p \, \mid \mid\,  m^2+n^2}|h(p)|^2\Big).
		\end{align*}
		Using this and since $G_N(f,K_0)=H_N(h, K_0)$, where $H_N(h,K_0)$ is given by \eqref{E:HNhP0}, we get
		\begin{multline}\label{equ13}
			\E_{m,n\in [N]}\, \big|f\big((Qm+a)^2+(Qn+b)^2\big)-   \exp\big(G_N(f,K_0)\big)\big|
			\ll \\  \E_{m,n\in [N]}|\exp\big(h((Qm+a)^2+(Qn+b)^2)\big)-\exp(H_N(h, K_0))|+\\
			\E_{m,n\in [N]}\sum_{p\, \mid \mid\,  (Qm+a)^2+(Qn+b)^2}|h(p)|^2.
		\end{multline}
		Next we  use the inequality $|e^{z_1}-e^{z_2}|\le |z_1-z_2|$, which is  valid for $\Re z_1,\Re z_2\le 0$, to bound the last expression by
		\begin{align}\label{equ14}
			\E_{m,n\in [N]}|h((Qm+a)^2+(Qn+b)^2)-H_N(h,K_0)|+\E_{m,n\in [N]}\sum_{p \, \mid \mid \, (Qm+a)^2+(Qn+b)^2}|h(p)|^2.
		\end{align}
		To bound the first term we use Lemma~\ref{L:TKadditive}. It gives that for all large enough $N$, depending on $K_0$ only, we have
		\begin{multline}\label{E:estimate1}
			\E_{m,n\in [N]}\, \big|h\big((Qm+a)^2+(Qn+b)^2\big)-   H_N(h,K_0) \big|\ll\\
			\D(h; K_0,\sqrt{N})+Q\cdot \D(h; \sqrt{N},N)+K_0^{-\frac{1}{2}}
			\ll \D(f,1; K_0,\sqrt{N})+ Q\cdot \D(f,1; \sqrt{N},N)+K_0^{-\frac{1}{2}},
		\end{multline}
		where  to get the last bound we used that  $|h(p)|^2\leq 2-2\,\Re(f(p))$, which holds since $|f(p)|\leq 1$.
		To bound the second term in \eqref{equ14}, we note  that
		using properties \eqref{I:hp1}-\eqref{I:hp3} of Lemma~\ref{L:TKadditive}, we have
		\begin{multline}\label{E:estimate2}
			\E_{m,n\in [N]}\sum_{p\, \mid \mid\,  (Qm+a)^2+(Qn+b)^2}|h(p)|^2=
			\sum_{ K_0< p \leq N}\ |h(p)|^2\, w_{N,Q}(p)\ll \\
			\\	 \sum_{ K_0< p \leq N}\frac{|h(p)|^2}{p}+O((\log{N})^{-1}) \ll 	\mathbb{D}^2(f, 1;K_0,N)+O((\log{N})^{-1}) ,
		\end{multline}
		where $w_{N,Q}(p)$ is as in \eqref{E:wNQp} and we used equation  \eqref{E:wNPQ1} of Lemma~\ref{L:wNPQ}  and the prime number theorem to get the first bound.
		Combining \eqref{equ13}-\eqref{E:estimate2} we get the asserted bound. 	
	\end{proof}
	We use the previous result to deduce the following improved version.
	\begin{lemma}\label{L:TKmultiplicative2}	
		Let $K_0,N\in \N$ and  $f\colon \mathbb{N}\to\mathbb{U}$ be a multiplicative function such that   $f(p)= 1$ for all primes $p> N$ with $p\equiv 1 \pmod{4}$. 	Let also  $Q= \prod_{p\leq K_0}p^{a_p}$ with  $a_p\in \N$.
		If  $N$ is   large enough, depending only on  $K_0$,
		then for all  $a,b\in \Z$ with  $-Q\leq a,b\leq Q$ and $(a^2+b^2,Q)=1$ we have
		\begin{multline}\label{E:mainestimate}
			\E_{m,n\in [N]}\, \big|f\big((Qm+a)^2+(Qn+b)^2\big)- \exp\big(G_N(f,K_0)\big)\big|\ll \\	(\D_1+\D_1^2)(f,1; K_0,\sqrt{N})+ Q\cdot \D_1(f,1; \sqrt{N},N)+K_0^{-\frac{1}{2}},
		\end{multline}
		where the implicit constant is absolute and
		\begin{equation}\label{E:GNfP01'}
			G_N(f,K_0):=2\sum_{ \substack{K_0< p\leq  N; \\ p\equiv 1 \! \! \! \pmod{4}}}\, \frac{1}{p}\,(f(p) -1),\end{equation}
		$$
		\D_1(f,1; x,y)^2:= \sum_{\substack{x< p\leq y;\\ p\equiv   1 \! \! \! \pmod{4}}} \frac{1}{p}\, (1-\Re(f(p)))
		$$
  for $x<y$.
	\end{lemma}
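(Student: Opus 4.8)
The plan is to deduce the estimate from Lemma~\ref{L:TKmultiplicative} by replacing $f$ with a truncated multiplicative function that retains only the primes $p \equiv 1 \pmod 4$ in the window $K_0 < p \leq N$. We may assume $K_0 \geq 2$ (so that $2 \mid Q$), since for $K_0 = 1$ the right-hand side is $\geq K_0^{-1/2} = 1$ and the bound is trivial. First I would introduce the multiplicative function $\tilde f \colon \N \to \U$ defined on prime powers by
$$
\tilde f(p^k) := \begin{cases} f(p), & k = 1,\ K_0 < p \leq N,\ p \equiv 1 \pmod 4, \\ 1, & \text{otherwise,} \end{cases}
$$
and observe that $\tilde f$ satisfies hypotheses \eqref{I:fp1}--\eqref{I:fp3} of Lemma~\ref{L:TKmultiplicative} with the same $Q$. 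Comparing definitions, the quantity $G_N(\tilde f, K_0)$ of \eqref{E:GNfP01} equals $G_N(f, K_0)$ in the sense of \eqref{E:GNfP01'}, and similarly $\D(\tilde f, 1; K_0, \sqrt N) = \D_1(f, 1; K_0, \sqrt N)$ and $\D(\tilde f, 1; \sqrt N, N) = \D_1(f, 1; \sqrt N, N)$, since every prime discarded in passing from $f$ to $\tilde f$ contributes a term $1 - \Re(1) = 0$. Hence Lemma~\ref{L:TKmultiplicative} applied to $\tilde f$ gives, for $N$ large,
$$
\E_{m,n \in [N]} \big| \tilde f(M) - \exp\big(G_N(f, K_0)\big) \big| \ll (\D_1 + \D_1^2)(f, 1; K_0, \sqrt N) + Q \cdot \D_1(f, 1; \sqrt N, N) + K_0^{-1/2},
$$
where $M := (Qm + a)^2 + (Qn + b)^2$; by the triangle inequality it then remains to show that $\E_{m,n \in [N]} |f(M) - \tilde f(M)| \ll K_0^{-1/2}$.

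The point of that last estimate is that $f$ and $\tilde f$ can disagree on $M$ only through the non-squarefree part of $M$. Since $(a^2 + b^2, Q) = 1$ and every prime $p \leq K_0$ divides $Q$, we have $M \equiv a^2 + b^2 \not\equiv 0 \pmod p$, so no prime $\leq K_0$ divides $M$; in particular $2 \nmid M$. If $p \mid\mid M$, then $p \neq 2$, and $p \not\equiv 3 \pmod 4$ because a prime $\equiv 3 \pmod 4$ divides any sum of two squares to an even power; thus $p \equiv 1 \pmod 4$ and $p > K_0$, so $\tilde f(p) = f(p)$ (by construction if $K_0 < p \leq N$, and since $f(p) = 1 = \tilde f(p)$ if $p > N$). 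Writing $M = M_1 M_2$ with $M_1 := \prod_{p \mid\mid M} p$ and $M_2 := M / M_1$ (so $(M_1, M_2) = 1$ and every prime factor of $M_2$ has exponent $\geq 2$), I would then use $\tilde f(M_1) = f(M_1)$ and $\tilde f(M_2) = 1$ to get
$$
\big| f(M) - \tilde f(M) \big| = \big| f(M_1) \big| \cdot \big| f(M_2) - 1 \big| \leq 2 \cdot \mathbf 1_{\{M_2 > 1\}} \leq 2 \sum_{p > K_0} \mathbf 1_{\{p^2 \mid M\}},
$$
the last inequality because $M_2 > 1$ forces $p^2 \mid M$ for some prime $p$, necessarily $p > K_0$. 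Averaging over $m, n \in [N]$, writing $w_{N,Q}(l)$ as in Lemma~\ref{L:wNQl}, and using that $p^2 \mid M$ forces $p \leq \sqrt M \leq \sqrt 3\, Q N$, the target average is $\leq 2 \sum_{K_0 < p \leq \sqrt N} w_{N,Q}(p^2) + 2 \sum_{\sqrt N < p \leq \sqrt 3\, Q N} w_{N,Q}(p^2)$. For $K_0 < p \leq \sqrt N$, an elementary count of the solutions of $p^2 \mid M$ modulo $p^2$ (distinguishing $p \equiv 1$ from $p \equiv 3 \pmod 4$, in the spirit of Lemma~\ref{L:wNPQ}) gives $w_{N,Q}(p^2) \ll p^{-2} + N^{-1}$, so the first sum is $\ll K_0^{-1} + N^{-1/2}$; for $\sqrt N < p$ the integer $p^2$ is a sum of two squares, so Lemma~\ref{L:wNQl} gives $w_{N,Q}(p^2) \ll Q^2 / p^2$ and the second sum is $\ll Q^2 / \sqrt N$. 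Altogether $\E_{m,n \in [N]} |f(M) - \tilde f(M)| \ll K_0^{-1} + Q^2 / \sqrt N$, which is $\ll K_0^{-1/2}$ once $N$ is large enough, and combining with the displayed estimate for $\tilde f$ completes the argument.

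The hard part will be the contribution of the large prime squares $\sqrt N < p \leq \sqrt 3\, Q N$ dividing $M$: for these one cannot use the precise modular count, whose $O(1/N)$ error summed over all such primes would be too large, and one must instead appeal to the bound $\sum_{k \leq n} r_2(k) \ll n$ underlying Lemma~\ref{L:wNQl}, which is what introduces the factor $Q^2$ that is then absorbed by enlarging $N$. This is exactly the mechanism flagged in the remark after Lemma~\ref{L:wNQl}: averaging over both variables $m, n$ makes the ``large prime divisor'' contribution of $(Qm + a)^2 + (Qn + b)^2$ negligible, whereas no analogous statement holds for $n^2 + 1$. Everything else is routine bookkeeping: verifying that $\tilde f$ meets the hypotheses of Lemma~\ref{L:TKmultiplicative} and that $G_N$ and $\D_1$ transform correctly under the truncation.
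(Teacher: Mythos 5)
Your proof is correct and follows essentially the same route as the paper: truncate $f$ to a multiplicative $\tilde f$ that satisfies the hypotheses of Lemma~\ref{L:TKmultiplicative}, observe that $G_N$ and the relevant distances match, and control the error coming from the primes that are thrown away. The paper packages the error control slightly differently (it first passes to a function agreeing with $f$ on $(Qm+a)^2+(Qn+b)^2$ exactly, via $(a^2+b^2,Q)=1$, and then discards the bad pairs $(m,n)$ in two congruence-classified steps), whereas you do it in one shot by noting that any prime dividing $M$ exactly once is automatically $>K_0$, odd, and $\equiv 1\pmod 4$, so $f$ and $\tilde f$ can only disagree through a repeated prime factor $p>K_0$. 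One small point in your favour: for the contribution of primes in the range $(\sqrt N,\sqrt 3\,QN]$ you explicitly invoke Lemma~\ref{L:wNQl} to get $w_{N,Q}(p^2)\ll Q^2/p^2$ and hence a total of $\ll Q^2/\sqrt N$; the paper's displayed sums for the $p\equiv 3\pmod 4$ and $p^2\mid M$ reductions run only up to $p\le N$, so your treatment of the very large primes is a bit more careful than what is written there, even though the underlying estimate is the same.
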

	\begin{proof}		
		We first  define the multiplicative function $\tilde{f}\colon \N\to \U$ on prime powers as follows
		$$
		\tilde{f}(p^k):= \begin{cases}f(p^k), \quad  &\text{if }\   p> K_0\\
			1, \quad &\text{otherwise}  \end{cases}.
		$$
		Since $p\leq K_0$ implies  $p\mid Q$ and $(a^2+b^2,Q)=1$,
		we get that   $p\nmid (Qm+a)^2+(Qn+b)^2$ for every $ p\leq K_0$, hence
		$$
		f((Qm+a)^2+(Qn+b)^2)=\tilde{f}((Qm+a)^2+(Qn+b)^2) \quad \text{for every } m,n\in\N.
		$$
		Note also that $G_N(\tilde{f},K_0)=G_N(f,K_0)$ and $\D_1(\tilde{f},1; K_0,N)=\D_1(f,1;K_0,N)$.
		It follows that in order to establish \eqref{E:mainestimate}, it is  enough to show that for all large enough $N$, depending only on $K_0$, we have
		\begin{multline}\label{E:mainestimate2}
			\E_{m,n\in [N]}|\tilde{f}((Qm+a)^2+(Qn+b)^2)- \exp(G_N(\tilde{f},Q))|\ll  \\	(\D_1+\D_1^2)(\tilde{f},1; K_0,\sqrt{N})+ Q\cdot \D_1(\tilde{f},1; \sqrt{N},N)+	K_0^{-\frac{1}{2}}.
		\end{multline}
		In order to establish \eqref{E:mainestimate2} we
		make a series of further reductions that will eventually allow us to apply Lemma~\ref{L:TKmultiplicative}. For every $p\equiv3\pmod 4,$ we have that $p\mid m^2+n^2$ implies that $p\mid m$ and $p\mid n$. Consequently, the contribution to the average of those $m,n\in [N]$ for which $(Qm+a)^2+(Qn+b)^2$ is divisible by some prime  $p\equiv 3\pmod 4$  is
		(note that $(Qm+a)^2+(Qn+b)^2$ is only divisible by primes $p>K_0$)
		$$
		\ll \frac{1}{N^2}\sum_{K_0< p\leq N} \left[\frac{N}{p}\right]^2\ll \frac{1}{K_0},
		$$
		which is acceptable.
		
		Next we show  that the contribution to the average  in  \eqref{E:mainestimate2} of those $m,n\in [N]$ for which $ (Qm+a)^2+(Qn+b)^2$ is divisible by $p^2$ for some  prime $p\equiv 1\pmod 4$  with $p>K_0$ (hence $p\nmid Q$) is also acceptable. Indeed, for
		fixed $n\in [N]$ such that  $p\nmid Qn+b$  there exist at most  $2[N/p^2]+2$ values of $m\in [N]$ such that $p^2\mid (Qm+a)^2+(Qn+b)^2.$ On the other hand,  if $p\mid Qn+b$ and $p\mid  (Qm+a)^2+(Qn+b)^2$,  then also $p\mid Qm+a$.
		Hence,  the contribution to the average  in \eqref{E:mainestimate2} of those  $m,n\in [N]$
		for which $ (Qm+a)^2+(Qn+b)^2$ is divisible by $p^2$ for some  prime $p\equiv 1\pmod 4$
		is bounded by (note again that $(Qm+a)^2+(Qn+b)^2$ is only divisible by primes $p> K_0$)
		$$
		\ll \frac{1}{N^2} \Big(\sum_{K_0< p\leq N}\Big(\left[\frac{N}{p^2}\right]+1\Big)N+\sum_{K_0< p\leq N}\left[\frac{N}{p}\right]^2\Big)\ll \frac{1}{K_0}+\frac{1}{\log{N}},
		$$
		where we used the prime number theorem to bound $\frac{1}{N}\sum_{K_0<p\leq N} 1$.
		
		Combining the above reductions, we deduce that in order to establish the estimate  \eqref{E:mainestimate2} we may further assume that
		\begin{equation}\label{E:freduced}
			\tilde{f}(p^k)=1 \ \text{ for all } \ p\in \P,\,  k\ge 2, \ \text{  and }\  \tilde{f}(p^k)=1 \ \text{ for all } \ p\equiv 3\pmod 4, \, k\in \N.
		\end{equation}
		We are now in a situation where Lemma~\ref{L:TKmultiplicative} is applicable and gives that
		for all large enough $N$, depending only on $K_0$, if $\D_1(f,1;K_0,N)\leq 1$, we have
		(note that \eqref{E:freduced} implies that $\mathbb{D}_1(\tilde{f}, 1;K_0,N)=\mathbb{D}(\tilde{f}, 1;K_0,N)$)
		\begin{multline*}
			\E_{m,n\in [N]}|\tilde{f}((Qm+a)^2+(Qn+b)^2)- \exp(G_N(\tilde{f},K_0))|\ll \\
			(\D_1+\D_1^2)(\tilde{f},1; K_0,\sqrt{N})+ Q\cdot \D_1(\tilde{f},1; \sqrt{N},N)+K_0^{-\frac{1}{2}}.
		\end{multline*}
		Combining this bound with the bounds we got in order to arrive to this reduction, we get that \eqref{E:mainestimate2} is satisfied. This completes the proof.
	\end{proof}

	\subsection{Proof of Proposition~\ref{P:concentration2quanti}}	
	We start with some reductions. Suppose that the statement holds when $\chi=1$ and $t=0$, we will show that it holds for arbitrary $\chi$ and $t$.
 	Let
		$\tilde{f}:=f \cdot \overline{\chi}\cdot n^{-it}$,  and apply the conclusion for $\chi=1, t=0$.
		We get the following bound for $\tilde{f}$:
	\begin{multline}\label{E:tildef}
		\E_{m,n\in [N]}\, \big|\tilde{f}\big((Qm+a)^2+(Qn+b)^2\big)-  \exp\big(G_N(\tilde{f},K_0)\big)\big|\ll \\
		(\D_1+\D_1^2)(\tilde{f},1; K_0,\sqrt{N})+Q^2\cdot\D_1(\tilde{f},1;N,3Q^2N^2)+	 Q\cdot \D_1(\tilde{f},1; \sqrt{N},N)+K_0^{-1/2}.
	\end{multline}
	Note that since $\chi$ is periodic with period $q$ and $q\mid Q$, we have $\chi((Qm+a)^2+(Qn+b)^2)=\chi(a^2+b^2)$ for every $m,n\in \N$. Furthermore,
	since  by assumption $(a^2+b^2, Q)=1$ and $q\mid Q$, we have $(a^2+b^2, q)=1$,  hence $|\chi(a^2+b^2)|=1$.
	Lastly, note that
	$\D_1(\tilde{f}, 1;x,y)=\D_1(f, \chi\cdot n^{it};x,y)$
    and
	$$
	G_N(\tilde{f},K_0)=2\sum_{\substack{ K_0< p\leq N,\\ p\equiv   1 \! \! \! \pmod{4}}}\, \frac{1}{p}\,(\tilde{f}(p) -1)=
	2\sum_{\substack{ K_0< p\leq N,\\ p\equiv   1 \! \! \! \pmod{4}}}\, \frac{1}{p}\,(f(p)\cdot \overline{\chi(p)}\cdot p^{-it} -1)=G_N(f,K_0).
	$$
	After inserting this information in \eqref{E:tildef}, we get that \eqref{E:generalf} is satisfied.
	
	So it suffices to show that if $Q= \prod_{p\leq K_0}p^{a_p}$ for some  $a_p\in \N$, then  if $N$ is large enough, depending only on  $Q$, and $\D_1(f, 1;K_0,N)\leq 1$, we have
	\begin{multline}\label{E:mainreduced}
		\E_{m,n\in [N]}\, \big|f\big((Qm+a)^2+(Qn+b)^2\big)- \exp\big(G_N(f,K_0)\big)\big|\ll\\
		(\D_1+\D_1^2)(f,1; K_0,\sqrt{N})+ Q^2\cdot \D_1(f,1;N,3Q^2N^2)+	 Q\cdot \D_1(f,1; \sqrt{N},N)+K_0^{-1/2}.
	\end{multline}

	For every $N\in \N$ we  decompose $f$ as  $f=f_{N,1} \cdot f_{N,2}$, where the multiplicative functions $f_{N,1},f_{N,2}\colon \N\to \U$  are defined on prime powers as follows
	\begin{align*}
		f_{N,1}(p^k):=& \begin{cases}f(p), \quad  &\text{if }\ k=1 \text{ and }   p> N,\,  p\equiv 1\! \! \! \pmod{4} \\
			1, \quad &\text{otherwise}  \end{cases}, \\
		f_{N,2}(p^k):=& \begin{cases}1, \quad  &\text{if }\ k=1 \text{ and }   p> N, \, p\equiv 1\! \! \! \pmod{4} \\
			f(p^k), \quad &\text{otherwise}  \end{cases}.
	\end{align*}

	We first study the function $f_{N,1}$. Following the notation of Lemma~\ref{L:wNQl} for $l,Q,N\in \N$ we let
	$$
	w_{N,Q}(l):=\frac{1}{N^2} \, \sum_{\substack{m,n\in [N],\\   l\mid  (Qm+a)^2+(Qn+b)^2} } 1.
	$$
	Lemma~\ref{L:wNQl}  implies that if $l$ is a sum of two squares, then
	\begin{equation}\label{E:wNQ}
		w_{N,Q}(l)\ll \frac{Q^2}{l}.
	\end{equation}
	Since for $N\gg Q$ we have $f_{N,1}((Qm+a)^2+(Qn+b)^2)-1\neq 0$ only if $(Qm+a)^2+(Qn+b)^2$ is divisible  by one or two primes $p>N$,\footnote{For $m,n\in [N]$ and $-Q\leq a,b\leq Q$  we have $(Qm+a)^2+(Qn+b)^2\ll Q^2N^2$, so if $(Qm+a)^2+(Qn+b)^2$ was divisible by three or more primes greater than $N$, we would have $N^3\ll Q^2 N^2$, which fails if $Q\ll N$.}
	we get
	\begin{multline}\label{E:f11}
		\E_{m,n\in[N]}
		|f_{N,1}\big((Qm+a)^2+(Qn+b)^2\big)-1|
		\leq \\
		\sum_{\substack{N<p\le
				3Q^2N^2,\\ p\equiv 1\! \! \!\pmod{4}}} |f(p)-1|\, w_{N,Q}(p)
		+ \sum_{\substack{N<p,q\le
				3Q^2N^2,\, p\neq q\\ p,q\equiv 1\! \! \!\pmod{4}}} |f(pq)-1|\,  w_{N,Q}(pq),
	\end{multline}
	where we used that $f_{N,1}(p)=f(p)$ for all $p>N$ and in the second sum we have ignored the contribution of the diagonal terms $p=q$ since, by construction, $f_{N,1}(p^2)=1$ for all primes $p$.
	Using  \eqref{E:wNQ} for $l:=p$, which is a sum of two squares since $p\equiv 1\pmod{4}$,  we estimate the first term as follows\footnote{Bounding $w_{N,Q}(p)$ using \eqref{E:wNPQ1}
		would lead to non-acceptable errors here, because the range of summation is much larger than $N$.}
	\begin{multline*}
		\sum_{\substack{N<p\le
				3Q^2N^2,\\ p\equiv 1\! \! \!\pmod{4}}} |f(p)-1|\, w_{N,Q}(p) \ll	Q^2\sum_{\substack{N<p\le
				3Q^2N^2,\\ p\equiv 1\! \! \!\pmod{4}}} \frac{|f(p)-1|}{p}\leq \\
		Q^2 \cdot \Big(\sum_{\substack{N<p\le
				3Q^2N^2,\\ p\equiv 1\! \! \!\pmod{4}}}
		\frac{|f(p)-1|^2}{p}\Big)^{\frac{1}{2}}\cdot
		\Big(\sum_{\substack{N<p\le
				3Q^2N^2,\\ p\equiv 1\! \! \!\pmod{4}}} \frac{1}{p} \Big)^{\frac{1}{2}} \ll
		Q^2\cdot \D_1(f,1;N,3Q^2N^2),
	\end{multline*}
	where we used  that $\sum_{N\le p\le 3Q^2N^2}\frac{1}{p}\ll 1$ for $N\geq Q$.
	Similarly, using  \eqref{E:wNQ} for $l:=pq$, which is a sum of two squares since $pq\equiv 1\pmod{4}$,  we estimate the second  term in \eqref{E:f11} as follows (note  that since $p\neq q$,  we have $f(pq)=f(p)f(q)$)
	\begin{multline*}
		\sum_{\substack{N<p,q\le
				3Q^2N^2,\, p\neq q, \\ p,q\equiv 1\! \! \!\pmod{4}}} |f(pq)-1|\, w_{N,Q}(pq) \ll	Q^2\sum_{\substack{N<p,q\le
				3Q^2N^2,\\ p,q\equiv 1\! \! \!\pmod{4}}}\frac{|f(p)-1|+|f(q)-1|}{pq}\leq \\
		2\, Q^2 \cdot \Big(\sum_{\substack{N<p,q\le
				3Q^2N^2,\\ p,q\equiv 1\! \! \!\pmod{4}}}
		\frac{|f(p)-1|^2}{pq}\Big)^{\frac{1}{2}}\cdot
		\Big(\sum_{\substack{N<p,q\le
				3Q^2N^2,\\ p,q\equiv 1\! \! \!\pmod{4}}} \frac{1}{pq} \Big)^{\frac{1}{2}} \ll
		Q^2\cdot \mathbb{D}_1(f,1;N,3Q^2N^2),
	\end{multline*}
	where we used  that $\sum_{N\le p\le 3Q^2N^2}\frac{1}{p}\ll 1$ for $N\geq Q$. 	
	Combining the above estimates and \eqref{E:f11}, we deduce that for $N\gg Q$ we have
	\begin{equation}\label{E:f1estimate}
		\mathbb{E}_{m,n\in [N]}\big|f_{N,1}\big((Qm+a)^2+(Qn+b)^2\big)-1|\ll  Q^2\cdot \mathbb{D}_1(f,1;N,3Q^2N^2).
	\end{equation}
	
	Next, we move to the function $f_2$. Since $f_2(p)=1 $ for all primes $p\geq N$,
	Lemma~\ref{L:TKmultiplicative2}  is applicable.  We get that if $N$ is large enough, depending on $K_0$, we have
	\begin{multline}\label{E:f2estimate}
		\mathbb{E}_{m,n\in [N]}|f_{N,2}\big((Qm+a)^2+(Qn+b)^2\big)- \exp(G_N(f,K_0))|
		\ll \\
		(\D_1+\D_1^2)(f,1; K_0,\sqrt{N})+ Q\cdot \D_1(f,1; \sqrt{N},N)+K_0^{-\frac{1}{2}},
	\end{multline}
	where we used that $f_{N,2}(p)=f(p)$ for all primes $p\equiv 1\pmod{4}$ with $p\leq N$, hence
	$G_N(f_{N,2},K_0)=G_N(f,K_0)$ and $\mathbb{D}_1(f_{N,2},1;K_0,N)=\mathbb{D}_1(f,1;K_0,N)$.

	Finally, we use the triangle inequality and combine \eqref{E:f1estimate} and  \eqref{E:f2estimate},
	to obtain that the left hand side  in \eqref{E:mainreduced} is bounded by
	\begin{multline*}	
		\mathbb{E}_{m,n\in [N]}\big(|f_{N,1}\big((Qm+a)^2+(Qn+b)^2\big)- 1|		
		+|f_{N,2}\big((Qm+a)^2+(Qn+b)^2\big)- \exp(G_N(f,K_0))|\big)\\
		\ll 	(\D_1+\D_1^2)(f,1; K_0,\sqrt{N})+ Q^2\cdot \mathbb{D}_1(f,1;N,3Q^2N^2)+ Q\cdot \D_1(f,1; \sqrt{N},N)+K_0^{-1/2}.
	\end{multline*}	
	Thus \eqref{E:mainreduced} holds, completing the proof.

\section{Type II Pythagorean pairs and more}\label{S:PythPairs2}
\subsection{Proof of \cref{T:mainpairs2}}
As explained in Section~\ref{SS:Plan1ii},  in order to  complete the proof of  Theorem~\ref{T:mainpairs2} (and hence of  part~\eqref{I:MainPythPairs2} of Theorem~\ref{T:MainPythPairs}) it remains  to prove Proposition~\ref{P:aperiodic2},  Proposition~\ref{P:vanishing2}, and Lemma~\ref{L:mainpositive2}. We  do this in this section.

We repeat the statement of Proposition~\ref{P:aperiodic2} and explain how it can be derived from results in \cite{FH17}.
\begin{proposition}\label{P:aperiodic2'}
	Let $f\colon \N\to \U$ be an aperiodic completely multiplicative function. Then  for every $\delta>0, c\in \R$ and $\ell,\ell',Q\in \N$  we have
	\begin{equation}\label{E:wdmnB'}
		\lim_{N\to\infty} \E_{m,n\in [N]} \,
		\tilde{w}_{\delta,c}(m,n)\cdot  f(\ell((Qm+1)^2+(Qn)^2))\cdot \overline{f(\ell'(Qm+1)(Qn))}=0,
	\end{equation}
	where 	$\tilde{w}_{\delta,c}(m,n)$ is as in \eqref{E:weight2}.
\end{proposition}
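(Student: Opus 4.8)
The plan is to mirror closely the proof of \cref{P:aperiodic1'}, replacing the linear-forms input with the quadratic-forms input of \cite{FH17} that underlies \cref{P:aperiodic}. First, using complete multiplicativity I would pull out the unimodular constants $f(\ell)$ and $\overline{f(\ell')}$, reducing to $\lim_{N\to\infty}\E_{m,n\in[N]}\tilde w_\delta(m,n)\cdot f\big((Qm+1)^2+(Qn)^2\big)\cdot\overline{f(Qm+1)}\cdot\overline{f(Qn)}=0$. Since $F_\delta\colon\S^1\to[0,1]$ is continuous it is a uniform limit of Laurent polynomials in $z:=(\ell(m^2+n^2))^i\cdot(\ell'mn)^{-i}$, and the remaining factor is $1$-bounded; so, by linearity, it suffices to prove the claim with $\tilde w_\delta(m,n)$ replaced by $(m^2+n^2)^{ik}\cdot(mn)^{-ik}$ for an arbitrary $k\in\Z$ (the constant $\ell^{ik}(\ell')^{-ik}$ is unimodular and harmless).

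Next I would move the weight onto the shifted variables. Since $\lim_{m\to\infty}(\log(Qm+1)-\log(Qm))=0$ and $\E_{m,n\in[N]}\big|\log((Qm+1)^2+(Qn)^2)-\log((Qm)^2+(Qn)^2)\big|\to0$ (the relevant ratio is $1+O\!\big(\tfrac1{Qm}\big)$, and the small-$m$ range is negligible), the limit is unchanged if we replace $(m^2+n^2)^{ik}(mn)^{-ik}$ by $((Qm+1)^2+(Qn)^2)^{ik}\cdot((Qm+1)(Qn))^{-ik}$: dropping the $+1$ produces the factors $Q^{2ik}$ and $Q^{-2ik}$, which cancel since numerator and denominator are both homogeneous of degree $2$, returning exactly $(m^2+n^2)^{ik}(mn)^{-ik}$ up to an $o(1)$ error in $L^1$-average. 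Absorbing these unimodular factors into $f$ and setting $f_k(n):=f(n)\cdot n^{ik}$, the task reduces to showing, for every $k\in\Z$,
\[
\lim_{N\to\infty}\E_{m,n\in[N]}f_k\big((Qm+1)^2+(Qn)^2\big)\cdot\overline{f_k(Qm+1)}\cdot\overline{f_k(Qn)}=0.
\]
Here $f_k\colon\N\to\U$ is again completely multiplicative, and it is aperiodic: by \cref{lemmamultiplicativestructure} it would otherwise satisfy $f_k\sim\chi\cdot n^{it}$, forcing $f\sim\chi\cdot n^{i(t-k)}$, contradicting the aperiodicity of $f$.

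Finally, to prove the displayed vanishing I would pull the unimodular constant $\overline{f_k(Q)}$ out of $\overline{f_k(Qn)}=\overline{f_k(Q)}\cdot\overline{f_k(n)}$ and then invoke \cite[Theorem~9.7]{FH17} — the quadratic-form averaging result from which \cref{P:aperiodic} is extracted — applied with the companion linear forms $Qm+1$, $n$ and the norm form $(Qm+1)^2+(Qn)^2=\big((Qm+1)+i(Qn)\big)\big((Qm+1)-i(Qn)\big)$; since $f_k$ is aperiodic, the Gowers-uniformity seminorm controlling this average vanishes, which gives the claim. The step I expect to be the main obstacle is precisely this last transfer: verifying that the estimate of \cite{FH17}, which is stated for $f$ evaluated at $m^2+n^2$ with no linear companions, still applies after the affine shift $m\mapsto Qm+1$, the dilation $n\mapsto Qn$, and in the presence of the extra linear factors $\overline{f_k(Qm+1)}$, $\overline{f_k(n)}$. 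As in the $m^2-n^2$ case of \cref{P:aperiodic1'} (where one combined \cite[Theorem~2.5]{FH17} with \cite[Lemma~9.6]{FH17}), this should be routine because the underlying estimates are insensitive to affine translations of the variables and to restricting $m,n$ to arithmetic progressions — but it is where the bulk of the bookkeeping lies, and one must also check that the forms at hand meet the non-degeneracy hypotheses of \cite[Theorem~9.7]{FH17}.
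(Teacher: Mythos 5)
Your proof matches the paper's argument up to and including the reduction to
\[
\lim_{N\to\infty}\E_{m,n\in[N]}f_k\big((Qm+1)^2+(Qn)^2\big)\cdot\overline{f_k(Qm+1)}\cdot\overline{f_k(Qn)}=0,
\]
and your observation that $f_k:=f\cdot n^{ik}$ is aperiodic is correct (the paper phrases it as ``since $f$ is aperiodic, so is $f_k$''). Where you stop is exactly where the remaining work lies, and here there is a genuine gap: you propose to invoke \cite[Theorem~9.7]{FH17} ``with companion linear forms $Qm+1$, $n$ and the norm form $(Qm+1)^2+(Qn)^2$,'' but $(Qm+1)^2+(Qn)^2$ is not a homogeneous binary quadratic form in $m,n$, and \cite[Theorem~9.7]{FH17} is stated for a genuine binary quadratic form $Q(m,n)$ with linear companions $m$ and $n$. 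Saying that the transfer ``should be routine because the underlying estimates are insensitive to affine translations'' does not by itself close this; one must actually carry out the translation. (Your parenthetical that the FH17 estimate is ``stated for $f$ evaluated at $m^2+n^2$ with no linear companions'' is also inaccurate: it does include linear companions — that is precisely how the paper uses it.)

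The paper bridges this gap by a short but essential device: rewrite the restriction of $(m',n')$ to the arithmetic progressions $m'\equiv 1\pmod Q$, $n'\equiv 0\pmod Q$ as a linear combination of Dirichlet characters evaluated at $m'$ and $n'$, so that the affine-shifted average is expressed as a finite combination of averages of the shape
\[
\E_{m,n\in [N]} \,   f_k(m^2+n^2)\cdot (\overline{f_k}\cdot\chi)(m)\cdot  (\overline{f_k}\cdot\chi')(n).
\]
Since $\overline{f_k}\cdot\chi$ and $\overline{f_k}\cdot\chi'$ inherit aperiodicity from $f$, \cite[Theorem~9.7]{FH17} now applies verbatim to the homogeneous form $m^2+n^2$. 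This Dirichlet-character reduction is the concrete mechanism behind your ``insensitivity to arithmetic progressions'' heuristic; without it the last step is not justified.

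Minor remark: you apply complete multiplicativity to pull out $\overline{f_k(Q)}$ and keep $\overline{f_k(n)}$, while the paper keeps the single factor $\overline{f_k((Qm+1)(Qn))}$; these are equivalent since $f_k$ is completely multiplicative and do not affect the argument.
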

\begin{proof}
	Recall that
	$$
	\tilde{w}_{\delta,c}(m,n):=F_\delta \Big(\log\frac{\ell (m^2+n^2)}{\ell'mn}-c\Big), \quad m,n\in\N,
	$$
	where  $F_\delta \colon \R\to [0,1]$ is the continuous function defined  in Lemma~\ref{L:Sdelta}.
	Arguing as in the proof of \cref{P:aperiodic1'}, we get  that it suffices to  verify  \eqref{E:wdmnB'}   when
	$\tilde{w}_{\delta,c}(m,n)$ is replaced by $(m^2+n^2)^{it}\cdot (mn)^{-it}$ for arbitrary $t\in \R$.  Furthermore, 	the limit remains unchanged if we replace $(m^2+n^2)^{it}\cdot (mn)^{-it}$
	with $((Qm+1)^2+(Qn)^2)^{it}\cdot ((Qm+1)(Qn))^{-it}$.
	Hence, it suffices to establish that for every $t\in \R$ we have
	\begin{equation}\label{E:fk02}
		\lim_{N\to\infty} \E_{m,n\in [N]} \,  f_t((Qm+1)^2+(Qn)^2)\cdot \overline{f_t((Qm+1)(Qn))}=0,
	\end{equation}
	where $f_t(n):=f(n)\cdot n^{it}$, $n\in \N$.
 Note that since the indicator function of an arithmetic progression is a linear combination of Dirichlet characters, in order to establish \eqref{E:fk02}, it suffices to show that  for all Dirichlet characters $\chi,\chi'$ we have
$$
\lim_{N\to\infty} \E_{m,n\in [N]} \,  \chi(m)\cdot \chi'(n)\cdot  f_t(m^2+n^2)\cdot \overline{f_t(mn)}=0,
$$
or, equivalently, that
\begin{equation}\label{E:fk02'}
\lim_{N\to\infty} \E_{m,n\in [N]} \,   f_t(m^2+n^2)\cdot (\overline{f_t}\cdot\chi) (m)\cdot  (\overline{f_t}\cdot\chi') (n)=0.
\end{equation}
 Since $f$ is aperiodic, so is $\overline{f_t}\cdot \chi$ (and $\overline{f_t}\cdot \chi'$).
	By  \cite[Theorem~9.7]{FH17} (applied to $Q(m,n):=m^2+n^2$),  we  deduce
	that 		\eqref{E:fk02'} holds, completing the proof.
\end{proof}

Recall that  in \eqref{E:PhiK} we defined the multiplicative  F\o lner sequence $(\Phi_K)$ by
$$
\Phi_K	:=\Big\{\prod_{p\leq K}p^{a_p}\colon K<a_p\leq 2K\Big\}, \quad K\in \N.
$$
Note  that
every  $q\in \N$
divides all elements of $\Phi_K$ when $K\in \N$ is large enough depending on $q$.

The next  result  is a key ingredient in the proof of \cref{P:vanishing2} below.
\begin{lemma}\label{L:keyQQ'}
	Let $f\colon \N\to \S^1$ be a  completely multiplicative function such that $f\sim \chi\cdot n^{it}$ for some $t\in \R$ and  Dirichlet character $\chi$. Let also $\delta>0, c\in \R$ be fixed,  $\tilde{w}_{\delta,c}$ be  as in \eqref{E:weight2},  and  $(\Phi_K)$ be as in \eqref{E:PhiK}. For $Q,N\in \N$  we let
	\begin{equation}\label{E:ANK}
		L_{\delta,c, N}(f, Q):=\E_{m,n\in [N]} \, \tilde{w}_{\delta,c}(m,n)\cdot f(\ell((Qm+1)^2+(Qn)^2))\cdot 	\overline{f(\ell'(Qm+1)\, n)}
	\end{equation}
	and
	\begin{equation}\label{E:LNfQ}
		\tilde{L}_{\delta,c, N}(f, Q):= Q^{-it}\cdot L_{\delta, c,N}(f, Q).
	\end{equation}
	Then
	\begin{equation}\label{E:tildeANK}
		\lim_{K\to\infty} \limsup_{N\to\infty}	\max_{Q,Q'\in \Phi_K}|\tilde{L}_{\delta,c,N}(f,Q)-\tilde{L}_{\delta,c,N}(f,Q')|=0.
	\end{equation}
\end{lemma}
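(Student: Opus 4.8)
The plan is to mimic the proof of \cref{L:mainvanishing1}, with the linear concentration estimate of \cref{P:concentration1} supplemented by the nonlinear one of \cref{P:concentration2}. The key structural point is that, after applying these concentration estimates, the average $L_{\delta,N}(f,Q)$ factors as $Q^{it}$ times a quantity that does not depend on $Q$: indeed the weight $\tilde{w}_\delta(m,n)$, the factor $\overline{f(n)}$, and the exponential corrections $\exp(G_N(f,K))$, $\exp(F_N(f,K))$ produced by the two concentration estimates are all free of $Q$. Hence the normalized quantity $\tilde{L}_{\delta,N}(f,Q)=Q^{-it}L_{\delta,N}(f,Q)$ is asymptotically independent of $Q\in\Phi_K$ once $K$ is large, which is exactly \eqref{E:tildeANK}.

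In detail: write $f=n^{it}g$ with $g:=f\cdot n^{-it}$ completely multiplicative and $g\sim\chi$. Fix $\varepsilon\in(0,1)$ and pick $K_0=K_0(f,\varepsilon)$ large enough that $\D(f,\chi\cdot n^{it};K_0,\infty)+K_0^{-1/2}\le\varepsilon$ (so that also $\D_1(f,\chi\cdot n^{it};K_0,\infty)\le\varepsilon\le 1$, since $\D_1\le\D$) and that $q$ divides every element of $\Phi_K$ for all $K\ge K_0$. For such $K$ and every $Q\in\Phi_K$, \cref{P:concentration2} gives
$$\limsup_{N\to\infty}\E_{m,n\in[N]}\big|f\big((Qm+1)^2+(Qn)^2\big)-Q^{2it}(m^2+n^2)^{it}\exp\big(G_N(f,K)\big)\big|\ll\varepsilon,$$
while \cref{P:concentration1}, applied in the variable $m$, gives
$$\limsup_{N\to\infty}\E_{m\in[N]}\big|f(Qm+1)-(Qm)^{it}\exp\big(F_N(f,K)\big)\big|\ll\varepsilon,$$
both uniformly over $Q\in\Phi_K$, and with $F_N(f,K),G_N(f,K)$ independent of $Q$. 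Using complete multiplicativity to split $f(\ell((Qm+1)^2+(Qn)^2))=f(\ell)\cdot f((Qm+1)^2+(Qn)^2)$ and $f(\ell'(Qm+1)n)=f(\ell')\cdot f(Qm+1)\cdot f(n)$, and combining the two displays by the elementary $1$-bounded bookkeeping $|ab-a'b'|\le|a-a'|+|b-b'|$ (all factors above being of modulus $\le 1$, with the errors depending only on $m,n$ and averaging to $\ll\varepsilon$), I would deduce that for $K\ge K_0$ and $Q\in\Phi_K$
$$\limsup_{N\to\infty}\big|L_{\delta,N}(f,Q)-Q^{it}\,C_N(K)\big|\ll\varepsilon,$$
where
$$C_N(K):=f(\ell)\,\overline{f(\ell')}\,\exp\big(G_N(f,K)\big)\,\overline{\exp(F_N(f,K))}\cdot\E_{m,n\in[N]}\tilde{w}_\delta(m,n)\,(m^2+n^2)^{it}\,m^{-it}\,\overline{f(n)}$$
does not depend on $Q$. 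Multiplying by the modulus-one factor $Q^{-it}$ gives $\limsup_{N\to\infty}|\tilde{L}_{\delta,N}(f,Q)-C_N(K)|\ll\varepsilon$ uniformly for $Q\in\Phi_K$; since $\Phi_K$ is finite, interchanging $\max$ with $\limsup$ and using the triangle inequality then yields $\limsup_{N\to\infty}\max_{Q,Q'\in\Phi_K}|\tilde{L}_{\delta,N}(f,Q)-\tilde{L}_{\delta,N}(f,Q')|\ll\varepsilon$ for every $K\ge K_0(f,\varepsilon)$. Letting $K\to\infty$ and then $\varepsilon\to 0$ gives \eqref{E:tildeANK}.

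The only genuine input is the pair of concentration estimates; everything else — the factorization into a $Q^{it}$-multiple of a $Q$-free quantity, the $1$-bounded error propagation, and the interchange of $\max$ with $\limsup$ over the finite set $\Phi_K$ — is routine. I do not expect a serious obstacle here: the one thing to be careful about is that the uniformity over $Q\in\Phi_K$ is genuinely available, but this is built into the statements of \cref{P:concentration1} and \cref{P:concentration2} (the $\max_{Q\in\Phi_K}$ sits inside the $\limsup_N$), and it is precisely this uniformity, together with the fact that $F_N,G_N$ are the same for all $Q\in\Phi_K$ divisible by $q$, that makes the argument work.
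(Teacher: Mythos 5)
Your proof is correct and is essentially identical to the paper's own argument: both invoke the two concentration estimates (Propositions~\ref{P:concentration1} and \ref{P:concentration2}) uniformly over $Q\in\Phi_K$, observe that after substitution $L_{\delta,N}(f,Q)$ is within $O(\varepsilon)$ of $Q^{it}$ times a $Q$-free quantity (your $C_N(K)$ is the paper's $M_{\delta,N}(f)\cdot\exp(G_N(f,K))\cdot\overline{\exp(F_N(f,K))}$), and then finish by the triangle inequality on $\Phi_K$.
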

\begin{proof}
	For $K\in \N$, let $F_N(f,K)$ and $G_N(f,K)$ be defined as in \eqref{E:FNfQdef} and \eqref{E:GNfQdef} respectively.

	We apply the concentration inequalities of Proposition~\ref{P:concentration1} and Proposition~\ref{P:concentration2}.
	Since $f\sim \chi\cdot n^{it}$ for some $t\in \R$ and Dirichlet character $\chi$, we get that
	$$
	\lim_{K\to\infty}	\limsup_{N\to\infty}\max_{Q\in \Phi_K} \E_{m\in [N]}|f(Qm+1)- (Qm)^{it}\exp\big(F_{N}(f,K)\big)|=0
	$$
	and
	$$
	\lim_{K\to\infty}	\limsup_{N\to\infty}\max_{Q\in \Phi_K}	\E_{m,n\in [N]}\big| f\big((Qm+1)^2+(Qn)^2\big)-  Q^{2it} \cdot (m^2+n^2)^{it} \cdot  \exp\big(G_N(f,K)\big)\big|=0.
	$$
	We deduce that  if
	$$
	M_{\delta,c, N}( f):=f(\ell)\cdot \overline{f(\ell')}\cdot \E_{m,n\in [N]}\, \tilde{w}_{\delta,c}(m,n)\cdot  (m^2+n^2)^{it}\cdot  m^{-it}\cdot \overline{f(n)},
	$$
	then
	$$
	\lim_{K\to\infty}	\limsup_{N\to\infty}\max_{Q\in \Phi_K} |\tilde{L}_{\delta, c,N}(f,Q)- M_{\delta,c,N}(f)\cdot  \exp(G_N(f,K))\cdot \overline{\exp(F_N(f,K))}|=0.
	$$
	Using this identity and the triangle inequality  we deduce that   \eqref{E:tildeANK} holds.
\end{proof}
Recall that $\CM_p$ and $\CA$ were defined in \eqref{E:pretentious} and \eqref{E:CA} respectively.
The next result follows easily from Lemma~\ref{L:Fol0} and the continuity of finite Borel measures.
\begin{lemma}\label{L:averagesmall}
	Let $\sigma$ be a Borel probability  measure on $\CM_p$.  Then for every $\varepsilon>0$  there exist a Borel subset $\CM_\varepsilon$ of $\CM_p\setminus \CA$   and  $K_0\in \N$,  such that
	\begin{equation}\label{E:MM'e}
		\sigma((\CM_p \setminus \CA) \setminus \CM_\varepsilon)\leq \varepsilon
	\end{equation}
	and
	\begin{equation}\label{E:fQsmall}
		\sup_{f\in \CM_\varepsilon} |\E_{Q\in \Phi_K}\, f(Q)\cdot Q^{-it_f}|\leq \varepsilon \, \text{ for all }\,  K\geq K_0,
	\end{equation}
	where $t_f$ is the unique real for which $f\sim \chi\cdot n^{it_f}$ for some Dirichlet character $\chi$.
\end{lemma}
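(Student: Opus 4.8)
The plan is to deduce the pointwise statement that the averages over $\Phi_K$ vanish for each individual $f\in\CM_p\setminus\CA$ from Lemma~\ref{L:Fol0}, and then to promote this to the asserted near-uniform statement by a continuity-from-below argument for the finite measure $\sigma$. For $f\in\CM_p\setminus\CA$ set $g_f:=f\cdot n^{-it_f}$, which is a completely multiplicative function with $g_f\sim\chi$ for the Dirichlet character $\chi$ attached to $f$. Note that $g_f\neq 1$: otherwise $f=n^{it_f}$ would lie in $\CA$, contrary to assumption. Lemma~\ref{L:Fol0} therefore gives $\lim_{K\to\infty}\E_{Q\in\Phi_K}g_f(Q)=0$; writing $a_K(f):=\E_{Q\in\Phi_K}f(Q)\cdot Q^{-it_f}=\E_{Q\in\Phi_K}g_f(Q)$, this says $a_K(f)\to 0$ as $K\to\infty$ for every $f\in\CM_p\setminus\CA$.

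Next I would verify that each $a_K$ is Borel measurable on $\CM_p\setminus\CA$ (which is itself Borel, since $\CM_p$ is Borel by Lemma~\ref{L:Borel}\eqref{I:Borel1} and $\CA$ is Borel). As $\Phi_K$ is finite, $a_K$ is a finite linear combination of the maps $f\mapsto f(Q)\cdot Q^{-it_f}$ with $Q\in\Phi_K$; here $f\mapsto f(Q)$ is continuous for the topology of pointwise convergence, and $f\mapsto t_f$ is Borel by Lemma~\ref{L:Borel}\eqref{I:Borel2}, so $f\mapsto Q^{-it_f}=\exp(-it_f\log Q)$ is Borel. Hence $a_K$ is Borel.

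Finally, fix $\varepsilon>0$, and for $j\in\N$ let $B_j:=\{f\in\CM_p\setminus\CA:\ |a_K(f)|\leq\varepsilon\ \text{for all}\ K\geq j\}$. Each $B_j$ is Borel, $B_j\subseteq B_{j+1}$, and $\bigcup_j B_j=\CM_p\setminus\CA$ by the pointwise convergence established above. Since $\sigma$ is a finite measure, continuity from below gives $\sigma(B_j)\uparrow\sigma(\CM_p\setminus\CA)$, so there is $K_0\in\N$ with $\sigma\big((\CM_p\setminus\CA)\setminus B_{K_0}\big)\leq\varepsilon$. Setting $\CM_\varepsilon:=B_{K_0}$ yields \eqref{E:MM'e}, and by construction $|a_K(f)|\leq\varepsilon$ for all $f\in\CM_\varepsilon$ and all $K\geq K_0$, which is precisely \eqref{E:fQsmall}. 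There is no genuinely hard step here; the only point needing care is the Borel measurability of $f\mapsto a_K(f)$, which is exactly why the measurability of $f\mapsto t_f$ was recorded in Lemma~\ref{L:Borel}\eqref{I:Borel2}.
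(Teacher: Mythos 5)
Your proof is correct and is essentially the same argument the paper gives: in both, one defines the increasing family of Borel sets where the averages are uniformly small for $K\geq m$, observes that Lemma~\ref{L:Fol0} (applied to $f\cdot n^{-it_f}\neq 1$) shows the union is all of $\CM_p\setminus\CA$, and applies continuity from below of $\sigma$. The only difference is cosmetic: you spell out the Borel measurability of $f\mapsto\E_{Q\in\Phi_K}f(Q)Q^{-it_f}$ (finiteness of $\Phi_K$, continuity of $f\mapsto f(Q)$, Borel measurability of $f\mapsto t_f$ from Lemma~\ref{L:Borel}), which the paper states more tersely.
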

\begin{remark}
	The important point is that $K_0$ does not depend on $f$ as long as $f\in \CM_\varepsilon$.
\end{remark}
\begin{proof}
	Let  $\varepsilon>0$. For $m\in \N$, we let
	$$
	\CM_{\varepsilon, m}:=\{f\in \CM_p\setminus \CA \colon |\E_{Q\in \Phi_K}\, f(Q) \cdot Q^{-it_f}|\leq \varepsilon \, \text{ for all }\,  K\geq m\}.
	$$
	Note that  by Lemma~\ref{L:Borel} the map $f\mapsto t_f$ from $\CM_p$ to $\R$ is Borel, hence  for every $\varepsilon>0$ the sets 	$\CM_{\varepsilon, m}$ form an increasing family of  Borel sets.
	Since  for $f\notin \CA$ we have   $f\cdot n^{-it_f}\neq 1$,  we get by  Lemma~\ref{L:Fol0}  that for every $f\in \CM_p\setminus \CA$ we have
	$$
	\lim_{K\to\infty} \E_{Q\in \Phi_K}\, f(Q)\cdot Q^{-it_f}=0.
	$$
	Hence,
	$$
	(\CM_p\setminus \CA):=\bigcup_{m\in\N} 	\CM_{\varepsilon, m}.
	$$
	It follows that there exists $m_0\in \N$ such that
	$$
	\sigma(	(\CM_p\setminus \CA)\setminus 	\CM_{\varepsilon, m_0})\leq \varepsilon.
	$$
	Renaming 	$\CM_{\varepsilon, m_0}$	 as $\CM_{\varepsilon}$ and letting $K_0:=m_0$, gives the asserted statement.
\end{proof}
Using  the previous two results we are going to prove Proposition~\ref{P:vanishing2}, which we formulate again for convenience.

\begin{named}{\cref{P:vanishing2}}{}
    Let $(\Phi_K)$, $\CA$, $B_{\delta,c}(f,Q; m,n)$  be defined by  \eqref{E:PhiK},  \eqref{E:CA},  \eqref{E:BdfQmn}, respectively, and $\delta>0, c\in \R$.  Let also $\sigma$ be a Borel probability measure on $\CM_p$. Then	
	$$
	\lim_{K\to\infty} 	\limsup_{N\to \infty}\Big|\E_{Q\in \Phi_{K}}\,  \E_{m,n\in[N]}	\int_{\CM_p \setminus \CA} \, B_{\delta,c}(f,Q; m,n)\, d\sigma(f)\Big|=0.
	$$	
\end{named}

\begin{proof}
	Let $\delta,c, \varepsilon>0$.
By Lemma~\ref{L:averagesmall}  there exists $K_0=K_0(\sigma)\in \N$ and a Borel subset
$\CM_{\varepsilon}$  of $\CM\setminus \CA$, such that
\begin{equation}\label{E:MM'e'}
	\sigma((\CM_p \setminus \CA) \setminus \CM_\varepsilon)\leq \varepsilon/4
\end{equation}
and
\begin{equation}\label{E:fQsmall'}
	\sup_{f\in \CM_\varepsilon} |\E_{Q\in \Phi_K}\, f(Q)\cdot Q^{-it_f}|\leq \varepsilon/2 \, \text{ for all }\,  K\geq K_0.
\end{equation}
	Because of \eqref{E:MM'e'},  and since $|B_{\delta,c}(f,Q; m,n)|\leq 1$,
	it suffices to show that
	\begin{equation}\label{E:MAT0'}
		\limsup_{K\to\infty} 	\limsup_{N\to \infty}\Big|\E_{Q\in \Phi_{K}}\,  \E_{m,n\in[N]}	\int_{\CM_\varepsilon} \, B_{\delta,c}(f,Q; m,n)\, d\sigma(f)\Big|\leq \varepsilon.
	\end{equation}
	As in Lemma~\ref{L:keyQQ'}, for  $Q,N\in \N$ we let
	$$
	\tilde{L}_{\delta,c,N}(f,Q):= f(Q)\cdot Q^{-it_f}\cdot \E_{m,n\in[N]}\, B_{\delta,c}(f,Q; m,n).
	$$
	We also let for $Q,N\in \N$
	\begin{equation}\label{E:IQN}
		I(Q,N):=	\E_{m,n\in [N]}\int_{\CM_{\varepsilon}} \, B_{\delta,c}(f,Q; m,n)\, d\sigma(f)=
		\int_{\CM_{\varepsilon}} \overline{f(Q)\cdot Q^{-it_f}} \cdot  \tilde{L}_{\delta,c,N}(f,Q)\, d\sigma(f).
	\end{equation}
	Finally, for  $K\in \N$, we let $Q_K$ be an arbitrary element of $\Phi_K$, and define
	\begin{equation}\label{E:IQN1}
		I_1(Q,N):=	\int_{\CM_{\varepsilon}} \overline{f(Q)\cdot  Q^{-it_f}}\cdot  \tilde{L}_{\delta,c,N}(f,Q_K)\, d\sigma(f), \quad Q\in \Phi_K, N\in \N.
	\end{equation}
	Recall that by part~\eqref{I:Borel2} of Lemma~\ref{L:Borel} the map $f\mapsto t_f$ from $\CM_p$ to $\R$ is Borel, so the  integral defining $I_1(Q,N)$ is well defined.
	Using \eqref{E:IQN} and  \eqref{E:IQN1} we get that 
	$$
	\max_{Q\in \Phi_K}|I(Q,N)-I_1(Q,N)|\leq	\int_{\CM_{\varepsilon}} \max_{Q\in \Phi_K}| \tilde{L}_{\delta,c,N}(f,Q)- \tilde{L}_{\delta,c,N}(f,Q_K)\, d\sigma(f)|, \quad K\in \N.
	$$
	We deduce from this, equation \eqref{E:tildeANK} of Lemma~\ref{L:keyQQ'}, and the bounded convergence theorem,  that
	$$
	\lim_{K\to\infty} \limsup_{N\to\infty}	\max_{Q\in \Phi_K}|I(Q,N)-I_1(Q,N)|=0.
	$$
	It follows from the above facts that in order 	to show that \eqref{E:MAT0'} holds, it suffices to show that
	\begin{equation}\label{E:I1QNe}
		\limsup_{K\to\infty} \limsup_{N\to\infty} |\E_{Q\in \Phi_K} I_1(Q,N)|\leq \varepsilon.
	\end{equation}
	Using the definition of $I_1(Q,N)$ in \eqref{E:IQN1} and the estimate \eqref{E:fQsmall'}, we get that 	 for every $K\geq K_0$ we have
	$$
	\sup_{N\in\N}|\E_{Q\in \Phi_{K}}\, I_1(Q,N)|\leq 	\sup_{f\in \CM_{\varepsilon}} |\E_{Q\in \Phi_{K}}\, f(Q)\cdot Q^{-it_f}|\leq \varepsilon.
	$$
	Hence,
	$$
	\limsup_{K\to\infty} \limsup_{N\to\infty} |\E_{Q\in \Phi_{K}}\, I_1(Q,N)|\leq \varepsilon,
	$$
	establishing \eqref{E:I1QNe} and completing the proof.
\end{proof}
Finally we restate and prove Lemma~\ref{L:mainpositive2}. 
\begin{named}{\cref{L:mainpositive2}}{}
	Let $\sigma$ be a bounded Borel  measure on $\CM_p$ such that $\sigma(\{1\})>0$ and $\CA$ as in \eqref{E:CA}.
	Then there exist $\delta_0,\rho_0>0$, $c_0\geq 0$, depending only on $\sigma$ (for fixed $\ell,\ell'$),  such that
	\begin{equation}\label{E:mainpositive2'}
		\liminf_{N\to\infty}\inf_{Q\in \N}	\Re\Big( \E_{m,n\in[N]}\int_{\CA} B_{\delta_0,c_0}(f,Q;m,n)\, d\sigma(f)\Big)\geq\rho_0.
	\end{equation}
\end{named}
\begin{proof}
If $2\ell\leq \ell'$ (which includes the Pythagorean pair $m^2+n^2, 2mn$), then, using the positivity property  of the weight $\tilde{w}_{\delta,0}(m,n)$ in Lemma~\ref{L:Sdelta}, the proof is identical to the
 one used to establish Lemma~\ref{L:mainpositive1}. In the general case, an additional maneuver is needed. We use the weight $\tilde{w}_{\delta,c}(m,n)$ in Lemma~\ref{L:Sdelta} for some
 $c\geq L:= \log\frac{2\ell}{\ell'}$ that will be chosen appropriately.  Let
 $$
 I(c):=\int_\CA e^{ic t_f }d\sigma(f).
 $$
 Following the argument in Lemma~\ref{L:mainpositive1}, we obtain the asserted positivity  provided that
 $\Re(I(c))>0$. We use an averaging  argument to prove this. For $M>0$, we have
 $$
 \frac{1}{M} \int_L^{L+M}I(c)\, dc=\sigma(\{1\}) + \int_{\CA\setminus \{1\}} e^{iLt_f} \, \frac{e^{iMt_f}-1}{iMt_f}\, d\sigma(f).
 $$
 By the bounded convergence theorem, we get that  the second integral converges to $0$ as $M\to \infty$. Since $\sigma(\{1\}) >0$, we deduce that there exists $c>L$ such that  $\Re(I(c))>0$, completing the proof.
\end{proof}

\subsection{Proof of \cref{T:DLMS}}\label{SS:DLMS}
We sketch the proof of \cref{T:DLMS}.
Following the  reduction in Section~\ref{SS:ReductionMultiplicative} we need to show that
under the assumptions of Theorem~\ref{T:MainPythPairs} we have
\begin{equation}\label{E:positiveM'}
	\liminf_{N\to\infty} \lE_{m,n\in[N]}\int_{\CM} f(n(n+1))\cdot \overline{f(m^2)}\, d\sigma(f)>0.
\end{equation}
To prove this we   follow the argument used in the proof of  part~\eqref{I:MainPythPairs2} of Theorem~\ref{T:MainPythPairs}.\footnote{We follow part~\eqref{I:MainPythPairs2}  of Theorem~\ref{T:MainPythPairs} and not part~\eqref{I:MainPythPairs1} because we do not know that the limit of the averages $\lE_{m,n\in[N]}\, f(n(n+1))\cdot \overline{f(m^2)}$ exists for every $f\in \CM$.}
We will  restrict our average to the grid $\{(Qn,m)\colon m,n\in \N\}$, this is why
for $f\in\CM$ and $Q,m,n\in\N$, we let
$$
B(f,Q;m,n):=  f((Qn)(Qn+1))\cdot \overline{f(m^2)}.
$$
(For reasons that will become clear shortly, in this case we do not have to introduce any kind of weight $w_\delta$.)


We first claim that if $f\in \CM$ is aperiodic, then for every $Q\in \N$ we have
$$
\lim_{N\to\infty}\lE_{m,n\in[N]}\, 	B(f,Q;m,n)=0.
$$
(This corresponds to \cref{P:aperiodic2}.)
Since $f$ is completely multiplicative, it suffices to show that
\begin{equation}\label{E:aperiodicM'}
	\lim_{N\to\infty}\lE_{n\in [N]}\,  f(n)\cdot f(Qn+1)=0 \quad \text{or} \quad 	\lim_{N\to\infty}\lE_{m\in [N]} \, f^2(m)=0.
\end{equation}
Suppose  that $f^2$ does not have logarithmic mean value $0$. Then by a consequence of a result of Hal\'asz~\cite{Hal68}, we have $f^2\sim 1$.\footnote{Hal\'asz's theorem  gives that $f^2\sim n^{it}$ for some $t\in \R$, but for logarithmic averages we have that if $g\sim n^{it}$ for some $t\neq 0$,  then $g$ has mean $0$.}  Combining  this with the following
consequence of
a result of Tao in \cite{Tao15},
we deduce that  \eqref{E:aperiodicM'} holds.
\begin{lemma}
	Suppose that $f\in \CM$ is aperiodic and satisfies $f^2\sim 1$.
	Then for every $Q\in \N$ we have
	\begin{equation}\label{E:Tao}
		\lim_{N\to\infty}\lE_{n\in [N]} \, f(n)\cdot f(Qn+1)=0.
	\end{equation}
\end{lemma}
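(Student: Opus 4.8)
The plan is to deduce \eqref{E:Tao} from Tao's theorem on logarithmically averaged two--point correlations of bounded multiplicative functions \cite{Tao15}. In the form we need, that result states: if $g_1\colon\N\to\U$ is a multiplicative function that is \emph{non--pretentious} in the sense that
$$\lim_{X\to\infty}\ \min_{|t|\le X}\ \D\big(g_1,\chi\cdot n^{it};1,X\big)=+\infty\qquad\text{for every Dirichlet character }\chi,$$
$g_2\colon\N\to\U$ is an arbitrary multiplicative function, and $a_1,a_2\in\N$, $b_1,b_2\in\Z$ satisfy $a_1b_2\neq a_2b_1$, then $\lim_{N\to\infty}\lE_{n\in[N]}\,g_1(a_1n+b_1)\cdot g_2(a_2n+b_2)=0$. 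We would apply this with $g_1=g_2=f$ and with the linear forms $n$ and $Qn+1$; these are non--degenerate since $1\cdot1-Q\cdot0=1\neq0$, so \eqref{E:Tao} follows as soon as $f$ is shown to be non--pretentious in the required sense.

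Verifying this is where the two hypotheses on $f$ enter. Since $f$ is aperiodic, Lemma~\ref{lemmamultiplicativestructure} gives $\D(f,\chi\cdot n^{it})=+\infty$ for every Dirichlet character $\chi$ and every $t\in\R$, which is the \emph{pointwise} form of non--pretentiousness. To upgrade this to the uniform statement above, one must rule out the possibility that at some large scale $X$ the function $f$ is close to $\chi\cdot n^{it_X}$ with $t_X$ drifting with $X$. Suppose this happened: there would be a fixed Dirichlet character $\chi$, a constant $C>0$, scales $X_j\to\infty$, and admissible frequencies $t_j$ with $\D(f,\chi\cdot n^{it_j};1,X_j)\le C$. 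Squaring and applying \eqref{E:Df1f2} gives $\D(f^2,\chi^2\cdot n^{2it_j};1,X_j)\le 2C$, while $f^2\sim1$ gives $\D(f^2,1;1,X_j)\le\D(f^2,1)<+\infty$, so by the triangle inequality $\D(\chi^2\cdot n^{2it_j},1;1,X_j)=O(1)$. This contradicts the standard Mertens--type lower bounds for the pretentious distance from $1$ of a non--trivial $\chi^2\cdot n^{2it}$: in the admissible frequency range one has $\log\log(2+|t_j|)=o(\log\log X_j)$, so when $\chi^2$ is non--principal this distance squared is $(1+o(1))\log\log X_j$, and when $\chi^2$ is principal with $|t_j|$ bounded away from $0$ it is $\gg\log(|t_j|\log X_j)\to\infty$; the remaining case of genuinely bounded $t_j$ is handled by extracting a limit frequency $t_0$, which would force $\D(f,\chi\cdot n^{it_0};1,X_j)=O(1)$ along a subsequence and hence $f\sim\chi\cdot n^{it_0}$, again contradicting aperiodicity. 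Thus $f$ is non--pretentious in the sense required, and Tao's theorem yields \eqref{E:Tao}.

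The routine parts of the argument are the non--degeneracy check for the pair of forms $n,Qn+1$ and the verbatim application of \cite{Tao15}. The main obstacle — and the only point where $f^2\sim1$ is genuinely used — is the passage from ordinary aperiodicity of $f$ to the uniform non--pretentiousness hypothesis of Tao's theorem, i.e.\ excluding the drifting--frequency scenario above; for a general aperiodic completely multiplicative function this step, and the conclusion itself, may fail, which is exactly why the hypothesis $f^2\sim1$ is imposed here.
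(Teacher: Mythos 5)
Your overall approach coincides with the paper's: invoke Tao's theorem on logarithmically averaged two--point correlations, and then verify that the hypotheses (aperiodicity plus $f^2\sim 1$) upgrade to the uniform \emph{strong aperiodicity} hypothesis that Tao's result requires. The paper delegates this upgrade entirely to the proof of \cite[Proposition~6.1]{F16}, whereas you attempt to supply the argument inline; that argument uses exactly the right idea (apply \eqref{E:Df1f2} to $f\cdot f$ to transfer a hypothetical bound $\D(f,\chi n^{it_j};1,X_j)=O(1)$ into $\D(\chi^2 n^{2it_j},1;1,X_j)=O(1)$, and then derive a contradiction from Mertens/Vinogradov--Korobov type lower bounds for the pretentious distance).

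However, your case analysis in that last step has a genuine gap. After reducing to $\chi^2$ principal, you split into ``$|t_j|$ bounded away from $0$'' (handled by $\D^2\gg\log(|t_j|\log X_j)\to\infty$) and ``genuinely bounded $t_j$'' (handled by extracting a subsequential limit $t_0$). This leaves untreated the intermediate regime where $t_j\to 0$ but $|t_j|\log X_j\to\infty$; in that regime $t_j$ is neither bounded away from $0$, nor can you deduce $\D(f,\chi n^{it_0};1,X_{j})=O(1)$ from $\D(f,\chi n^{it_{j}};1,X_{j})=O(1)$ merely from $t_{j}\to t_0$, since $\D(n^{i(t_j-t_0)},1;1,X_j)$ need not stay bounded unless $|t_j-t_0|=O(1/\log X_j)$. (Relatedly, the assertion $\log\log(2+|t_j|)=o(\log\log X_j)$ is false at the edge of the admissible range $|t_j|\asymp X_j$.) The clean way to close the argument is to use the quantitative lower bound directly: for $\chi^2$ principal and $|t_j|$ in the admissible range one has $\D(\chi^2 n^{2it_j},1;1,X_j)^2\gg\log\big(2+|t_j|\log X_j\big)$, so boundedness forces $|t_j|=O(1/\log X_j)$; in that regime a direct computation gives $\D(n^{it_j},1;1,X_j)=O(1)$, whence by the triangle inequality $\D(f,\chi;1,X_j)=O(1)$ and $f\sim\chi$, contradicting aperiodicity via Lemma~\ref{lemmamultiplicativestructure}. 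With this repair, your proof becomes a self-contained version of what the paper achieves by citing \cite[Proposition~6.1]{F16}.
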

\begin{proof}
	We say that $f\in \CM$ is strongly aperiodic if for every Dirichlet character $\chi$ and $A\geq 1$  we have
	$\lim_{N\to\infty} \min_{|t|\leq AN}\D(f,\chi\cdot n^{it}; 1,N)=+\infty$. It was shown in \cite[Corollary~1.5]{Tao15} that if $f$ is strongly aperiodic, then \eqref{E:Tao} holds for
	every $Q\in \N$. Thus,  it  remains  to show that if $f$ is aperiodic and $f^2\sim 1$, then  $f$ is strongly aperiodic.
	This can be shown exactly as in the proof of \cite[Proposition~6.1]{F16}; the assumption
	$f^2\sim 1$ in our setting replaces the  assumption $f^k=1$ for some $k\in \N$ that was used in \cite{F16}.
\end{proof}

Using the previous claim and the bounded convergence theorem, we get that
it suffices to establish \eqref{E:positiveM'} when the range of integration $\CM$ is replaced by the subset  $\CM_p$ of pretentious multiplicative functions.

Next we claim that  if $(\Phi_K)$ is as in  \eqref{E:PhiK} and  $\sigma$ is  a Borel probability measure on $\CM_p$, then	
\begin{equation}\label{E:Bnodelta}
	\lim_{K\to\infty} 	\limsup_{N\to \infty}\Big|\E_{Q\in \Phi_{K}}\,  \lE_{m,n\in[N]}	\int_{\CM_p \setminus \{1\}} \, B(f,Q; m,n)\, d\sigma(f)\Big|=0.
\end{equation}
(This corresponds to \cref{P:vanishing2}. Note that $\CA$ can be replaced by $\{1\}$ in this case, which is the reason why introducing a weight  is not needed for this argument.)
To prove this, we argue as in the proof of \cref{P:vanishing2}.  If $f\sim \chi \cdot n^{it_f}$ for some $t_f\in \R$ and Dirichlet character $\chi$, for $Q,N\in \N$,  we let
$$
\tilde{L}_{ N}(f, Q):= Q^{-it_f}\cdot \lE_{m,n\in [N]}  \, f(n(Qn+1))\cdot \overline{f(m^2)}
$$
and  show that
$$
\lim_{K\to\infty} \limsup_{N\to\infty}	\max_{Q,Q'\in \Phi_K}|\tilde{L}_{N}(f,Q)-\tilde{L}_{N}(f,Q')|=0.
$$
We do this exactly as in the proof of \cref{L:keyQQ'}, using  in this case the concentration estimate of \cref{P:concentration1} for logarithmic averages (see the third remark following \cref{P:concentration1}). Then \eqref{E:Bnodelta} follows exactly as in the proof of \cref{P:vanishing2}. The reason why we only have to exclude the multiplicative function $\{1\}$ in the integral in \eqref{E:Bnodelta} (versus the set $\CA$ of all Archimedean characters), is because in  our current setting we have
$$
\E_{m,n\in[N]} \, B(f,Q;m,n)= f(Q)\cdot Q^{it_f}\cdot \tilde{L}_N(f,Q),
$$
and $Q\mapsto f(Q)\cdot Q^{it_f}$ is the trivial multiplicative function only when $f=1$.
Note also that the variant of \cref{L:mainpositive2} is trivial in our case, since $\CA$ is replaced by $\{1\}$.
With the above information we can  complete the proof of \eqref{E:positiveM'}  exactly as we did at the end of Section~\ref{SS:Plan1ii}.

\section{Pythagorean triples on level sets - Reduction to the pretentious case}\label{S:triplesreduction}
First, let us recall a convention made in Section~\ref{SS:Plan2}, which we will continue to follow in this and the next section. When we write $\E^*_{k\in \N}$, we mean the limit $\lim_{K\to\infty} \E_{k\in \Phi_K}$, where $(\Phi_K)$ is a multiplicative F\o lner sequence chosen so that all the limits in the following statements exist. Since it will always be the case in our arguments that only a countable collection of limits needs to be considered, such a F\o lner sequence can be taken as a subsequence of any given
multiplicative F\o lner sequence.

As explained in Section~\ref{SS:Plan2}, the proof of	Theorem~\ref{T:triplesrestated}
splits in two parts,  Propositions~\ref{P:reduction} and \ref{P:pretentiousfinite}.
Our goal in this section is to establish the first part, which we now state in a more general form (we do not assume that $f$ takes finitely many values).
\begin{proposition}\label{P:reduction'}
	Suppose that for every  completely multiplicative function  $h\colon \N\to \S^1$, with $h\sim n^{it}$ for some $t\in \R$, modified Dirichlet character $\tilde{\chi}\colon \N\to \S^1$, and open arc $I$ on $\S^1$ around $1$,  we have
	$$
	\liminf_{N\to \infty} \E_{m,n\in [N],m>n}\, \E^*_{k\in \N}\ A(k\, (m^2-n^2))\cdot A(k\cdot 2 mn)
	\cdot  A(k\,(m^2+n^2))>0,
	$$
	where
	$$
	A(n):=F(h(n))\cdot F(\tilde{\chi}(n)), \quad n\in \N, \quad F:={\bf 1}_I.
	$$
	Then  for every completely multiplicative function $f\colon \N\to \S^1$ and open arc $I$ around $1$,  we have
	$$
	\liminf_{N\to \infty} \E_{m,n\in [N],m>n}\, \E^*_{k\in \N} \, F(f(k\, (m^2-n^2)))\cdot F(f(k\, 2 mn))
	\cdot  F(f(k\,  (m^2+n^2)))>0,
	$$
	where $F$ is as before.  	Furthermore, if our assumptions hold for all finite-valued completely multiplicative functions $h$, then the conclusion holds for all finite-valued completely multiplicative functions $f$.
\end{proposition}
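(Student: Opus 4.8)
The plan is to combine the decomposition of $f$ into an aperiodic and a pretentious factor (Lemma~\ref{L:decomposition}, together with the Daboussi–Delange dichotomy of Lemma~\ref{lemmamultiplicativestructure}), the vanishing of aperiodic correlations along the forms $(m^2-n^2,mn,m^2+n^2)$ (Proposition~\ref{P:aperiodic}), and the orthogonality of nontrivial completely multiplicative functions to constants along a multiplicative F\o lner sequence (Lemma~\ref{L:Fol0}). First I would write $f=g\cdot h$ with $h$ pretentious, say $h\sim\chi\cdot n^{it}$, and $g$ such that $g^j$ is aperiodic whenever $g^j\not\equiv 1$; thus either all nontrivial powers of $g$ are aperiodic, or $g$ takes values in the group $\mu_d$ of $d$-th roots of unity for some $d\in\N$ with $g^j$ aperiodic for $d\nmid j$. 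Next I would replace $\chi$ by its modified character: setting $h_0:=h\cdot\overline{\tilde\chi}$, one has $h_0\sim n^{it}$, since $\chi$ and $\tilde\chi$ agree at every prime not dividing the modulus and hence $\D(\chi,\tilde\chi)<\infty$; this gives $f=g\cdot h_0\cdot\tilde\chi$ with $h_0\sim n^{it}$ and $\tilde\chi$ a modified Dirichlet character, exactly the data appearing in the hypothesis. Finally, given the open arc $I$ I would pick smaller open arcs $I_0,I_1$ around $1$ with $I_0\cdot I_1\cdot I_1\subseteq I$, which yields the pointwise bound $\mathbf 1_I(f(n))\ge \mathbf 1_{I_0}(g(n))\,\mathbf 1_{I_1}(h_0(n))\,\mathbf 1_{I_1}(\tilde\chi(n))$ and therefore, writing $(X_1,X_2,X_3):=(m^2-n^2,\,2mn,\,m^2+n^2)$,
$$\E^*_{k\in\N}\prod_{i=1}^{3}\mathbf 1_I\big(f(kX_i)\big)\ \ge\ \E^*_{k\in\N}\prod_{i=1}^{3}\mathbf 1_{I_0}\big(g(kX_i)\big)\,\mathbf 1_{I_1}\big(h_0(kX_i)\big)\,\mathbf 1_{I_1}\big(\tilde\chi(kX_i)\big).$$

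The core of the argument is to clear the $g$-factors out of the right-hand side at the cost of a strictly positive constant. For this I would expand $\mathbf 1_{I_0}\otimes\mathbf 1_{I_1}\otimes\mathbf 1_{I_1}$ on $\T^3$ into modes $g(n)^a h_0(n)^b\tilde\chi(n)^c=\psi_{a,b,c}(n)$, where $\psi_{a,b,c}:=g^a h_0^b\tilde\chi^c\in\CM$, multiply the three copies (indexed by $i$) together, and carry out the $\E^*_{k}$ average first: since $\psi_{A,B,C}(k)$ (with $A=\sum_i a_i$, $B=\sum_i b_i$, $C=\sum_i c_i$) is the $k$-dependent factor of each term, Lemma~\ref{L:Fol0} annihilates every term with $\psi_{A,B,C}\not\equiv 1$, and because $g^A$ is either $\equiv 1$ or aperiodic while $h_0^B\tilde\chi^C$ is pretentious, $\psi_{A,B,C}\equiv 1$ forces $g^A\equiv 1$. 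Averaging the surviving terms over $m,n$ and applying Proposition~\ref{P:aperiodic} to $\psi_{a_1,b_1,c_1}$ and $\psi_{a_2,b_2,c_2}$ at $m^2-n^2$ and $mn$ (after pulling out the harmless constant $\psi_{a_2,b_2,c_2}(2)$, since $X_2=2mn$) annihilates every term with $g^{a_1}$ or $g^{a_2}$ aperiodic; combined with $g^A\equiv 1$ this forces $g^{a_i}\equiv 1$ for all $i$, so the $g$-modes contribute only their constant term and drop out. Re-summing what remains leaves
$$\liminf_{N\to\infty}\E_{m,n\in[N],\,m>n}\E^*_{k\in\N}\prod_{i=1}^{3}\mathbf 1_I\big(f(kX_i)\big)\ \ge\ c^{3}\cdot\liminf_{N\to\infty}\E_{m,n\in[N],\,m>n}\E^*_{k\in\N}\prod_{i=1}^{3}\mathbf 1_{I_1}\big(h_0(kX_i)\big)\,\mathbf 1_{I_1}\big(\tilde\chi(kX_i)\big),$$
where $c:=\E^*_{k\in\N}\mathbf 1_{I_0}(g(k))$ equals the normalized Haar measure of $I_0\cap\overline{\mathrm{range}(g)}$ and is strictly positive because $1=g(1)\in I_0\cap\overline{\mathrm{range}(g)}$ (so $c\ge 1/d$ when $g$ is $\mu_d$-valued, and $c=|I_0|/2\pi$ when the range of $g$ is dense in $\S^1$). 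The last $\liminf$ is positive by the hypothesis of the proposition applied with $h:=h_0\sim n^{it}$, the modified character $\tilde\chi$, and the arc $I_1$; and when $f$ is finite-valued, $g$ and $h$ are finite-valued, $t=0$ by Lemma~\ref{L:convergesabsolutely}, so $h_0\sim 1$ is finite-valued and the finite-valued form of the hypothesis applies, giving the last assertion.

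The step I expect to be the main obstacle is making the Fourier-expansion and decorrelation manipulations rigorous, since the Fourier series of arc indicators converge only in $L^2$. I would handle this by approximating $\mathbf 1_{I_0}\otimes\mathbf 1_{I_1}\otimes\mathbf 1_{I_1}$ by a trigonometric polynomial in the $L^1$-norm of the limiting joint distribution of $\big(g(kX_i),h_0(kX_i),\tilde\chi(kX_i)\big)_{i=1,2,3}$ — which exists along the F\o lner sequence fixed by the $\E^*$-convention, after passing to a subsequence — controlling the error uniformly in $N$ and $k$, and choosing the lengths of $I_0,I_1$ generically so that these indicators are almost-everywhere continuous for that distribution; the secondary identity $\E^*_{k\in\N}\mathbf 1_{I_0}(g(k))=c>0$ follows similarly from Lemma~\ref{L:Fol0} and a one-sided trigonometric minorant of $\mathbf 1_{I_0}$ of positive mean (when the range of $g$ is dense) or by restricting to the finite group $\overline{\mathrm{range}(g)}$ (when $g$ is finite-valued). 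Throughout, the convergence of the averages over $m,n$ needed to justify these rearrangements is supplied by \cite{FH16} and by the concentration estimates of Section~\ref{S:concentration2}.
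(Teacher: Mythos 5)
The main gap is in the decomposition step, and it is precisely the pitfall that the paper's remark after Lemma~\ref{L:roots} warns about. You propose to write $f=g\cdot h$ with $h\sim\chi\cdot n^{it}$ pretentious and $g$ such that \emph{every} nontrivial power $g^j$ is aperiodic; you then conclude that either all $g^j$, $j\ge 1$, are aperiodic, or $g$ takes values in $\mu_d$. But if $f^d$ is pretentious with $d\ge 2$ minimal, this forces $g^d=1$, hence $h^d=f^d$; i.e.\ $h$ would be a pretentious $d$-th root of $f^d$. Such a root need not exist: Lemma~\ref{L:roots} produces one when $f^d\sim n^{it}$, but, as the paper notes explicitly, the analogous statement ``is not always true when $f\sim\chi$ where $\chi$ is a Dirichlet character (not even when $f=\chi$).'' For instance, if $f^2=\tilde\chi$ is a modified quadratic character that is not a square in the group of Dirichlet characters modulo principal characters, no pretentious square root $h$ exists, and any $g$ with $g^2=1$ forces $h=f\overline g$ to be aperiodic. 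This is why Lemma~\ref{L:decomposition} delivers the weaker relation $g^d=\tilde\chi$ (not $g^d=1$): the resulting $g$ takes values in $\mu_{rd}$ (with $r$ the order of $\tilde\chi$) and its powers $g^{d},g^{2d},\ldots$ are periodic, nontrivial, and \emph{not} aperiodic.

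This breaks the cancellation step in your argument. You argue that after factoring out the $k$-average, a surviving mode $\psi_{A,B,C}=g^A h_0^B\tilde\chi^C\equiv 1$ forces $g^A\equiv 1$ (since $g^A$ is ``either $1$ or aperiodic''), and then Proposition~\ref{P:aperiodic} kills the terms with $g^{a_1}$ or $g^{a_2}$ aperiodic, so that only the constant mode of $g$ survives and contributes the factor $c^3$. With the actual $g$ of Lemma~\ref{L:decomposition}, $g^A$ can be a nontrivial power of $\tilde\chi$ and $\psi_{A,B,C}\equiv 1$ can hold with $g^A\neq 1$, so these modes do not cancel and your factor $c$ is not the right quantity. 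The paper avoids this by keeping $g$ with $g^d=\tilde\chi$ and using the exact finite expansion $\mathbf 1_{g=1}=\mathbb{E}_{0\le j<rd}\,g^j=r\cdot\mathbf 1_{\tilde\chi=1}+\sum_{j\in J}g^j$ (where $J$ are the exponents not divisible by $d$); it then separately shows that the all-$\mathbf 1_{\tilde\chi=1}$ term is governed by the hypothesis, that terms with a $g^j$ only in the third slot vanish by the multiplicative $k$-average and Lemma~\ref{L:Fol0}, and that terms with a $g^j$ in the first or second slot vanish by Proposition~\ref{P:aperiodic}. Any correct version of your proof would have to reintroduce this case analysis, since the cleaner dichotomy you rely on is unavailable. (Your secondary concern about $L^2$-only Fourier convergence of arc indicators is also real in the case of dense range, but the paper sidesteps it entirely: it replaces $\mathbf 1_I$ by a continuous minorant $F$ with $\mathbf 1_{I/4}\le F\le\mathbf 1_{I/2}$ at the outset and uses uniform approximation by trigonometric polynomials, while in Case~2 the factor $g$ is automatically finite-valued so the expansion is exact.)
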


\subsection{Preparation}
Recall that we write  $f\sim g$ if $\D(f,g)<+\infty$ where $\D(f,g)$ is as in \eqref{E:Dfg}.
\begin{lemma}\label{L:roots}
	Let $f\colon \N\to \S^1$ be  a completely multiplicative function such that $f\sim n^{it}$ for some $t\in \R$. Then for every $d\in \N$, there exists a completely multiplicative function $g\colon \N\to \S^1$, such that $g\sim n^{it/d}$ and  $g^d=f$. Furthermore, if $f$ takes finitely many values, then so does $g$.
\end{lemma}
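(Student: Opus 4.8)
\textbf{Proof plan for Lemma~\ref{L:roots}.}

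The plan is to construct $g$ directly by taking $d$-th roots prime by prime, using the fact that $\S^1$ is a divisible group, and then to verify the pretentiousness and finite-valuedness claims by unwinding the definitions. Concretely, for each prime $p$, since $f(p)\in\S^1$, I would choose $g(p)\in\S^1$ with $g(p)^d=f(p)$, and extend $g$ to a completely multiplicative function on $\N$ in the unique way ($g(p_1^{a_1}\cdots p_r^{a_r}):=\prod g(p_i)^{a_i}$). Then automatically $g^d=f$ as completely multiplicative functions.

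The first substantive point is to arrange $g\sim n^{it/d}$, i.e.\ $\D(g,n^{it/d})<+\infty$. Here the freedom in choosing the $d$-th root of $f(p)$ matters: among the $d$ choices of $g(p)$, at least one lies within arc-distance $\pi/d$ of $p^{it/d}$ (the $d$-th roots of $f(p)$ are equspaced on $\S^1$, and $p^{it/d}$ is one of the $d$-th roots of $p^{it}$, which is within distance... more carefully: pick $g(p)$ to be the $d$-th root of $f(p)$ closest to $p^{it/d}$). With this choice I claim $|g(p)-p^{it/d}|\ll_d |f(p)-p^{it}|$ for all $p$: indeed if $w^d=\zeta$ and $v^d=\eta$ with $w$ the root of $\zeta$ nearest $v$, then $|w-v|\ll_d|\zeta-\eta|$ by a compactness/elementary argument on $\S^1$ (the map is a local diffeomorphism away from where roots collide, and near-collisions are controlled by choosing the nearest root). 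Squaring and summing against $1/p$ and using $1-\Re(f(p)\overline{p^{it}})=\tfrac12|f(p)-p^{it}|^2$, this gives
$$
\D(g,n^{it/d})^2=\tfrac12\sum_p\frac1p|g(p)-p^{it/d}|^2\ll_d \sum_p\frac1p|f(p)-p^{it}|^2 = 2\,\D(f,n^{it})^2<+\infty,
$$
so $g\sim n^{it/d}$.

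For the finite-valued claim: if $f$ takes values in a finite set $S\subset\S^1$, then in particular $\{f(p):p\in\P\}$ is finite, and since each $g(p)$ is a $d$-th root of some element of $S$, the set $\{g(p):p\in\P\}$ is contained in the finite set $\{w\in\S^1:w^d\in S\}$. However, this alone does not bound the range of $g$ on all of $\N$, since products of these roots could a priori be dense. The fix is standard and I would invoke it: a completely multiplicative $f$ taking finitely many values must in fact take values in $\mu_m$ (the $m$-th roots of unity) for some $m$ — because $f(p)$ ranges over a finite subset of $\S^1$ closed under nothing in particular, but $\{f(n):n\in\N\}$ being finite and closed under multiplication forces it to be a finite subgroup of $\S^1$, hence cyclic, say $\mu_m$; then every $f(p)\in\mu_m$. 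Consequently each $g(p)$ is a $dm$-th root of unity, so $g$ takes values in the finite group $\mu_{dm}$. This is routine once phrased correctly.

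\textbf{Main obstacle.} The only place needing genuine care is the inequality $|g(p)-p^{it/d}|\ll_d|f(p)-p^{it}|$ governing the choice of root, i.e.\ ensuring the $d$-th root operation does not blow up the pretentious distance. The subtlety is that $z\mapsto z^{1/d}$ is not globally Lipschitz on $\S^1$ and the $d$-th roots are only defined up to a choice; one must pick, for each $p$, the root of $f(p)$ nearest to the reference value $p^{it/d}$, and then argue that this nearest root stays within a constant (depending on $d$) times $|f(p)-p^{it}|$. I expect a short lemma: for $\zeta,\eta\in\S^1$ and any $d$-th root $v$ of $\eta$, the nearest $d$-th root $w$ of $\zeta$ satisfies $|w-v|\le \tfrac{\pi}{2}|\zeta-\eta|$ — proved by writing $\zeta/\eta=e^{i\theta}$ with $|\theta|\le\pi$, so $|\zeta-\eta|=2|\sin(\theta/2)|\ge \tfrac{2}{\pi}|\theta|$, while some $d$-th root ratio $w/v$ equals $e^{i\theta/d}$, giving $|w-v|=2|\sin(\theta/2d)|\le|\theta|/d\le|\theta|$. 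Everything else in the proof is formal.
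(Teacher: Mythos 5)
Your proof is correct and uses essentially the same construction as the paper's: for each prime $p$ you choose the $d$-th root of $f(p)$ closest to $p^{it/d}$, which (after reducing to $t=0$) is exactly the paper's choice of dividing the principal argument $\theta_p\in[-1/2,1/2)$ of $f(p)$ by $d$, and you check that this operation increases the pretentious distance by at most a bounded factor. You additionally spell out the finite-valued claim via the standard observation that a finite-valued completely multiplicative function on $\S^1$ must take all its values in the $m$-th roots of unity for some $m$, a point the paper's proof leaves implicit.
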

\begin{proof}
	Suppose first that $f\sim 1$. Then $f(p)=e(\theta_p)$, $p\in \P$, for some $\theta_p\in [-1/2,1/2)$ with  $\sum_{p\in\P}\frac{1-\cos(\theta_p)}{p}<+\infty$. Hence, $\sum_{p\in\P}\frac{\theta_p^2}{p}<+\infty$.  We define the completely multiplicative function $g\colon \N\to \S^1$ by
	$$
	g(p):=e(\tilde{\theta}_p), \  \text{ where } \, \tilde{\theta}_p:=\theta_p/d, \ p\in \P.
	$$
	We have  $g^d=f$. Also   $\sum_{p\in\P}\frac{\tilde{\theta}_p^2}{p}<+\infty$, hence $g\sim 1$.
	
	Now suppose that  $f\sim n^{it}$, and let $d\in \N$.  Then $f\cdot n^{-it}\sim 1$, and the previous argument gives that there exists $h\colon \N\to \S^1$ with $h\sim 1$ such that
	$h^d=f\cdot n^{-it}$. Let $g:=h\cdot n^{it/d}$. Then $g^d=f$ and $g\sim n^{it/d}$.	
\end{proof}
A similar statement is not always true when $f\sim \chi$ where $\chi$ is a Dirichlet character (not even when $f=\chi$).

We remind the reader that modified Dirichlet characters $\tilde{\chi}$ were defined in Section~\ref{SS:multiplicative}.
If a completely multiplicative function  $f\colon \N\to \S^1$ is such that $f^l$ is aperiodic for every $l\in \N$, then things are easier for us. If this is not the case (for example, it is never the case when $f$ is finite-valued), then the next lemma gives a useful decomposition to work with.
\begin{lemma}\label{L:decomposition}
	Let $f\colon \N\to \S^1$ be an aperiodic completely multiplicative function such that $f^d$ is  pretentious for some $d\in \N$, and suppose that $d\geq 2$ is the smallest such $d$.  Then there exist  completely multiplicative functions $g,h\colon \N\to \S^1$ and a Dirichlet character $\chi$, such that
	\begin{enumerate}
		\item $f=g\cdot h$
		
		\item $g,\ldots, g^{d-1}$  are aperiodic and $g^d=\tilde{\chi}$.
		
		\item $h\sim n^{it}$ for some $t\in \R$.
	\end{enumerate}
	Furthermore, if $f$ takes finitely many values, then so does $h$ and $h\sim 1$.
\end{lemma}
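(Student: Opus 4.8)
The plan is to use the hypothesis that $f^d$ is pretentious to peel off from $f$, in two stages, a ``character-type'' $d$-th root and an ``Archimedean-type'' $d$-th root. Since $f^d$ is pretentious, write $f^d\sim\chi\cdot n^{i\tau}$ for a Dirichlet character $\chi$ and $\tau\in\R$, and set $t:=\tau/d$; these will be the character and the exponent appearing in items~(2) and~(3) of the statement.

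For the first stage, I would note that $\tilde\chi$ is completely multiplicative with each $\tilde\chi(p)$ a root of unity, so I may fix for every prime $p$ a value $g_0(p)\in\S^1$ with $g_0(p)^d=\tilde\chi(p)$ and extend $g_0$ completely multiplicatively; then $g_0\colon\N\to\S^1$ is finite-valued and $g_0^d=\tilde\chi$ identically. Because $\chi$ and $\tilde\chi$ agree off the finitely many primes dividing the modulus of $\chi$, one has $\D(\chi,\tilde\chi)<\infty$, hence $f^d\sim\tilde\chi\cdot n^{i\tau}$ by the triangle inequality for $\D$ together with \eqref{E:Df1f2}, so the completely multiplicative function $\psi:=f\cdot\overline{g_0}$ satisfies $\psi^d=f^d\cdot\overline{\tilde\chi}\sim n^{i\tau}$. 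For the second stage, set $\eta:=\psi^d\cdot n^{-i\tau}$, so that $\eta\sim 1$; by \cref{L:roots} there is a completely multiplicative $\rho\sim 1$ with $\rho^d=\eta$, and I put $h:=\rho\cdot n^{it}$, so $h\sim n^{it}$ and $h^d=\rho^d\,n^{i\tau}=\eta\,n^{i\tau}=\psi^d$. Consequently $\zeta:=\psi\cdot\overline{h}$ is completely multiplicative with $\zeta^d=\psi^d\cdot\overline{h^d}\equiv 1$, so $\zeta$ takes values in the $d$-th roots of unity. Setting $g:=g_0\cdot\zeta$, I obtain a completely multiplicative (indeed finite-valued) function $g\colon\N\to\S^1$ with $g\cdot h=g_0\,\zeta\,\rho\,n^{it}=g_0\,\psi=f$ and $g^d=g_0^d\,\zeta^d=\tilde\chi$, which establishes~(1), (3), and the equality $g^d=\tilde\chi$ of~(2).

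To finish~(2) I would argue by minimality of $d$: if some $g^j$ with $1\le j\le d-1$ were not aperiodic, then by \cref{lemmamultiplicativestructure} it is pretentious, say $g^j\sim\chi'\cdot n^{i\sigma}$; since $h^j=\rho^j\,n^{ijt}\sim n^{ijt}$, the relation $f^j=g^j h^j$ and \eqref{E:Df1f2} would give $f^j\sim\chi'\cdot n^{i(\sigma+jt)}$, contradicting that $d$ is the smallest power of $f$ that is pretentious. For the ``furthermore'', if $f$ takes finitely many values then so does $f^d$, so \cref{L:convergesabsolutely} forces $\tau=0$, hence $t=0$; then $\eta=\psi^d=f^d\cdot\overline{\tilde\chi}$ is finite-valued, the last sentence of \cref{L:roots} makes $\rho$ finite-valued, and so $h=\rho\sim 1$ is finite-valued.

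I do not expect a serious obstacle here; the one point requiring care is arranging $g^d=\tilde\chi$ \emph{exactly} rather than only up to $\sim$. This is precisely what dictates the two-stage construction: one takes a genuine $d$-th root $g_0$ of the root-of-unity-valued function $\tilde\chi$, a separate Archimedean $d$-th root $\rho$ of the $\sim 1$ part, and then absorbs the $d$-th-root-of-unity ``defect'' $\zeta$ between $\psi$ and $\rho n^{it}$ back into $g$ without disturbing $g^d=\tilde\chi$. The remainder is routine bookkeeping with $\D$ and $\sim$, the only delicate equality being $h^d=\psi^d$ on the nose, which is what makes $\zeta^d\equiv 1$.
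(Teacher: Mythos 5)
Your proof is correct and rests on the same key ingredients as the paper's — \cref{L:roots}, the Daboussi--Delange dichotomy of \cref{lemmamultiplicativestructure}, uniqueness of the Archimedean exponent, and \cref{L:convergesabsolutely} for the finite-valued case — but the construction you give is more elaborate than necessary. The paper obtains $g^d=\tilde\chi$ exactly with a single application of \cref{L:roots}: since $f^d\cdot\overline{\tilde\chi}\sim n^{it}$ (a point you also verify), one applies \cref{L:roots} to this function itself to get $h\sim n^{it/d}$ with $h^d=f^d\cdot\overline{\tilde\chi}$, and then sets $g:=f\cdot\overline h$, so that $g^d=f^d\cdot\overline{h^d}=f^d\cdot\overline{f^d}\cdot\tilde\chi=\tilde\chi$ is automatic with no auxiliary objects. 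The ``one point requiring care'' you identify — arranging $g^d=\tilde\chi$ on the nose rather than only up to $\sim$ — therefore dissolves once you root $f^d\cdot\overline{\tilde\chi}$ directly, instead of first extracting a prime-by-prime $d$-th root $g_0$ of $\tilde\chi$, then an Archimedean root $\rho$ of $\psi^d n^{-i\tau}$, and finally absorbing the $d$-th-root-of-unity defect $\zeta=\psi\cdot\overline h$ into $g:=g_0\zeta$. Your three-piece decomposition is logically sound and all the auxiliary claims check out (in particular $\D(\chi,\tilde\chi)<\infty$, $\zeta^d\equiv 1$, and the finiteness of $g_0$ and $\zeta$), so this is an economy point rather than a correctness one. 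Your contrapositive argument for the aperiodicity of $g^j$ for $1\le j\le d-1$ and your use of \cref{L:convergesabsolutely} to force $\tau=0$ in the finite-valued case agree with the paper's in substance.
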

\begin{proof}
	By our assumption, we have that $f,\ldots, f^{d-1}$ are aperiodic and $f^d\sim \chi\cdot  n^{it}$ for some $t\in \R$ and Dirichlet character $\chi$. Then $f^d\cdot \overline{\tilde{\chi}}\sim n^{it}$, and  Lemma~\ref{L:roots} gives that there exists a completely multiplicative function $h\colon \N\to \S^1$ such that
	$$
	h\sim n^{it/d} \quad \text{and} \quad
	h^d=f^d\cdot \overline{\tilde{\chi}}.
	$$
	Let $g:=f\cdot \overline{h}$. Then obviously $f=g\cdot h$. Also, for $j=1,\ldots, d-1$ we have
	$
	g^j=f^j\cdot h^j
	$
	is aperiodic, since by assumption $f^j$ is aperiodic and $h^j$ is pretentious. Moreover,
	$$
	g^d=f^d\cdot \overline{h}^d=\tilde{\chi}.
	$$
	
	Lastly, suppose that $f$ takes finitely many values. Since $g$ also takes finitely many values, and $h:=f\cdot \overline{g}$, we have that $h$ takes finitely many values. Also, since $h$ takes finitely many values and $h\sim n^{it}$ for some $t\in \R$, we have that $t=0$.
	This completes the proof.
\end{proof}
Since $\chi$ is a Dirichlet character, there exists $r\in \N$ such that $\tilde{\chi}^r=1$.
We gather some facts about $g$ that we shall use in the proof of \cref{P:reduction'}:
\begin{itemize}
	\item $g^{rd}=\tilde{\chi}^r=1$, hence $g$ takes values in $(rd)$-roots of unity and
	the sequence $(g^j)_{j\in \N}$ is periodic with period $rd$.
	\smallskip
	
	\item
	$g^{d}=\tilde{\chi}$,   $g^{2d}=\tilde{\chi}^2$, $\ldots$,  $g^{(r-1)d}=\tilde{\chi}^{r-1}$, $g^{rd}=1$.
	
	\smallskip
	
	\item $	g^j$    is aperiodic if   $ j\not\equiv 0 \! \pmod{d}$.
\end{itemize}

\subsection{Proof of Proposition~\ref{P:reduction'}}  In this subsection we prove
Proposition~\ref{P:reduction'}. For convenience we use the following notation.
\begin{definition}	If $I$ is a circular arc around $1$ and $d\in \N$, we let
	$$
	I/d:=\{e(t/d)\colon e(t)\in I, t\in [-1/2,1/2)\}.
	$$
\end{definition}
Let $f\colon \N\to \S^1$ be a completely multiplicative function and  $I$ be an open arc around $1$.
Let also $F\colon \S^1\to [0,1]$ be a continuous function such that
$$
{\bf 1}_{I/4}\leq F\leq {\bf 1}_{I/2}.
$$
It suffices to show that under the assumption of Proposition~\ref{P:reduction'} we have
\begin{equation}\label{E:Fpositive}
	\liminf_{N\to \infty} \E_{m,n\in [N],m>n}\, \E^*_{k\in \N}\ F(f(k(m^2-n^2)))\cdot F(f(k\, 2mn))
	\cdot  F(f(k(m^2+n^2)))>0.
\end{equation}

We consider three cases.

\smallskip

{\bf Case 1.} If $f$ is pretentious, then $f= h\cdot \tilde{\chi}$, where $h\sim n^{it}$ for some $t\in \R$, and $\tilde{\chi}$ is a modified Dirichlet character, and the conclusion   follows from our assumption.

\smallskip

{\bf Case 2.} Suppose that   $f$ is aperiodic and $f^d$ is pretentious for some $d\geq 2$. We use Lemma~\ref{L:decomposition} to get a decomposition $f=gh$, where
$g$ takes values on $(rd)$-roots of unity for some $r\in \N$,  $g,\ldots, g^{d-1}$  are aperiodic and $g^d=\tilde{\chi}$ for some modified Dirichlet character $\tilde{\chi}$,
and $h\sim n^{it}$ for some $t\in \R$.
Note first that in order to establish \eqref{E:Fpositive} it suffices to show that
\begin{equation}\label{E:Fpositivea}
	\liminf_{N\to \infty} \E_{m,n\in [N], m>n}\, \E^*_{k\in \N}\
	c_{k,m,n} \cdot
	F(g(k(m^2-n^2)))\cdot F(g(k\, 2mn))
	\cdot  F(g(k(m^2+n^2)))>0,
\end{equation}
where
\begin{equation}\label{E:ckmn}
	c_{k,m,n}:=
	F(h(k(m^2-n^2)))\cdot F(h(k\, 2mn))
	\cdot  F(h(k(m^2+n^2))), \quad k,m,n\in \N.
\end{equation}
This is so, since if $g(n),h(n)\in I/2$, then $f(n)=g(n)\cdot h(n)\in I$.

\smallskip

{\bf Main Claim.} {\em  If  for  $G:={\bf 1 }_{\{1\}}$ and $c_{k,m,n}$ as in \eqref{E:ckmn} we have
	\begin{equation}\label{E:chiFpositive}
		\liminf_{N\to \infty} \E_{m,n\in [N], m>n}\, \E^*_{k\in \N}\
		c_{k,m,n} \cdot
		G(\tilde{\chi}(k(m^2-n^2)))\cdot G(\tilde{\chi}(k\, 2mn))
		\cdot  G(\tilde{\chi}(k(m^2+n^2)))>0,
	\end{equation}
	then \eqref{E:Fpositivea} holds.}

\smallskip

Note that  \eqref{E:chiFpositive} is satisfied from the hypothesis of Proposition~\ref{P:reduction'}. So to finish the proof of Proposition~\ref{P:reduction'} in Case 2, it  remains to verify  the above claim.

We start with a simple identity. Since $g$ takes values in $rd$ roots of unity, we have
$$
{\bf 1}_{g=1}=\E_{0\leq j <rd}\, g^j.
$$
Since  $F\geq {\bf 1}_{\{1\}}$,
it suffices to verify \eqref{E:Fpositivea} with
$\sum_{j=0}^{rd-1}g^j$ in place of $F\circ g$.
Let
$$
J:=\{0\leq j<rd \colon  j\not\equiv 0 \! \pmod{d}\}.
$$
Recall that $g^j$ is aperiodic for $j\in J$.
Also $g^d=\tilde{\chi}$ and $\tilde{\chi}$ takes values on $r$-th roots of unity, hence
$$
\sum_{j=0}^{rd-1}g^j=\sum_{j=0}^{r-1}\tilde{\chi}^j +\sum_{j\in J }g^j=r \cdot {\bf 1}_{\tilde{\chi}=1}+ \sum_{j\in J }g^j.
$$
Hence, in order to verify   \eqref{E:Fpositivea}, it suffices to show that
\begin{equation}\label{E:Hpositive}
	\liminf_{N\to \infty} \E_{m,n\in [N], m>n}\, \E^*_{k\in \N}\
	c_{k,m,n} \cdot
	H(k(m^2-n^2))\cdot H(k\, 2mn)
	\cdot  H(k(m^2+n^2))>0,
\end{equation}
where
$$
H:=r \cdot {\bf 1}_{\tilde{\chi}=1}+ \sum_{j\in J}g^j.
$$
After expanding the product we get a finite sum of expressions of the form
\begin{equation}\label{E:Hi}
	\liminf_{N\to \infty} \E_{m,n\in [N],m>n}\, \E^*_{k\in \N}\
	c_{k,m,n} \cdot
	H_1(k(m^2-n^2))\cdot H_2(k\, 2mn)
	\cdot  H_3(k(m^2+n^2)),
\end{equation}
where  each  $H_1,H_2,H_3$ is either of the form $r\cdot {\bf 1}_{\tilde{\chi}=1}$, or of
the  form  $g^j$    for some $j\in J$.

With this in mind, we see that the positivity property \eqref{E:Hpositive} would follow once we establish  the following three claims:
\begin{enumerate}
	\item \label{I:Hi1} If $H_1=H_2=H_3=r\cdot  {\bf 1}_{\tilde{\chi}=1}$, then the limit in \eqref{E:Hi} is positive.
	
	\item\label{I:Hi2} If $H_1=H_2=r\cdot  {\bf 1}_{\tilde{\chi}=1}$ and $H_3=g^j$ for some $j\in J$,
	then the limit in \eqref{E:Hi} is $0$.
	
	\item \label{I:Hi3} If $H_1=g^j$ or $H_2=g^j$ for some $j\in J$,
	then the limit in \eqref{E:Hi} is $0$.
\end{enumerate}
(We do not combine the last two cases because the argument we use is different.)

We prove \eqref{I:Hi1}. This  follows immediately from the assumption \eqref{E:chiFpositive} of the Main Claim.

We prove \eqref{I:Hi2}. We will show that for every $m,n\in \N$ with $m>n$  we have
$$
\E^*_{k\in \N}\
c_{k,m,n} \cdot
H_1(k(m^2-n^2))\cdot H_2(k\, mn)
\cdot  H_3(k(m^2+n^2))=0.
$$
Using the definition of $c_{k,m,n}$ in \eqref{E:ckmn} and  uniform approximation of $F$, it suffices to show that  for every $m,n\in \N$  with $m>n$ we have
$$
\E^*_{k\in \N}\
H_1'(k(m^2-n^2))\cdot H_2'(k\, mn)
\cdot  H_3'(k(m^2+n^2))=0,
$$
where $H_1':=\tilde{\chi}^{j_1}\cdot h^{j_2}$, $H_2':=\tilde{\chi}^{j_3}\cdot h^{j_4}$, and  $H_3':= g^{j_5}\cdot h^{j_6}$, for some $j_1,j_2,j_3,j_4,j_6\in \Z$ and $j_5:=j\in J$.
Factoring out the  multiplicative average $\E^*_{k\in \N}$, we get that it suffices to show
that
$$
\E^*_{k\in \N}\ \tilde H(k)=0 \quad \text{where} \quad \tilde H:=\tilde{\chi}^{j_1+j_3}\cdot h^{j_2+j_4+j_6} \cdot g^{j_5}.
$$
Since $g^{j_5}$ is aperiodic and $\tilde{\chi}^{j_1+j_3}\cdot h^{j_2+j_4+j_6}$ is pretentious, we get that $\tilde H\neq 1$, hence $ \E^*_{k\in \N}\ \tilde H(k)=0$.

We prove \eqref{I:Hi3}. Suppose that $H_1=g^{j_1}$ for some $j_1\in J$, the argument is similar for $j_2$. Using the definition of $c_{k,m,n}$ from \eqref{E:ckmn} and uniform approximation of $F$, it suffices to show that
$$
\lim_{N\to \infty} \E_{m,n\in [N], m>n}\, \E^*_{k\in \N}\
H_1'(k(m^2-n^2))\cdot H_2'(k\, mn)
\cdot  H_3'(k(m^2+n^2))=0,
$$
where $H_1':=g^{j_1}\cdot h^{j_2}$, $H_2':=\tilde{\chi}^{j_3}\cdot h^{j_4}$ or $g^{j_5}\cdot h^{j_6}$, $H_3':=\tilde{\chi}\cdot h^{j_7}$ or $g^{j_8}\cdot h^{j_9}$, for some $j_2,\ldots, j_9\in \Z$.
Factoring out the multiplicative average
$ \E^*_{k\in \N}\ (H_1'\cdot H_2'\cdot H_3')(k)$, we get that it suffices to show that
\begin{equation}\label{E:Hi3}
	\lim_{N\to \infty} \E_{m,n\in [N], m>n}\,
	H_1'((m^2-n^2))\cdot H_2'( mn)
	\cdot  H_3'(m^2+n^2)=0,
\end{equation}
where $H_1'$ is an aperiodic completely multiplicative function (since $g^{j_1}$ is aperiodic and $h^{j_2}$ is pretentious), and $H_2'$, $H_3'$ are completely multiplicative functions. The hypothesis of  Proposition~\ref{P:aperiodic}  is satisfied and we deduce that \eqref{E:Hi3} holds.

This finishes the proof of the Main Claim and the proof of Case 2.

\smallskip

{\bf Case 3.} Suppose that  $f^l$ is aperiodic for every $l\in \N$.
In this case we claim that the following identity holds
$$
\liminf_{N\to \infty} \E_{m,n\in [N], m>n}\, \E^*_{k\in \N}\,   F(f(k(m^2-n^2)))\cdot F(f(k\, mn))
\cdot  F(f(k(m^2+n^2)))= \Big(\int F\, dm_{\S^1}\Big)^3.
$$
If we prove this, then \eqref{E:Fpositive} holds,  since $\int F\, dm_{\S^1}\geq m_{\S^1}(I/4)>0$.

Using uniform approximation of $F$,  it suffices to show that
$$
\lim_{N\to \infty} \E_{m,n\in [N],m>n}\, \E^*_{k\in \N} \, F_1(k(m^2-n^2))\cdot F_2(k\, mn)
\cdot  F_3(k(m^2+n^2))=0
$$
when for $i=1,2,3$ we have $F_i=f^{j_i}$, $j_i\in \Z$, and at least one of the $j_1,j_2,j_3$ is non-zero.

We consider two cases.
Suppose first that  $j_1=j_2=0$. Then $j_3\neq 0$. After factoring out the multiplicative average $\E^*_{k\in \N}$ it suffices to show that
$$
\E^*_{k\in \N}   \, f^{j_3}(k)=0.
$$
This is the case since $f^{j_3}$ is a non-trivial completely multiplicative function.

Suppose now that  $j_1\neq 0$, the argument is similar if $j_2\neq 0$.
After factoring out the multiplicative average $\E^*_{k\in \N}$ it suffices to show that
\begin{equation}\label{E:F123}
	\lim_{N\to \infty} \E_{m,n\in [N], m>n}\, F_1((m^2-n^2))\cdot F_2( 2mn)
	\cdot  F_3(m^2+n^2)=0.
\end{equation}
By our assumption, $F_1=f^{j_1}$ is  aperiodic. Note also that  all $F_1,F_2,F_3$ are completely multiplicative function. The asserted identity then follows again  from Proposition~\ref{P:aperiodic}.\footnote{It is crucial for this part of the argument that we avoided working with an aperiodicity assumption on $F_3$, since such an assumption does not imply that \eqref{E:F123} holds (but it does hold if $F_1$ or $F_2$ are aperiodic completely multiplicative functions).}

\section{Pythagorean triples on level sets - The pretentious case}\label{S:Triples}
Our goal in this section is to prove Proposition~\ref{P:pretentiousfinite}, which combined with Proposition~\ref{P:reduction'} (\cref{P:reduction} is a direct consequence) implies Theorem~\ref{T:Triples}.
We first restate	 Proposition~\ref{P:pretentiousfinite} in a slightly more convenient form.
Let $f\colon \N\to \S^1$ be a pretentious completely multiplicative function  taking finitely many values. Then for some $d\in \N$ it takes values on $d$-th roots of unity.  We can assume that $d$ is minimal with this property, in which case we have $f^j\neq 1$ for
$j=1,\ldots, d-1$. In this case we will show the following.
\begin{proposition}\label{P:pretentiousfinite'}
	Let $d\in \N$ and    $f\colon \N\to \S^1$ be pretentious multiplicative function taking values on $d$-th roots of unity and $\tilde{\chi}\colon \N\to \S^1$ be a modified Dirichlet character.  Then
	$$
	\liminf_{N\to \infty} \E_{m,n\in [N], m>n}\, \E^*_{k\in \N}\ A(k\, (m^2-n^2))\cdot A(k\cdot 2mn)
	\cdot  A(k\, (m^2+n^2))>0,
	$$
	where
	\begin{equation}\label{E:An}
		A(n):=F(f(n))\cdot F(\tilde{\chi}(n)), \quad n\in \N, \quad  F:={\bf 1}_{\{1\}}.
	\end{equation}
\end{proposition}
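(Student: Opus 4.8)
The plan is to combine the concentration estimates of \cref{C:concentration2} with the ``multiplicative averaging over $k$'' trick that cleared out unwanted terms in the proof of \cref{P:reduction'}. Since $f$ is pretentious and finite-valued, \cref{L:convergesabsolutely} gives $f\sim\chi$ for some Dirichlet character $\chi$ with $\sum_p \frac1p|1-f(p)\overline{\chi(p)}|<\infty$; write $\tilde\chi_0$ for the modified character attached to $\chi$, so that $g:=f\cdot\overline{\tilde\chi_0}$ satisfies $g\sim1$, is finite-valued (say taking values in $d'$-th roots of unity), and $\sum_p\frac1p|1-g(p)|<\infty$. Absorbing $\tilde\chi_0$ into the given modified character $\tilde\chi$, we may assume $A(n)=F(g(n))\cdot F(\tilde\chi(n))$ with $g\sim1$ and $F=\mathbf 1_{\{1\}}$. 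Now $F\circ g=\mathbf 1_{g=1}=\E_{0\le j<d'}\,g^j$ and $F\circ\tilde\chi=\mathbf 1_{\tilde\chi=1}=\E_{0\le i<r}\,\tilde\chi^i$, where $r$ is chosen so that $\tilde\chi^r=1$. Expanding the triple product $A(k(m^2-n^2))A(2kmn)A(k(m^2+n^2))$ yields a finite average of terms of the shape $g^{j_1}\tilde\chi^{i_1}(k(m^2-n^2))\cdot g^{j_2}\tilde\chi^{i_2}(2kmn)\cdot g^{j_3}\tilde\chi^{i_3}(k(m^2+n^2))$, each of which factors (by complete multiplicativity) as $\big(g^{j_1+j_2+j_3}\tilde\chi^{i_1+i_2+i_3}\big)(k)$ times a function of $m,n$ alone. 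The idea is that after applying $\E^*_{k\in\N}$, all terms die by \cref{L:Fol0} except those where $g^{j_1+j_2+j_3}\tilde\chi^{i_1+i_2+i_3}=1$, and among the survivors a positive ``main term'' remains.

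First I would isolate the main term: the choice $j_1=j_2=j_3=0$, $i_1=i_2=i_3=0$ contributes $\E^*_{k\in\N}1=1$ times $\E_{m,n\in[N],m>n}1\to 1/2$. More generally, I would group the surviving terms (those with $g^{\sum j}\tilde\chi^{\sum i}=1$) and handle the ``off-diagonal'' ones by a vanishing argument; but here $g\sim1$ is aperiodic only if $g\neq1$, so one cannot immediately invoke \cref{P:aperiodic}. This is where the concentration estimates enter and replace aperiodicity. The second step is the key maneuver: restrict the average over $m,n$ to the grid $\{(Qm+1,Qn)\}$ for a highly divisible $Q$ (with $Q_0\mid Q$, $Q_0=Q_0(f,\varepsilon)$ from \cref{C:concentration2}), and then average $Q$ over a multiplicative F\o lner sequence $\Phi_K$; since the $\E^*_{k}$ average and the $\E_Q$ average interact correctly (products $kQ$ range over a dilation-invariant family), this is legitimate. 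By \cref{C:concentration2}\eqref{I:1}, $f(Q(m-n)+1),f(Q(m+n)+1)\to1$ on average (using \cref{L:lN} to pass from $Qn+1$ to $Q(m\pm n)+1$), and by \cref{C:concentration2}\eqref{I:2}, $f((Qm+1)^2+(Qn)^2)\to1$ on average; the analogous statements hold for $\tilde\chi$ trivially once $q\mid Q$, since then $\tilde\chi(Qm+a)=\tilde\chi(a)$ and the relevant residues are coprime to $q$ (using that $m^2-n^2=(m-n)(m+n)$ and $2mn$ factor through $Q(m\pm n)+1$ and $Qn$). Consequently, for $Q$ divisible by a large enough modulus, the weighted average $\E_{m,n\in[N]}A(\ell_1(m,n))A(\ell_2(m,n))A(\ell_3(m,n))$ along the grid is, up to an error $\ll\varepsilon$, equal to the density of the set of $(m,n)$ for which the purely ``$m,n$-dependent'' obstructions vanish — and one checks that this density is bounded below by an absolute positive constant (coming from the $m^2-n^2$, $mn$, $m^2+n^2$ factors not introducing extra characters on a positive-density set of $m,n$, e.g.\ those with $m,n$ in suitable residue classes mod $Q$).

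Concretely, after the grid restriction and the concentration step, I expect the surviving average to reduce to something like $\E^*_{k\in\N}\big[A(k)\cdot(\text{bounded})\big]$ composed with $\E_{m,n}$ over a positive-density set, and the role of $\E^*_k$ is exactly to kill every term except $j_1=j_2=j_3=0$, $i_1=i_2=i_3=0$ — because any nontrivial $g^{\sum j}\tilde\chi^{\sum i}$ has $\E^*_k=0$ by \cref{L:Fol0}, and this is where finite-valuedness of both $g$ and $\tilde\chi$ is used (so that only finitely many such characters appear and the exchange of limits is harmless). Thus the liminf is at least (a positive constant)$-O(\varepsilon)$, and letting $\varepsilon\to0$ finishes the proof.

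The main obstacle, as in the proof of \cref{P:vanishing2}, is the bookkeeping around non-convergence and the order of limits: one must take $\E^*_k=\lim_K\E_{k\in\Phi_K}$, the grid parameter $Q\in\Phi_{K'}$, and the $\E_{m,n\in[N]}$ limit in a compatible way, and justify that the concentration estimate of \cref{C:concentration2} (whose implicit constants are absolute but which requires $Q_0\mid Q$ with $Q_0$ depending on $f$ and $\varepsilon$) can be invoked uniformly enough to push through. In particular, one needs the two multiplicative averages (over $k$ and over $Q$) not to interfere — this works because both are along multiplicative F\o lner sequences and the relevant quantity $f(kQ)=f(k)f(Q)$ factors — but making this rigorous requires choosing the F\o lner sequence for $\E^*$ after $Q$ is fixed, or equivalently running a diagonal argument. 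I expect this ``triple-limit'' interchange, rather than any single analytic estimate, to be the delicate part; the positivity of the main term and the vanishing of off-diagonal terms are then comparatively routine given \cref{C:concentration2}, \cref{L:Fol0}, \cref{L:lN}, and \cref{P:aperiodic}.
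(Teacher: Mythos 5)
Your proposal correctly identifies the toolkit (grid restriction, \cref{C:concentration2}, averaging over $k$, averaging over $Q$), but there is a genuine gap in the accounting of which terms survive the $\E^*_k$ average and how they are to be dispatched. You assert that "the role of $\E^*_k$ is exactly to kill every term except $j_1=j_2=j_3=0$, $i_1=i_2=i_3=0$." This is false: \cref{L:Fol0} kills terms with $g^{\sum j}\tilde\chi^{\sum i}\neq 1$, but there are many non-trivial index tuples with $g^{\sum j}\tilde\chi^{\sum i}=1$ (e.g.\ $\sum j$ a multiple of the order of $g$), and these are precisely the terms that cause trouble. Your proposal never explains how they are controlled, and the vague closing claim that "one checks that this density is bounded below by an absolute positive constant" sidesteps the issue: after concentration, such a term is approximately $(f^{k_2}\tilde\chi^{k_2'})(2Q)\cdot\lim_N\E_{n\in[N]}(f^{k_2}\tilde\chi^{k_2'})(n)$, a complex number of unpredictable phase that could make the sum negative. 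It is not enough to lower-bound a density; these terms have to \emph{vanish}.

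The missing structural insight is the dichotomy on $f^{k_2}\tilde\chi^{k_2'}$. After grid restriction, the $f^{k_2}$ factor on $2mn$ contributes $f^{k_2}(Qm+1)\cdot f^{k_2}(2Qn)$: the first piece is handled by concentration, but $f^{k_2}(2Qn)=f^{k_2}(2Q)f^{k_2}(n)$ carries a genuine $n$-dependence (and a $Q$-dependence) that concentration does not touch, and the only $\tilde\chi$ factor that survives ($q\mid Q$) is $\tilde\chi^{k_2'}(2Qn)$. The paper splits the surviving index set exactly on whether $f^{k_2}\tilde\chi^{k_2'}=1$ or not. When it equals $1$, the residual $n$- and $Q$-dependence cancels and concentration forces $L_{k_1,k_2,k_3,k_2'}(Q)$ close to $1$, giving positivity (Claim~1). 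When it is $\neq 1$, the term carries the factor $(f^{k_2}\tilde\chi^{k_2'})(2Q)$, and it is \emph{averaging over $Q\in\Phi_K$ and letting $K\to\infty$} — not the $\E^*_k$ average — that annihilates these terms via \cref{L:Fol0} (Claim~2). Without this split the proof does not close. Two smaller points: the factorisation $f=g\cdot\tilde\chi_0$ with "absorbing $\tilde\chi_0$ into $\tilde\chi$" is not a direct rewrite since $\mathbf 1_{\{1\}}(ab)\neq\mathbf 1_{\{1\}}(a)\mathbf 1_{\{1\}}(b)$ in general (though one can lower-bound $\mathbf 1_{f=1}\geq\mathbf 1_{g=1}\mathbf 1_{\tilde\chi_0=1}$ and proceed, as $A\geq 0$; the paper sidesteps this by expanding $\mathbf 1_{f(n)=1}=\E_{0\le j<d}f^j(n)$ directly), and by the paper's normalisation $\E_{m,n\in[N],m>n}1=1$, not $1/2$.
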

\begin{remark}
	Note that in the argument that follows we only deal with countably many choices of multiplicative functions  and other choices of parameters, so we can choose a subsequence of positive integers $(N_l)$ along which all the limits (as $l\to\infty$) that appear below exist. We make this implicit assumption throughout. 		
\end{remark}
Before giving the proof of Proposition~\ref{P:pretentiousfinite'}, we
show how the concentration estimates of Corollary~\ref{C:concentration2} follow from Propositions~\ref{P:concentration1} and \ref{P:concentration2}.
\subsection{Proof of Corollary~\ref{C:concentration2}}
We will deduce part~\eqref{I:1} from  Proposition~\ref{P:concentration1}. In a similar fashion
we can deduce part~\eqref{I:2} from  Proposition~\ref{P:concentration2}.

Let $\varepsilon>0$ and $\varepsilon<1$. Since $f$ is a finite-valued pretentious multiplicative function we have by Lemma~\ref{L:convergesabsolutely}
that $f\sim \chi$ for some Dirichlet character $\chi$ with period $q$ and
$$
\sum_{p\in  \P}\, \frac{1}{p}\,|1-f(p)\cdot \overline{\chi(p)}|<\infty.
$$
Hence, there exists $K_0\in \N$ such that
$$
\sum_{p\geq K_0}\, \frac{1}{p}\,|1- f(p)\cdot \overline{\chi(p)}| +K_0^{-1/2}\leq \varepsilon.
$$
This implies that
$$
\mathbb{D}(f,\chi;K_0,\infty)\leq \varepsilon
\quad \text{and} \quad  \big|\exp\big(F_N(f,K_0)\big)-1\big|\ll \varepsilon,
$$
where
$F_N(f,K_0)=\sum_{K_0< p\leq N} \frac{1}{p}\,\big(f(p)\cdot \overline{\chi(p)} -1\big)$.

We let $Q_0:=q\cdot \prod_{p\leq K_0}p$. If $Q\in \N$ is such that $Q_0\mid Q$, then using the second remark following Proposition~\ref{P:concentration1}   with $t=0$, we get that
$$
\limsup_{N\to\infty} \E_{n\in [N]}|f(Qn+1)-  \exp\big(F_N(f,K_0)\big)|\ll \varepsilon.
$$
Since $|\exp\big(F_N(f,K_0)\big)-1|\ll \varepsilon$,
this completes the proof.
\subsection{Proof of Proposition~\ref{P:pretentiousfinite'}}
Recall that  $A(n)$ is given by \eqref{E:An}.
Since $A(n)\geq 0$ for every $n\in \N$, 	it suffices to show that there exist $Q\in \N$ and $N_l\to \infty$ (which can be taken to be a subsequence of any given $M_l\to\infty$) such that  all limits appearing below as $l\to \infty$ exist and
\begin{multline*}
	\lim_{l\to \infty} \E_{m,n\in [N_l], m>n}\, \E^*_{k\in \N} \, A(k((Qm+1)^2-(Qn)^2)\cdot
	A(k\, 2(Qm+1)(Qn))
	\cdot \\  A(k((Qm+1)^2+(Qn)^2))>0.
\end{multline*}

Since $f$ takes values on $d$-th roots of unity  and $\tilde{\chi}$ takes values on $d'$-th roots of unity for some $d,d'\in \N$, we have
\begin{equation}\label{E:Ff}
	F(f)= {\bf 1}_{f=1}=\E_{0\leq j <d}\, f^j, \quad 	F(\tilde{\chi})= {\bf 1}_{\tilde{\chi}=1}=\E_{0\leq j <d'}\, \tilde{\chi}^j.
\end{equation}
Let $m,n \in \N$ with $m>n$ be fixed. In order to compute
$$
\E^*_{k\in \N} \, A(k(m^2-n^2))\cdot A
(k\, 2mn)
\cdot  A(k(m^2+n^2)),
$$
we use \eqref{E:Ff},  expand, and use that  by Lemma~\ref{L:Fol0} we have $ \E^*_{k\in \N} \ g(k)=0$ for all completely multiplicative functions
$g\colon \N\to \U$ with $g\neq 1$ (in particular this holds if $g:=f^k\cdot \tilde{\chi}^{k'}\neq 1$). We see that the previous expression is equal to $1/(dd')^3$ times
$$
\sum_{k_i,k_i'\in \CK} \,   (f^{k_1}\cdot \tilde{\chi}^{k'_1})(m^2-n^2)\cdot (f^{k_2}\cdot \tilde{\chi}^{k'_2})(2mn)\cdot (f^{k_3}\cdot \tilde{\chi}^{k'_3})(m^2+n^2),
$$
where
$$
\CK:=\{0\leq k_1,k_2,k_3<d,\, 0\leq k'_1,k'_2,k'_3< d' \colon f^{k_1+k_2+k_3}\cdot \tilde{\chi}^{k_1'+k_2'+k_3'}=1\}.
$$
In what follows, we implicitly assume that all $k_i,k_i'$ belong to $\CK$.

Let $q$ be the period of $\chi$, then $\tilde{\chi}(qn+1)=1$ for every $n\in \N$.
Taking the previous facts in mind, we see that  in order to establish the needed positivity it suffices to show that there exists $Q\in \N$ such that  $q\mid Q$ and
\begin{equation}\label{E:k123}
	L(Q):=  \sum_{k_i,k_i'\in \CK}  \,    \Re(L_{k_1,k_2,k_3,k_2'}(Q))>0,
\end{equation}
where
\begin{multline}\label{E:Lk123}
	L_{k_1,k_2,k_3,k_2'}(Q):=\lim_{l\to\infty} \E_{m,n\in [N_l], m>n}  \,   f^{k_1}((Qm+1)^2-(Qn)^2)\cdot f^{k_2}(2(Qm+1)(Qn))\cdot \\
	f^{k_3}((Qm+1)^2+(Qn)^2)\cdot \tilde{\chi}^{k_2'}(2(Qn)).
\end{multline}
(We used that $\tilde{\chi}(j)=1$ for $j\in Q\Z+1$.)


\medskip

{\bf Claim 1 ($f^{k_2}\cdot \tilde{\chi}^{k_2'}=1$).}
{\em For every  $\varepsilon>0$  there exists $Q_0=Q_0(f,\tilde{\chi},\varepsilon)\in \N$  with $q\mid Q_0$, such that the following holds: If $Q\in \N$ satisfies $Q_0\mid Q$, then   for all $k_1,k_2,k_3,k_2'\in \CK$ with $ f^{k_2}\cdot \tilde{\chi}^{k_2'}=1$ we have
\begin{equation}\label{E:Lkk}
	\Re(L_{k_1,k_2,k_3,k_2'}(Q))\geq 1-\varepsilon.
\end{equation}
As a consequence, there exists $Q_0:=Q_0(f,\tilde{\chi} )$, such that if $Q\in \N$ satisfies $Q_0\mid Q$, then }
\begin{equation}\label{E:geq1}
\sum_{k_1,k_2,k_3,k_2'\colon f^{k_2}\cdot \tilde{\chi}^{k_2'}= 1}
\Re(L_{k_1,k_2,k_3,k_2'}(Q))\geq 1.
\end{equation}

We prove the claim. Let $\varepsilon>0$. Note first that since $f^{k_2}\cdot \tilde{\chi}^{k_2'}=1$, we get using \eqref{E:Lk123} that
\begin{multline}\label{E:Lk123'}
L_{k_1,k_2,k_3,k_2'}(Q):=\lim_{l\to\infty} \E_{m,n\in  [N_l], m>n}  \,   f^{k_1}(Q(m-n)+1)\cdot f^{k_1}(Q(m+n)+1) \\
f^{k_2}(Qm+1)\cdot f^{k_3}((Qm+1)^2+(Qn)^2).
\end{multline}
Using this identity,  Corollary~\ref{C:concentration2}, and Lemma~\ref{L:lN},  we deduce that    there exists $Q_0=Q_0(f,\varepsilon)$, with $q\mid Q_0$,
such that if
$Q\in \N$  satisfies  $Q_0\mid Q$, then   for all $k_1,k_2,k_3,k_2'\in \CK$ such that $ f^{k_2}\cdot \tilde{\chi}^{k_2'}=1$ we have
$$
| L_{k_1,k_2,k_3,k_2'}(Q)-1|\ll_d \varepsilon.
$$
This proves \eqref{E:Lkk}. 	 Since $L_{0,0,0,0}=1$, using \eqref{E:Lkk} for $\varepsilon=1/2$, we deduce \eqref{E:geq1}.
This completes the proof of  Claim~1.

\medskip

{\bf Claim 2 ($f^{k_2}\cdot \tilde{\chi}^{k_2'}\neq 1$).}
{\em Let $Q_0\in \N$ be such that \eqref{E:geq1} holds for every $Q\in \N$ such that $Q_0\mid Q$. 	Then for every  $\varepsilon>0$
	there exists $Q_1=Q_1(f,\tilde{\chi},\varepsilon)\in \N$  such that $Q_0\mid Q_1$  (hence \eqref{E:Lkk} holds for $Q=Q_1$) and}
\begin{equation}\label{E:geqe}
	\sum_{k_1,k_2,k_3,k_2'\colon f^{k_2}\cdot \tilde{\chi}^{k_2'}\neq 1}
	\Re(L_{k_1,k_2,k_3,k_2'}(Q_1))\geq - \varepsilon.
\end{equation}

We prove the claim. Let $\varepsilon>0$. It suffices to show
that
\begin{equation}\label{E:Lk120}
	\lim_{K\to\infty}\E_{Q\in \Phi_K} \,  L_{k_1,k_2,k_3,k_2'}(Q)=0, \quad
	\text{as long as }  f^{k_2}\cdot \tilde{\chi}^{k_2'}\neq1.
\end{equation}
Note that
$$
L_{k_1,k_2,k_3,k_2'}(Q):=(f^{k_2}\cdot \tilde{\chi}^{k_2'}) (2Q)\cdot 	L'_{k_1,k_2,k_3,k_2'}(Q),
$$
where
\begin{multline}\label{E:Lk123''}
	L'_{k_1,k_2,k_3,k_2'}(Q):= \lim_{l\to\infty} \E_{m,n\in  [N_l], m>n}  \,   f^{k_1}(Q(m-n)+1)\cdot f^{k_1}(Q(m+n)+1)\\
	f^{k_2}(Qm+1) \cdot f^{k_3}((Qm+1)^2+(Qn)^2)\cdot
	f^{k_2}(n)\cdot  \tilde{\chi}^{k'_2}(n).
\end{multline}

We prove \eqref{E:Lk120}. Let $\varepsilon'>0$.
Using  \eqref{E:Lk123''},  Corollary~\ref{C:concentration2}, and Lemma~\ref{L:lN}, we get that  there exists $ Q_2=Q_2(f,\varepsilon')$ such that the following holds: If $Q\in \N$ satisfies $Q_2\mid Q$, then, for all
$k_1,k_2,k_3,k_2'\in\CK$ we have
\begin{equation}\label{E:CLk123}
	| L'_{k_1,k_2,k_3,k_2'}(Q)-c_{k_2,k'_2}|\ll \varepsilon',
\end{equation}
where
$$
c_{k_2,k'_2}:=\lim_{l\to\infty}\E_{n\in [N_l]} \, f^{k_2}(n)\cdot  \tilde{\chi}^{k'_2}(n).\footnote{The limit exists since $f^{k_2}\cdot \tilde{\chi}^{k'_2}$ is finite-valued, but we do not have to use this.}
$$
Hence, by \eqref{E:Lk123}, \eqref{E:Lk123''}, and \eqref{E:CLk123}, we have
\begin{equation}\label{E:CLk123'}
	| L_{k_1,k_2,k_3,k_2'}(Q)-c_{k_2}\cdot (f^{k_2}\cdot \tilde{\chi}^{k_2'})(2Q)|\ll \varepsilon'\quad \text{for all } Q \text{ with } Q_2\mid Q.
\end{equation}
Since by assumption $f^{k_2}\cdot \tilde{\chi}^{k_2'}\neq 1$, we have
$$
\lim_{K\to\infty}  \E_{Q\in \Phi_K} \, (f^{k_2}\cdot \tilde{\chi}^{k_2'})(Q)=0.
$$
Combining this with \eqref{E:CLk123'},  we get that \eqref{E:Lk120} holds.  This proves  Claim~2.

Putting together the two claims,  in particular the estimates \eqref{E:geq1} and \eqref{E:geqe}, we deduce that for every $\varepsilon>0$  there exists $Q_1=Q_1(f, \tilde{\chi}, \varepsilon)\in \N$ with $q\mid Q_1$, such that $L(Q_1)\geq 1-\varepsilon$, hence \eqref{E:k123} holds for $Q=Q_1$.  This completes the proof.

\subsection{More general equations}\label{SS:abc}
Our methods allow us to extend Theorem~\ref{T:Triples} and cover
more general equations of the form
\begin{equation}\label{E:abcz}
	ax^2+by^2=cz^2
\end{equation}
where $a,b,c\in \N$ are  squares satisfying Rado's condition, i.e., we  have either  $a=c$, or $b=c$, or $a+b=c$.  We summarize the key differences in the argument.

Suppose first that  $a=c$ (the case $b=c$ is similar).   Then, as in Section~\ref{SS:parametric}, we get parametrizations of \eqref{E:abcz}
of the form
$$
x=\ell_1\, (m^2-n^2), \quad  y=\ell_2\, mn,  \quad z=\ell_3\, (m^2+n^2),
$$  for some $\ell_1,\ell_2,\ell_3\in \N$, and our hypothesis $a=c$ implies that  we can take $\ell_1=\ell_3$.
This fact is then used to handle Claim~1 in the proof of Proposition~\ref{P:pretentiousfinite'}, and the rest of the argument remains unchanged.  To see how Claim~1 is handled, note that in our setting the expressions 	$L_{k_1,k_2,k_3,k_2'}(Q)$ in \eqref{E:Lk123} take the form
\begin{multline}\label{E:Lk123abc}
	L_{k_1,k_2,k_3,k_2'}(Q):=c_{k_1,k_2,k_3}\cdot \lim_{l\to\infty} \E_{m,n\in [N_l], m>n}  \,   f^{k_1}((Qm+1)^2-(Qn)^2)\cdot f^{k_2}(2(Qm+1)(Qn))\cdot \\
	f^{k_3}((Qm+1)^2+(Qn)^2)\cdot \tilde{\chi}^{k_2'}(2(Qn)),
\end{multline}
where
$$
c_{k_1,k_2,k_3}:=(f^{k_1}\cdot \tilde{\chi}^{k_1'})(\ell_1)\cdot (f^{k_2}\cdot \tilde{\chi}^{k_2'})(\ell_2)\cdot  (f^{k_3}\cdot \tilde{\chi}^{k_3'})(\ell_3).
$$
Using additionally that $\ell_1=\ell_3$ and that  $f^{k_2}\cdot \tilde{\chi}^{k_2'}=1$,  $f^{k_1+k_2+k_3}\cdot \tilde{\chi}^{k_1'+k_2'+k_3'}=1$, which are standing assumptions in Claim 1, we deduce
that $c_{k_1,k_2,k_3}=1$. With this information at hand, the proof of Claim~1 in our setting
is exactly the same as in the case of Pythagorean triples.

Now suppose  that $a+b=c$, in which case the argument is  a bit different and somewhat simpler. As shown  in Step~2 of \cite[Appendix~C]{FH17}, we can obtain parametrizations of \eqref{E:abcz}  of the form
$$
x=k\, (m+\ell_1n)\cdot (m+\ell_2n), \quad y=k\, (m+\ell_3n)\cdot (m+\ell_4n), \quad z=k\, (m^2+(\ell_5n)^2),
$$
for suitable  $\ell_1,\ell_2,\ell_3,\ell_4, \ell_5\in \N$ that satisfy  $\ell_1\neq \ell_2$, $\ell_3\neq \ell_4$, and $\{\ell_1,\ell_2\}\neq \{\ell_3,\ell_4\}$. Note that our assumption $a+b=c$ was used to ensure that the coefficient of $m$ is $1$ in all linear forms.
We average on the grid $\{(Qm+1,Qn)\colon m,n\in \N\}$.
We will demonstrate how Claims~1 and 2 in the proof of Proposition~\ref{P:pretentiousfinite'} can be established within our framework. The remainder of the argument remains unaltered.
In our context, the expressions 	$L_{k_1,k_2,k_3,k_2'}(Q)$ in \eqref{E:Lk123} take the form
\begin{multline}\label{E:Lk123abc'}
	L_{k_1,k_2,k_3,k_2'}(Q):= \lim_{l\to\infty} \E_{m,n\in [N_l], m>n}  \,   f^{k_1}\big((Q(m+\ell_1n)+1) (Q(m+\ell_2n)+1)\big)\cdot \\
	f^{k_2}\big((Q(m+\ell_3n)+1) (Q(m+\ell_4n)+1)\big)\cdot 	f^{k_3}((Qm+1)^2+(Q\ell_5n)^2).
\end{multline}
Using the concentration estimates of \cref{C:concentration2},  we can  see   that Claim~1
holds without  assuming that $ f^{k_2}\cdot \tilde{\chi}^{k_2'}=1$. Therefore, in our setting, Claim~2 in the proof of Proposition~\ref{P:pretentiousfinite'} is already addressed by this case and  requires no further explanation.

\end{document}